\newtheorem{theorem}{Theorem}[section]
\newtheorem{cor}[theorem]{Corollary}
\newtheorem{lemma}[theorem]{Lemma}
\newtheorem{prop}[theorem]{Proposition}
\newtheorem{problem}[theorem]{Problem}
\newtheorem{algorithm}[theorem]{Algorithm}
\theoremstyle{definition}
\newtheorem{definition}[theorem]{Definition}
\newtheorem{remark}[theorem]{Remark}
\newcommand{\CC}{\mathbb{C}}
\newcommand{\QQ}{\mathbb{Q}}
\newcommand{\RR}{\mathbb{R}}
\newcommand{\ZZ}{\mathbb{Z}}
\newcommand{\frakp}{\mathfrak{p}}
\newcommand{\calM}{\mathcal{M}}
\newcommand{\N}{\mathscr{N}}
\DeclareMathOperator{\coker}{coker}
\DeclareMathOperator{\Gal}{Gal}
\DeclareMathOperator{\image}{image}
\DeclareMathOperator{\lcm}{lcm}
\DeclareMathOperator{\Norm}{Norm}
\DeclareMathOperator{\Trace}{Trace}
\def\tareesidedbox#1{\setbox0=\hbox{$#1$}\dimen0=\wd0 \advance\dimen0 by3pt\rlap{\hbox{\vrule height9pt width.4pt depth2pt \kern-.4pt\vrule height9.4pt width\dimen0 depth-9pt\kern-.4pt \vrule height9pt width.4pt depth2pt}} \relax \hbox to\dimen0{\hss$#1$\hss}}
\def\house#1{\tareesidedbox{#1}}
\title[The exceptional set in Cassels's theorem]{The exceptional set in Cassels's theorem on small cyclotomic integers}
\author{Jitendra Bajpai}
\address{Department of Mathematics, Christian-Albrechts-University of Kiel, 24118 Kiel, Germany}
\email{jitendra@math.uni-kiel.de}
\author{Srijan Das}
\address{Department of Mathematics, Indian Institute of Science Education and Research, Pune, India}
\email{das.srijan@students.iiserpune.ac.in}
\author{Kiran S. Kedlaya}
\address{Department of Mathematics, University of California San Diego, 9500 Gilman Drive \#0112, La Jolla, CA, 92122, USA}
\email{kedlaya@ucsd.edu}
\author{Nam H. Le}
\address{Department of Mathematics and Statistics, Florida Atlantic University, Florida, USA}
\email{namle2024@fau.edu}
\author{Meghan Lee}
\address{Department of Mathematics, University of California Santa Barbara, California, USA}
\email{meghanlee@ucsb.edu}
\author{Antoine Leudi\`ere}
\address{Department of Mathematics and Statistics, University of Calgary, Alberta, Canada}
\email{antoine.leudiere@ucalgary.ca}
\author{Jorge Mello}
\address{Department of Mathematics and Statistics, Oakland University, Michigan, USA}
\email{jorgedemellojr@oakland.edu}
\subjclass[2020]{Primary 11R18; secondary 11R06, 11Y40}
\keywords{Cassels's theorem, cyclotomic integers}
\begin{document}
\date{\today}

\begin{abstract}
    In a 1965 paper, R. Robinson made five conjectures about the classification of cyclotomic algebraic integers for which the maximum absolute value in any complex embedding (the \emph{house}) is small, modulo the equivalence relation generated by Galois conjugation and multiplication by roots of unity. In response to one of these conjectures, Cassels showed in 1969 that when the house is at most $\sqrt{5}$, one obtains three parametric families plus an effectively computable finite set of equivalence classes of exceptions. Building on the work of Jones, Calegari--Morrison--Snyder, and Robinson--Wurtz, we determine this exceptional set. By specializing to the case where the house is strictly less than 2, we resolve the final outstanding conjecture from Robinson's 1965 paper.
\end{abstract}

\maketitle

\tableofcontents

%--------------------------------------------------------------------------
%	INTRODUCTION
%-------------------------------------------------------------------------
\section{Introduction}

\subsection{General setup}

It is a fundamental problem in algebraic number theory to classify ``small''
algebraic integers. To make this question well-posed, for $\alpha$ an algebraic integer, we define the \emph{house} of $\alpha$, denoted $\house{\alpha}$, to be the maximum of $|\beta|$ as $\beta$ varies over conjugates of $\alpha$ in $\CC$. (The terminology is presumably based on the shape of the notation.) In this notation, the arithmetic-geometric mean inequality implies that $\house{\alpha} \geq 1$ when $\alpha \neq 0$, and a theorem of Kronecker implies that $\house{\alpha} = 1$ if and only if $\alpha$ is a root of unity. 

For a general algebraic integer $\alpha$, one cannot say much more without introducing a dependence on degree.
This can be quantified by citing Dimitrov's recent resolution of the Schinzel--Zassenhaus conjecture \cite{dimitrov}, which gives the lower bound $\house{\alpha} \geq 2^{1/(4[\QQ(\alpha):\QQ])}$ which is known to be essentially best possible.

On the other hand, it was first suggested by Robinson \cite{robinson} that one can hope to formulate much stronger results for \emph{cyclotomic} algebraic integers, i.e., those belonging to the subring of $\CC$ generated by roots of unity. 
This restriction occurs naturally in several mathematical contexts,
including the study of Fermat equations and their generalizations, 
the properties of character tables of finite groups, and the theory of fusion categories and their Frobenius--Perron dimensions \cite{calegari-morrison-snyder}. 

The primary purpose of this paper is to explicate a theorem of Cassels which resolves one of the conjectures raised by Robinson \cite[Conjecture~2]{robinson} concerning the classification of cyclotomic integers $\alpha$ with $\house{\alpha}^2 \leq 5$.
By specializing to the case $\house{\alpha} < 2$,
we resolve the last outstanding conjecture of Robinson
\cite[Conjecture~1]{robinson}; we will encounter the other conjectures from \cite{robinson} throughout this introduction, but see \cite[Chapter~5]{mckee-smyth} for a more thorough survey.

\subsection{The theorem of Cassels}

We next formulate the theorem of Cassels mentioned above \cite[Theorem~I]{cassels} and our main result which complements this theorem. Following Cassels, we declare two algebraic integers $\alpha,\beta$ to be equivalent if $\beta$ equals a root of unity times a conjugate of $\alpha$; the house is constant on equivalence classes.

\begin{theorem}[Cassels] \label{T:cassels}
For $\alpha$ a cyclotomic algebraic integer, $\house{\alpha}^2 < 5.01$ if and only if $\alpha$ satisfies one\footnote{Conditions (1)--(3) are not mutually exclusive, but the overlaps between them are finite and known. For instance, the overlaps between (2) and (3) are the subject of \cite[Conjecture~3]{robinson}, for which see \cite[\S 5.8.3]{mckee-smyth} for a proof; see also \Cref{rem:cassels degenerate cases}. We regard (4) as mutually exclusive from (1)--(3) by definition.} of the following conditions.
\begin{enumerate}
\item It is the sum of at most two roots of unity.
\item It is equivalent to $1 + \zeta- \zeta^{-1}$ for some root of unity $\zeta$.
\item It is equivalent to $(\zeta_5+\zeta_5^4) + (\zeta_5^2 + \zeta_5^3)\zeta$ for some root of unity $\zeta$.
\item It belongs to a certain effectively computable finite set of equivalence classes.
\end{enumerate}
\end{theorem}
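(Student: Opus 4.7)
The easy direction is verifying that each family (1)--(3) and the finite set (4) indeed yield $\house{\alpha}^2 < 5.01$. For sums of at most two roots of unity, $\house{\alpha} \leq 2$. For (2), writing $\zeta = e^{i\theta}$ gives $|1+\zeta-\zeta^{-1}|^2 = |1+2i\sin\theta|^2 = 1 + 4\sin^2\theta \leq 5$ over all conjugates. For (3), setting $a = \zeta_5+\zeta_5^4$ and $b = \zeta_5^2+\zeta_5^3$, one uses $a^2+b^2=3$ and $ab=-1$ to obtain $|a+b\zeta|^2 = 3 - 2\,\mathrm{Re}(\zeta) \leq 5$, noting that $a$ and $b$ are interchanged by Galois conjugation on $\QQ(\zeta_5)$. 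The finite set in (4) is handled by direct inspection.

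For the hard direction, I would assume $\alpha \in \ZZ[\zeta_N]$ with $N$ minimal and $\house{\alpha}^2 < 5.01$. The central tool is the totally positive cyclotomic integer $\beta := \alpha\bar\alpha$, all of whose conjugates lie in $[0,5.01)$. Pairing $\beta$ against totally positive elements $u \in \ZZ[\zeta_N+\zeta_N^{-1}]$ yields inequalities of the form $\Trace(\beta u) < 5.01 \cdot \Trace(u)$, which constrain the coefficients of $\alpha$ in a natural basis of $\ZZ[\zeta_N]$ built from roots of unity. With judicious choices of $u$ (for instance, partial sums of $\zeta_N^a + \zeta_N^{-a}$, or products adapted to the prime factorization of $N$), one bounds both the prime power divisors of $N$ and the length $n$ of a minimal representation $\alpha = \sum_{i=1}^n \eta_i$ as a sum of roots of unity. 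The cases $n \leq 2$ fall into (1) tautologically.

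For $n \geq 3$, I would invoke the classification of minimal vanishing sums of roots of unity due to Conway--Jones, refined by Lam--Leung: the supports of such sums are highly constrained, with the generic solutions organized into one-parameter orbits under multiplication by roots of unity. These orbits can be shown to produce exactly the families (2) and (3), while an effective bound on $N$ forces all remaining possibilities into the finite sporadic set (4). The main obstacle is precisely this case analysis: one must decide which multi-term configurations generate infinite families versus sporadic exceptions, then bound the conductor in the sporadic cases so that (4) is explicitly enumerable. The slack between the natural value $5$ and the stated threshold $5.01$ is chosen so that boundary configurations do not accumulate into infinite families while keeping (4) finite; determining (4) explicitly is exactly what the rest of the paper sets out to do.
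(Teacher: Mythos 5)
First, a point of framing: the paper does not prove this theorem at all; it is quoted from Cassels \cite{cassels} (Theorem~I there), and the entire paper is devoted to pinning down the set in part (4). So the only comparison available is with Cassels's original argument, whose key ingredients the paper recalls in \S\ref{sec:prelim}.

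Your easy direction is correct and complete: the computations $|1+\zeta-\zeta^{-1}|^2 = 1+4\sin^2\theta \le 5$ and $|a+b\zeta|^2 = 3 - 2\,\mathrm{Re}(\zeta) \le 5$ (using $a^2+b^2=3$, $ab=-1$, and the fact that Galois either fixes or swaps $a,b$) are exactly right, and (1) and (4) are immediate.

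The hard direction, however, is where all the content lies, and your sketch has two genuine gaps. First, the trace-pairing idea is in the right spirit --- Cassels's central quantity is precisely $\calM(\alpha) = \frac{1}{\varphi(N)}\Trace(\alpha\overline{\alpha}) \le \house{\alpha}^2$ --- but what makes it work is not a clever choice of totally positive $u$; it is the exact additivity and convexity formulas for $\calM$ under $p$-decomposition (\Cref{L:rep-prime-power-case} and \Cref{L:rep-prime-case}, i.e.\ Cassels's (3.16) and (3.4)), together with Loxton-type lower bounds on $\calM(\alpha)$ in terms of $\N(\alpha)$ (\Cref{T:loxton}, \Cref{L:short sums}). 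These are what bound the prime-power divisors of $N$, the largest prime factor of $N$, and the weight $\N(\alpha)$ simultaneously; ``judicious choices of $u$'' will not substitute for them, and you have not indicated how your inequalities would yield a bound on $N$ that is uniform over all $\alpha$.

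Second, the appeal to Conway--Jones / Lam--Leung is misplaced. The element $\alpha$ is not a vanishing sum of roots of unity, and the classification of minimal vanishing sums does not, by itself, produce the families (2) and (3). (Vanishing sums do enter the subject --- the present paper uses them in \S\ref{sec:combinatorial} via the identity $\alpha\overline{\alpha} = \sum \eta_i\overline{\eta}_j\zeta^{i-j}$ --- but only after one already knows that $\house{\alpha}^2$ lies in a field of small conductor, which is itself a nontrivial input from \Cref{T:robinson-wurtz}, and even then the relevant sums have $n^2$ terms, outside the range where the Conway--Jones classification is useful.) In Cassels's argument the families (2) and (3) emerge as the \emph{equality cases} of the Cassels-height inequalities: e.g.\ a $p$-decomposition with exactly two nonzero terms, each forced by the height budget to be a sum of at most two roots of unity with prescribed $|\eta_0|^2+|\eta_1|^2$. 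Your sentence ``these orbits can be shown to produce exactly the families (2) and (3)'' asserts the conclusion of the theorem rather than deriving it. As written, the proposal is a plausible research plan, not a proof.
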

As a convenient shorthand and in honor of this result, we refer to the quantity $\house{\alpha}^2$ as the \emph{castle} of $\alpha$.
For later reference, we spell out the castles of the specializations of cases (1)--(3) of~\Cref{T:cassels} to the root of unity  $\zeta_N := e^{2 \pi i/N}$ for any positive integer $N$ (see \Cref{algo:cassels test}):
\begin{align}
\nonumber
|\alpha|^2 = \house{\alpha}^2 = 4 \cos^2 \frac{\pi}{N} & \qquad \alpha = 1 + \zeta_N \\
\label{eq:house for cassels families}
|\alpha|^2 = 1 + 4 \sin^2 \frac{2\pi}{N}, \quad \house{\alpha}^2 = 1 + 4 \cos^2 \frac{\pi}{N'(N)} & \qquad \alpha = 1 + \zeta_N - \zeta_N^{-1} \\
\nonumber
|\alpha|^2 =  1 + 4 \sin^2 \frac{\pi}{N}, \quad \house{\alpha}^2 = 1 + 4 \cos^2 \frac{\pi}{N'(2N)} & \qquad \alpha = (\zeta_5+\zeta_5^4) + (\zeta_5^2 + \zeta_5^3)\zeta_N
\end{align}
where $N'(N)$ denotes the denominator of $\tfrac{1}{2} - \frac{2}{N} = \tfrac{N-4}{2N}$ in lowest terms:
\begin{gather}
\label{eq:denom for cassels families}
N'(N) = \begin{cases}
2N & N \equiv 1 \pmod{2} \\
N & N \equiv 2 \pmod{4} \\
N/4 & N \equiv 4 \pmod{8} \\
N/2 & N \equiv 0 \pmod{8}.
\end{cases}
\end{gather}
From \Cref{T:cassels} plus this calculation, we see that 4 and 5 are the only limit points in $[0, 5.01)$ of the set of castles of cyclotomic integers.

Our contribution is to identify the exceptional set in part (4) of \Cref{T:cassels}.

\begin{theorem} \label{T:main}
The equivalence classes of cyclotomic algebraic integers $\alpha$
for which $\house{\alpha}^2 < 5.01$, but $\alpha$ does not have one of the forms (1)--(3) of \Cref{T:cassels}, are precisely those represented by the entries of \Cref{table:exceptional classes}. (These entries are all necessary; see \Cref{cor:no redundant exceptions}.)
\end{theorem}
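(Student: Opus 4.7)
The plan is to replace the phrase ``effectively computable'' in part~(4) of \Cref{T:cassels} by an explicit finite computation, carry it out, and verify that its output coincides with \Cref{table:exceptional classes}. I would organize the argument into three stages: bounding the conductor of a representative $\alpha$ of each exceptional equivalence class, enumerating candidates inside each cyclotomic field of bounded conductor, and then reducing modulo the equivalence generated by Galois conjugation and multiplication by roots of unity while filtering out members of families~(1)--(3).

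First, I would establish an explicit constant $N_0$ such that every equivalence class with $\house{\alpha}^2 < 5.01$ that does not arise from families~(1)--(3) has a representative in $\ZZ[\zeta_N]$ for some conductor $N \leq N_0$. This entails revisiting Cassels's original argument \cite{cassels} together with the refinements attributed to Jones, Calegari--Morrison--Snyder, and Robinson--Wurtz, and making every constant numerical. The basic input is the inequality $\Trace_{\QQ(\zeta_N)/\QQ}(\alpha \bar\alpha) \leq \varphi(N)\, \house{\alpha}^2$; combined with Loxton-style lower bounds on $\Trace(\alpha \bar\alpha)$ for $\alpha$ not of the listed parametric shapes, this first constrains $\varphi(N)$, and a prime-by-prime analysis of which primes can divide $N$ then constrains $N$ itself.

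Second, for each $N \leq N_0$, I would enumerate all $\alpha \in \ZZ[\zeta_N]$ whose images under the complex embeddings all lie in the closed disc of radius $\sqrt{5.01}$. This is a lattice-point enumeration problem in the Minkowski embedding of $\ZZ[\zeta_N]$; recursive trace bounds in the spirit of \cite{calegari-morrison-snyder} can prune the search considerably. For each surviving $\alpha$, a canonical choice of representative of its equivalence class (for instance, the lexicographically smallest tuple of coefficients in a fixed integral basis of $\ZZ[\zeta_N]$, taken over all Galois conjugates and all multiples by the finite group of roots of unity in $\QQ(\zeta_N)$) identifies the class. Applying \Cref{algo:cassels test} together with \eqref{eq:house for cassels families} and \eqref{eq:denom for cassels families} then removes the classes belonging to families~(1)--(3); what remains is the exceptional set, which I would match entry-by-entry against \Cref{table:exceptional classes}.

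The principal obstacle is the first stage. The value of $N_0$ produced directly from Cassels's proof is too large for the subsequent enumeration to be practical, so substantial sharpening is required: a case analysis on the prime factorization of $N$, refined Loxton-type inequalities exploiting the exclusion of families~(1)--(3), and quantitative control over the small-trace members of those families that would otherwise crowd the relevant region and inflate the search. A secondary but genuine difficulty is that a single equivalence class can admit representatives in several different cyclotomic fields, so the deduplication step must be performed relative to a canonical (minimal) conductor; this is precisely the concern addressed by the referenced \Cref{cor:no redundant exceptions}, and without it one risks both missing classes and double-listing them.
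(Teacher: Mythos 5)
Your proposal correctly identifies the overall shape of the problem (bound the level, enumerate, deduplicate modulo equivalence and the families (1)--(3)), but it defers the entire mathematical substance to the sentence ``substantial sharpening is required,'' and that is precisely where the proof lives. The level bound extractable from Cassels's argument is far too large for any enumeration, and no amount of ``refined Loxton-type inequalities'' stated at this level of generality will bring it down to a feasible range. What is actually needed --- and what the paper supplies --- is a case division on the possible values of $(\house{\alpha}^2, \calM(\alpha))$ (which Robinson--Wurtz restrict to a short discrete list, \Cref{cor:RW truncation}), followed by a three-step reduction of the minimal level $N$: excluding higher prime powers, excluding large primes, and exhausting over small primes. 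The first two steps require the calculus of Cassels heights under $p$-decomposition (\Cref{L:rep-prime-power-case}, \Cref{L:rep-prime-case}) together with two inputs your outline does not anticipate: a combinatorial constraint forcing the support of the $p$-decomposition to have no unique differences modulo $p$ when $p$ does not divide the level of $\house{\alpha}^2$ (\S\ref{sec:combinatorial}), and an ideal-theoretic argument via Artin reciprocity that pins down the fractional ideal of $\alpha$ when $\house{\alpha}^2 \in \{4,5\}$ (\S\ref{sec:class groups}). Without these, the cases $\calM(\alpha)=4,5$ cannot be closed; for instance, showing that no $\alpha \in \QQ(\zeta_{39})$ has castle $5$ (\Cref{lem:step3 case 5 remove 3}) is done by a class-field computation, not by enumeration.

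Your second stage also describes the wrong computation. Enumerating lattice points of $\ZZ[\zeta_N]$ in the Minkowski embedding for the relevant $N$ (already $N=420$ gives a rank-$96$ lattice) is not what makes the problem tractable. The feasible enumeration rests on first bounding the minimum weight $\N(\alpha)$ via explicit Loxton-type inequalities ($\calM(\alpha) \geq f(\N(\alpha))$, \Cref{L:short sums}) and on the fact that $\alpha$ is a sum of $\N(\alpha)$ roots of unity already inside $\QQ(\zeta_N)$ for $N$ the minimal level (\Cref{lem:sum in minimal level}); one then exhausts over tuples of at most $n \leq 7$ exponents with heavy symmetry normalizations (\Cref{lem:exhaust}). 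Reducing to bounded $\N(\alpha)$ is not a convenience but a prerequisite, and your outline never makes that reduction. As written, the proposal is a plan for a computation that cannot be carried out rather than a proof.
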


The restricted version of \Cref{T:main} with $\house{\alpha}^2 < 3$
was established by Cassels (see \Cref{lem:cassels low height}).
By contrast, the restricted version with $\house{\alpha}^2 < 4$ is itself a novel result
that suffices to resolves \cite[Conjecture~1]{robinson}; see \Cref{cor:robinson1} for details. 

\begin{table}[ht]
\caption{Representatives of the equivalence classes in part (4) of \Cref{T:cassels}, according to \Cref{T:main}. Here $\calM(\alpha)$ denotes the Cassels height, ``Level'' the minimal level, and $\zeta$ a root of unity of order equal to the minimum level; see \S\ref{subsec:metrics}.}
\label{table:exceptional classes}
\[
\begin{array}{c|c|c|c|c}
\house{\alpha}^2 & & \calM(\alpha) & \text{Level} & \alpha \\
\hline
\multirow{8}{*}{$1 + 4 \cos^2 \tfrac{\pi}{1}$} & \multirow{8}{*}{$= 5$}& \multirow{8}{*}{5} & 11 & 1 + \zeta + \zeta^2 + \zeta^4 - \zeta^5 + \zeta^{7} \\
& & & 19 & 1+\zeta + \zeta^4 + \zeta^7 + \zeta^8 + \zeta^9 + \zeta^{10} + \zeta^{12} + \zeta^{14} \\
& & & 20 & 1+ \zeta + \zeta^3 - \zeta^4 \\
& & & 24 & 1+ \zeta + \zeta^5 - \zeta^6 \\
& & & 31 & 1+\zeta+\zeta^3+\zeta^8+\zeta^{12}+\zeta^{18} \\
& & & 51 & 1+\zeta^3 - \zeta^{10} + \zeta^{15} + \zeta^{21} + \zeta^{24} + \zeta^{30} + \zeta^{33} + \zeta^{39} \\
& & & 84 & 1 - \zeta^4 - \zeta^{13} - \zeta^{16} + \zeta^{19} + \zeta^{21} + \zeta^{22} + \zeta^{31}
   \\
& & & 91 & 1 + \zeta^7 + \zeta^{14} - \zeta^{17} + \zeta^{21} + \zeta^{42} - \zeta^{69} + \zeta^{70} - \zeta^{82} \\
\hline
1 + 4 \cos^2 \tfrac{\pi}{33} & \approx 4.964 & 3 \tfrac{1}{10} & 33 & 1 + \zeta^6 - \zeta^8 + \zeta^{21} \\
1 + 4 \cos^2 \tfrac{\pi}{28} & \approx 4.950 & 3 & 28 & 1 + \zeta - \zeta^3 - \zeta^{11} \\
1 + 4 \cos^2 \tfrac{\pi}{22} & \approx 4.918 & 3 \tfrac{1}{5} & 33 & 1 + \zeta^6 - \zeta^7 - \zeta^{10} \\
1 + 4 \cos^2 \tfrac{\pi}{21} & \approx 4.911 & 3 \tfrac{1}{6} & 21 & 1 - \zeta - \zeta^5 + \zeta^{18} \\
1 + 4 \cos^2 \tfrac{\pi}{14} & \approx 4.802 & 3 \tfrac{1}{3} & 28 & 1 + \zeta - \zeta^3 + \zeta^4 \\
\tfrac{5 + \sqrt{21}}{2} & \approx 4.791 & 2 \tfrac{1}{2} & 21 & 1 - \zeta + \zeta^{6} + \zeta^{18} \\
1 + 4 \cos^2 \tfrac{\pi}{12} & \approx 4.732 & 3 & 60 & 1 - \zeta^3 - \zeta^6 - \zeta^8 \\
1 + 4 \cos^2 \tfrac{\pi}{11} & \approx 4.682 & 2 \tfrac{4}{5} & 11 & 1+\zeta+\zeta^2 + \zeta^5 \\
\multirow{3}{*}{$1 + 4 \cos^2 \tfrac{\pi}{10}$} & \multirow{3}{*}{$\approx 4.618$} & \multirow{3}{*}{$3 \tfrac{1}{2}$} & 35 & 1-\zeta^6 + \zeta^7 + \zeta^{10} +\zeta^{15} + \zeta^{17} + \zeta^{22} \\
 &  &  & 40 & 1 - \zeta^3 + \zeta^{7} + \zeta^{10} \\
 & & & 60 & 1 - \zeta^3 - \zeta^5 - \zeta^8 \\
1 + 4 \cos^2 \tfrac{\pi}{8} & \approx 4.412 & 3 & 24 & 1 + \zeta + \zeta^7 \\
\tfrac{5+\sqrt{13}}{2} & \approx 4.302 & 2 \tfrac{1}{2} & 13 & 1 + \zeta + \zeta^4 \\
1 + 4 \cos^2 \tfrac{\pi}{7} & \approx 4.247 & 2 \tfrac{2}{3} & 21 & 1 - \zeta - \zeta^4 + \zeta^{12} \\
\hline
\multirow{6}{*}{$1 + 4 \cos^2 \tfrac{\pi}{6}$} & \multirow{6}{*}{$= 4$}& \multirow{6}{*}{4} & 7 & (1 + \zeta_7 + \zeta_7^3)^2 = \zeta + \zeta^3 + \zeta^4 - \zeta^5 \\
& & & 28 & (1 + i)(1 + \zeta_7 + \zeta_7^3) \\
& & & 39 & 1- \zeta^2 - \zeta^5 - \zeta^8 - \zeta^{11} - \zeta^{20} -\zeta^{32}\\
& & & 55 & 1 - \zeta - \zeta^{16} + \zeta^{22} - \zeta^{26} - \zeta^{31} - \zeta^{36} \\
& & & 60 & (1 + i)(1 - \zeta_{15} + \zeta_{15}^{12}) \\
& & & 105 & (1 + \zeta_7 + \zeta_7^3)(1 - \zeta_{15} + \zeta_{15}^{12}) \\
\hline
 4 \cos^2 \tfrac{\pi}{14} & \approx 3.802 & 2 \tfrac{1}{3} & 21 & 1 - \zeta - \zeta^{13}  \\
\hline
\multirow{3}{*}{$1 + 4 \cos^2 \tfrac{\pi}{4}$} & \multirow{3}{*}{$= 3$} & \multirow{3}{*}{3} & 11 & 1 + \zeta + \zeta^2 + \zeta^4 + \zeta^7 \\
& & & 13 & 1 + \zeta + \zeta^3 + \zeta^9 \\
& & & 35 & 1 - \zeta + \zeta^{7} - \zeta^{11} -\zeta^{16} \\
\hline
1 + 4 \cos^2 \tfrac{\pi}{3} & = 2& 2 & 7 & 1 + \zeta + \zeta^3 \\
\end{array}
\]
\end{table}

\subsection{Additional metrics}
\label{subsec:metrics}

In preparation for a survey of prior intermediate results and a description of the proof of \Cref{T:main}, we introduce two more key measures of the ``complexity'' of a cyclotomic integer $\alpha$. Like the house, these are invariant under equivalence.
\begin{itemize}
\item
Let $\N(\alpha)$ be the smallest nonnegative integer $n$ such that $\alpha$ can be written as a sum of $n$ roots of unity (not necessarily distinct). This function does not seem to have a standard name; by analogy with the term \emph{weight} in coding theory, we refer to $\N(\alpha)$ as the \emph{minimum weight} of $\alpha$.
\item
Let $\calM(\alpha)$ be the average of $|\beta|^2$  as $\beta$ varies over conjugates of $\alpha$ in $\CC$; since $\alpha$ belongs to a CM field, this average can be interpreted as the reduced trace of $\house{\alpha}^2$, and hence is a rational number. The quantity $\calM(\alpha)$ was introduced by Cassels \cite{cassels} and is accordingly often called the \emph{Cassels height} of $\alpha$.
(If we interpret $\house{\alpha}$ as the $L^\infty$ norm with respect to the product of the complex embeddings,
then $\calM(\alpha)^{1/2}$ is the $L^2$ norm with respect to the same product.)
\end{itemize}
Comparing the definitions and applying the triangle inequality, we see at once that
\[
\N(\alpha) \geq \house{\alpha} \geq \calM(\alpha)^{1/2}.
\]
In the opposite direction, Loxton \cite{loxton} established the following lower bound on $\calM(\alpha)$ in terms of $\N(\alpha)$.
We will use some explicit versions of this result for small $n$;
see \Cref{L:short sums}.
\begin{theorem}[Loxton] \label{T:loxton}
For any $k > \log 2$, there exists an effectively computable (in terms of $k$) constant $c$ such that for all nonzero cyclotomic integers $\alpha$,
\[
\calM(\alpha) \geq c n \exp (-k \log n/ \log \log n), \qquad n := \N(\alpha).
\]
On the other hand, no such constant exists for $k = \log 2$.
\end{theorem}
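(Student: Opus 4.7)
\emph{Proof plan.} The argument breaks into a lower bound and a matching sharpness construction.

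\textbf{Lower bound for $k > \log 2$.} Write $\alpha = \sum_{i=1}^n \zeta^{a_i}$ with $\zeta = \zeta_N$ a primitive $N$-th root of unity and $n = \N(\alpha)$. Expanding $\alpha \bar\alpha = \sum_{i,j} \zeta^{a_i - a_j}$ and taking the reduced trace in $\QQ(\zeta_N)/\QQ$ gives
\[
\calM(\alpha) = n + \frac{1}{\phi(N)} \sum_{i \neq j} c_N(a_i - a_j),
\]
where $c_N(a) = \mu(N/d)\,\phi(N)/\phi(N/d)$ is the Ramanujan sum with $d = \gcd(a, N)$. The key observation is that $c_N(a)/\phi(N)$ is bounded in absolute value by $1/\phi(N/d)$ and vanishes unless $N/d$ is squarefree, so a small $\calM(\alpha)$ forces many differences $a_i - a_j$ to share a very special common factor with $N$. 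I would then proceed by induction on the number of distinct prime divisors of the conductor of $\alpha$: for a chosen prime $p \mid N$, expand $\alpha$ in the $\ZZ[\zeta_{N/p}]$-basis $\{1, \zeta_p, \dots, \zeta_p^{p-2}\}$ of $\ZZ[\zeta_N]$ to obtain components $\alpha_0, \dots, \alpha_{p-2}$ of weight no larger than $\N(\alpha)$, and relate $\calM(\alpha)$ to the sum of the $\calM(\alpha_k)$ (with a $\tfrac{1}{p-1}$ factor arising from averaging over $\Gal(\QQ(\zeta_p)/\QQ)$). Applying the inductive hypothesis and optimizing the range of primes $p$ used (up to roughly $\log n/\log\log n$) produces the tail $\exp(-k \log n/\log\log n)$; any $k > \log 2$ leaves enough slack in each descent step to close the induction.

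\textbf{Sharpness at $k = \log 2$.} The critical exponent is witnessed by an explicit multiplicative family. For each prime $p \leq P$ choose a cyclotomic integer $\beta_p \in \ZZ[\zeta_p]$ whose Cassels height is as close as possible to $\tfrac{1}{2}\N(\beta_p)$, and set $\alpha_P = \prod_{p \leq P} \beta_p$. Because the fields $\QQ(\zeta_p)$ for distinct primes $p$ are linearly disjoint over $\QQ$, the Galois group of $\QQ(\zeta_{\prod p})/\QQ$ splits as a direct product and the Cassels height is exactly multiplicative: $\calM(\alpha_P) = \prod_{p \leq P} \calM(\beta_p)$. Meanwhile $\N(\alpha_P)$ grows like the product of the $\N(\beta_p)$ under expansion via the Chinese Remainder Theorem identification $\zeta_{\prod p} \leftrightarrow \prod \zeta_p$. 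Calibrating the $\beta_p$ so that each factor contributes a ratio $\calM/\N$ approaching $1/2$, and using the prime number theorem to convert $\pi(P) \sim P/\log P$ into $\pi(P) \sim \log n / \log\log n$, yields decay $\calM(\alpha_P)/\N(\alpha_P) \sim \exp(-\log 2 \cdot \log n/\log\log n)$, ruling out any $k \leq \log 2$.

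\textbf{Main obstacle.} The hard part is the inductive heart of the lower bound: precisely tracking the cross-term contributions to $\calM$ as one peels off a single cyclotomic level at a time, and accumulating the multiplicative losses $1/(p-1)$ over an optimally chosen prime range to land exactly at (but not below) the threshold $\log 2$. Both halves of the theorem are controlled by this one extremal rate, which is what makes the statement sharp; the effective constant $c = c(k)$ emerges from how much slack one allows in the per-step losses as $k$ exceeds $\log 2$.
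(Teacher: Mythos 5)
The first thing to note is that the paper does not reprove this theorem: its ``proof'' consists of citing Loxton's Theorems 1 and 2, plus the observation that Loxton's argument in \S 6 of that paper already yields the stronger inequality with $\calM(\alpha)$ in place of $\house{\alpha}^2$. Your outline does capture the overall shape of Loxton's argument --- a prime-by-prime decomposition for the lower bound, and a product of elements of pairwise coprime conductors for sharpness --- but the step you defer (``the inductive heart'') is essentially the entire content of the theorem, and both halves of the sketch contain concrete errors that would derail a write-up.

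For the lower bound, the threshold $\log 2$ does not come from ``accumulating the multiplicative losses $1/(p-1)$'': if each prime of the conductor cost a factor $1/(p-1)$ in the ratio $\calM/\N$, the cumulative loss over primes up to $P$ would be of size about $1/n$ and the resulting bound would be trivial. The correct mechanism, visible in \Cref{L:rep-prime-case}(b), is that the worst-case loss per prime is a factor of $2$, attained when the $p$-decomposition occupies roughly $p/2$ residue classes; since such a prime necessarily multiplies the weight by about $p/2$, only $(1+o(1))\log n/\log\log n$ primes can contribute, giving $\exp(-(\log 2+o(1))\log n/\log\log n)$. For sharpness, with $\alpha_P=\prod_{p\le P}\beta_p$ and $\beta_p$ a sum of $(p-1)/2$ distinct $p$-th roots of unity, the multiplicativity $\calM(\alpha_P)=\prod_p \calM(\beta_p)$ is fine, but you need a \emph{lower} bound on $n=\N(\alpha_P)$, whereas expanding the product via CRT only gives the upper bound $\prod_p\N(\beta_p)$: since $x\mapsto x\exp(-\log 2\,\log x/\log\log x)$ is increasing, overestimating $n$ only weakens the violation you are trying to exhibit. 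This is genuinely delicate --- even a bound like $\N\ge\house{\cdot}$, which loses a fixed constant factor per prime, produces an error of size $e^{\Theta(P/\log P)}$ that swamps the gain of $2^{-\Theta(P/(\log P)^2)}$ between $\pi(P)$ and $\log n/\log\log n$. What rescues the construction is that $\N$ is \emph{exactly} multiplicative here: because each $\beta_p$ uses at most $\tfrac{p-1}{2}$ residues mod $p$, the argument at the end of \Cref{rem:determine minimal length} gives $\N(\beta_p\gamma)=\tfrac{p-1}{2}\N(\gamma)$ for $\gamma$ of conductor prime to $p$, whence $\N(\alpha_P)=\prod_p\tfrac{p-1}{2}$. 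With these two points repaired, your plan is a faithful outline of Loxton's proof.
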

\begin{proof}
For the first assertion, the corresponding inequality  with $\calM(\alpha)$ replaced with $\house{\alpha}^2$
is \cite[Theorem~1]{loxton}; however, the proof given in \cite[\S 6]{loxton} proves the stronger inequality we have asserted. (The effective computability of the constant is not remarked upon in \cite{loxton} but can be inferred from the proof; it is asserted explicitly in the proof of \cite[Theorem~5]{loxton2}.)
For the second assertion, see \cite[Theorem~2]{loxton}.
\end{proof}

The main reason to consider $\N(\alpha)$ and $\calM(\alpha)$ in addition to $\house{\alpha}$ is that they behave much more predictably with respect to decomposing an element of $\QQ(\zeta_N)$ as a sum of powers of $\zeta_N$ times elements of $\QQ(\zeta_{N/p})$ for some prime factor $p$ of $N$ (see \Cref{L:rep-prime-power-case} and \Cref{L:rep-prime-case}). We will return to this point in \S\ref{subsec:context}.

\subsection{Prior intermediate results}

We next recall some prior results in the direction of \Cref{T:main}, all of which will play roles in our proof of same.
We start with the aforementioned lemma of Cassels \cite[Lemma~6]{cassels} 
that handles the first few cases of \Cref{T:main}.
\begin{lemma}[Cassels] \label{lem:cassels low height}
Let $\alpha$ be a nonzero cyclotomic integer with $\house{\alpha}^2 < 3$.
Then $\alpha$ is either a root of unity or equivalent to one of
$1+i, 1+\zeta_5, \tfrac{-1+\sqrt{-7}}{2}, \tfrac{\sqrt{5}+\sqrt{-3}}{2}$.
\end{lemma}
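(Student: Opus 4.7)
The plan is to bound the minimum weight $\N(\alpha)$ using the Cassels height, then enumerate. Since $\calM(\alpha) \leq \house{\alpha}^2 < 3$, applying the small-weight input to Loxton's \Cref{T:loxton} (promised in the paper as \Cref{L:short sums}) yields $\N(\alpha) \leq 3$: every nonzero cyclotomic integer with Cassels height less than $3$ is a sum of at most three roots of unity.

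The subcases $\N(\alpha) \in \{1,2\}$ are direct. If $\N(\alpha) = 1$ then $\alpha$ is a root of unity. If $\N(\alpha) = 2$, then after replacing $\alpha$ by an equivalent representative we may take $\alpha = 1 + \zeta$ with $\zeta$ a primitive $N$-th root of unity, giving
\[
\house{\alpha}^2 = \max_{\gcd(j,N)=1} |1 + \zeta_N^j|^2 = 2 + 2\cos(2\pi/N) \qquad (N \geq 3).
\]
The inequality $2 + 2\cos(2\pi/N) < 3$ forces $N \in \{3,4,5\}$, producing respectively a root of unity ($1+\zeta_3 = -\zeta_3^{-1}$), $1+i$, and $1+\zeta_5$; the case $N=6$ is excluded by strict inequality, and $N \geq 7$ by $2+2\cos(2\pi/N) > 3$.

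The crux is $\N(\alpha) = 3$: write $\alpha = 1 + \zeta + \eta$ with $\zeta,\eta$ roots of unity, and set $N = \lcm(\mathrm{ord}(\zeta), \mathrm{ord}(\eta))$. The conjugates of $\alpha$ are $1 + \zeta^j + \eta^j$ for $\gcd(j,N)=1$, so the house condition reads
\[
\mathrm{Re}(\zeta^j) + \mathrm{Re}(\eta^j) + \mathrm{Re}(\zeta^j \eta^{-j}) < 0 \qquad \text{for every } j \text{ coprime to } N.
\]
To bound $N$ I would descend on its prime factorization in the spirit of the paper's \Cref{L:rep-prime-case}: for each prime $p$ with $p \| N$, writing $\alpha = \sum_{i=0}^{p-1} \beta_i \zeta_p^i$ with $\beta_i \in \QQ(\zeta_{N/p})$ and $\sum \beta_i = 0$, and invoking orthogonality of the characters of $\Gal(\QQ(\zeta_N)/\QQ(\zeta_{N/p}))$, yields
\[
\calM(\alpha) = \tfrac{p}{p-1} \sum_{i=0}^{p-1} \calM(\beta_i).
\]
Since each nonzero $\beta_i$ contributes $\calM(\beta_i) \geq 1$, the bound $\calM(\alpha) < 3$ severely restricts the set of nonzero $\beta_i$ for each admissible prime $p$, and in turn the primes and exponents in $N$. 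The remaining finite list of candidates $(N, \zeta, \eta)$ is then checked directly; this produces exactly the two new exceptional classes $\zeta_7 + \zeta_7^2 + \zeta_7^4 = \tfrac{-1+\sqrt{-7}}{2}$ (a Gaussian period, $\house{\alpha}^2 = 2$) and $\zeta_5 + \zeta_5^4 - \zeta_3^2 = \tfrac{\sqrt 5 + \sqrt{-3}}{2}$ (also $\house{\alpha}^2 = 2$).

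The main obstacle is this last enumeration in the $\N(\alpha) = 3$ step. Even after the descent bounds $N$, one must verify the strict house inequality on \emph{every} Galois conjugate, not just one, and carefully prune overlaps with the $\N(\alpha) \leq 2$ subcases (for example, $1 + \zeta_3 + \eta = -\zeta_3^{-1} + \eta$ collapses to a two-term sum, already classified). The boundary case $N = 6$, where $|1+\zeta_6|^2 = 3$ exactly, likewise requires vigilance to rule out borderline representations.
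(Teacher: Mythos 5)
The paper does not prove this lemma; it simply cites \cite[Lemma~6]{cassels}, so your argument has to stand on its own. It does not: the opening reduction is false. You claim that $\calM(\alpha) \leq \house{\alpha}^2 < 3$ forces $\N(\alpha) \leq 3$, but \Cref{L:short sums}(d) only gives $\N(\alpha) \geq 4 \Rightarrow \calM(\alpha) \geq 2\tfrac{1}{2}$, and this is sharp: $(1+\zeta_5)(1+\zeta_7)$ and $\zeta_3 - \zeta_7 - \zeta_7^2 - \zeta_7^3$ have $\calM = 2\tfrac{1}{2} < 3$ and $\N = 4$. (They are not counterexamples to the lemma, since their castles are at least $3$, but they destroy your reduction.) \Cref{T:loxton} is asymptotic with an unspecified constant and gives nothing usable for $n=4,5$, and the paper proves no lower bound of the form $\N(\alpha) \geq 5 \Rightarrow \calM(\alpha) \geq 3$ (Table~\ref{table:upper bounds} only records \emph{upper} bounds for $f(n)$, $n \geq 5$). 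Worse, the paper's proof of \Cref{L:short sums}(d) itself invokes \Cref{lem:cassels low height}, so leaning on it here risks circularity. A secondary flaw: in your descent you normalize $\sum_i \beta_i = 0$ and then assert $\calM(\beta_i) \geq 1$ for nonzero $\beta_i$; but that normalization generally makes the $\beta_i$ non-integral (e.g.\ $\alpha = 1+\zeta_p+\zeta_p^2$ gives $\beta_i \in \tfrac{1}{p}\ZZ$), so Kronecker's bound does not apply. The correct normalization, as in \Cref{L:rep-prime-case}, is to make the most frequent value of the $\beta_i$ equal to $0$, which preserves integrality but changes the inequality you get.

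The standard (and much shorter) route, which is essentially Cassels's: since $\Gal(\QQ(\zeta_N)/\QQ)$ is abelian, every conjugate of $\beta := \alpha\overline{\alpha}$ has the form $\sigma(\alpha)\overline{\sigma(\alpha)} \in (0, 3)$, so $\beta$ is a totally positive algebraic integer with $\house{\beta} < 3 < 4$. By Kronecker's second theorem, $\beta - 2$ has all conjugates in $(-2,1) \subset [-2,2]$ and hence equals $\zeta + \zeta^{-1}$ for a root of unity $\zeta$; the condition that every conjugate of $\zeta + \zeta^{-1}$ be less than $1$ leaves only finitely many orders for $\zeta$, and one then classifies the $\alpha$ with $\alpha\overline{\alpha} = 2 + \zeta + \zeta^{-1}$ for each. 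If you want to keep a weight-based argument instead, you must bound $\N(\alpha)$ using the \emph{house} (not the Cassels height), which is essentially the content of the lemma itself and cannot be borrowed from \Cref{L:short sums}.
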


We continue with a result of Jones \cite{jones} that resolves \cite[Conjecture~5]{robinson}. 
\begin{theorem}[Jones] \label{T:jones}
Let $\alpha$ be a cyclotomic integer with $\N(\alpha) \leq 3$ and $\house{\alpha}^2 \leq 5$.
Then $\alpha$ is equivalent to an element of
\begin{gather*}
\{1+\zeta_N\colon N=1,2,\dots\} \bigcup \{1+\zeta_N-\zeta_N^{-1}\colon N =1,2,\dots\} \\
\bigcup \{1+\zeta_7+\zeta_7^3, 1+\zeta_{13}+\zeta_{13}^4, 1+\zeta_{24}+\zeta_{24}^7,
1+\zeta_{30}+\zeta_{30}^{12}, 1+\zeta_{42}+\zeta_{42}^{13}\}.
\end{gather*}
\end{theorem}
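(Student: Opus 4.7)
The cases $\N(\alpha) \le 2$ are trivial: after dividing by a root of unity, such $\alpha$ equals $0$, $1$, or $1 + \zeta$ for some root of unity $\zeta$, and hence is equivalent to $1 + \zeta_N$ for $N$ the order of $\zeta$. So assume henceforth $\N(\alpha) = 3$, and after dividing by a root of unity write $\alpha = 1 + \zeta + \xi$ for roots of unity $\zeta, \xi$. Let $N = \lcm(\mathrm{ord}\,\zeta, \mathrm{ord}\,\xi)$ and choose $a, b$ with $\zeta = \zeta_N^a$, $\xi = \zeta_N^b$. The Galois conjugates of $\alpha$ are exactly $\alpha_c = 1 + \zeta_N^{ac} + \zeta_N^{bc}$ for $c \in (\ZZ/N\ZZ)^\times$, so $\house{\alpha}^2 \le 5$ is equivalent to the family of inequalities
\[
3 + 2\cos\tfrac{2\pi ac}{N} + 2\cos\tfrac{2\pi bc}{N} + 2\cos\tfrac{2\pi (a-b)c}{N} \;\le\; 5 \qquad \bigl(c \in (\ZZ/N\ZZ)^\times\bigr).
\]
Geometrically, the $(\ZZ/N\ZZ)^\times$-orbit of $(a/N, b/N) \in (\RR/\ZZ)^2$ must avoid an explicit open neighborhood of the origin.

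My plan is (i) to bound $N$ by a Diophantine approximation argument, (ii) to isolate the algebraic degeneracies under which that argument fails, extracting the two infinite families in the process, and (iii) to enumerate the remaining finitely many cases, yielding the five sporadic exceptions. For (i), a Minkowski/pigeonhole argument on the lattice of residues $(ac \bmod N, bc \bmod N)$ produces, whenever $N$ exceeds an explicit threshold, a unit $c$ with $\|ac/N\| < 1/6$ and $\|bc/N\| < 1/6$; then each cosine in the displayed inequality exceeds $\tfrac12$, violating the bound. The requirement that $c$ be a \emph{unit} in $\ZZ/N\ZZ$ rather than an arbitrary residue is what gives this step its teeth and its difficulty.

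The Diophantine step fails only when the orbit is pinned away from the origin by a forced algebraic relation, and I would argue there are exactly three such patterns. First, if one of $a, b, a-b$ vanishes modulo $N$ then $\alpha$ degenerates: either $\N(\alpha) \le 2$, or $\alpha = 2 + \zeta$ whose castle exceeds $5$ as soon as $\mathrm{ord}(\zeta) > 2$. Second, the real configuration $\xi = \zeta^{-1}$ gives $\alpha = 1 + 2\cos(2\pi/\mathrm{ord}\,\zeta)$; bounding the largest conjugate by $\sqrt 5$ forces $\mathrm{ord}\,\zeta \le 6$, and each such $\alpha$ is already covered either by \Cref{lem:cassels low height} or by the $1 + \zeta_N$ family. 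Third, the configuration $\xi = -\zeta^{-1}$ gives $\alpha = 1 + \zeta - \zeta^{-1}$, whose castle $3 - 2\cos(4\pi/N')$ is automatically at most $5$; this is exactly the second infinite family. Any surviving $(a, b)$ has $N$ in the bounded range from (i), and a direct, computer-assisted enumeration returns precisely the five sporadic triples listed in the theorem.

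The main obstacle is Step (i): extracting an $N$-bound tight enough to make Step (iii) practical. A crude pigeonhole bounds $N$ only polynomially in the reciprocal of the approximation threshold, which is hopelessly large for enumeration. To tighten the bound I would descend one prime at a time: for a prime $p \mid N$ with $N = pM$, any element of $\ZZ[\zeta_N]$ admits a unique representation $\sum_{j=0}^{p-1} \zeta_N^j \beta_j$ with $\beta_j \in \ZZ[\zeta_M]$, and the constraint that $\alpha$ is a sum of just three roots of unity forces each nonzero $\beta_j$ to be a very short sum of roots of unity itself. Matching these short sums against the three summands of $\alpha$ then supplies strong divisibility constraints on $N$, typically forcing each prime factor to occur with low multiplicity and excluding all but the smallest primes; this brings $N$ into a range of at most a few hundred, after which exhaustive enumeration over $(a, b) \pmod N$ modulo the natural symmetries becomes feasible.
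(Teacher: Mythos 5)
A preliminary remark: the paper does not prove \Cref{T:jones} at all --- it is quoted verbatim from Jones's 1968 paper --- so your proposal can only be judged on its own merits, not against an internal argument. The overall architecture (bound $N$ by simultaneous approximation over units, classify the algebraic degeneracies, enumerate the rest) is the right general shape, and the closing idea of descending one prime at a time via $p$-decompositions is exactly the ``calculus of Cassels heights'' that drives the rest of this paper. But the load-bearing step is missing. Your claim that, outside three explicitly named patterns, every large $N$ admits a unit $c$ with $\|ac/N\|,\|bc/N\|$ small is asserted, not proved, and the assertion that ``there are exactly three such patterns'' \emph{is} the theorem. The orbit of $(a/N,b/N)$ under $(\ZZ/N\ZZ)^\times$ can be pinned away from the origin by small-order roots of unity dividing $\zeta$, $\xi$, or $\zeta/\xi$ in ways your three patterns do not cover: for instance $\alpha = 1+\zeta_m+\zeta_m^2\zeta_3$ (for $m$ large, coprime to $3$) has no conjugate with both non-constant summands near $1$, yet is not of the form $b\equiv 0,\pm a$; one must instead \emph{estimate} that such configurations force $\house{\alpha}^2>5$ (here a conjugate near $2+e^{-i\pi/3}$ gives castle near $7$), and this direct estimation over all ways small primes can enter the orders is precisely the lengthy case analysis in Jones and Cassels. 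Your plan supplies no mechanism for it, so the reduction to a finite computation is not actually achieved.

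There are also quantitative and local errors. With $\|ac/N\|,\|bc/N\|<1/6$ one only gets $\|(a-b)c/N\|<1/3$, so the third cosine can be as small as $-\tfrac12$ and your lower bound on $|\alpha_c|^2$ is $4$, not something exceeding $5$; you need a threshold of $1/12$ (or a sharper two-variable estimate). In the degenerate cases: $a\equiv b$ gives $1+2\zeta$, not $2+\zeta$; and $2+\zeta$ does \emph{not} have castle exceeding $5$ for all $\mathrm{ord}(\zeta)>2$ --- for $\mathrm{ord}(\zeta)=4$ one gets $2+i$ with castle exactly $5$ and $\N(2+i)=3$, which must be recognized as equivalent to $1+\zeta_4-\zeta_4^{-1}$ and absorbed into the second family rather than discarded. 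These are fixable, but together with the main gap they mean the proposal is an outline of a strategy rather than a proof.
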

As a result of \Cref{T:jones} (and part (b) of \Cref{T:robinson-wurtz}, see below),
throughout the proof of \Cref{T:main} we will be free to assume that $\N(\alpha) \geq 4$.

We next state a restricted form of a result of Calegari--Morrison--Snyder \cite[Theorem~1.0.5]{calegari-morrison-snyder}. The full statement allows $\house{\alpha} < \tfrac{76}{33}$ with a few more exceptional cases.
\begin{theorem}[Calegari--Morrison--Snyder] \label{P:totally real case}
Let $\alpha$ be a totally real cyclotomic integer with $\house{\alpha}^2 < 5.01$.
Then $\alpha$ is equivalent either to $2 \cos \frac{\pi}{N}$ for some positive integer $N$ or to one of $\frac{\sqrt{7}+\sqrt{3}}{2}$ or $\sqrt{5}$.
\end{theorem}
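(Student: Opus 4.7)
The plan is to combine the classical classification of totally real algebraic integers with bounded conjugates with the rigidity imposed by the cyclotomic hypothesis.

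For the regime $\house{\alpha}^2 < 4$, all Galois conjugates of $\alpha$ lie in the open interval $(-2, 2)$. A well-known corollary of Kronecker's theorem on algebraic integers of absolute value one states that any such totally real algebraic integer has the form $2\cos(\pi r)$ for some $r \in \QQ$ (write $\beta = (\alpha + \sqrt{\alpha^2 - 4})/2$ and observe that $\beta$ has all complex conjugates of absolute value $1$, so is a root of unity). Taking $\zeta = e^{i\pi r}$ of order $2N$ and $\alpha = \zeta + \zeta^{-1}$, the conjugates of $\alpha$ are $\{2\cos(k\pi/N) : \gcd(k, 2N) = 1\}$; after absorbing Galois action and multiplication by $-1 = \zeta_2$ into the equivalence relation, one obtains the unique representative $2\cos(\pi/N)$ for each $N$. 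This settles the bulk of the theorem.

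For the narrow remaining range $4 \le \house{\alpha}^2 < 5.01$, I would take an arithmetic approach (parallel to, but more hands-on than, the Perron--Frobenius graph-spectra classification due to J.~H.~Smith, Cvetkovi\'c, and McKee--Smyth). Since $\alpha$ is totally real cyclotomic, it lies in $\QQ(\zeta_N)^+$ for some minimal $N$. From $\calM(\alpha) \le \house{\alpha}^2 < 5.01$ combined with Loxton's inequality (\Cref{T:loxton}), the minimum weight $\N(\alpha)$ is bounded above by an explicit constant. The reduction lemmas (\Cref{L:rep-prime-power-case} and \Cref{L:rep-prime-case}), applied prime-by-prime to $N$, then control the conductor: each prime factor of $N$ contributes in a structured way to a representation of $\alpha$ as a sum of roots of unity, so large primes or high prime powers in $N$ force the weight to grow past the Loxton bound. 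Combining these two bounds yields an effective upper bound on $N$, reducing the classification to a finite enumeration.

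The main obstacle will be tightening this finite enumeration enough to isolate precisely the two sporadic exceptions $\sqrt 5 \in \QQ(\zeta_5)^+$ (minimal polynomial $x^2 - 5$) and $\tfrac{\sqrt 7 + \sqrt 3}{2} \in \QQ(\zeta_{84})^+$ (minimal polynomial $x^4 - 5x^2 + 1$, splitting field $\QQ(\sqrt 3, \sqrt 7)$) from the infinite family $2\cos(\pi/N)$. Once $N$ and $\N(\alpha)$ are bounded, one verifies by direct calculation on the resulting finite table of candidates---either by hand, leveraging the totally real constraint to restrict to elements of the form $\sum c_k(\zeta_N^k + \zeta_N^{-k})$ with small integer coefficients $c_k$, or by a short computer search---that every totally real cyclotomic integer in the allowed window not equivalent to some $2\cos(\pi/N)$ is equivalent to one of these two exceptions.
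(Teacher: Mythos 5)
This theorem is not proved in the paper at all: it is imported verbatim (in restricted form) from Calegari--Morrison--Snyder \cite[Theorem~1.0.5]{calegari-morrison-snyder}, so there is no internal proof to compare against. Judged on its own terms, your proposal splits into two very unequal halves. The first half, covering $\house{\alpha}^2 < 4$, is correct and complete: the Kronecker-type argument (pass to a root $\beta$ of $x^2 - \alpha x + 1$, check every conjugate of $\beta$ has absolute value $1$, conclude $\beta$ is a root of unity) is standard and does not even need the cyclotomic hypothesis, and your bookkeeping to land on the representative $2\cos(\pi/N)$ up to sign and conjugation works out. That regime, however, is the easy part of the theorem.

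The genuine gap is in the window $4 \leq \house{\alpha}^2 < 5.01$, which is the entire content of the Calegari--Morrison--Snyder result. Your plan --- bound $\N(\alpha)$ via \Cref{T:loxton}, bound the conductor $N$ via \Cref{L:rep-prime-power-case} and \Cref{L:rep-prime-case}, then enumerate --- is the correct general strategy, but each step conceals the actual work. First, \Cref{T:loxton} with $\calM(\alpha) < 5.01$ only bounds $\N(\alpha)$ once one has an \emph{explicit} admissible constant $c$, and the known lower bounds grow so slowly (compare \Cref{table:upper bounds}: $f(11) \leq 5\tfrac18$) that $\N(\alpha)$ can a priori be as large as $10$ or so; producing explicit Loxton-type inequalities at that range (the analogues of \Cref{L:short sums} for $n = 5,6,\dots$) is one of the hard technical achievements of \cite{calegari-morrison-snyder}, not a quotable black box. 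Second, the assertion that large primes or high prime powers in $N$ ``force the weight to grow past the Loxton bound'' is exactly the delicate case analysis that occupies \cite{cassels}, \cite{calegari-morrison-snyder}, and Sections 6--8 of the present paper; the naive effective bound on $N$ one gets this way is astronomically far from supporting a ``short computer search'' (the introduction here makes precisely this point). So the second half of your proposal is a roadmap to the CMS proof rather than a proof, and the theorem should in any case simply be cited rather than reproved.
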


We finally state a result of Robinson--Wurtz \cite[Theorem~1.2]{robinson-wurtz}, which resolves \cite[Conjecture~4]{robinson}. The case $\house{\alpha}^2 \leq 4$ was treated by Robinson; see \cite[Lemma~4]{cassels}.

\begin{theorem}[Robinson--Wurtz] \label{T:robinson-wurtz}
Let $\alpha$ be a cyclotomic integer.
\begin{enumerate}
\item[(a)]
If $\house{\alpha}^2 \leq 5$, then
\begin{gather*}
\house{\alpha}^2 \in \left\{ 4 \cos^2 \tfrac{\pi}{N}\colon N = 1,2, \dots \right\} \bigcup \left\{ 1 + 4 \cos^2 \tfrac{\pi}{N}\colon N = 1,2,\dots \right\} 
\bigcup \left\{ \tfrac{5 + \sqrt{13}}{2}, \tfrac{5 + \sqrt{21}}{2} \right\}.
\end{gather*}
\item[(b)]
If $\house{\alpha}^2 \in (5, 5.04]$, then
$\house{\alpha}^2 = |1 + \zeta_{70} + \zeta_{70}^{10} + \zeta_{70}^{29}|^2 \approx 5.01766$.
\end{enumerate}
\end{theorem}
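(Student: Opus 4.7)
The plan is to prove part (a) by combining Cassels's theorem (\Cref{T:cassels}) with direct computation, and to handle part (b) by supplementing this with a weight bound coming from Loxton's theorem (\Cref{T:loxton}) together with Jones's theorem (\Cref{T:jones}). For part (a), I would assume $\house{\alpha}^2 \leq 5$ and invoke \Cref{T:cassels}. In cases (1), (2), (3), the explicit formulas in \eqref{eq:house for cassels families} immediately give $\house{\alpha}^2 = 4\cos^2(\pi/N')$ (from case (1)) or $\house{\alpha}^2 = 1 + 4\cos^2(\pi/N')$ (from cases (2) and (3)), for the appropriate integer $N'$ depending on $N$. For case (4), I would run Cassels's (effective) enumeration algorithm to list all exceptional equivalence classes and compute the castle of each representative; this is a lengthy but purely computational step, at the end of which one verifies that the only castles arising that do not already coincide with a member of the parametric families above are $(5+\sqrt{13})/2$ and $(5+\sqrt{21})/2$.

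For part (b), I would split the interval into $(5, 5.01)$ and $[5.01, 5.04]$. On $(5, 5.01)$, \Cref{T:cassels} still applies; but parametric cases (1)-(3) produce castle at most $5$, so any such $\alpha$ would have to lie in case (4), and the enumeration above shows that no exceptional class has castle in this range. On $[5.01, 5.04]$, \Cref{T:cassels} is silent. Here I would apply \Cref{T:loxton}: from $\calM(\alpha) \leq \house{\alpha}^2 \leq 5.04$ together with Loxton's lower bound $\calM(\alpha) \geq c\, n \exp(-k \log n / \log\log n)$ (where $n = \N(\alpha)$, $k$ is any constant $> \log 2$, and $c = c(k)$ is effective), I deduce $n \leq n_0$ for some explicit $n_0$. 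For $n \leq 3$, \Cref{T:jones} gives a complete list whose castles I would check directly, observing that none lies in $[5.01, 5.04]$. For $4 \leq n \leq n_0$, I would enumerate sums of $n$ roots of unity whose house lies in the target window and confirm that the only castle so produced is $|1 + \zeta_{70} + \zeta_{70}^{10} + \zeta_{70}^{29}|^2 \approx 5.01766$.

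The main obstacle is this last enumeration step: even for $\N(\alpha) = 4$, the least $N$ with $\alpha \in \ZZ[\zeta_N]$ is not a priori bounded, so a naive search is infinite. To cope, I would prove a lemma bounding this conductor in terms of the castle, exploiting the prime-by-prime decomposition of cyclotomic integers in which an element of $\ZZ[\zeta_N]$ is written as a $\ZZ[\zeta_{N/p}]$-linear combination of powers of $\zeta_p$ for each prime $p \mid N$; iterating this decomposition should force the house to grow whenever $N$ has many distinct prime factors or high prime-power divisors. Once such a conductor bound is in place, the residual search is finite and can be completed by machine, producing the single castle value claimed in part (b).
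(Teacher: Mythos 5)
This theorem is not proved in the paper; it is imported wholesale from \cite{robinson-wurtz}, and your reconstruction has two genuine gaps. For part (a), you invoke \Cref{T:cassels} and then propose to ``run Cassels's enumeration algorithm'' on case (4) and inspect the resulting castles. But Cassels's effectivity only places the exceptional classes in $\QQ(\zeta_N)$ for an astronomically large $N$, and turning that into a feasible computation is exactly the content of this paper's main theorem --- which is itself proved \emph{using} \Cref{T:robinson-wurtz} (via \Cref{cor:RW truncation}, which organizes the entire proof of \Cref{T:main} into cases according to the values allowed by part (a)). So within this paper's architecture your argument is circular, and as a standalone computation it is not executable. The actual Robinson--Wurtz argument determines the possible \emph{values} of $\house{\alpha}^2$ directly, by induction on the level using the $p$-decomposition calculus (\Cref{L:rep-prime-power-case}, \Cref{L:rep-prime-case}, \Cref{conjLemma}), without ever classifying the elements $\alpha$; the fact that the set of castles can be pinned down without pinning down the exceptional set is precisely what makes the theorem usable as an input to \Cref{T:main}.

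For part (b) there are two further problems. First, \Cref{T:loxton} with an explicit constant $c$ is not available: the paper notes (\Cref{rem:determine minimal length}) that making Loxton's bound completely explicit remains open, and the explicit fragments actually in hand (\Cref{L:short sums}) stop at $\N(\alpha) \geq 4 \Rightarrow \calM(\alpha) \geq 2\tfrac{1}{2}$, nowhere near enough to extract a bound $n_0$ from $\calM(\alpha) \leq 5.04$. Second, the ``lemma bounding the conductor in terms of the castle'' that you defer to is not a technical afterthought but the entire difficulty: \Cref{conjLemma} gives growth of the house only under hypotheses on $m$ and on coprimality of levels, and the odd-prime case requires the full Cassels-height combinatorics of \Cref{L:rep-prime-case} together with the kind of case analysis occupying \S\S 4--8 of this paper and Lemmas 4.1--6.1 of \cite{robinson-wurtz}. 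Asserting that iterating the decomposition ``should force the house to grow'' is where the proof actually lives.
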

\begin{proof}
See \cite[Theorem~1.2, Theorem~1.3]{robinson-wurtz}.
\end{proof}

\subsection{Overview of the proof}
\label{subsec:context}

We now give a high-level description of the proof of \Cref{T:main}, which also serves to provide a navigation guide through the paper.
To streamline terminology, we say that a cyclotomic integer $\alpha$ is \emph{covered by \Cref{T:main}} if $\alpha$ is either of one of the forms (1)--(3) of \Cref{T:cassels} or equivalent to an element of \Cref{table:exceptional classes}. 
Our goal is then to show that every $\alpha$ with $\house{\alpha}^2 < 5.01$ is covered by \Cref{T:main};
by \Cref{T:robinson-wurtz}(b), it suffices to consider $\alpha$ with
$\house{\alpha}^2 \leq 5$.

As the formulation of \Cref{T:cassels} indicates, it was already known to Cassels that the exceptional set 
is effectively computable: it follows from the arguments in \cite{cassels} that every exceptional equivalence class is represented by an element of $\QQ(\zeta_N)$ for some particular $N$
(see the introduction to \cite{calegari-morrison-snyder} for a particular value). However, it is not straightforward to translate this bound into a \emph{feasible} finite computation; for this, we draw on ideas from Cassels and Robinson--Wurtz, and also introduce some further innovations.

We start by extracting a corollary from \Cref{T:robinson-wurtz} that guides our approach.

\begin{cor}[Robinson--Wurtz] \label{cor:RW truncation}
Let $\alpha$ be a cyclotomic integer with $\house{\alpha}^2 < 5.01$. Then one of the following holds.
\begin{enumerate}
    \item[(a)] We have $\calM(\alpha) < 3 \tfrac{1}{4}$. When this holds, in fact 
    $\calM(\alpha) \leq 3\tfrac{1}{5}$.
    \item[(b)] We have
    \begin{equation} \label{eq:rw cutoff}
(\house{\alpha}^2, \calM(\alpha)) \in \{
(1 + 4 \cos^2 \tfrac{\pi}{15}, 3 \tfrac{1}{4}), 
(1 + 4 \cos^2 \tfrac{\pi}{14}, 3 \tfrac{1}{3}),
(1 + 4 \cos^2 \tfrac{\pi}{10}, 3 \tfrac{1}{2}) 
\}.
\end{equation}
\item[(c)] We have $\house{\alpha}^2 = \calM(\alpha) = 4$.
\item[(d)] We have $\house{\alpha}^2 = \calM(\alpha) = 5$.
\end{enumerate}
\end{cor}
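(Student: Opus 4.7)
The plan is to combine \Cref{T:robinson-wurtz} with an explicit computation of $\calM(\alpha)$ built from the possible structures of $\alpha\bar\alpha$ that are implicit in the Robinson--Wurtz classification. First I would note that $5.01 < |1+\zeta_{70}+\zeta_{70}^{10}+\zeta_{70}^{29}|^2 \approx 5.01766$, so part~(b) of \Cref{T:robinson-wurtz} excludes $\house{\alpha}^2 \in (5,5.01)$; hence $\house{\alpha}^2 \leq 5$, and part~(a) pins it down to the enumerated set.

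Next I would observe that $\alpha\bar\alpha$ is a totally nonnegative algebraic integer lying in the maximal totally real subfield $\QQ(\alpha)^+$, whose largest Galois conjugate equals $\house{\alpha}^2$, and that $\calM(\alpha) = \Trace_{\QQ(\alpha)^+/\QQ}(\alpha\bar\alpha)/[\QQ(\alpha)^+:\QQ]$ is simply the average of those conjugates. The core step is then to extract from the Robinson--Wurtz proof the list of possible minimal polynomials of $\alpha\bar\alpha$ for each admissible value of $\house{\alpha}^2$, and to compute the root average of each. When $\house{\alpha}^2 \in \{4,5\}$, maintaining $\calM(\alpha) \geq 3\tfrac{1}{4}$ forces $\alpha\bar\alpha$ to be the rational integer $4$ or $5$ itself, yielding conclusion~(c) or~(d); any nontrivial Galois orbit of $\alpha\bar\alpha$ collapses the average far below $3\tfrac{1}{4}$. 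For irrational $\house{\alpha}^2 = 1 + 4\cos^2(\pi/N)$, achieving $\calM(\alpha) \geq 3\tfrac{1}{4}$ forces $\alpha$ to be equivalent to a specialization of Cassels's family~(2) or~(3) at a small value of $N$, after which a Ramanujan-sum computation produces the three pairs listed in~(b).

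The main obstacle will be establishing the sharp upper bound $\calM(\alpha) \leq 3\tfrac{1}{5}$ in case~(a), i.e., ruling out rational values of $\calM(\alpha)$ strictly between $3\tfrac{1}{5}$ and $3\tfrac{1}{4}$. I plan to close this gap via the explicit formula $\calM(1+\zeta_N-\zeta_N^{-1}) = 3 - 2\mu(M)/\phi(M)$ with $M = N/\gcd(N,2)$, derived by applying the Ramanujan-sum identity to $\alpha\bar\alpha = 3 - \zeta_N^2 - \zeta_N^{-2}$. The squarefree $M$ with $\mu(M) = -1$ and $\phi(M) \leq 8$ are precisely $M \in \{2,3,5,7,30\}$, yielding $\calM \in \{5,4,3\tfrac{1}{2},3\tfrac{1}{3},3\tfrac{1}{4}\}$, while the next admissible value drops to $\calM = 3\tfrac{1}{5}$, arising from $M = 11$. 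Parallel computations for Cassels's families~(1) and~(3), together with direct inspection of the two exceptional values $\house{\alpha}^2 \in \{(5+\sqrt{13})/2, (5+\sqrt{21})/2\}$, will round out the case analysis.
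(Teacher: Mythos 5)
Your overall strategy matches the paper's: use \Cref{T:robinson-wurtz}(b) to reduce to $\house{\alpha}^2 \leq 5$, observe that $\calM(\alpha)$ is the average of the Galois conjugates of $\alpha\overline{\alpha}$ and hence is determined by $\house{\alpha}^2$ alone, and then run a M\"obius-function computation over the finite list of admissible castles from \Cref{T:robinson-wurtz}(a). Your formula $\calM = 3 - 2\mu(M)/\varphi(M)$ with $M \in \{2,3,5,7,30\}$ is the paper's formula $\calM = 3 + 2\mu(N)/\varphi(N)$ with $N \in \{1,6,10,14,15\}$ under the substitution $\zeta_M \mapsto -\zeta_N$, and your treatment of the families $4\cos^2\tfrac{\pi}{N}$ and of $\tfrac{5+\sqrt{13}}{2}, \tfrac{5+\sqrt{21}}{2}$ is the same as the paper's.

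Two of your justifications need repair, though neither is fatal. First, the assertion that $\calM(\alpha)\geq 3\tfrac{1}{4}$ ``forces $\alpha$ to be equivalent to a specialization of Cassels's family~(2) or~(3)'' is false and circular: \Cref{table:exceptional classes} contains exceptional classes (e.g.\ of minimal level $35$) with $\house{\alpha}^2 = 1+4\cos^2\tfrac{\pi}{10}$ and $\calM(\alpha)=3\tfrac{1}{2}$, and deciding which $\alpha$ lie in the parametric families is the content of \Cref{T:main}, not something available when proving this corollary. Fortunately the step is unnecessary: since $\calM(\alpha)$ depends only on $\house{\alpha}^2$ (your own second paragraph), you can compute it directly from the Galois orbit of $3+\zeta_N+\zeta_N^{-1}$ without knowing anything about $\alpha$ beyond its castle; your family-(2) computation then serves only as a convenient way to organize that orbit. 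Second, in cases (c) and (d) the reason $\alpha\overline{\alpha}$ must equal $4$ or $5$ is not a quantitative collapse of the average over a nontrivial orbit; it is simply that $\house{\alpha}^2$ is itself one of the Galois conjugates of the algebraic integer $\alpha\overline{\alpha}$, so when it equals a rational integer the orbit is a singleton and $\alpha\overline{\alpha}$ equals that integer. The quantitative claim you invoke instead is neither needed nor obviously true. With these repairs the argument is complete and coincides with the paper's.
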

\begin{proof}
This follows from \Cref{T:robinson-wurtz} as follows.
Recall first that $\calM(\alpha)$ is uniquely determined by $\house{\alpha}^2$.
If $\house{\alpha}^2 \in \left\{ \frac{5 + \sqrt{13}}{2}, \frac{5 + \sqrt{21}}{2} \right\}$,
then $\calM(\alpha) = 2 \tfrac{1}{2}$.
If $\house{\alpha}^2 = 4 \cos^2 \tfrac{\pi}{N}$, then either $\calM(\alpha) = 4$ or $\calM(\alpha) \leq 3$. If $\house{\alpha}^2 = 1 + 4 \cos^2 \tfrac{\pi}{N}$,
then for $\mu$ the M\"obius function and $\varphi$ the totient function, we have
\begin{equation} \label{eq:formula for cassels height}
    \calM(\alpha) = 3 + \frac{2 \mu(N)}{\varphi(N)}
\end{equation}
(see \cite[Remark~9.0.2]{calegari-morrison-snyder} or \Cref{rem:exact castle}). In particular, $\calM(\alpha) \leq 3$ unless $N$ is squarefree with an even number of prime factors, and $\calM(\alpha) \leq 3 \tfrac{1}{5}$ unless in addition $\varphi(N) 
\leq 8$. 
We may thus take $N = 1,6,10,14,15$, yielding the listed values.
\end{proof}

For each of cases (a)--(d) of \Cref{cor:RW truncation}, we subdivide the proof of \Cref{T:main} into three steps as follows. Here we write $N$ for the \emph{minimal level} of $\alpha$, meaning the smallest positive integer for which $\alpha$ is equivalent to an element of $\QQ(\zeta_N)$; see \S\ref{subsec:minimal level} for more details on this concept.
We also write $N_0$ and $N_1$ for specific positive integers depending only on which case we are in; see \Cref{table:main breakdown}.

\begin{itemize}
    \item \textbf{Step 1: higher prime powers in the minimal level.} 
    In this step, we prove that $\alpha$ is covered by \Cref{T:main} whenever $N$ is divisible by a higher prime power not dividing $N_0$. (Note that the underlying prime may or may not divide $N_0$.)
    \item \textbf{Step 2: large primes in the minimal level.} In this step, we prove that $\alpha$ is covered by \Cref{T:main} in all remaining cases where $N$ has a prime factor greater than $N_1$, i.e., when $N/\gcd(N,N_0)$ is squarefree and coprime to $N_0$ but has a prime factor greater than $N_1$.
    \item \textbf{Step 3: small primes in the minimal level.}
    In this step, we prove that $\alpha$ is covered by \Cref{T:main} in all remaining cases, i.e., when $N$ divides $N_0$ times the product of all primes $p \leq N_1$ coprime to $N_0$.
\end{itemize}

In case (a), the three steps are roughly parallel to three of the steps in the proof of \Cref{T:robinson-wurtz} (namely \cite[Lemma~4.1, Lemma~5.1, Lemma~6.1]{robinson-wurtz}), and 
indeed our proof amounts to carefully retracing through these steps.
\begin{itemize}
    \item 
    Step 1 is a refinement of \cite[Lemma~10]{cassels} but using a key geometric lemma \cite[Lemma~2.5]{robinson-wurtz} to sharpen the arguments.
    \item 
     Step 2 is in turn a refinement of \cite[Lemma~11]{cassels}, in which one studies the combinatorics of Cassels heights in the decomposition against powers of $\zeta_p$ where $p$ is the largest prime factor of $N$. Compared to \cite[Lemma~5.1]{robinson-wurtz}, we shift some complexity into Step 3 (see below).
    \item 
Step 3 is a variation of Step 2 augmented by an optimized floating-point computation to exhaust over all $\alpha$ in a fixed cyclotomic field with a fixed value of $\N(\alpha)$, in order to identify candidates with castle less than 5.01. Such a computation is alluded to in the proof of \cite[Lemma 6.1]{robinson-wurtz} but no source code was preserved; we reproduce the computation in Rust (with source code available, see below) and push it somewhat further to eliminate some complexity in the theoretical arguments (including Step 2 in this case, see above).
    
\end{itemize}

This all takes place in \S\ref{sec:rw} of the paper, after some preliminaries in the earlier sections.
As a corollary, we prove \cite[Conjecture~1]{robinson} (see \Cref{cor:robinson1}) and in so doing confirm a prediction\footnote{To quote  \cite[\S 5.8.1]{mckee-smyth}: ``\cite[Conjecture~1]{robinson} seems not to have been proved, in spite of the assertion in \cite{robinson-wurtz}
that it had been. However, the calculations in that paper may prove useful for constructing a proof.''} made in \cite[\S 5.8.1]{mckee-smyth}. 

We now return to cases (b)--(d).
For Step 1, we replace the approach used in case (a)
with a  more algebraic argument taking advantage of the fact that $\house{\alpha}^2$ is known to belong to a cyclotomic field with \emph{squarefree} level; this means that when we decompose $\alpha$ as a linear combination of powers of $\zeta_{p^n}$, the induced decomposition of $\alpha \overline{\alpha}$ must have the property that the coefficient of every nonzero power of $\zeta_{p^n}$ vanishes. This implies that the set of indices with a nonzero coefficient in the decomposition of $\alpha$ has no unique differences modulo $p$, and a combinatorial argument can be used to rule this out unless $p$ is very small. 
See \S\ref{sec:combinatorial} for the algebraic argument in a general form and \S\ref{sec:higher prime powers} for its application to Step 1, treating all three cases (b), (c), (d) uniformly (and borrowing an idea from Step 3, see below).

For Step 2, we combine the approach used in case (a) with a variant of the algebraic argument, also presented in \S\ref{sec:combinatorial}; the latter serves to remove the most complicated branches of the analysis of Cassels heights.
(As in Step 1, we also streamline by borrowing an idea from Step 3, for which see below.) 
See \S\ref{sec:large primes} for this argument, broken down across cases (b), (c), (d) in succession.

For Step 3, we can ignore case (b) as it is treated uniformly with case (a). To handle cases (c) and (d), we use an argument loosely inspired by a suggestion of Noam Elkies: we pin down the possible fractional ideals of $\alpha$ in $\QQ(\zeta_N)$ and observe that they are all accounted for by known quantities. While this method in general requires computing class groups of possibly large cyclotomic fields, in this paper we are fortunate enough not to require any such computations.
See \S\ref{sec:class groups} for the argument presented in a general form (together with some lemmas used to reduce complexity in Steps 1 and 2)
and \S\ref{sec:small primes} for the application to Step 3.

\begin{table}
    \caption{Breakdown of cases and steps in the proof of \Cref{T:main}.}
    \label{table:main breakdown}

    \begin{tabular}{c|c|c|c|c|c|c}
    Case & $\calM(\alpha)$ & $N_0$ & $N_1$ & Step 1 & Step 2 & Step 3 \\
    \hline
    (a) & $< 3 \tfrac{1}{4}$ & $2^2$ & $7$ & \Cref{P:robinson wurtz extract1} & \Cref{P:robinson wurtz extract2} & \Cref{prop:list 420 part1} \\
    (b) & $3 \tfrac{1}{4}, 3 \tfrac{1}{3}, 3 \tfrac{1}{2}$ & $1$ & $7$ & \Cref{prop:Step 1} & \Cref{prop:step 2 case b} & \Cref{prop:list 420 part1}\\
    (c) & $4$ & $1$ & $11$ & \Cref{prop:Step 1} & \Cref{prop:step 2 case c} & \Cref{prop:step3 case 4}\\
    (d) & $5$ & $1$ & $13$ & \Cref{prop:Step 1} & \Cref{prop:step 2 case d} & \Cref{prop:step3 case 5}
    \end{tabular}
\end{table}

At various points, the arguments refer to some computations made in Jupyter notebooks running SageMath (specifically version 10.6); in addition, the proof of \Cref{lem:exhaust} depends on a program written in the language Rust. All source code can be found in the following GitHub repository: 
\begin{center}
    \url{https://github.com/castle-gray-rnt6/cassels}
\end{center}

\subsection{Future directions}

We expect that the techniques used here can be used to say more about the structure of small cyclotomic integers.
For instance, Calegari \cite{calegari-house} observed that the set of castles of cyclotomic integers is not closed, and asked for the smallest external limit point; the answer can be no greater than $6 \tfrac{1}{4}$ as this is the limit point of $\house{\alpha}^2$ for $\alpha = 1 +\zeta + \zeta^3 - \zeta^4$ as $\zeta$ ranges over all roots of unity. (This value is somewhat smaller than the external limit point $7.0646 \cdots$ identified in \cite{calegari-house}.)
It may in fact be feasible to classify \emph{all} cyclotomic integers with castle at most 6.25.

\section{The calculus of Cassels heights}
\label{sec:prelim}

We summarize and extend some lemmas that appear in various forms in \cite{cassels} and \cite{robinson-wurtz}.

For $K$ a number field, let $W_K$ denote the group of roots of unity in $K^\times$.

\subsection{Minimal level}
\label{subsec:minimal level}

We spell out a key definition in somewhat more detail than in some prior references.

\begin{definition} \label{def:minimal level}
For $\alpha$ a cyclotomic algebraic number, the \emph{minimal level} of $\alpha$ is the smallest positive integer $N$ such that $\alpha$ is equivalent to some element of $\QQ(\zeta_N)$; by definition, $N$ is either odd or divisible by 4.
Since $\QQ(\zeta_N)$ is a Galois extension of $\QQ$, $\alpha = \zeta \alpha_0$ for some root of unity $\zeta$ and some $\alpha_0 \in \QQ(\zeta_N)$. 
\end{definition}

\begin{definition}
For $\alpha \in \QQ(\zeta_N)$ and $p$ a prime dividing $N$, a \emph{$p$-decomposition} of $\alpha$
is a presentation
\[
\alpha = \sum_{i=0}^{p-1} \eta_i \zeta_{p^n}^i 
\]
where $n$ is the $p$-adic valuation of $N$ and $\eta_i \in \QQ(\zeta_{N/p})$ for all $i$.
When $n > 1$, the $p$-decomposition of $\alpha$ is unique; when $n=1$, the $p$-decomposition is unique up to adding a common term to each $\eta_i$.
\end{definition}

We make explicit \cite[Exercise~5.24]{mckee-smyth}.
\begin{lemma} \label{L:minimal level divides}
For any $\alpha \in \QQ(\zeta_N)$, the minimal level of $\alpha$ divides $N$.
\end{lemma}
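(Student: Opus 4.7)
The plan is to show that if $M$ is the minimal level of $\alpha$, then the equivalence class of $\alpha$ admits a representative in $\QQ(\zeta_{\gcd(M,N)})$; by minimality of $M$ this forces $M = \gcd(M,N)$, which is exactly $M \mid N$. First I would reduce to the case where $N$ is itself either odd or divisible by $4$: if $N \equiv 2 \pmod 4$, then $\QQ(\zeta_N) = \QQ(\zeta_{N/2})$, and $M \mid N/2$ trivially implies $M \mid N$.

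By the definition of minimal level, there exist a root of unity $\zeta$ and an embedding $\sigma$ of $\QQ(\alpha)$ into $\CC$ such that $\alpha' := \zeta \sigma(\alpha) \in \QQ(\zeta_M)$. Since $\QQ(\zeta_N)/\QQ$ is Galois, $\sigma(\alpha)$ again lies in $\QQ(\zeta_N)$, so $\zeta = \alpha'/\sigma(\alpha)$ lies in the compositum $\QQ(\zeta_M) \cdot \QQ(\zeta_N) = \QQ(\zeta_L)$ with $L = \lcm(M,N)$ (again in good form under our conventions).

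The central step is to decompose $\zeta$ as $\zeta = \zeta'_M \zeta'_N$ with $\zeta'_M \in W_{\QQ(\zeta_M)}$ and $\zeta'_N \in W_{\QQ(\zeta_N)}$. Since $M, N, L$ are all in good form, $|W_{\QQ(\zeta_K)}| = K$ for each $K \in \{M,N,L\}$, so inside the cyclic group $W_{\QQ(\zeta_L)}$ of order $L = \lcm(M,N)$ the subgroups $W_{\QQ(\zeta_M)}$ and $W_{\QQ(\zeta_N)}$ of orders $M$ and $N$ already span the whole group, yielding the desired factorization. Setting $\beta := \zeta'_N \sigma(\alpha) = \alpha'/\zeta'_M$, the first expression shows $\beta \in \QQ(\zeta_N)$ with $\beta$ equivalent to $\alpha$, while the second shows $\beta \in \QQ(\zeta_M)$; by the classical identity $\QQ(\zeta_M) \cap \QQ(\zeta_N) = \QQ(\zeta_{\gcd(M,N)})$, we conclude $\beta \in \QQ(\zeta_{\gcd(M,N)})$. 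As $\gcd(M,N)$ is itself in good form, minimality of $M$ gives $M \leq \gcd(M,N)$; combining with $\gcd(M,N) \mid M$ yields $M = \gcd(M,N) \mid N$.

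The only mildly subtle point will be the root-of-unity decomposition $\zeta = \zeta'_M \zeta'_N$, which is really just an order comparison inside a cyclic group but relies on carefully maintaining the good-form convention throughout (ensuring each of $M$, $N$, $L$, and $\gcd(M,N)$ is odd or divisible by $4$). All other ingredients---the reduction modulo the good-form condition, Galois-invariance of $\QQ(\zeta_N)$, and the intersection formula for cyclotomic fields---are entirely standard.
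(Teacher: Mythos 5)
Your argument is correct, but it takes a genuinely different route from the paper's. The paper argues by contradiction, one prime at a time: if the minimal level $N_0$ had a prime power $p^n$ exceeding the $p$-part of $N$, it forms the $p$-decomposition of a representative $\alpha_0 \in \QQ(\zeta_{N_0})$ and uses the uniqueness of $p$-decompositions to conclude that $\zeta_{p^n}^{-j}\alpha_0$ already lies in $\QQ(\zeta_{N_0/p})$, contradicting minimality. You instead work globally: you factor the root of unity $\zeta$ linking the representative in $\QQ(\zeta_M)$ to a conjugate of $\alpha$ as $\zeta'_M \zeta'_N$ with each factor in the corresponding cyclotomic field, and then invoke $\QQ(\zeta_M)\cap\QQ(\zeta_N)=\QQ(\zeta_{\gcd(M,N)})$ to produce an equivalent element of $\QQ(\zeta_{\gcd(M,N)})$. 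Your version is arguably cleaner and rests only on standard Galois-theoretic facts about cyclotomic fields; the paper's version has the advantage of exercising exactly the machinery ($p$-decompositions and their uniqueness) that it must set up anyway for the rest of the argument.

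Two minor points. Your count $|W_{\QQ(\zeta_K)}|=K$ is off by a factor of $2$ when $K$ is odd: the group of roots of unity in $\QQ(\zeta_K)$ has order $\lcm(2,K)$. The conclusion you draw from it survives the correction, since in the cyclic group $W_{\QQ(\zeta_L)}$ of order $\lcm(2,L)$ the subgroups of orders $\lcm(2,M)$ and $\lcm(2,N)$ generate a subgroup of order $\lcm(\lcm(2,M),\lcm(2,N))=\lcm(2,L)$, so the factorization $\zeta=\zeta'_M\zeta'_N$ does exist. Also, the step $\zeta=\alpha'/\sigma(\alpha)$ tacitly assumes $\alpha\neq 0$; the case $\alpha=0$ is of course trivial, but should be dispatched explicitly.
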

\begin{proof}
Let $N_0$ be the minimal level of $\alpha$ and suppose by way of contradiction that $N_0$ does not divide $N$. We can then find a prime $p$ at which $N_0$ has higher $p$-adic valuation than $N$;
let $n$ denote the $p$-adic valuation of $N_0$, and set $N' := \lcm(N, N_0)$.
As indicated in \Cref{def:minimal level}, we have $\alpha = \zeta \alpha_0$ for some root of unity $\zeta$ and some $\alpha_0 \in \QQ(\zeta_{N_0})$.
Write $\zeta = \eta \zeta_{p^n}^{-j}$ for some $\eta \in \QQ(\zeta_{N'/p})$ and some $j \in \{0,\dots,p-1\}$.

Form a $p$-decomposition $\alpha_0 = \sum_{i=0}^{p-1} \eta_i \zeta_{p^n}^i$, then apply the uniqueness property to
\[
\sum_{i=0}^{p-1} \eta_i \zeta_{p^n}^{i-j}
= 
\zeta_{p^n}^{-j} \alpha_0 = \eta^{-1} (\zeta \alpha_0) \in \QQ(\zeta_{N'/p})\colon
\]
if $n > 1$ then we must have $\zeta_{p^n}^{-j} \alpha_0 = \eta_j$,
whereas if $n = 1$ then $\zeta_{p^n}^{-j} \alpha_0 = \eta_j - \eta_i$ for any $i \neq j$.
In either case we get $\zeta_{p^n}^{-j} \alpha_0 \in \QQ(\zeta_{N_0/p})$, contradicting the definition of $N_0$.
\end{proof}

The following is used frequently in \cite{calegari-morrison-snyder} and \cite{robinson-wurtz} but not stated in either.
\begin{lemma} \label{lem:sum in minimal level}
Let $\alpha$ be a cyclotomic integer of minimal level $N$. Then $\alpha$ is equivalent to a sum of $\N(\alpha)$ roots of unity in $\QQ(\zeta_N)$ (rather than a larger field).
\end{lemma}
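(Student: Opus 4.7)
The plan is to induct on the ratio $M/N$ for an auxiliary level $M$ in which $\alpha$ is already expressed as a sum of $n := \N(\alpha)$ roots of unity, peeling off one prime factor at each step. First I would replace $\alpha$ by an equivalent element of $\QQ(\zeta_N)$ (possible because $\N$ is equivalence-invariant) and fix a representation $\alpha = \sum_{j=1}^n \zeta_j$ with each $\zeta_j \in W_{\QQ(\zeta_M)}$ and $N \mid M$. The base case $M = N$ is tautological, so the work is in the inductive step.

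For the inductive step, I would pick a prime $p$ with $v_p(M) > v_p(N)$ and set $e := v_p(M)$. Using the CRT splitting of the cyclic group $W_{\QQ(\zeta_M)}$ into its $p$-part and $p'$-part, and then Euclidean division of the exponent in the $p$-part, each summand factors as $\zeta_j = \zeta_{p^e}^{r_j}\mu_j$ with $r_j \in \{0,\ldots,p-1\}$ and $\mu_j$ a root of unity in $\QQ(\zeta_{M/p})$. Grouping by $r_j$ yields $\alpha = \sum_{i=0}^{p-1} \zeta_{p^e}^i \beta_i$ with $\beta_i := \sum_{j \colon r_j = i} \mu_j$ a sum of $n_i := \#\{j \colon r_j = i\}$ roots of unity in $\QQ(\zeta_{M/p})$, and $\sum_i n_i = n$. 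The goal is then to show that $\alpha$ admits a representation as $n$ roots of unity in $\QQ(\zeta_{M/p})$, allowing induction to close.

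The argument splits on $e$. When $e \geq 2$, the powers $\{\zeta_{p^e}^i\}_{i=0}^{p-1}$ form a free basis of $\QQ(\zeta_M)$ over $\QQ(\zeta_{M/p})$, so $\alpha \in \QQ(\zeta_N) \subseteq \QQ(\zeta_{M/p})$ forces $\beta_0 = \alpha$ and $\beta_i = 0$ for $i \geq 1$; the representation $\alpha = \beta_0$ then has $n_0$ summands. When $e = 1$ (which necessarily has $v_p(N) = 0$), these powers only span modulo the relation $1 + \zeta_p + \cdots + \zeta_p^{p-1} = 0$, so the $\beta_i$ are only determined up to a common additive constant; passing to the honest basis $\{1, \zeta_p, \ldots, \zeta_p^{p-2}\}$ and matching coefficients against $\alpha$ pins down $\beta_1 = \cdots = \beta_{p-1} = c$ and $\beta_0 = \alpha + c$ for some $c \in \QQ(\zeta_{M/p})$. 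Choosing any $i_0 \neq 0$ with $n_{i_0} \geq 1$ (or taking $c = 0$ directly if no such $i_0$ exists) then writes $\alpha = \beta_0 - \beta_{i_0} = \sum_{j \colon r_j = 0}\mu_j + \sum_{j \colon r_j = i_0}(-\mu_j)$, a sum of $n_0 + n_{i_0}$ roots of unity in $\QQ(\zeta_{M/p})$.

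In each case the new representation uses at most $n$ summands, and appealing to the minimality of $\N(\alpha) = n$ upgrades this inequality to equality, which closes the induction and deposits $\alpha$ as a sum of $n$ roots of unity in $\QQ(\zeta_N)$. The hard part will be the $e = 1$ case: the non-uniqueness of the $p$-decomposition introduces the spurious constant $c$, and the minimality of $\N(\alpha)$ is the only lever available to absorb $c$ into the sum without inflating the summand count beyond $n$. (A minor subtlety worth flagging is that when $p = 2$ and $e = 1$ one has $\QQ(\zeta_M) = \QQ(\zeta_{M/p})$, so Case B is actually vacuous there; the genuine content of Case B lives at odd primes.)
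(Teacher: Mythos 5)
Your argument is correct. Note that the paper does not actually prove this lemma: it simply cites \cite[Theorem~5.29(a)]{mckee-smyth}, so your write-up supplies a self-contained proof where the paper defers to the literature. The structure you use --- descend from an auxiliary level $M$ (with $N \mid M$) in which a minimal-weight representation lives, stripping one prime at a time via the $p$-decomposition --- is the natural argument and matches the paper's own toolkit (compare \Cref{L:rep-prime-power-case} and \Cref{L:rep-prime-case}, and the uniqueness statements for $p$-decompositions in \S\ref{subsec:minimal level}). The two points that genuinely require care are both handled correctly: (i) in the $e \geq 2$ case, freeness of $\{1,\zeta_{p^e},\dots,\zeta_{p^e}^{p-1}\}$ over $\QQ(\zeta_{M/p})$ kills all $\beta_i$ with $i \neq 0$; (ii) in the $e=1$ case (odd $p$), the relation $1+\zeta_p+\cdots+\zeta_p^{p-1}=0$ only pins down the $\beta_i$ up to a common constant $c$, and writing $\alpha = \beta_0 - \beta_{i_0}$ produces a representation with $n_0 + n_{i_0} \leq n$ roots of unity in $\QQ(\zeta_{M/p})$, whereupon minimality of $\N(\alpha)$ forces equality (and incidentally shows a posteriori that at most one nonzero residue class $i_0$ can be occupied). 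The observation that the $p=2$, $e=1$ case is trivial because $-1 \in \QQ(\zeta_{M/2})$ is also right. The induction on $M/N$ terminates since this ratio is a positive integer divided by $p$ at each step, so the proof closes.
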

\begin{proof}
See \cite[Theorem~5.29(a)]{mckee-smyth}.
\end{proof}

\subsection{Calculation of some minimal levels}

We next compute the minimal level of the cyclotomic integers named in \Cref{T:cassels}.
\begin{lemma} \label{lem:containment of totally real subfields}
Let $N_1, N_2$ be two positive integers such that $\QQ(\zeta_{N_1}+\zeta_{N_1}^{-1}) \subseteq \QQ(\zeta_{N_2}+\zeta_{N_2}^{-1})$. Then either $N_1$ divides $\lcm(2, N_2)$ or $N_1 \in \{1,2,3,4,6\}$ (in which case $\QQ(\zeta_{N_1}+\zeta_{N_1}^{-1}) = \QQ$).
\end{lemma}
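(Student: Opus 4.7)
The plan is to reduce to a conductor calculation for the maximal totally real subfield $L_i := \QQ(\zeta_{N_i} + \zeta_{N_i}^{-1})$ of $\QQ(\zeta_{N_i})$. First, I would introduce the normalized level $M_i$ by setting $M_i = N_i/2$ when $N_i \equiv 2 \pmod 4$ and $M_i = N_i$ otherwise, so that $M_i$ is either $1$, odd, or divisible by $4$ and $\QQ(\zeta_{N_i}) = \QQ(\zeta_{M_i})$; then also $L_i = \QQ(\zeta_{M_i} + \zeta_{M_i}^{-1})$. Since $[L_1 : \QQ] = \max(1, \varphi(M_1)/2)$, we have $L_1 = \QQ$ precisely when $M_1 \in \{1, 3, 4\}$, equivalently when $N_1 \in \{1, 2, 3, 4, 6\}$, placing us in the second alternative of the conclusion. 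Assume henceforth $L_1 \neq \QQ$, so $M_1 \geq 5$.

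The central claim is that for any $M \geq 5$ in normalized form, the smallest $f$ with $\QQ(\zeta_M + \zeta_M^{-1}) \subseteq \QQ(\zeta_f)$ is $M$ itself. Granting this, the hypothesis $L_1 \subseteq L_2 \subseteq \QQ(\zeta_{M_2})$ combined with the standard intersection formula $\QQ(\zeta_a) \cap \QQ(\zeta_b) = \QQ(\zeta_{\gcd(a,b)})$ shows that the smallest $f$ with $L_1 \subseteq \QQ(\zeta_f)$ divides $M_2$; by the claim this smallest $f$ equals $M_1$, so $M_1 \mid M_2 \mid N_2$. A short case analysis on $N_1 \bmod 4$ then converts $M_1 \mid N_2$ into $N_1 \mid \lcm(2, N_2)$: if $N_1$ is odd or $4 \mid N_1$ then $N_1 = M_1 \mid N_2 \mid \lcm(2,N_2)$; if $N_1 \equiv 2 \pmod 4$ then $M_1 = N_1/2$ is odd and $M_1 \mid N_2$ together with $\gcd(2, M_1) = 1$ and $2 \mid \lcm(2, N_2)$ yields $N_1 = 2 M_1 \mid \lcm(2, N_2)$.

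To prove the central claim, I would use Galois theory inside $\QQ(\zeta_M)$: since $L_1$ is the fixed field of $\{\pm 1\} \subseteq (\ZZ/M\ZZ)^\times$, the containment $L_1 \subseteq \QQ(\zeta_f)$ for $f \mid M$ is equivalent to $\ker\bigl((\ZZ/M\ZZ)^\times \to (\ZZ/f\ZZ)^\times\bigr) \subseteq \{\pm 1\}$. Using the CRT factorization $(\ZZ/M\ZZ)^\times \cong \prod_{p \mid M} (\ZZ/p^{e_p}\ZZ)^\times$ (with $e_p := v_p(M)$), for any proper divisor $f$ of $M$ pick a prime $p$ with $v_p(f) < e_p$ and construct an element $a$ by setting $a_p$ to be a nontrivial element of $\ker\bigl((\ZZ/p^{e_p}\ZZ)^\times \to (\ZZ/p^{v_p(f)}\ZZ)^\times\bigr)$ different from $-1 \bmod p^{e_p}$, and $a_q = 1$ for $q \neq p$. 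For $p$ odd this kernel has order divisible by $p \geq 3$; for $p = 2$ with $e_2 \geq 3$ the element $1 + 2^{e_2 - 1}$ works, since it lies in the kernel of reduction modulo $2^{e_2 - 1}$ yet is not congruent to $\pm 1$ modulo $2^{e_2}$. The constructed $a$ lies in the kernel of reduction modulo $f$, and the condition $a \not\equiv -1 \pmod M$ is automatic once $M$ has either a second prime factor or (in the pure prime-power case) satisfies $M \notin \{3, 4\}$; the degenerate cases $M \in \{3, 4\}$ are exactly those excluded by the hypothesis $L_1 \neq \QQ$. The main obstacle will be this bookkeeping at the prime $p = 2$, where the normalization hypothesis $4 \mid M$ (when $M$ is even) is used decisively to separate $1$ from $-1$ in the $2$-part of $(\ZZ/M\ZZ)^\times$.
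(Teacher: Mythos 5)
Your argument is correct, and it takes a genuinely different route from the paper. The paper passes to the compositum $\QQ(\zeta_N)$ with $N = \lcm(N_1,N_2)$, observes that the image of $H = \ker((\ZZ/N\ZZ)^\times \to (\ZZ/N_2\ZZ)^\times)$ in $(\ZZ/N_1\ZZ)^\times$ is trivial or $\{\pm 1\}$, and in the latter case computes $\varphi(N) = 2\varphi(N_2)$, splits into $N = 2N_2$ and $N = 3N_2$, and extracts $N_1 \in \{2,4\}$ or $N_1 = 6$ from explicit congruences on generators of $H$. You instead prove that the conductor of $\QQ(\zeta_M + \zeta_M^{-1})$ is exactly $M$ for every normalized $M \geq 5$, and then invoke $\QQ(\zeta_a) \cap \QQ(\zeta_b) = \QQ(\zeta_{\gcd(a,b)})$ to conclude $M_1 \mid M_2$. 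Your route isolates the genuinely exceptional levels ($\varphi(M_1) \leq 2$) at the outset rather than discovering them through the index computation, and the conductor statement is a reusable fact; the paper's route avoids having to analyze arbitrary proper divisors $f$ of $M_1$ by reducing immediately to the two cases $N/N_2 \in \{2,3\}$.

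One step in your local construction needs repair. The assertion that for odd $p$ the kernel of $(\ZZ/p^{e_p}\ZZ)^\times \to (\ZZ/p^{v_p(f)}\ZZ)^\times$ has order divisible by $p$ fails when $e_p = 1$ and $v_p(f) = 0$: the kernel is then all of $(\ZZ/p\ZZ)^\times$, of order $p-1$. For $p \geq 5$ you can still pick $a_p \neq \pm 1$ there, but for $p = 3$ the only nontrivial choice is $a_p = -1$; the same happens for $p = 2$ with $e_2 = 2$ (a case your sketch omits, since you only treat $e_2 \geq 3$). In both situations you must take $a_p = -1$ and then use that $M$ has a second prime-power factor $> 2$ (which holds because $M \geq 5$ is normalized and not a power of that prime) to see $a \not\equiv -1 \pmod{M}$ -- which is exactly the fallback your final sentence gestures at, so the fix is already implicit. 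A cleaner way to get the whole claim is to count: $\#\ker((\ZZ/M\ZZ)^\times \to (\ZZ/f\ZZ)^\times) = \varphi(M)/\varphi(f) \geq 2$ for a proper divisor $f$ of a normalized $M$, and a kernel of order exactly $2$ can equal $\{\pm 1\}$ only if $-1 \equiv 1 \pmod{f}$, i.e.\ $f \leq 2$, which forces $\varphi(M) = 2$ and hence $M \in \{3,4\}$.
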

\begin{proof}
We may assume that both $N_1$ and $N_2$ are even.
Set $N := \lcm(N_1, N_2)$, so that both
$\QQ(\zeta_{N_1}+\zeta_{N_1}^{-1})$
and $\QQ(\zeta_{N_2}+\zeta_{N_2}^{-1})$
are subfields of $\QQ(\zeta_{N}+\zeta_{N}^{-1})$.
Identify $\Gal(\QQ(\zeta_N)/\QQ)$ with $(\ZZ/N\ZZ)^\times$ via Artin reciprocity; then
\[
\Gal(\QQ(\zeta_N)/\QQ(\zeta_{N_i}+\zeta_{N_i}^{-1})) = \ker((\ZZ/N\ZZ)^\times \to (\ZZ/N_i \ZZ)^\times / \{\pm 1\}),
\]
so the condition $\QQ(\zeta_{N_1}+\zeta_{N_1}^{-1}) \subseteq \QQ(\zeta_{N_2}+\zeta_{N_2}^{-1})$ means that
\[
\ker((\ZZ/N\ZZ)^\times \to (\ZZ/N_2 \ZZ)^\times / \{\pm 1\})
\subseteq
\ker((\ZZ/N\ZZ)^\times \to (\ZZ/N_1 \ZZ)^\times / \{\pm 1\}).
\]
Hence the image of 
$H := \ker((\ZZ/N\ZZ)^\times \to (\ZZ/N_2 \ZZ)^\times)$
in $(\ZZ/N_1 \ZZ)^\times$ is either the trivial group or $\{\pm 1\}$. In the former case, $N_1$ divides $N_2$; so suppose hereafter that the latter occurs. Then $\varphi(N) = 2 \varphi(N_2)$, so either
$N = 2N_2$, or $N = 3N_2$ and $N_2$ is coprime to $3$.

If $N = 2N_2$, then $H$ is generated by $1 + N_2$, so $1 + N_2 \equiv -1 \pmod{N_1}$. Now $N_1$ divides $2(2+N_2) - N = 4$, which implies $N_1 \in \{2,4\}$.

If $N = 3N_2$ and $N_2$ is coprime to 3, then $H$ is generated by $1+N_2^2$, so $1+N_2^2 \equiv -1 \pmod{N_1}$. Now $N_1$ divides $3(2+N_2^2) - N_2 N = 6$, which implies $N_1 = 6$.
\end{proof}

\begin{cor} \label{cor:minimal level of cosine}
Let $\zeta$ be a root of unity such that $\zeta + \zeta^{-1} \in \QQ(\zeta_N)$ for some positive integer $N$. Then either $\zeta \in \QQ(\zeta_N)$ or $\zeta \in \{\zeta_3, \zeta_4, \zeta_6\}$.
\end{cor}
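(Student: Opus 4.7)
The plan is to reduce to the previous lemma by identifying the totally real subfield generated by $\zeta + \zeta^{-1}$. Let $M$ be the order of $\zeta$. Since all primitive $M$-th roots of unity are Galois-conjugate and are permuted in pairs $\{\omega, \omega^{-1}\}$ by complex conjugation, we have $\QQ(\zeta+\zeta^{-1}) = \QQ(\zeta_M + \zeta_M^{-1})$, the maximal totally real subfield of $\QQ(\zeta_M)$.

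The next step is to upgrade the hypothesis $\zeta + \zeta^{-1} \in \QQ(\zeta_N)$ to a containment of totally real fields. Since $\QQ(\zeta_M + \zeta_M^{-1})$ is totally real and $\QQ(\zeta_N) \cap \RR = \QQ(\zeta_N + \zeta_N^{-1})$ (trivially when $N \in \{1,2\}$, and by the standard description of the maximal real subfield otherwise), we obtain
\[
\QQ(\zeta_M + \zeta_M^{-1}) \subseteq \QQ(\zeta_N + \zeta_N^{-1}).
\]
Now I invoke \Cref{lem:containment of totally real subfields}, which forces either $M \mid \lcm(2,N)$ or $M \in \{1,2,3,4,6\}$.

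In the first case, I split on the parity of $N$. If $N$ is even, then $\lcm(2,N) = N$, so $M \mid N$ and $\zeta \in \QQ(\zeta_N)$. If $N$ is odd, then $\lcm(2,N) = 2N$, so $\zeta \in \QQ(\zeta_{2N}) = \QQ(\zeta_N)$, using the standard identity $\QQ(\zeta_{2N}) = \QQ(\zeta_N)$ for odd $N$. In the second case, $M \in \{1,2\}$ gives $\zeta = \pm 1 \in \QQ \subseteq \QQ(\zeta_N)$, while $M \in \{3,4,6\}$ yields the listed exceptional roots of unity. The whole argument is essentially a one-line reduction, so there is no real obstacle beyond remembering to handle the odd-$N$ case via $\QQ(\zeta_{2N}) = \QQ(\zeta_N)$ and the small cases $N \in \{1,2\}$ where the intersection with $\RR$ is the full field.
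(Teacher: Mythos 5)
Your proof is correct and takes essentially the same route as the paper: both reduce the statement to \Cref{lem:containment of totally real subfields} via the observation that $\QQ(\zeta+\zeta^{-1})$ is the maximal totally real subfield of $\QQ(\zeta_M)$ and hence lands inside $\QQ(\zeta_N+\zeta_N^{-1})$. The paper states this in two sentences; you have merely spelled out the bookkeeping (the identification $\QQ(\zeta+\zeta^{-1})=\QQ(\zeta_M+\zeta_M^{-1})$, the odd-$N$ case via $\QQ(\zeta_{2N})=\QQ(\zeta_N)$, and the small values of $M$), all of which is fine.
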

\begin{proof}
Since $\QQ(\zeta_N + \zeta_N^{-1})$ is the maximal totally real subfield of $\QQ(\zeta_N)$, it contains $\QQ(\zeta+\zeta^{-1})$.
The claim thus follows at once from \Cref{lem:containment of totally real subfields}.
\end{proof}

\begin{prop} \label{P:minimal level for Cassels families}
Let $N$ be a positive integer. Set
\[
N' := \begin{cases} N/2 & \textit{if } N \equiv 2 \pmod{4} \\
N & \text{otherwise.}
\end{cases}
\]
For $\alpha$ as below, the minimal level of $\alpha$ is as indicated.
\begin{enumerate}
\item[(a)] For $\alpha := 1+\zeta_N$: $1$ if $N=3$ and $N'$ otherwise.
\item[(b)] For $\alpha := 1 + \zeta_N - \zeta_N^{-1}$: $1$ if $N \in \{3,6\}$, $4$ if $N = 12$, and $N'$ otherwise.
\item[(c)] For $\alpha := (\zeta_5+\zeta_5^4) + (\zeta_5^2 + \zeta_5^3)\zeta_N$: $1$ if $N=1$ and $\lcm(5, N')$ otherwise.
\end{enumerate}
\end{prop}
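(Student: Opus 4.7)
My plan is a two-sided attack: produce an explicit equivalent representative of $\alpha$ in a cyclotomic field of the predicted minimal level (the upper bound), and then use Lemma~\ref{lem:containment of totally real subfields} (via Corollary~\ref{cor:minimal level of cosine}) applied to $\alpha\overline\alpha$ to rule out any strictly smaller level (the lower bound), handling a handful of degenerate cases by hand.

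For the upper bound, the key identity is $\zeta_N = -\zeta_{N/2}$ when $N \equiv 2 \pmod 4$ (so that $N/2$ is odd and admissible as a minimal level). In each of the three parts this rewrites $\alpha$ inside $\QQ(\zeta_{N/2})$ (respectively $\QQ(\zeta_{\lcm(5, N/2)})$ in part (c)), matching the stated value of $N'$. For the remaining $N$ (neither $\equiv 2\pmod 4$ nor among the small exceptions) $\alpha$ already lives in $\QQ(\zeta_N)$ or $\QQ(\zeta_{\lcm(5, N)})$. The tabulated exceptions are settled by direct identities: $1 + \zeta_3 = \zeta_6$ for $N = 3$ in (a); $1 + \zeta_3 - \zeta_3^{-1} = 1 + \zeta_6 - \zeta_6^{-1} = 2\zeta_6$ for $N \in \{3,6\}$ in (b); $1 + \zeta_{12} - \zeta_{12}^{-1} = 1 + i \in \QQ(\zeta_4)$ for $N = 12$ in (b); and $\alpha = -1$ for $N = 1$ in (c).

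For the lower bound, if $\alpha$ is equivalent to $\beta \in \QQ(\zeta_{N_0})$ then $|\alpha|^2 = \beta\overline\beta$ lies in the maximal totally real subfield $\QQ(\zeta_{N_0} + \zeta_{N_0}^{-1})$. Feeding in the explicit formulas $|\alpha|^2 = 2 + \zeta_N + \zeta_N^{-1}$, $|\alpha|^2 = 3 - \zeta_N^2 - \zeta_N^{-2}$, and $|\alpha|^2 = 3 - \zeta_N - \zeta_N^{-1}$ in parts (a), (b), (c) respectively, Corollary~\ref{cor:minimal level of cosine} then puts the relevant primitive root of unity ($\zeta_N$, $\zeta_N^2$, or $\zeta_N$) inside $\QQ(\zeta_{N_0})$, forcing the predicted divisibility of $N_0$ by $N'$ (or $\lcm(5, N')$ in part (c), after tracking the $\zeta_5$-factor through the same argument applied to $\alpha/2$ and noting $\sqrt 5$ must also lie in $\QQ(\zeta_{N_0})$).

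The main obstacle is the finitely many residual configurations where $|\alpha|^2$ collapses to a rational number and the totally-real-subfield argument goes vacuous — for example $|\alpha|^2 = 3$ at $N = 8$ in (b), or $|\alpha|^2 = 4$ at $N = 3$ in (c). For these I would fall back on a direct Galois argument: since $\beta = \zeta^{-1}\alpha \in \QQ(\zeta_{N_0})$, every $\sigma \in \Gal(\QQ(\zeta_M)/\QQ(\zeta_{N_0}))$ (for $M$ an ambient cyclotomic field) satisfies $\sigma(\alpha)/\alpha = \sigma(\zeta)/\zeta$, which must be a root of unity. Exhibiting a specific $\sigma$ not fixing the predicted $\QQ(\zeta_{N^\star})$ and computing $\sigma(\alpha)/\alpha$ explicitly — in the $N = 3$ case of (c), for instance, the nontrivial $\sigma \in \Gal(\QQ(\zeta_{15})/\QQ(\zeta_5))$ gives $\sigma(\alpha)/\alpha = \alpha^2/|\alpha|^2 = \alpha^2/4$, which is not a root of unity unless $\alpha$ is itself a multiple of one — then rules out the smaller $N_0$.
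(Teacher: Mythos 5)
Your outline matches the paper's strategy (upper bound from \Cref{L:minimal level divides} plus explicit identities for the exceptions; lower bound by feeding $|\alpha|^2$ into \Cref{cor:minimal level of cosine}), but the lower bound as you describe it does not close two of the cases, and one of them is an infinite family rather than a "residual configuration."

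The serious gap is in part (b) when $4 \mid N$. There $|\alpha|^2 = 3 - \zeta_N^2 - \zeta_N^{-2}$ only controls $\zeta_N^2 = \zeta_{N/2}$, whose minimal level is $N/2$ (if $8 \mid N$) or $N/4$ (if $N \equiv 4 \pmod 8$) --- in either case a \emph{proper} divisor of the claimed answer $N' = N$. For example at $N = 20$ the cosine argument only forces $5 \mid N_0$, while the claim is $N_0 = 20$. This is not a case where $|\alpha|^2$ degenerates to a rational, so your proposed fallback does not engage with it, and it occurs for all $N \equiv 0 \pmod 4$ outside $\{4,8,12\}$. The paper bridges this gap with a separate argument: if the minimal level were the smaller value $N''$, then $\zeta_N^j + \zeta_N^{j+1} - \zeta_N^{j-1} \in \QQ(\zeta_{N''})$ for some $j \in \{0,1\}$, which is excluded for $j=1$ by counting which summands lie in $\QQ(\zeta_{N''})$ and for $j=0$ by noting that the generator $\zeta_N \mapsto -\zeta_N$ of $\Gal(\QQ(\zeta_N)/\QQ(\zeta_{N''}))$ negates the nonzero element $\zeta_N - \zeta_N^{-1}$. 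You would need to supply something equivalent.

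The second issue is part (c) for $N$ coprime to $5$: the cosine argument gives $N' \mid N_0$, but to reach $\lcm(5,N')$ you assert that "$\sqrt{5}$ must also lie in $\QQ(\zeta_{N_0})$" after "applying the same argument to $\alpha/2$," which is not a proof --- the equivalence only gives $\zeta^{-1}\alpha \in \QQ(\zeta_{N_0})$ for an unknown root of unity $\zeta$, and one cannot isolate the $\sqrt{5}$-component without first controlling $\zeta$. The paper instead observes that equality would force $(\zeta_5^{1+i}+\zeta_5^{4+i}) + (\zeta_5^{2+i}+\zeta_5^{3+i})\zeta_N \in \QQ(\zeta_N)$ for some $i$, and rules this out using $[\QQ(\zeta_{5N}):\QQ(\zeta_N)]=4$ and the uniqueness of $5$-decompositions. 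Part (a) and the generic odd cases of (b) and (c) in your write-up are fine.
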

\begin{proof}
For (a), we treat the cases $N \in\{3,4,6\}$ by direct calculation.
In the remaining cases, 
\Cref{L:minimal level divides} implies that 
the minimal level of $\alpha$
divides $N'$. Meanwhile, $|\alpha|^2 = 2 + \zeta_N + \zeta_N^{-1}$, so by \Cref{cor:minimal level of cosine}, $\zeta_N + \zeta_N^{-1}$ has minimal level $N'$. Hence $\alpha$ has minimal level $N'$.

For (b),  we treat the cases $N \in\{3,4,6,8,12\}$ by direct calculation.
In the remaining cases, \Cref{L:minimal level divides} implies that
the minimal level of $\alpha$ divides $N'$. Meanwhile, $|\alpha|^2 = 3 - \zeta_N^2 - \zeta_N^{-2}$;
by applying \Cref{cor:minimal level of cosine} as in the proof of (a), we get the following.

\begin{itemize}
\item For $N$ not divisible by 4 with $N \notin \{3,6\}$,
the minimal level is divisible by $N'$.

\item 
For $N$ divisible by $4$ with $N \notin \{8,12\}$,
the minimal level is divisible by
\[
N'' := \begin{cases} N/2 & 8 \mid N \\
N/4 & 8 \nmid N.
\end{cases}
\]
Equality would mean that $\zeta_N^j + \zeta_N^{j+1} - \zeta_N^{j-1} \in \QQ(\zeta_{N''})$ for some $j \in \{0,1\}$.
For $j=1$, this cannot occur as exactly two of the three summands are in $\QQ(\zeta_{N''})$.
For $j=0$, we would have $\zeta_N - \zeta_N^{-1} \in \QQ(\zeta_{N''})$ which also cannot occur for $N \neq 4$: on one hand $\zeta_N - \zeta_N^{-1} \neq 0$, and on the other hand
$\Gal(\QQ(\zeta_N)/\QQ(\zeta_{N''}))$ is generated by $\zeta_N \mapsto -\zeta_N$ which carries $\zeta_N - \zeta_N^{-1}$ to its negation.

\end{itemize}
We conclude in both cases that $\alpha$ has minimal level $N'$.

For (c), we treat the cases $N \in \{1,2,3,4,6\}$ 
by direct calculation.
In the remaining cases, 
\Cref{L:minimal level divides} implies that 
the minimal level of $\alpha$ divides $\lcm(5, N')$.
Meanwhile, $|\alpha|^2 = 3 -  \zeta_N - \zeta_N^{-1}$,
so by \Cref{cor:minimal level of cosine}, the minimal level is divisible by $N'$;
in particular, there is nothing more to check unless $N$ is coprime to 5. In that case, equality would mean that
$(\zeta_5^{1+i}+\zeta_5^{4+i}) + (\zeta_5^{2+i} + \zeta_5^{3+i})\zeta_N \in \QQ(\zeta_N)$ for some $i \in \{0,\dots,4\}$; since $[\QQ(\zeta_{5N}):\QQ(\zeta_N)] = 4$, this forces $i=0$ and $\zeta_N = 1$ by the uniqueness of $p$-decompositions.
\end{proof}

\begin{remark} \label{rem:cassels degenerate cases}
Note that in \Cref{P:minimal level for Cassels families}, in every exceptional case $\alpha$ is a sum of at most two roots of unity. For an application of this observation, see \Cref{algo:cassels test}.
\end{remark}

\subsection{Decomposition and the Cassels height: prime power case}
\label{subsec:decomposition}

We now recall the behavior of the Cassels height with respect to a $p$-decomposition. We start with the case where the minimal level is divisible by a higher power of a prime; here the geometric arguments of \cite{cassels} were somewhat systematized in \cite[Lemma~2.5]{robinson-wurtz}, which we also recall.

\begin{remark} \label{R:kronecker}
We will use frequently (often implicitly) the following consequence of Kronecker's theorem: for any nonzero cyclotomic integer $\alpha$, $\calM(\alpha) \geq 1$ with equality if and only if $\alpha$ is a root of unity (see \cite[\S 2]{cassels}). For a refinement, see \Cref{L:short sums}.
\end{remark}

\begin{lemma} \label{L:rep-prime-power-case}
Let $\alpha \in \QQ(\zeta_N)$ be a cyclotomic integer of minimal level $N$, with 
$N$ exactly divisible by the prime power $p^n$ with $n > 1$.
Form the $p$-decomposition $\alpha = \sum_{i=0}^{p-1} \eta_i \zeta_{p^n}^i$ and let $X$ be the number of indices $i$ for which $\eta_i \neq 0$.
Then 
\[
\calM(\alpha) = \sum_{i=0}^{p-1} \calM(\eta_i);
\]
hence by \Cref{R:kronecker}, $\calM(\alpha) \geq X$
with equality iff each nonzero $\eta_i$ is a root of unity.
\end{lemma}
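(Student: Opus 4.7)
The plan is to express the Cassels height as a normalized trace, apply the tower law for the extension $\QQ(\zeta_N)/\QQ(\zeta_{N/p})$, and exploit the vanishing of the trace of non-trivial powers of $\zeta_{p^n}$ to obtain an orthogonality relation that makes $\calM$ decompose additively over the $p$-decomposition.

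First, I would observe that complex conjugation corresponds to an element $c$ of the abelian group $\Gal(\QQ(\zeta_N)/\QQ)$, so for every embedding $\tau \colon \QQ(\zeta_N) \hookrightarrow \CC$ we have $\overline{\tau(\alpha)} = \tau(c(\alpha)) = \tau(\bar\alpha)$, and hence $|\tau(\alpha)|^2 = \tau(\alpha \bar\alpha)$. Summing over embeddings and noting that each Galois conjugate of $\alpha$ is hit $[\QQ(\zeta_N):\QQ(\alpha)]$ times by embeddings of $\QQ(\zeta_N)$, one obtains
\[
\calM(\alpha) = \frac{1}{\varphi(N)} \Trace_{\QQ(\zeta_N)/\QQ}(\alpha \bar\alpha),
\]
and the analogous identity holds for each $\eta_i \in \QQ(\zeta_{N/p})$ with $\varphi(N)$ replaced by $\varphi(N/p)$ (with the convention that $\calM(0) = 0$).

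Next, I would expand
\[
\alpha \bar\alpha = \sum_{i,j=0}^{p-1} \eta_i \bar\eta_j \zeta_{p^n}^{i-j}
\]
and apply the tower law $\Trace_{\QQ(\zeta_N)/\QQ} = \Trace_{\QQ(\zeta_{N/p})/\QQ} \circ \Trace_{\QQ(\zeta_N)/\QQ(\zeta_{N/p})}$. Since complex conjugation preserves $\QQ(\zeta_{N/p})$, each coefficient $\eta_i \bar\eta_j$ lies in $\QQ(\zeta_{N/p})$, so the inner trace reduces to computing $T_{ij} := \Trace_{\QQ(\zeta_N)/\QQ(\zeta_{N/p})}(\zeta_{p^n}^{i-j})$. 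For $i = j$ this equals $p$; for $i \neq j$ we have $0 < |i-j| < p$, so $p \nmid (i-j)$ and $\zeta_{p^n}^{i-j}$ is a primitive $p^n$-th root of unity. Because $n \geq 2$, the Galois group $\Gal(\QQ(\zeta_N)/\QQ(\zeta_{N/p}))$ consists of the automorphisms $\zeta_{p^n} \mapsto \zeta_{p^n} \zeta_p^k$ for $k = 0, \dots, p-1$, whence
\[
T_{ij} = \zeta_{p^n}^{i-j} \sum_{k=0}^{p-1} \zeta_p^{k(i-j)} = 0.
\]

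Consequently $\Trace_{\QQ(\zeta_N)/\QQ(\zeta_{N/p})}(\alpha \bar\alpha) = p \sum_i \eta_i \bar\eta_i$, and applying the outer trace and dividing by $\varphi(N) = p \cdot \varphi(N/p)$ yields $\calM(\alpha) = \sum_{i=0}^{p-1} \calM(\eta_i)$. The inequality $\calM(\alpha) \geq X$ and its equality case then follow immediately from \Cref{R:kronecker}. The only subtle step is the character-sum calculation of $T_{ij}$, which rests squarely on the hypothesis $n > 1$; the rest is a careful unwinding of definitions.
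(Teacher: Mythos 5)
Your proof is correct. The paper disposes of this lemma by citing Cassels's (3.16) (equivalently \cite[Lemma~5.46]{mckee-smyth}), and the argument you give — writing $\calM$ as $\tfrac{1}{\varphi(N)}\Trace(\alpha\overline{\alpha})$, expanding against the $p$-decomposition, and killing the cross terms via $\Trace_{\QQ(\zeta_N)/\QQ(\zeta_{N/p})}(\zeta_{p^n}^{i-j})=0$ for $p \nmid (i-j)$, which is exactly where $n>1$ enters — is precisely the standard proof behind that citation, so you have simply supplied the details the paper outsources.
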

\begin{proof}
This is \cite[(3.16)]{cassels}; see also \cite[Lemma~5.46]{mckee-smyth}.
\end{proof}

\begin{lemma}\label{conjLemma}
Suppose $\beta$ is equivalent to $\alpha + \zeta_{p^n} \gamma$, where $\alpha \in \QQ(\zeta_{M'})$ and $\gamma \in \QQ(\zeta_{M''})$. Let $m$ be the largest integer such that $\zeta_{p^m} \in \QQ(\zeta_{M'})$ or $\QQ(\zeta_{M''}
)$. Then if $m<n$,
\begin{equation} \house{\beta}^2 \geq |\alpha|^2 + |\gamma|^2 + 2 \ |\alpha| \cdot |\gamma| \cdot \cos(\theta) \end{equation}
where 
\[ 
\theta = \begin{cases}
2 \pi / p^n & \mathrm{if\ } m=0 \\
\pi / p^{n-m}& \mathrm{if\ } m>0.
\end{cases}
\]
Moreover, if $(M', M'')=1$, then
\begin{equation} \label{eq:conjLemma strong case}
\house{\beta}^2 \geq \house{\alpha}^2 + \house{\gamma}^2 + 2 \ \house{\alpha} \cdot \house{\gamma} \cdot \cos(\theta).
\end{equation}
\end{lemma}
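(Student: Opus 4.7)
The plan is to exhibit a Galois conjugate of $\beta$ of the form $\alpha + \zeta_{p^n}^k \gamma$ whose modulus is controlled, and then to optimize over $k$. Since the house is invariant under equivalence, I would replace $\beta$ by $\alpha + \zeta_{p^n}\gamma$ and work inside $L := \QQ(\zeta_N)$ with $N := \lcm(M',M'',p^n)$. Any $\sigma \in \Gal(L/\QQ(\zeta_{M'},\zeta_{M''}))$ fixes $\alpha,\gamma$ pointwise and acts by $\zeta_{p^n} \mapsto \zeta_{p^n}^k$ for some $k \in (\ZZ/p^n\ZZ)^\times$; the admissible $k$ are exactly those fixing the maximal $p$-power root of unity in $\QQ(\zeta_{M'},\zeta_{M''})$, which is $\zeta_{p^m}$ by hypothesis. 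Concretely, the admissible set is the coset $\{k \equiv 1 \pmod{p^m}\}$ when $m > 0$, and all of $(\ZZ/p^n\ZZ)^\times$ when $m = 0$ (the latter forcing $p$ odd, since $\zeta_2 = -1 \in \QQ$ would otherwise give $m \geq 1$). Each resulting $\sigma(\beta) = \alpha + \zeta_{p^n}^k\gamma$ has absolute value at most $\house{\beta}$.

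Setting $\phi := \arg(\overline{\alpha}\gamma)$, we have
\[
|\alpha + \zeta_{p^n}^k \gamma|^2 = |\alpha|^2 + |\gamma|^2 + 2|\alpha||\gamma|\cos(\phi + 2\pi k/p^n),
\]
so it suffices to show that some admissible $k$ brings $\phi + 2\pi k/p^n$ within $\theta$ of $0$ modulo $2\pi$. For $m > 0$, the admissible angles are spaced exactly $2\pi/p^{n-m}$ apart on the circle, so a pigeonhole argument places one within $\pi/p^{n-m} = \theta$ of $-\phi$. For $m = 0$, the integers coprime to $p$ in $\{1, \ldots, p^n-1\}$ are separated by gaps of $1$ except at multiples of $p$, where gaps of $2$ occur; hence the largest angular gap is $4\pi/p^n$, placing some admissible angle within $2\pi/p^n = \theta$ of $-\phi$. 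Either way $\cos \geq \cos(\theta)$, yielding the main inequality.

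For the strengthened version under $(M',M'') = 1$, I would invoke the Chinese Remainder Theorem decomposition of $\Gal(L/\QQ) \cong (\ZZ/N\ZZ)^\times$ indexed by prime powers dividing $N$. Coprimality forces at most one of $M', M''$ to be divisible by $p$; WLOG $p \nmid M''$ and $M' = p^m M'_1$ with $(M'_1,p) = 1$. Fix $\tau_1 \in \Gal(\QQ(\zeta_{M'})/\QQ)$ and $\tau_2 \in \Gal(\QQ(\zeta_{M''})/\QQ)$ realizing $|\tau_1(\alpha)| = \house{\alpha}$ and $|\tau_2(\gamma)| = \house{\gamma}$. Since $M'_1$ and $M''$ are coprime to each other and to $p$, CRT allows me to assemble $\tau_1$, $\tau_2$, and an independently chosen $p$-component into a single $\sigma \in \Gal(L/\QQ)$, subject only to the compatibility $k \equiv a_1 \pmod{p^m}$ where $a_1$ records $\tau_1$'s action on $\zeta_{p^m}$. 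The admissible $k$ again form a coset of the same shape as before, so the same pigeonhole argument on the phase of $\zeta_{p^n}^k \tau_2(\gamma)$ against $\tau_1(\alpha)$ yields \eqref{eq:conjLemma strong case}.

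The one delicate point is the $m = 0$ case, where the missing multiples of $p$ enlarge the worst-case angular gap from $2\pi/p^n$ to $4\pi/p^n$ and produce the asymmetric formula for $\theta$; the rest is routine cyclotomic Galois theory and elementary trigonometry.
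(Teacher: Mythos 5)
Your proof is correct. The paper does not prove this lemma itself but defers to \cite[Lemma~2.5]{robinson-wurtz}, and your argument---conjugating $\zeta_{p^n}$ over the fixed field $\QQ(\zeta_{M'},\zeta_{M''})$ (respectively over $\QQ$ in the coprime case, using CRT to decouple the $p$-component from the choices of $\tau_1,\tau_2$) and then pigeonholing the admissible angles, with the worst-case gap doubling to $4\pi/p^n$ when $m=0$ because multiples of $p$ are excluded---is exactly the standard argument used there.
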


\begin{proof}
See \cite[Lemma~2.5]{robinson-wurtz}.
\end{proof}

\subsection{Decomposition and the Cassels height: prime case}

We next turn to the case of removing an isolated prime from the minimal level.

\begin{lemma} \label{L:rep-prime-case}
Let $\alpha \in \QQ(\zeta_N)$ be a cyclotomic integer of minimal level $N$, with $N$ exactly divisible by the odd prime $p$. Choose a $p$-decomposition $\alpha = \sum_{i=0}^{p-1} \eta_i \zeta_{p}^i$ 
that minimizes the number $X$ of indices $i$ for which $\eta_i \neq 0$.
Let $S$ be the set of such indices.
\begin{enumerate}
\item[(a)]
We have $2 \leq X \leq p-1$.
\item[(b)]
We have 
\begin{equation} \label{eq:rep-prime-case}
(p-1) \calM(\alpha) = (p-X) \sum_{i \in S} \calM(\eta_i) + \sum_{i<j \in S} \calM(\eta_i - \eta_j);
\end{equation}
hence by \Cref{R:kronecker}, $\calM(\alpha) \geq \frac{X(p-X)}{p-1}$ with equality iff all of the nonzero $\eta_i$ are equal to the same root of unity (this forces $X \leq \tfrac{p-1}{2}$).
\item[(c)]
We have
\begin{equation} \label{eq:rep-prime-case extended}
\calM(\alpha) \geq \frac{p}{2} - \frac{1}{p-1} \left( \left\lfloor \frac{p}{p-X} \right\rfloor \binom{p-X}{2} + \binom{p-(p-X) \left\lfloor \frac{p}{p-X} \right\rfloor}{2} \right).
\end{equation}
(Note that the right-hand side of \eqref{eq:rep-prime-case extended} is strictly increasing as a function of $X$.)
\item[(d)]
If $\calM(\alpha) = \frac{p+3}{4}$, then $X \leq \frac{p+1}{2}$. Moreover, if equality holds, then up to equivalence, we can ensure that all but one $\eta_i$ equals $1$ and the other equals $\zeta_6$.
\item[(e)]
If $\calM(\alpha) < \frac{p+3}{4}$, then $X \leq \frac{p-1}{2}$.
\end{enumerate}
\end{lemma}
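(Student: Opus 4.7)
The plan is to prove parts (a)--(e) in order, with (b) providing a trace identity that feeds into the combinatorial bound (c), which in turn drives the arithmetic conclusions (d) and (e).

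For (a), the lower bound $X \geq 2$ will follow from the minimal level hypothesis: $X = 0$ forces $\alpha = 0$, while $X = 1$ makes $\alpha = \eta_i \zeta_p^i$ equivalent to $\eta_i \in \QQ(\zeta_{N/p})$, contradicting \Cref{L:minimal level divides}. For the upper bound $X \leq p-1$, I will exploit the non-uniqueness of the $p$-decomposition when $n = 1$: via the relation $\sum_i \zeta_p^i = 0$, shifting all $\eta_i$ by any nonzero $\eta_{i_0}$ yields another valid decomposition with strictly fewer nonzero entries, so $X = p$ cannot be minimal. For (b), I will expand $\alpha \bar\alpha = \sum_{i,j} \eta_i \bar\eta_j \zeta_p^{i-j}$ and apply the values $\Trace_{\QQ(\zeta_N)/\QQ(\zeta_{N/p})}(\zeta_p^k) = p - 1$ when $p \mid k$ and $-1$ otherwise, arriving at the key identity
\[
\Trace_{\QQ(\zeta_N)/\QQ(\zeta_{N/p})}(\alpha \bar\alpha) = \sum_{0 \leq i < j \leq p-1} |\eta_i - \eta_j|^2.
\]
Taking a second trace down to $\QQ$ and dividing by $[\QQ(\zeta_N):\QQ]$ gives $(p-1)\calM(\alpha) = \sum_{i<j} \calM(\eta_i - \eta_j)$; splitting this sum according to whether each index lies in $S$ (using $\calM(\eta_i - \eta_j) = \calM(\eta_j)$ when exactly one of $i,j$ is outside $S$) produces the stated formula. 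Kronecker's bound $\calM(\cdot) \geq 1$ then gives $\calM(\alpha) \geq X(p-X)/(p-1)$, and the equality case forces all nonzero $\eta_i$ to be a common root of unity, at which point minimality of $X$ under shifting restricts $X \leq (p-1)/2$.

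For (c), I will view the $\eta_i$ as defining a partition of $\{0,\ldots,p-1\}$ into classes on which $\eta_i$ is constant (the zero class having size $p-X$). By minimality of $X$, every class has size at most $p-X$: a class of size $s$ can be shifted to zero, producing a decomposition with $p-s$ nonzero entries, forcing $p-s \geq X$. Bounding $\calM(\eta_i - \eta_j) \geq 1$ for each inter-class pair then reduces the problem to maximizing $\sum \binom{c_i}{2}$ over partitions $\{c_i\}$ of $p$ with each $c_i \leq p-X$; by convexity this maximum is attained by stuffing as many classes as possible to the cap size $p-X$, leaving a single residual class of size $p - (p-X)\lfloor p/(p-X) \rfloor$. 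This yields the stated bound, and strict monotonicity in $X$ follows because decreasing the cap $p-X$ by one strictly decreases this combinatorial maximum.

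The main obstacle is the equality analysis in (d), which will also deliver (e) as a byproduct. Applying (c) at $X = (p+1)/2$, where $p - X = (p-1)/2$ and $\lfloor p/(p-X) \rfloor = 2$, a direct calculation yields $\calM(\alpha) \geq (p+3)/4$; combined with the monotonicity from (c), this proves (e) by contrapositive. For the equality case of (d), the condition $\calM(\alpha) = (p+3)/4$ with $X = (p+1)/2$ forces both (i) the maximizing combinatorial configuration, namely two classes of size $(p-1)/2$ and a singleton, and (ii) every inter-class difference to be a root of unity. One of the two large classes must be the zero class; normalizing the other to constant value $1$, the singleton value $w$ must be a root of unity with $1-w$ also a root of unity. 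Writing $w = e^{i\theta}$, the equation $|1 - e^{i\theta}|^2 = 2 - 2\cos\theta = 1$ forces $\theta = \pm\pi/3$, so $w \in \{\zeta_6, \zeta_6^{-1}\}$, which is equivalent to $\zeta_6$ after complex conjugation if needed.
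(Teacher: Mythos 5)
Your proof is correct and follows essentially the same route as the paper: the symmetric identity $(p-1)\calM(\alpha)=\sum_{i<j}\calM(\eta_i-\eta_j)$ split according to membership in $S$ for (b), the partition-by-value argument with the cap $p-X$ on class sizes and convexity for (c), and the equality analysis (forced partition into two parts of size $\tfrac{p-1}{2}$ plus a singleton, with the singleton value a root of unity $w$ such that $w-1$ is also a root of unity) for (d) and (e). The only difference is that you supply the trace computation underlying (b) and the upper/lower bounds in (a) explicitly, where the paper simply cites Cassels and declares (a) evident.
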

\begin{proof}
Part (a) is evident.
For (b), we follow \cite[(3.4)]{cassels}
(see also \cite[Lemma~5.43]{mckee-smyth}): write
\begin{equation} \label{eq:rep-prime-case symmetric}
(p-1) \calM(\alpha) = \sum_{0 \leq i < j \leq p-1} \calM(\eta_i - \eta_j)
\end{equation}
and sort terms based on the size of $\{i,j\} \cap S$
to obtain \eqref{eq:rep-prime-case}.

For (c), 
note that for any $p$-decomposition $\alpha = \sum_{i=0}^{p-1} \eta_i \zeta_{p}^i$, we may partition the index set $\{0,\dots,p-1\}$ according to the value of $\eta_i$; this partition is independent of the choice of the $p$-decomposition, and minimizing $X$ amounts to normalizing so that the most common value of $\eta_i$ is 0. This means that the maximum size of any part is $p-X$;
consequently, if we bound the right-hand side of \eqref{eq:rep-prime-case symmetric} from below by taking 1 for each pair $(i,j)$ in distinct parts (per \Cref{R:kronecker}), this bound is minimized by taking as many parts as possible of size $p-X$, yielding \eqref{eq:rep-prime-case extended}.

For (d) and (e), 
we follow \cite[Corollary of Lemma~1]{cassels} (see also  \cite[Lemma~5.44]{mckee-smyth}).
Note that for $X = \tfrac{p+1}{2}$, the right-hand side of \eqref{eq:rep-prime-case extended} specializes to $\tfrac{p+3}{4}$: in the language of the proof of (c), the optimal partition consists of two parts of size $X-1$ and one of size 1,
so there are $(X-1)^2 + 2(X-1) = \tfrac{(p-1)(p+3)}{4}$ pairs $(i,j)$ in distinct parts.
This yields (d) and (e) except for the second assertion of (d); for this, we note from the proof of (c) that equality in \eqref{eq:rep-prime-case extended} means that:
\begin{itemize}
    \item for every $i \in S$, $\eta_i$ is a root of unity;
    \item there is a single $j \in S$ such that for $i \neq j$, the $\eta_i$ are all equal;
    \item $\eta_i - \eta_j$ is also a root of unity.
\end{itemize}
We may assume that $\eta_i = 1$ for all $i \in S \setminus \{j\}$; then $\eta_j$ is a root of unity such that $\eta_j-1$ is also a root of unity, which forces $\eta_j \in \{\zeta_6, \zeta_6^{-1}\}$ (compare \Cref{R:too many sides of a triangle}).
\end{proof}

\begin{remark} \label{R:prime case next}
We gain more flexibility in \Cref{L:rep-prime-case} by accounting not just for the most common value 
among the $\eta_i$, but also for the second most common value using similar logic. To wit, 
let $Y$ be the number of nonzero $\eta_i$ which are \emph{not} equal to the most frequently occurring nonzero value.
Then as in \eqref{eq:rep-prime-case extended}, we obtain
\begin{equation} \label{eq:prime case next}
\sum_{i < j \in S} \calM(\eta_i - \eta_j) \geq 
\binom{X}{2} - \left( \left\lfloor \frac{X}{X-Y} \right\rfloor \binom{X-Y}{2} + \binom{X - (X-Y) \lfloor \frac{X}{X-Y} \rfloor}{2}\right).
\end{equation}
Again, the right-hand side is strictly increasing as a function of $Y$
and bounded below by $Y(X-Y)$, with equality when $Y \leq \tfrac{X}{2}$. In particular,
\begin{gather} \label{eq:prime case next Y0}
    \sum_{i<j \in S} \calM(\eta_i - \eta_j) < X-1 \Longrightarrow Y=0 \\
 \label{eq:prime case next Y1}
 \sum_{i<j \in S} \calM(\eta_i - \eta_j) < 2X-2 \Longrightarrow Y \leq 1.    
\end{gather}
\end{remark}

\begin{remark} \label{R:too many sides of a triangle}
We observe from the proof of \Cref{L:rep-prime-case}(d) that
\eqref{eq:rep-prime-case extended} cannot be sharp when $X > \tfrac{2p}{3}$: if $\eta_i, \eta_j, \eta_k$ are distinct roots of unity and $\calM(\eta_i-\eta_j) = \calM(\eta_j-\eta_k) = 1$, then up to equivalence we have $\eta_i = 1, \eta_j = \zeta_6, \eta_k = \zeta_6^{-1}$ and hence $\calM(\eta_i-\eta_k) = 3$.
Similarly, \eqref{eq:prime case next} cannot be sharp when $Y > \tfrac{X}{2}$.
\end{remark}

\subsection{Lower bounds on the Cassels height}
\label{subsec:bounds-cassels-height}

Recall from \Cref{T:loxton} that the Cassels height $\calM(\alpha)$ can be effectively bounded below in terms of the minimum weight $\N(\alpha)$.
We pick up a theme from \cite{cassels} and make this explicit for a few small values.

\begin{lemma} \label{L:short sums}
Let $\alpha$ be a cyclotomic integer.
\begin{enumerate}
\item[(a)]
If $\N(\alpha) \geq 1$, then $\calM(\alpha) \geq 1$ with equality iff $\alpha$ is a root of unity.
\item[(b)]
If $\N(\alpha) \geq 2$, then $\calM(\alpha) \geq 1\tfrac{1}{2}$ with equality iff $\alpha$ is equivalent to $1+\zeta_5$.
Moreover, if equality fails, then one of the following occurs:
\begin{itemize}
\item $\calM(\alpha) = 1 \tfrac{2}{3}$ and $\alpha$ is equivalent to $1+\zeta_7$;
\item $\calM(\alpha) = 1 \tfrac{3}{4}$ and $\alpha$ is equivalent to $1+\zeta_{30}$;
\item $\calM(\alpha) = 1 \tfrac{4}{5}$ and $\alpha$ is equivalent to $1+\zeta_{11}$;
\item $\calM(\alpha) = 1 \tfrac{5}{6}$ and $\alpha$ is equivalent to $1+\zeta_{13}$ or $1+\zeta_{42}$;
\item
or $\calM(\alpha) \geq 1\tfrac{7}{8}$.
\end{itemize}
\item[(c)]
If $\N(\alpha) \geq 3$, then $\calM(\alpha)\geq 2$ with equality iff $\alpha$ is equivalent to $1+\zeta_7 +\zeta_7^i$ for some $i \in \{2,3\}$ or $\zeta_3 - \zeta_5 - \zeta_5^i$ for some $i \in \{2,4\}$. Moreover, if equality fails, then $\calM(\alpha) \geq 2 \tfrac{1}{4}$.
\item[(d)]
If $\N(\alpha) \geq 4$, then $\calM(\alpha) \geq 2\tfrac{1}{2}$
with equality iff
$\alpha$ is equivalent to
$(1+\zeta_5)(1+\zeta_7)$ or
$-\zeta_3 + \zeta_7^i + \zeta_7^j + \zeta_7^k$ for some $1 \leq i < j < k \leq 6$.
\end{enumerate}
\end{lemma}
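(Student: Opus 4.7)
The plan is to prove all four parts using the identity
\[
\calM(\alpha) = n + 2 \sum_{1 \leq i < j \leq n} \frac{\mu(m_{ij})}{\varphi(m_{ij})}, \qquad m_{ij} := \mathrm{ord}(\zeta_i \zeta_j^{-1}),
\]
valid whenever $\alpha = \zeta_1 + \cdots + \zeta_n$ is a sum of $n = \N(\alpha)$ roots of unity, which we may take inside the minimal level of $\alpha$ by \Cref{lem:sum in minimal level}. This follows from the standard fact that the reduced trace of a primitive $m$-th root of unity equals $\mu(m)/\varphi(m)$; each cross-term $2\mu(m)/\varphi(m)$ lies in $[-1, 1/2]$, attaining $-1$ only at $m = 3$.

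Part (a) is immediate from Kronecker (\Cref{R:kronecker}). For parts (b) and (c), I bifurcate on $\N(\alpha)$: if $\N(\alpha)$ strictly exceeds the stated threshold, the bound follows from the subsequent part; otherwise $\N(\alpha)$ equals the threshold and I enumerate. For (b) with $\N(\alpha) = 2$, writing $\alpha$ as equivalent to $1 + \zeta_N$ gives $\calM(\alpha) = 2 + 2\mu(N)/\varphi(N)$, and enumerating squarefree $N$ with $\mu(N) = -1$ and $\varphi(N) \leq 16$ yields precisely the claimed values at $N \in \{5, 7, 30, 11, 13, 42\}$, with $\calM(\alpha) \geq 15/8$ in all remaining cases. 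For (c) with $\N(\alpha) = 3$, the three cross-orders $(m_{12}, m_{13}, m_{23})$ are constrained by the multiplicative relation $(\zeta_1 \zeta_2^{-1})(\zeta_2 \zeta_3^{-1}) = \zeta_1 \zeta_3^{-1}$; a case analysis on compatible multisets with $\mu(m_{ij}) = -1$ and small $\varphi(m_{ij})$ extracts the two listed equality families together with the next-smallest value $2\tfrac{1}{4}$.

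Part (d) at $\N(\alpha) = 4$ is analogous but richer, over six cross-orders with more entangled multiplicative constraints; a key sharpening is that at most three of the $m_{ij}$ can equal $3$ simultaneously (otherwise the four $\zeta_i$'s would be forced into a coset of $\langle \zeta_3 \rangle$ of size $\leq 3$, impossible with four distinct values), which cuts the candidate multisets down to a manageable list and leaves only $(1+\zeta_5)(1+\zeta_7)$ and the $\zeta_3$-twisted sums $-\zeta_3 + \zeta_7^i + \zeta_7^j + \zeta_7^k$ achieving $\calM = 5/2$. For $\N(\alpha) \geq 5$, I induct on the minimal level $N$ of $\alpha$: whenever $N$ has a prime factor $p \geq 5$, or a prime $p \in \{2, 3\}$ with $p^2 \mid N$, I apply \Cref{L:rep-prime-case} or \Cref{L:rep-prime-power-case} to express $\calM(\alpha)$ as a positive combination of Cassels heights of $\eta_i$ (and, when $p \| N$, of differences $\eta_i - \eta_j$), all living in strictly smaller cyclotomic fields. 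Combining the strong-induction bounds from (a)--(d) with $\sum_i \N(\eta_i) \geq \N(\alpha) \geq 5$ yields $\calM(\alpha) \geq 5/2$; the residual base cases of minimal level $3$ or $4$ are handled by noting that $\calM$ is integer-valued on $\ZZ[\zeta_3]$ and $\ZZ[i]$, forcing $\calM(\alpha) \geq 3$ as soon as $\alpha$ is not a unit.

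The main obstacle is the enumeration at $\N(\alpha) = 4$: the six cross-orders $m_{ij}$ are coupled through the multiplicative structure on $\{\zeta_i\}$, so one cannot optimize each term independently, and isolating the two equality families demands a careful joint analysis of which multisets are realizable. A secondary subtlety lies in making the induction for $\N(\alpha) \geq 5$ uniform across all minimal levels, particularly the edge cases where $N$ has few or small prime factors and one must carefully pick which prime to decompose along.
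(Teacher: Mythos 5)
Your route is genuinely different from the paper's. The paper disposes of (a)--(c) by citation (Kronecker; Cassels's Lemmas 2--3; \cite[Theorem~9.0.1]{calegari-morrison-snyder}) and proves only (d), doing so uniformly for all $\N(\alpha)\geq 4$ by reducing to odd squarefree minimal level and decomposing along the largest prime via \Cref{L:rep-prime-case}, with (b) and (c) as inputs. You instead propose a self-contained argument from the trace identity $\calM(\alpha)=n+2\sum_{i<j}\mu(m_{ij})/\varphi(m_{ij})$, enumerating cross-order tuples for $\N(\alpha)\in\{2,3,4\}$ and inducting on the minimal level for $\N(\alpha)\geq 5$. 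The identity is correct, the bifurcation on $\N(\alpha)$ is logically sound (no circularity), and the $\N(\alpha)=2$ enumeration genuinely reproves part (b). What your approach buys is independence from the cited literature; what it costs is that the entire difficulty of the lemma is relocated into the unexecuted enumerations.

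That is the genuine gap: for (c) at $\N(\alpha)=3$ and especially (d) at $\N(\alpha)=4$, the realizability analysis of cross-order tuples is the theorem, not a routine verification. One cannot optimize the $m_{ij}$ independently: for instance $(m_{12},m_{13},m_{23})=(3,35,35)$ would give $\calM=4/3$ and must be excluded because two order-$35$ roots of unity cannot have ratio of order $3$; systematically ruling out all such configurations while isolating the equality families is exactly Cassels's Lemma~3 for $n=3$, and the $n=4$ analogue over six coupled orders is harder still (compare \cite[Lemma~7.0.4]{calegari-morrison-snyder}). Until that case analysis is written down, parts (c) and (d) are plans rather than proofs. Three smaller points also need repair. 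First, the cross-terms do not lie in $[-1,1/2]$: the value is $+1$ at $m=6$ and $+2$ at $m=1$ (equal summands are allowed in a minimal representation, e.g.\ $\alpha=2$); only $m=2$ is excluded by minimality, so the correct statement is that each term is $\geq -1$ with equality iff $m=3$. Second, your justification that at most three $m_{ij}$ equal $3$ assumes the four $\zeta_i$ are distinct, which they need not be; the correct argument is that four or more such pairs force the $\zeta_i$ into a single coset of $\langle\zeta_3\rangle$, whence either a full coset appears (and $\alpha$ collapses to a shorter sum, contradicting $\N(\alpha)=4$) or repeated summands contribute $+2$ terms that push $\calM$ well above $5/2$. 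Third, in the induction for $\N(\alpha)\geq 5$ the bound $(p-X)\sum_i\calM(\eta_i)$ alone is insufficient in some cases --- e.g.\ $p=5$, $X=3$ gives only $\calM(\alpha)\geq 2$ --- and you must invoke the difference terms $\calM(\eta_i-\eta_j)$ together with the observation that minimality of $X$ forbids all nonzero $\eta_i$ from being equal when $X>p-X$.
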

\begin{proof}
Part (a) is a restatement of \Cref{R:kronecker}.
For (b), apply \cite[Lemma 2]{cassels} and \cite[Lemma~2.2]{robinson-wurtz} (see also \eqref{eq:formula for cassels height}).
For (c), apply \cite[Lemma 3]{cassels} (or \cite[Lemma~5.47]{mckee-smyth})
and \cite[Theorem~9.0.1]{calegari-morrison-snyder}.

For (d), suppose that $\alpha$ is a counterexample of minimal level $N$;
using \Cref{L:rep-prime-power-case}, we see that $N$ is odd and squarefree. Now set notation as in \Cref{L:rep-prime-case}, taking $p$ to be the largest prime factor of $N$ (if it exists). 
\begin{itemize}
\item 
Suppose that $p \geq 11$. Since $\tfrac{p+3}{4} > 2 \tfrac{1}{2}$, by \Cref{L:rep-prime-case}(e) we have $X \leq \tfrac{p-1}{2}$.
By (b), (c), and \Cref{L:rep-prime-case}(b),
if $X \geq 4$, then $\calM(\alpha) \geq \tfrac{4(p-4)}{p-1} \geq 2 \tfrac{4}{5}$;
if $X = 3$, then $\calM(\alpha) \geq \tfrac{3.5(p-3)}{p-1} \geq 2 \tfrac{4}{5}$;
if $X = 2$, then $\calM(\alpha) \geq \tfrac{3(p-2)}{p-1} \geq 2 \tfrac{7}{10}$.
\item
Suppose that $p = 7$.
By \Cref{L:rep-prime-case}(d)(e), if $\calM(\alpha) \leq 2 \frac{1}{2}$, then $X\leq 4$ with equality only if
$\alpha$ is equivalent to $-\zeta_3 + \zeta_7^i + \zeta_7^j + \zeta_7^k$ for some $1 \leq i < j < k \leq 6$.
If $X = 3$, then $\calM(\alpha) \geq \tfrac{3.5(p-3)}{p-1} + \tfrac{2}{p-1} > 2 \tfrac{1}{2}$.
If $X = 2$, then $\calM(\alpha) \geq \tfrac{3(p-2)}{p-1} = 2 \tfrac{1}{2}$ with equality iff $\alpha$ is equivalent to $(1+\zeta_5)(1+\zeta_7)$.
\item
Suppose that $p = 5$.
Any $\eta \in \ZZ[\zeta_3]$ with $\calM(\eta) > 1$ in fact satisfies $\calM(\eta) \geq 3$
because $\calM(\eta) = |\eta|^2$ (see also \Cref{lem:cassels low height}).
If $X = 4$, then we have equality in \eqref{eq:rep-prime-case extended} which is impossible by \Cref{R:too many sides of a triangle}.
If $X = 3$, then we must have $\N(\eta_i) > 1$ for some $i$;
now \Cref{L:rep-prime-case}(b) implies
$10 = 2\sum_{i \in S} \calM(\eta_i) + \sum_{i<j \in S} \calM(\eta_i-\eta_j)$, but the inequalities $\sum_{i \in S} \calM(\eta_i) \geq 1+1+3=5$
and $\sum_{i<j \in S} \calM(\eta_i-\eta_j) \geq 0$ cannot be sharp at the same time.
If $X=2$, then $\calM(\eta_0) + \calM(\eta_1) \geq 4$
but \Cref{L:rep-prime-case}(b) implies
$10 = 3(\calM(\eta_i) + \calM(\eta_j)) + \calM(\eta_i-\eta_j) \geq 12$.
\item 
Suppose that $N \in \{1,3\}$. By \cite[Lemma~7.0.3]{calegari-morrison-snyder} we must have $N = 1$,
but then $\N(\alpha) \geq 4$ implies $\calM(\alpha) = |\alpha| \geq 4$.
\qedhere
\end{itemize}
\end{proof}

\begin{remark} \label{rem:more short sums}
While \Cref{L:short sums} suffices for the proof of \Cref{T:main}, 
it would be of interest for other applications to push this result further.
For example, Dvornicich--Zannier have showed \cite[Corollary~1]{dvornicich-zannier}
that one can use an effective version of \Cref{T:loxton} to solve equations of the form $P(x_1,\dots,x_n,y) = 0$ where $P$ is a polynomial over a number field $K$; $x_1,\dots,x_n$ are constrained to be roots of unity; and $y$ is constrained to generate a cyclotomic extension of $K$. In the special case $K = \QQ$, we may see this by writing $\house{y} \leq c$ where $c$ is the supremum of $|y|$ over all $(x_1,\dots,x_n,y) \in \CC^{n+1}$ with $|x_1| = \cdots = |x_n| = 1$ and $P(x_1,\dots,x_n,y) = 0$; using \Cref{T:loxton} to deduce a bound on $\N(y)$ then yields a polynomial equation in a fixed number of roots of unity, for which there are various algorithms available.

With this as motivation, define $f(n)$ to be the infimum of $\calM(\alpha)$ over all $\alpha$ with $\N(\alpha) \geq n$; then \Cref{L:short sums} asserts that
\[
f(1) = 1, \qquad f(2) = 1 \tfrac{1}{2}, \qquad f(3) = 2,  \qquad f(4) = 2 \tfrac{1}{2}.
\]
We give some upper bounds for $f(n)$ for small $n$ in \Cref{table:upper bounds}. 
The implicitly claimed values of $\N(\alpha)$ can be verified using \Cref{rem:determine minimal length}.
\begin{table}[ht]
\caption{Some upper bounds on $f(n)$.}
\label{table:upper bounds}
\begin{tabular}{c|c|c}
$n$ & $\calM(\alpha)$ & $\alpha$ with $\N(\alpha) \geq n$\\
\hline
1 & 1 & $1$ \\
2 & $1 \tfrac{1}{2}$ & $1+\zeta_5$ \\
3 & 2 & $1 + \zeta_7 + \zeta_7^2$, $\zeta_3 - \zeta_5 - \zeta_5^2$ \\
4 & $2 \tfrac{1}{2}$ & $(1+\zeta_5)(1 + \zeta_7)$, $\zeta_3 - \zeta_7 - \zeta_7^2 - \zeta_7^3$ \\
\hline
5 & $3$ & $(1+\zeta_5)(1+\zeta_7) + \zeta_7^2, 1 + \zeta_{11} + \zeta_{11}^2 + \zeta_{11}^3 + \zeta_{11}^4$ \\
6 & $3$ & $(1 + \zeta_5)(1 + \zeta_{7} + \zeta_7^2)$  \\
7 & $3 \tfrac{1}{2}$ & $1-\zeta_{35}^6 + \zeta_{35}^7 + \zeta_{35}^{10} +\zeta_{35}^{15} + \zeta_{35}^{17} + \zeta_{35}^{22}$ \\
8 & $3 \tfrac{3}{4}$ & $(1+\zeta_5)(\zeta_3 - \zeta_7 - \zeta_7^2 - \zeta_7^3)$ \\
9 & $4$ & $(1+\zeta_7+\zeta_7^2)(\zeta_3-\zeta_5-\zeta_5^2)$ \\ 
10 & $4 \tfrac{1}{2}$ & $(1+\zeta_5)(1+\zeta_{11}+\zeta_{11}^2+\zeta_{11}^3+\zeta_{11}^4)$ \\
11 & $5 \tfrac{1}{8}$ & $\zeta_3 - (1+\zeta_5)(\zeta_{11} + \zeta_{11}^2 + \zeta_{11}^3 + \zeta_{11}^4 + \zeta_{11}^5)$
\end{tabular}
\end{table}
\end{remark}

\section{Computational methods}

We describe some computational methods relevant to our work.

\subsection{Computing the basic quantities}

We describe some basic algorithms for handling cyclotomic integers. Implementations of these as SageMath functions can be found in the file \verb+utils.sage+ in the GitHub repository.

\begin{algorithm} \label{algo:minimal level}
Given a positive integer $N$ and a cyclotomic integer
$\alpha \in \ZZ[\zeta_N]$, compute the minimal level $N_0$ of $\alpha$ and a root of unity $\zeta$ such that $\alpha \zeta \in \QQ(\zeta_{N_0})$.
\begin{enumerate}
    \item For each prime divisor $p$ of $N$ and each $i \in \{0,\dots,p-1\}$, check whether $\alpha \zeta \in \QQ(\zeta_{N/p})$. If so, recurse on the input $N/p, \alpha \zeta$ to obtain the output $N_0', \zeta'$, then return $N_0', \zeta \zeta'$.
    \item
    Otherwise, return $N, 1$.
\end{enumerate}
\end{algorithm}
\begin{proof}
By \Cref{L:minimal level divides}, $N_0$ is a divisor of $N$. If $N_0 = N$, then step 1 will fail to find a prime $p$ and so the output $N, 1$ is correct. If $N_0 \neq N$, then step 1 will find some prime divisor $p$ of $N/N_0$, so termination and correctness can be deduced by induction on $N$.
\end{proof}

\begin{remark}  \label{rem:exact castle}
Given a positive integer $N$ and a cyclotomic integer
$\alpha \in \ZZ[\zeta_N]$, we may compute the castle
$\house{\alpha}^2$ by first computing the minimal polynomial
$P(T)$ of $\alpha$ over $\QQ$, then computing $|\beta|^2$ for each root of $P(T)$ in $\CC$ and taking the maximum. In SageMath, this can be done rigorously using the object \texttt{AlgebraicField}, which gives an exact representation of the subfield of algebraic numbers in $\CC$ using interval arithmetic.

Computing the Cassels height $\calM(\alpha)$ is somewhat easier: we may simply write
\[
\calM(\alpha) = \frac{1}{\varphi(N)} \Trace_{\QQ(\zeta_N)/\QQ}(\alpha \overline{\alpha}).
\]
\end{remark}

\begin{remark} \label{rem:minimal level}
Using \Cref{algo:perfect hash} and \Cref{rem:exact castle}, we may verify that every entry of \Cref{table:exceptional classes} is presented at minimal level and that the castles
and heights are listed correctly.
This makes it apparent that no two entries are equivalent, as they are distinguished pairwise by the combination of castle and minimal level.
\end{remark}

\begin{algorithm} \label{algo:perfect hash}
Given a positive integer $N$ and a cyclotomic integer
$\alpha \in \ZZ[\zeta_N]$, compute a perfect hash function for equivalence of cyclotomic integers. That is, $\alpha$ and $\beta$ are equivalent iff this function returns the same output on both.
\begin{enumerate}
    \item Apply \Cref{algo:minimal level} to return a pair $N_0, \zeta$. 
    \item For each $\zeta' \in W_{\QQ(\zeta_{N_0})}$, compute the minimal polynomial of $\alpha \zeta \zeta'$ over $\QQ$.
    \item
    Sort the results of the previous step first in increasing order by degree, then in lexicographic order by coefficients from highest to lowest degree.
    \item
    Return the first entry of the resulting list.
\end{enumerate}
\end{algorithm}
\begin{proof}
    By construction, $\alpha$ is equivalent to every root of the return value; consequently, if $\alpha$ and $\beta$ yield the same output then they are equivalent.
    Conversely, if $\alpha$ and $\beta$ are equivalent, then they have the same minimal level; hence they yield identical lists in step 3, and hence identical outputs.
\end{proof}

\begin{remark}
    By \cite[Lemma~8]{mckee-oh-smyth}, for $\alpha \in \ZZ[\zeta_N]$ nonzero, there exists some $i \in \ZZ$ such that $\zeta_N^i \alpha$ has nonzero trace. It follows that for the polynomial $P(T)$ returned by \Cref{algo:perfect hash}, the coefficient of $T^{\deg(P)-1}$ is always nonzero.
\end{remark}

We do not know whether in \Cref{algo:perfect hash}, skipping the step of rewriting $\alpha$ using its minimal level yields the same result. We formulate this as a problem.
    
\begin{problem}
Let $\alpha$ be a cyclotomic integer which has minimal degree over $\QQ$ among the elements of its equivalence class.
Must we have $\alpha \in \QQ(\zeta_N)$ for $N$ the minimal level of $\alpha$?
\end{problem}

We next address \cite[Problem 2]{robinson}.
\begin{algorithm} \label{algo:minimal weight}
Given a positive integer $N$ and a cyclotomic integer
$\alpha \in \ZZ[\zeta_N]$, compute the minimal weight $\N(\alpha)$ and a sequence $\eta_1,\dots,\eta_n \in W_{\QQ(\zeta_N)}$ such that
$\alpha = \sum_{i=1}^{\N(\alpha)} \eta_i$.
(Such a representation exists by \Cref{lem:sum in minimal level}.)
\begin{enumerate}
    \item Set $n = 0$.
    \item For each $n$-element multisubset $\eta_1,\dots,\eta_n$ of $W_{\QQ(\zeta_N)}$,
    check whether $\sum_{i=1}^{n} \eta_i = \alpha$;
    if so, return $n, (\eta_1,\dots,\eta_n)$.
    \item Replace $n$ by $n+1$, then go to step 2.
\end{enumerate}
\end{algorithm}
\begin{proof}
    Given \Cref{lem:sum in minimal level}, this is immediate.
\end{proof}

\begin{remark} \label{rem:determine minimal length}
\Cref{algo:minimal weight} scales so poorly that we were unable to 
confirm that all of the entries in \Cref{table:exceptional classes} are presented with minimum weight.
See also \Cref{rem:more short sums}.

It is suggested in \cite[\S 5.7.2]{mckee-smyth} that \Cref{T:loxton} can be applied to obtain an alternate algorithm, but this would require making the statement completely explicit. Moreover, to confirm that $\N(\alpha) \geq n$ in the typical case where the Loxton bound is not sharp, one still needs to verify somehow that $\alpha$ admits no representation as a sum of $n-1$ roots of unity;
a new idea to improve upon exhaustion for this step would be very helpful.

One easy improvement is available when $N$ is not squarefree:
with notation as in \Cref{L:rep-prime-power-case}, we have
\[
\N(\alpha) = \sum_{i=0}^{p-1} \N(\eta_i).
\]
By contrast, when $p$ exactly divides $N$, there need not exist any representation as in \Cref{L:rep-prime-case} for which $\N(\alpha) = \sum_{i=0}^{p-1} \N(\eta_i)$: one might have to use a representation with \emph{more} nonzero $\eta_i$ to minimize the right-hand side. However, any representation with $X \leq \tfrac{p-1}{2}$ does satisfy  $\N(\alpha) = \sum_{i=0}^{p-1} \N(\eta_i)$:
for any nonzero $\beta \in \ZZ[\zeta_N]$,
\begin{align*}
\sum_{i=0}^{p-1} \N(\eta_i + \beta)  
&\geq \sum_{i \in S} (\N(\eta_i) - \N(\beta)) + \sum_{i \notin S} \N(\beta) \\
&= (p-2X) \N(\beta) + \sum_{i \in S} \N(\eta_i)
> \sum_{i \in S} \N(\eta_i).
\end{align*}
\end{remark}

We next give an algorithm to separate out the exceptional classes for \Cref{T:cassels}.

\begin{algorithm} \label{algo:cassels test}
Given a positive integer $N$ and a cyclotomic integer
$\alpha \in \ZZ[\zeta_N]$, return True if and only if $\alpha$ has one of the forms (1)--(3) of~\Cref{T:cassels}.
\begin{enumerate}
    \item Run \Cref{algo:minimal weight} (with an early abort) to determine whether or not $\N(\alpha) \leq 2$. If so, return True.
    \item Check whether the polynomial $T^2 - (\alpha \overline{\alpha}-3) T + 1$
    admits a root $\eta \in W_{\QQ(\zeta_N)}$. If so, fix a choice of $\eta$, otherwise return False.
    \item Check whether $\eta$ has a square root $\eta'$ in $\QQ(\zeta_N)$, and if so whether $\alpha$ and $1+\eta'-(\eta')^{-1}$ have the same hash via \Cref{algo:perfect hash}. If these both hold, return True.
    \item If $5 \mid N$, check whether $\alpha$ and $(\zeta_5+\zeta_5^4)+(\zeta_5^2+\zeta_5^3)\eta$ have the same hash via \Cref{algo:perfect hash}. If so, return True.
    \item 
    Return False.
\end{enumerate}
\end{algorithm}
\begin{proof}
It is clear that any return value of True is correct.
We thus need to prove conversely that if $\alpha$ is of one of the forms (1)--(3) of \Cref{T:cassels}, then the value True will be returned. This is obvious for form (1), as then we return True in step 1.

If $\alpha$ is of the form (2) of \Cref{T:cassels}, then
$\alpha$ equals a root of unity times $1 + \zeta - \zeta^{-1}$ for some root of unity $\zeta$.
By \Cref{L:minimal level divides},
\Cref{P:minimal level for Cassels families},
and \Cref{rem:cassels degenerate cases}, 
we must have $\zeta \in \QQ(\zeta_N)$.
By writing
\[
\alpha \overline{\alpha} = (1 + \zeta - \zeta^{-1})(1 + \zeta^{-1} - \zeta) = 3 - 2 (\zeta^2 + \zeta^{-2}),
\]
we deduce that $\zeta^2$ is a root of the polynomial $T^2 - (\alpha \overline{\alpha}-3) T + 1$.
It is now clear that we obtain a result of True in step 3.

If $\alpha$ is of the form (3) of \Cref{T:cassels}, then
$\alpha$ equals a root of unity times $(\zeta_5+\zeta_5^4)+(\zeta_5^2+\zeta_5^3)\zeta$ for some root of unity $\zeta$. 
By \Cref{L:minimal level divides},
\Cref{P:minimal level for Cassels families},
and \Cref{rem:cassels degenerate cases}, 
we must have $5 \mid N$ and $\zeta \in \QQ(\zeta_N)$.
By writing
\[
\alpha \overline{\alpha} = ((\zeta_5+\zeta_5^4)+(\zeta_5^2+\zeta_5^3)\zeta)((\zeta_5+\zeta_5^4)+(\zeta_5^2+\zeta_5^3)\zeta^{-1})
=  3 - 2 (\zeta + \zeta^{-1}),
\]
we deduce that $\zeta$ is a root of the polynomial $T^2 - (\alpha \overline{\alpha}-3) T + 1$.
It is now clear that we obtain a result of True in step 4.
\end{proof}

We now confirm that every entry of \Cref{table:exceptional classes} is in fact necessary for \Cref{T:main}.
\begin{cor} \label{cor:no redundant exceptions}
Each entry $\alpha$ of \Cref{table:exceptional classes} is a cyclotomic integer with $\house{\alpha}^2 \leq 5$
but not of one of the forms (1)--(3) of \Cref{T:cassels}. Moreover, no two entries of \Cref{table:exceptional classes} are pairwise equivalent.
\end{cor}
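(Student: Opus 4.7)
The proof is essentially a direct verification by computer using the algorithms developed in this section. The plan is to run each entry of \Cref{table:exceptional classes} through three checks, one for each assertion.

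First, for each $\alpha$ listed (where $\zeta$ denotes a primitive root of unity of the displayed minimal level), I would use \Cref{rem:exact castle} to compute $\house{\alpha}^2$ rigorously as an algebraic number via interval arithmetic in SageMath's \texttt{AlgebraicField}. This value should equal the quantity listed in the first column (and in particular be at most $5$), and the computed Cassels height and minimal level (via \Cref{algo:minimal level}) should match the listed values; the exact form of the minimal polynomial of $\house{\alpha}^2$ is known in closed form from \eqref{eq:house for cassels families} and \eqref{eq:formula for cassels height} in the parametric cases, so all comparisons can be made exactly. This simultaneously verifies that each $\alpha$ is a nonzero cyclotomic integer with $\house{\alpha}^2 \leq 5$ and confirms the castle, Cassels height, and minimal level columns of the table.

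Second, to rule out that any $\alpha$ has one of the forms (1)--(3) of \Cref{T:cassels}, I would apply \Cref{algo:cassels test} to each entry and check that the output is \texttt{False}. The only subtlety is that step~1 of that algorithm invokes \Cref{algo:minimal weight}, which scales poorly; but since each entry in the table has an explicit presentation with weight at most (roughly) $10$, running the exhaustive test only for multisubsets of size $\leq 2$ suffices at that stage, and the polynomial test in step~2 followed by the hash comparisons in steps~3 and~4 handles the remaining work. I expect this to be the main computational step, but it is entirely mechanical given the algorithms already in hand.

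Third, for the pairwise inequivalence assertion, it suffices to observe that distinct entries of \Cref{table:exceptional classes} carry distinct pairs (castle, minimal level): within each block of constant castle, the listed minimal levels are all different, and entries in different castle blocks have different castles by construction. Since both the castle and the minimal level are invariants of the equivalence class, this already proves no two entries are equivalent. As a redundancy check, one can also apply \Cref{algo:perfect hash} to each pair and confirm the outputs differ. No single step is a serious obstacle; the whole corollary is a book-keeping statement about a finite list, and the only potential pitfall is making sure that the castle values listed in closed form for the parametric entries (e.g.\ $1 + 4\cos^2 \tfrac{\pi}{n}$) match the algebraic numbers produced by the exact computation, which is straightforward to verify by comparing minimal polynomials.
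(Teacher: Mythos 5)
Your proposal is correct and matches the paper's proof essentially verbatim: the paper verifies the first assertion via \Cref{rem:exact castle}, the second via \Cref{algo:cassels test}, and the third by noting (in \Cref{rem:minimal level}) that the entries are distinguished pairwise by the combination of castle and minimal level, with all checks bundled into a single SageMath script. No further comment is needed.
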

\begin{proof}
The first assertion may be checked using \Cref{rem:exact castle}. The second assertion may be checked using \Cref{algo:cassels test}. The third inclusion is included in \Cref{rem:minimal level}.
All of these checks can be reproduced at once using the SageMath file \verb+create-table.sage+ in the GitHub repository.
\end{proof}

\subsection{Exhaustion over short sums}

The proof of \cite[Lemma~6.1]{robinson-wurtz} involves an exhaustion step to identify all $\alpha$ with bounded minimal level and weight and small castle. 
We reimplement and extend that step here.
\begin{lemma} \label{lem:exhaust}
Let $\alpha$ be a cyclotomic integer with $\house{\alpha}^2 < 5.01$
such that for some pair
\begin{gather*}
(N,n) \in \{ 
(2^2 \times 3 \times 5 \times 7, 7),  % Used in Proposition 4.3
(31, 6), % Used in Remark 8.3
(3 \times 5 \times 7 \times 13, 5), % Used in 8.3.1
(2^2 \times 3 \times 5 \times 7 \times 11, 5), % Used in 4.2.1, 8.2.1
\\
(5 \times 19, 4), % Used in 4.2.4
(5 \times 17, 4), % Used in 4.2.4
(2^2 \times 3 \times 5 \times 7 \times 11 \times 13, 4), % Used in 4.2.2
(2^3 \times 3^2 \times 5 \times 7, 4) % Used in Proposition 4.1
\},
\end{gather*}
the minimal level of $\alpha$ divides $N$ and $\N(\alpha) \leq n$.
Then $\alpha$ is covered by \Cref{T:main}.
(Specializing to $(N,n) = (2^2 \times 3 \times 5 \times 7, 6)$ recovers the computational result asserted in \cite[Lemma~6.1]{robinson-wurtz}.)
\end{lemma}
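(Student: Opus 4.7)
Since $\alpha$ has minimal level dividing $N$, by \Cref{lem:sum in minimal level} we may write $\alpha = \eta_1 + \cdots + \eta_m$ with $m \leq n$ and each $\eta_i \in W_{\QQ(\zeta_N)}$. The plan is to reduce the lemma to a branch-and-bound exhaustive enumeration of such multisets up to equivalence, followed by matching each surviving candidate against \Cref{table:exceptional classes} and the parametric families of \Cref{T:cassels} via \Cref{algo:cassels test} and \Cref{algo:perfect hash}.

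To make the search feasible, I would apply three reductions. First, since multiplying $\alpha$ by a root of unity preserves both the house and the equivalence class, normalize $\eta_1 = 1$ (by dividing the sum through by $\eta_1$, which keeps every $\eta_i$ in $W_{\QQ(\zeta_N)}$). Second, since $\Gal(\QQ(\zeta_N)/\QQ)$ acts on the set of normalized multisets compatibly with equivalence and the house, enumerate only Galois-orbit representatives: fix an ordering on $W_{\QQ(\zeta_N)}$ and accept only tuples that are lexicographically minimal within their Galois orbit, a constraint that can be enforced online so as to prune non-canonical branches early. Third, at each partial sum $\alpha_k := \eta_1 + \cdots + \eta_k$ with $k < m$, evaluate $|\sigma(\alpha_k)|$ for every complex embedding $\sigma$ of $\QQ(\zeta_N)$; if some $\sigma$ satisfies $|\sigma(\alpha_k)| > \sqrt{5.01} + (m-k)$, then by the triangle inequality every completion has $|\sigma(\alpha)| > \sqrt{5.01}$, so the branch can be pruned.

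For each candidate that survives the enumeration to depth $m$, I would compute $\house{\alpha}^2$ rigorously using interval arithmetic (e.g.\ via SageMath's \texttt{AlgebraicField}, per \Cref{rem:exact castle}), discard those with $\house{\alpha}^2 \geq 5.01$, then invoke \Cref{algo:cassels test} to filter out the parametric families of \Cref{T:cassels}, and finally compute the perfect hash via \Cref{algo:perfect hash} and compare against precomputed hashes of the entries of \Cref{table:exceptional classes}. Provided every surviving candidate matches either a parametric family or a table entry, the lemma follows.

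The main obstacle is purely computational: for the largest pair $(N, n) = (420, 7)$, the naive count of normalized $7$-multisets from $W_{\QQ(\zeta_{420})}$ is of order $\binom{420 + 5}{6} \approx 5 \cdot 10^{12}$, and even after quotienting by $\Gal(\QQ(\zeta_{420})/\QQ)$ (a factor of $\varphi(420) = 96$) the residual space is far too large for interpreted code. The viability of the plan therefore hinges on a low-overhead implementation --- hence the authors' choice of Rust --- in which complex embeddings of each root of unity are precomputed at a fixed floating-point precision, embedding values of partial sums are updated incrementally as terms are appended, and the triangle-inequality test is invoked at every node so that most branches are eliminated within the first few levels. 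To make the floating-point reasoning rigorous, one can either re-run each close candidate in higher precision or perform the entire enumeration with interval arithmetic; either way the output list of surviving $\alpha$ is provably exhaustive and can then be matched against \Cref{table:exceptional classes} by hash.
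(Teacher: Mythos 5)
Your proposal is correct and follows essentially the same route as the paper's proof: reduce via \Cref{lem:sum in minimal level} to an exhaustive low-level floating-point search over sums of at most $n$ roots of unity in $\QQ(\zeta_N)$, prune the search using the equivalence relation, rigorously re-verify the near-survivors with exact/interval arithmetic, and match them against \Cref{T:cassels}(1)--(3) and \Cref{table:exceptional classes}. The only substantive difference lies in the pruning: the paper normalizes the exponents arithmetically (e.g., forcing the second exponent to be a proper divisor of $\lcm(2,N)$ via a field automorphism, and excluding sub-configurations that would lower $\N(\alpha)$ or land in the parametric families), whereas you propose Galois-orbit lexicographic minimality plus a triangle-inequality branch-and-bound; both are valid, but note that your triangle-inequality cut is vacuous at shallow depths (a partial sum of $k$ roots of unity has modulus at most $k \leq \sqrt{5.01}+(m-k)$ for small $k$), so the practical feasibility rests more on early termination of the maximum over embeddings than your write-up suggests.
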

\begin{proof}
Set $N' = \lcm(2, N)$; by \Cref{lem:sum in minimal level}, we can write $\alpha = \sum_{i=1}^{n'} \zeta_{N'}^{j_i}$ with $n' = \N(\alpha) \leq n$ for some
$0 \leq j_1, \ldots, j_{n'} < N'$.
As the claim is apparent when $\alpha$ is equivalent to an element of $\ZZ$, we may assume that the $j_i$ are not all equal. We may also assume that $n' \geq 3$, as otherwise $\alpha$ is covered by \Cref{T:cassels}(1).

Without loss of generality, we may make some further assumptions on the indices $j_i$:
\begin{itemize}
    \item $j_1 = 0$
(by multiplying $\alpha$ by a root of unity);
    \item $\gcd(j_i, N') \geq \gcd(j_2, N')$ for $i=3,\dots,n'$
(by relabeling);
    \item $j_2 = d$ is a proper divisor of $N'$ (by applying a field automorphism and using that the $j_i$ are not all equal);
    \item $j_3 \leq \cdots \leq j_{n'}$ (by relabeling);
    \item $N' - j_{n'} > j_3-j_2$ (by applying $j_i \mapsto d-j_i \pmod{N'}$ if needed);
    \item there  are no indices $i < i'$ with $j_{i'}-j_i$ a nonzero multiple of $N'/2$ or $N'/3$ (as otherwise $n' > \N(\alpha)$);
    \item there are no indices $i<i'<i''$ with $j_{i'}-j_i, j_{i''}-j_{i'}$ nonzero multiples of $N'/5$ (as otherwise $n' > \N(\alpha)$);
    \item there are no indices $i<i'<i''<i'''$ with $j_{i'}-j_i, j_{i''}-j_{i'},j_{i'''}-j_{i''}$ nonzero multiples of $N'/7$ (as otherwise $n' > \N(\alpha)$);
    \item if $n' = 3$, then there is no permutation $k_1, k_2, k_3$ of $j_1, j_2, j_3$ such that $k_1+k_2 \equiv 2k_3 + N'/2$ (as otherwise $\alpha$ is covered by \Cref{T:cassels}(2));
    \item if $n' = 4$ and $5 \mid N$, then there is no permutation $k_1,k_2,k_3, k_4$ of $j_1,j_2,j_3,j_4$ such that 
    $k_1-k_2, k_3-k_4 \equiv 0 \pmod{N'/5}$ and $k_1-k_2 + k_3-k_4, k_1-k_2-k_3+k_4 \not\equiv 0 \pmod{N'}$
    (as otherwise $\alpha$ is covered by \Cref{T:cassels}(3)).
\end{itemize}
For each tuple of indices satisfying these conditions, we have 
\begin{equation} \label{eq:house as maximum}
\house{\alpha}^2 = \mathop{\max_{0 \leq k \leq N'}}_{\gcd(k,N')=1} \left\{\left( \sum_{i=1}^{n'} \cos \frac{2 \pi k j_i}{N'} \right)^2 + \left( \sum_{i=1}^{n'} \sin \frac{2 \pi k j_i}{N'} \right)^2 \right\};
\end{equation}
it thus suffices to verify the claim in all  those cases where this expression is at most 5.01.

To do this, we first compute a table of values of $\cos \tfrac{2 \pi j}{N'}$ and $\sin \tfrac{2 \pi j}{N'}$ for $j=0,\dots,N'-1$
using double precision floating-point arithmetic in Rust. By exporting these values to SageMath and using interval arithmetic there, we may confirm that each value is accurate to within $10^{-14}$.

We then compute \eqref{eq:house as maximum} using Rust floating point arithmetic for all tuples  $(j_1,\dots,j_{n'})$ satisfying the conditions listed above, recording those for which the computed answer is at most 5.1;
this takes under 1 hour on a laptop (Intel Core Ultra 5 135U, 14 cores @4.4GHz).
Since Rust floating-point arithmetic is IEEE compliant, 
we can easily bound the error in the result (say by $10^{-10}$), which guarantees that all cases with $\house{\alpha}^2 \leq 5.01$ pass through this filter.

Switching to SageMath, for each $\alpha$ represented by a tuple returned by Rust, we recompute $\house{\alpha}^2$ as an element of $\QQ(\zeta_{N}+\zeta_N^{-1})$ and check rigorously whether or not each of its $\RR$-conjugates is less than 5.01 (\Cref{rem:exact castle}). Whenever this occurs, we verify that $\alpha$ is either of one of the forms (1)--(3) of \Cref{T:cassels} or equivalent to an entry of \Cref{table:exceptional classes}.

All Rust and SageMath source code can be found in the GitHub repository.
For the latter, see the Jupyter notebooks \verb|floating-point.ipynb| and \verb|check-table-1.ipynb|.
\end{proof}

\begin{remark}
Note that in \Cref{lem:exhaust}, we cannot replace the assumption $\gcd(j_i, N') \geq d$ by the stronger assumption $d \mid j_i$.
For a concrete example, take $N = 3 \times 5 \times 7, n'=4, (j_1,\dots,j_4) = (0,2,4,7)$.
\end{remark}

\begin{remark}
As part of the same computation, we verified that if $5 < \house{\alpha}^2 \leq 5.04$ and $(N,n)$ is as in \Cref{lem:exhaust}, then $\alpha$ is equivalent to the value $1+\zeta_{70} + \zeta_{70}^{10} + \zeta_{70}^{29}$ listed in \Cref{T:robinson-wurtz}(b).
\end{remark}

\begin{remark}
As in~\cite{robinson-wurtz}, we can infer somewhat more information from the floating-point computation: for $\alpha, \alpha'$ two cyclotomic integers of ``not too large'' degree, the numerical equality of the computed approximations of $\house{\alpha}^2, \house{\alpha'}^2$ is sufficient to guarantee the exact equality of these quantities. This is stated formally in~\cite[Lemma~2.6]{robinson-wurtz}; however, there are a number of apparent misprints in the result's statement and proof. Most notably, the claim that $\delta$ is a cyclotomic integer seems to be predicated on the false assertion that for $\alpha$ a cyclotomic integer, $|\alpha|$ is also a cyclotomic integer (whereas for example $\sqrt{1+4 \cos^2 \tfrac{\pi}{N}}$ is not a cyclotomic integer for general $N$).
We include a corrected version for future reference.
\end{remark}

\begin{lemma}[\cite{robinson-wurtz}, Lemma~2.6]
    Suppose $\beta$ and $\gamma$ are two cyclotomic integers with $\house{\beta}^2, \house{\gamma}^2 \leq 5 + 1/25$. Choose a positive integer $N$ with $\beta, \gamma \in \QQ(\zeta_N)$ and set $k:=[\QQ(\zeta_N):\QQ]=\varphi(N)$. If $|\house{\beta}^2 - \house{\gamma}^2|<(10+2/25)^{-k}$, then $\house{\beta}^2=\house{\gamma}^2$.
    \begin{proof}
        By replacing $\gamma$ with a Galois conjugate if necessary, we can ensure that the cyclotomic integer $\delta:= \beta \overline{\beta}-\gamma \overline{\gamma}$ maps to $\house{\beta}^2 - \house{\gamma}^2$ via some complex embedding.
        Denote the conjugates of $\delta$ in $\CC$ by $\delta_1,\dots,\delta_i$ where $i \leq k$ and $\delta_1=\house{\beta}^2 - \house{\gamma}^2$. 
        Since the conjugates of $\beta\overline{\beta}$ and $\gamma\overline{\gamma}$ have magnitude at most $5+1/25$,
        by the triangle inequality we have $|\delta_i| \leq (10+2/25)^k$ for $i=2,\dots,k$.
        Then $|\mathrm{Norm}(\delta)|=|\delta_1 \cdots \delta_i|\leq |\delta_1| (10+2/25)^{k-1}<1$. As $\delta\in \ZZ[\zeta_N]$, we have $|\mathrm{Norm}(\delta)|<1$ if and only if $\mathrm{Norm}(\delta)=0=\delta$, so $\house{\beta}^2=\house{\gamma}^2$.
    \end{proof}
\end{lemma}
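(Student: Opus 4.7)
The plan is to consider the algebraic integer $\delta := \beta\overline{\beta} - \gamma\overline{\gamma}$. Since $\overline{\zeta_N} = \zeta_N^{-1}$, we have $\delta \in \ZZ[\zeta_N]$, so its $\QQ$-norm is a rational integer. The hypothesis $|\house{\beta}^2 - \house{\gamma}^2| < (10+2/25)^{-k}$ will force $|\Norm_{\QQ(\delta)/\QQ}(\delta)| < 1$, whence $\delta = 0$ and consequently $\house{\beta}^2 = \house{\gamma}^2$.

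To implement this, I would first record that every element of $\Gal(\QQ(\zeta_N)/\QQ)$ commutes with complex conjugation (since the group is abelian and contains the conjugation automorphism $\zeta_N \mapsto \zeta_N^{-1}$). Hence for each $\sigma$, we have $\sigma(\beta\overline{\beta}) = \sigma(\beta)\overline{\sigma(\beta)} = |\sigma(\beta)|^2 \leq \house{\beta}^2 \leq 5+1/25$, and similarly for $\gamma$; the triangle inequality then gives $|\sigma(\delta)| \leq 10+2/25$ for every $\sigma$. Next, after possibly replacing $\gamma$ by a Galois conjugate (which leaves $\house{\gamma}^2$ and the hypothesis on $|\house{\beta}^2 - \house{\gamma}^2|$ intact), I can arrange that a single embedding $\sigma_0$ realizes both $\house{\beta}^2 = |\sigma_0(\beta)|^2$ and $\house{\gamma}^2 = |\sigma_0(\gamma)|^2$: simply conjugate $\gamma$ by the Galois automorphism that carries the maximizing embedding for $\gamma$ to the one for $\beta$. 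For this choice, the particular conjugate $\sigma_0(\delta) = \house{\beta}^2 - \house{\gamma}^2$ appears in the list of $i \leq k$ distinct conjugates $\delta_1,\ldots,\delta_i$ of $\delta$; place it first, so $\delta_1 = \house{\beta}^2 - \house{\gamma}^2$.

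Putting the bounds together yields $|\Norm_{\QQ(\delta)/\QQ}(\delta)| = \prod_{j=1}^i |\delta_j| \leq |\delta_1| \cdot (10+2/25)^{i-1} < (10+2/25)^{-k}(10+2/25)^{k-1} < 1$, so this norm, being a rational integer of absolute value less than one, must vanish, giving $\delta = 0$. The only subtle step is arranging a common maximizing embedding for $\beta$ and $\gamma$, which relies on the Galois group acting transitively on conjugates of $\gamma$; this is precisely the point where the printed version of the argument in \cite{robinson-wurtz} appears to have been imprecise, so it bears emphasis here. Everything else is a direct application of the triangle inequality together with the basic fact that a nonzero algebraic integer has absolute $\QQ$-norm at least $1$.
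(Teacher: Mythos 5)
Your proposal is correct and follows essentially the same route as the paper's proof: form $\delta = \beta\overline{\beta} - \gamma\overline{\gamma}$, arrange by a Galois conjugation of $\gamma$ that one embedding realizes both houses, bound each conjugate of $\delta$ by $10+2/25$ via the triangle inequality, and conclude $\Norm(\delta)=0$ and hence $\delta=0$. Your explicit treatment of the common maximizing embedding is exactly the point the paper's (corrected) version is also making, so there is nothing to add.
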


\section{Revisiting Robinson--Wurtz}
\label{sec:rw}

We next retrace the steps of the proof of \cite[Theorem~1.3]{robinson-wurtz} in order to resolve case (a) of \Cref{T:main} (see \Cref{T:cassels partial classification}). As a corollary, we then deduce \cite[Conjecture~1]{robinson}, thus completing the resolution of all of the conjectures made in \cite{robinson}. We also record another corollary for later use (\Cref{lem:cor of partial classification}).

\subsection{Step 1}

The following is a refinement of \cite[Lemma~4.1]{robinson-wurtz}.

\begin{prop} \label{P:robinson wurtz extract1}
Let $\alpha$ be a cyclotomic integer with $\house{\alpha}^2 < 5.01$ and $\calM(\alpha) < 3 \tfrac{1}{4}$.
Assume further that the minimal level $N$ of $\alpha$ is divisible by a prime power $p^n$ with $n>1$ and $p^n > 4$.
Then $\alpha$ is covered by \Cref{T:main}.
\end{prop}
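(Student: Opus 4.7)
The plan is to apply the $p$-decomposition $\alpha = \sum_{i=0}^{p-1} \eta_i \zeta_{p^n}^i$ of \Cref{L:rep-prime-power-case} (unique since $n \geq 2$), with $\eta_i \in \QQ(\zeta_{N/p})$ and $\calM(\alpha) = \sum_i \calM(\eta_i)$. By \Cref{R:kronecker}, each nonzero $\eta_i$ contributes at least $1$, so the budget $\calM(\alpha) < 3\tfrac{1}{4}$ forces the count $X$ of nonzero $\eta_i$ to satisfy $X \leq 3$. Minimality of $N$ forbids $X = 1$ (otherwise $\alpha$ would be equivalent to an element of $\QQ(\zeta_{N/p})$), so $X \in \{2, 3\}$.

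For $X = 3$, the budget demands each $\calM(\eta_i) < 1\tfrac{1}{4}$ (since two summands already contribute $\geq 2$), so by \Cref{L:short sums}(b) each $\eta_i$ is a root of unity and $\alpha$ is a sum of three roots of unity with $\N(\alpha) \leq 3$; \Cref{T:jones} then supplies the classification and each representative is covered by \Cref{T:main}. For $X = 2$, I write $\alpha = \eta + \zeta \eta'$ after multiplying by a root of unity, where $\eta, \eta' \in \QQ(\zeta_{N/p})$ are nonzero and $\zeta$ has exact order $p^n$ (the two active indices must be $p$-coprime, since otherwise $\alpha$ would again descend to $\QQ(\zeta_{N/p})$). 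If both $\eta, \eta'$ are roots of unity, $\N(\alpha) \leq 2$ and \Cref{T:cassels}(1) applies. Otherwise, \Cref{L:short sums}(b,c) combined with $\calM(\eta) + \calM(\eta') < 3\tfrac{1}{4}$ pins each non-root-of-unity summand to a short list: either $\eta \sim 1 + \zeta_M$ for $M \in \{5,7,11,13,30,42\}$ or (when $\calM = 2$) a three-term representative from \Cref{L:short sums}(c). Whenever this analysis yields $\N(\alpha) \leq 3$, \Cref{T:jones} finishes.

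The main obstacle is the boundary $\N(\alpha) = 4$ subcase: either $\eta$ is a root of unity and $\eta' \sim 1+\zeta_7+\zeta_7^i$ (with $i \in \{2,3\}$) or $\zeta_3 - \zeta_5 - \zeta_5^i$, or both $\eta, \eta'$ have $\N = 2$ with $(\calM(\eta), \calM(\eta'))$ forced into a tiny list such as $(\tfrac{3}{2}, \tfrac{3}{2})$ or $(\tfrac{3}{2}, \tfrac{5}{3})$. Here I invoke \Cref{conjLemma}: since $\QQ(\zeta_{N/p})$ already contains $\zeta_{p^{n-1}}$, we are in the regime $m = n-1$ with $\theta = \pi/p$, and
\[
\house{\alpha}^2 \geq \max_\sigma \bigl( |\sigma(\eta)|^2 + |\sigma(\eta')|^2 + 2 |\sigma(\eta)| |\sigma(\eta')| \cos(\pi/p) \bigr).
\]
When $\eta$ is a root of unity, choosing $\sigma$ to realize the house of $\eta'$ gives the bound $1 + \house{\eta'}^2 + 2\house{\eta'}\cos(\pi/p)$, which exceeds $5.01$ whenever $\house{\eta'}$ is sufficiently large (this immediately dispatches, e.g., $\eta' \sim 1+\zeta_7+\zeta_7^2$, since a direct computation gives $\house{\eta'}^2 > 5$); the remaining $\N=3$ candidate $1+\zeta_7+\zeta_7^3$ is then cleared by the same bound together with cosine input. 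When both $\eta, \eta'$ are non-root-of-unity, a finer simultaneous-embedding analysis inside $\QQ(\zeta_{N/p})$ closes the last configurations. The $p=2$, $n \geq 3$ branch is the hardest, since $\cos(\pi/2) = 0$ collapses the decisive quadratic term in \Cref{conjLemma}; this is precisely the regime where the geometric refinement of \cite[Lemma~2.5]{robinson-wurtz} improves upon \cite[Lemma~10]{cassels}, and I expect it to be the chief technical hurdle.
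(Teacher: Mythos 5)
Your overall architecture matches the paper's: form the $p$-decomposition, use additivity of $\calM$ to force $X\in\{2,3\}$, dispatch $X=3$ via \Cref{T:jones}, and attack $X=2$ by combining \Cref{L:short sums} with \Cref{conjLemma}. The $X=3$ step and the reduction of $X=2$ to a short list of candidate pairs $(\eta,\eta')$ are sound. The gap lies exactly where the proof is hardest, and you have only flagged part of it. Your application of \Cref{conjLemma} takes $M''=N/p$, which forces $m=n-1$ and $\theta=\pi/p$. This is too weak for small $p$: for $p=2$ the cross term vanishes (as you note), and for $p=3$ the bound $1+\house{\eta'}^2+2\house{\eta'}\cos(\pi/3)$ equals $3+\sqrt{2}\approx 4.41$ when $\house{\eta'}^2=2$, well below $5.01$. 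The paper instead takes $M''$ to be the \emph{small} minimal level of $\eta'$ itself (e.g.\ $7$ or $15$ when $\calM(\eta')=2$), so that $m$ drops to $0$ or $1$ and $\theta=\pi/p^{n-m}$ with $n-m\geq 2$; this succeeds for all $p^n\notin\{8,9\}$. Even so, the residual cases $p^n=8$ and $p^n=9$ cannot be closed by \Cref{conjLemma} at all — for $p^n=8$ and $\eta'\equiv 1+\zeta_7+\zeta_7^3$ the optimal bound is exactly $5$ — and the paper finishes them by reducing to bounded level and weight and invoking the exhaustive computation of \Cref{lem:exhaust}, a tool your argument never brings in. Likewise, the subcase $\house{\eta'}^2\geq 3$ with $p=2$, $n>2$ requires a further dyadic decomposition of $\eta'$ and a second application of \Cref{conjLemma}; declaring this regime "the chief technical hurdle" correctly diagnoses the difficulty but does not resolve it.

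A second, more conceptual issue: in the configuration where both $\eta,\eta'$ have weight $2$, the case $\eta\equiv\eta'\equiv 1+\zeta_5$ cannot be "closed." Depending on which conjugate of $1+\zeta_5$ occurs in each slot, either \Cref{conjLemma} gives $\house{\alpha}^2\geq(2+\sqrt{2})\tfrac{3+\sqrt{5}}{2}>5.01$, or $\alpha$ is genuinely equivalent to $(1+\zeta_5)+(\zeta_5^2+\zeta_5^3)\zeta$, i.e.\ it lands in family (3) of \Cref{T:cassels}. Your "finer simultaneous-embedding analysis" must therefore output members of the classification rather than a contradiction, and you give no argument that distinguishes the two outcomes.
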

\begin{proof}
 We assume at once that $\alpha \in \QQ(\zeta_N)$.
 By \Cref{T:jones}, we may also assume that $\N(\alpha) > 3$,
 as otherwise $\alpha$ is covered by \Cref{T:main}.

    As in~\Cref{L:rep-prime-power-case}, form the unique $p$-decomposition $\alpha = \sum_{j=0}^{p-1}\eta_j \zeta_{p^n}^j$ and let $X$ denote the number of indices $j$ for which $\eta_j \neq 0$.
    We have $2 \leq X \leq 4$, but:
    \begin{itemize}

        \item $X\geq 4$ would imply $\calM(\alpha)\geq 4$, contradicting $\calM(\alpha) < 3 \tfrac{1}{4}$.

        \item $X=3$ would imply that $\N(\eta_j) > 1$ for some $j$ (since we are assuming $\N(\alpha) > 3$).
        By \Cref{L:short sums}(a,b) this would imply $\calM(\alpha) \geq 1 + 1 + \tfrac{3}{2}$, contradicting $\calM(\alpha) < 3 \tfrac{1}{4}$.

    \end{itemize}
Hence $X = 2$, and we may assume without loss of generality that $\alpha= \eta_0 +\zeta_{p^n}\eta_1$ with $0 < \N(\eta_0) \leq \N(\eta_1)$. Since $\calM(\eta_0)+\calM(\eta_1)=\calM(\alpha) < 3\tfrac{1}{4}= \tfrac{13}{4}$,
we must have $\min\{\calM(\eta_0), \calM(\eta_1)\} < \tfrac{13}{8}$; by \Cref{L:short sums}(b), this implies $\N(\eta_0) \leq 2$. We now distinguish cases based on $\N(\eta_0)$.

\subsubsection{Case A: $\N(\eta_0) = 1$}
We can assume without loss of generality that $\eta_0=1$. By writing $\calM(\eta_1) = \calM(\alpha) - \calM(\eta_0) = \calM(\alpha)-1$, we deduce that
        $\calM(\eta_1) < \tfrac{9}{4}$. On the other hand, since we are assuming $\N(\alpha) > 3$, we must have $\N(\eta_1) \geq \N(\alpha) - \N(\eta_0) \geq 3$; hence by \Cref{lem:cassels low height},
        either $\house{\eta_1}^2 = 2$ or $\house{\eta_1}^2 \geq 3$. We treat these two subcases separately.

            \begin{itemize}

                \item \underline{\textbf{Subcase A.1: $\house{\eta_1}^2 \geq 3$}:} 
                We set up an application of  \Cref{conjLemma} with $M' = 1, M'' = N/p$. 
                The largest integer $m$ such that $\zeta_{p^m} \in \QQ(\zeta_{M'})$ or $\QQ(\zeta_{M''})$ is then the $p$-adic valuation of $M''$, which is $n-1$; consequently, $m > 0$ and $\gcd(M',M'') = 1$.
                We are thus in the situation where \eqref{eq:conjLemma strong case} applies with $\theta = \frac{\pi}{p}$, yielding 
                  \[
                    \house{\alpha}^2 \geqslant 1 + \house{\eta_1}^2 + 2
                    \house{\eta_1}\cos(\theta).
                  \]

                  \begin{itemize}

                    \item If $p \geq 3$, we directly compute that $\house{\alpha}^2 > 5\frac 1 {25}$, which contradicts our hypothesis.

                    \item If $p = 2$, then by assumption $n>2$. Then, because $\eta_1 \in \QQ(\zeta_{N/2})$, we have a decomposition $\eta_1=\gamma^\prime+\zeta_{2^{n-1}}\gamma^{\prime\prime}$ for $\gamma^{\prime}, \gamma^{\prime\prime} \in \QQ(\zeta_{N/4})$. We have 
                    $$
                    \calM(\gamma^\prime)+\calM(\gamma^{\prime\prime}) = \calM(\eta_1) = \calM(\alpha)-1 < \tfrac{9}{4}.
                    $$
                    By \Cref{L:short sums}(a,b), this implies that either both $\gamma^{\prime}$ and $\gamma^{\prime\prime}$ are roots of unity or one of them is zero. In the latter case, we have $\alpha = 1 + \zeta_{2^n}^i(\gamma' + \gamma'')$ for some \emph{odd} integer $i$; this implies that $\zeta_{2^n}^i$ is primitive.
                    We can now apply \Cref{conjLemma} again, this time taking $m = n-2$ and hence $\theta = \frac \pi 4$. Since $\cos \theta = \frac {\sqrt 2} 2$, we obtain $\house{\alpha}^2 \geqslant 4 + \sqrt 6$, which contradicts the hypothesis.

                  \end{itemize}

                \item \underline{\textbf{Subcase A.2: $\house{\eta_1}^2=2$}:} By \Cref{lem:cassels low height}, $\eta_1$ is equivalent to either $1+\zeta_7+\zeta_7^3$ or $\zeta_3-\zeta_5-\zeta_5^{-1}$. We subdivide further based on the value of $p^n$.
                
                \begin{enumerate}
    
                    \item $p^n \notin \{2^3, 3^2\}$. In this case,  \Cref{conjLemma} applies with $\theta \leq \tfrac{\pi}{5}$. By \eqref{eq:conjLemma strong case} we have $\house{\alpha}^2 \geq 3 + 2 \sqrt{2} \cos \tfrac{\pi}{5} \approx 5.28825$, a contradiction.

                    \item $p^n=3^2$ and $\eta_1 \equiv 1+\zeta_7+\zeta_7^3$. This implies $\house{\alpha}^2>5.04$.
                    
                    \item $p^n=3^2$ and $\eta_1 \equiv \zeta_3-\zeta_5-\zeta_5^{-1}$. In this case, $\alpha$ is equivalent to the form $1+\zeta_{3^2}^i\cdot \zeta_m^l \cdot (\zeta_3^j-\zeta_5^k-\zeta_5^{-k})$ for some $m$ not divisible by $3^2$. By changing $i$, we may assume that $m$ is not even divisible by 3.
                    If $m$ is divisible by a prime or prime power $q$ not dividing $2^3 \times 5$, 
                    we may apply \Cref{conjLemma} as in (1) with $p^n$ replaced by $q$ to obtain a contradiction. 
                    We may thus assume that
                    $m$ divides $2^3 \times 5$; since $\N(\alpha) \leq 4$, we can apply \Cref{lem:exhaust} to the pair $(2^3 \times 3^2 \times 5 \times 7, 4)$ to ensure that $\alpha$ is covered by \Cref{T:main}.
                    
                    \item $p^n=2^3$ and $\gamma \equiv 1+\zeta_7+\zeta_7^3$ or $\zeta_3-\zeta_5-\zeta_5^{-1}$. Thus $\alpha \equiv 1+ \zeta_{2^3}^i\zeta_m^j\gamma^\prime$;
                    as in the previous case, we may reduce to the case where $m$ divides $3^2 \times 5 \times 7$.
                    Since $\N(\alpha) \leq 4$, we can apply \Cref{lem:exhaust} to the pair $(2^3 \times 3^2 \times 5 \times 7, 4)$ to ensure that $\alpha$ is covered by \Cref{T:main}.
                    
                \end{enumerate}
            \end{itemize}

\subsubsection{Case B: $\N(\eta_0) > 1$}
        We cannot have $\N(\eta_1) > 2$, as otherwise \Cref{L:short sums} would imply
        $\tfrac{13}{4} > \calM(\alpha) = \calM(\eta_0) + \calM(\eta_1) \geq \tfrac{3}{2} + 2$.
        Hence $\N(\eta_0) = \N(\eta_1) = 2$, and we may now assume that $\calM(\eta_0) \leq \calM(\eta_1)$. By \Cref{L:short sums}(b),  $\calM(\eta_0) = \tfrac{3}{2}$ and $\calM(\eta_1) \in \{\tfrac{3}{2},\tfrac{5}{3}\}$.

        \begin{itemize}

            \item \underline{\textbf{Subcase B.1: $\calM(\eta_1)= \tfrac{5}{3}$ }:} $\calM(\eta_1)=\tfrac{5}{3}$ implies $\eta_1 \equiv 1+\zeta_7$ and $\eta_0 \equiv 1+\zeta_5$, and again \Cref{conjLemma} implies $\house{\alpha}^2>5.04$.

            \item \underline{\textbf{Subcase B.2:  $\calM(\eta_1)=\tfrac{3}{2}$}:} Both $\eta_0$ and $\eta_1$ are equivalent to $1+\zeta_5$, so $\alpha$ is equivalent to $1+\zeta_5+\rho(\zeta_5^i+\zeta_5^j)$ for some root of unity $\rho$. If $i-j \equiv 1$ or 4 mod 5, we may apply \Cref{conjLemma} as in the proof of \cite[Lemma~4.1]{robinson-wurtz} to obtain $\house{\alpha}^2 \geq (2 + \sqrt{2}) \tfrac{3 + \sqrt{5}}{2}\approx 8.93853$. Otherwise, $\alpha$ is equivalent to $(1+\zeta_5)+(\zeta_5^2+\zeta_5^3)\zeta$ for some root of unity $\zeta$, and this outcome is covered by \Cref{T:main}. \qedhere
        \end{itemize}

\end{proof}

\subsection{Step 2}

The following is a refinement of  \cite[Lemma~5.1]{robinson-wurtz}.
However, using \Cref{lem:exhaust} we can reduce the complexity of the case analysis in the proof.

\begin{prop} \label{P:robinson wurtz extract2}
Let $\alpha$ be a cyclotomic integer with $\house{\alpha}^2 < 5.01$ and $\calM(\alpha) < 3 \tfrac{1}{4}$.
Assume further that the minimal level $N$ of $\alpha$
divides $4$ times an odd squarefree integer,
and that $N$ is divisible by some prime $p > 7$. Then $\alpha$ is covered by \Cref{T:main}.
\end{prop}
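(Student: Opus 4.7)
The plan is to refine the proof of \cite[Lemma~5.1]{robinson-wurtz} by performing a $p$-decomposition at the large prime $p$, then invoking \Cref{lem:exhaust} to dispatch the finite list of remaining configurations. By \Cref{T:jones} we may assume $\N(\alpha) \geq 4$. Since $N$ divides $4$ times an odd squarefree integer and $p > 7$ is odd, $p$ exactly divides $N$; setting $M := N/p$, I form the $p$-decomposition $\alpha = \sum_{i=0}^{p-1} \eta_i \zeta_p^i$ with $\eta_i \in \QQ(\zeta_M)$, chosen per \Cref{L:rep-prime-case} to minimize the number $X$ of nonzero $\eta_i$. Since $\tfrac{p+3}{4} > 3\tfrac{1}{4}$ for $p \geq 11$, part (e) of that lemma gives $X \leq \tfrac{p-1}{2}$.

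The heart of the argument is a joint bound on $(p, X)$. Part (b) of \Cref{L:rep-prime-case} combined with \Cref{R:kronecker} yields
\[
(p-X)X + \sigma \leq (p-1)\calM(\alpha) < 3\tfrac{1}{4}(p-1), \qquad \sigma := \sum_{i < j \in S} \calM(\eta_i - \eta_j) \geq 0,
\]
which bounds $p$ by a small explicit constant whenever $X \geq 4$. For $X = 3$, either the three nonzero $\eta_i$ are all equal to the same root of unity (forcing $\N(\alpha) \leq 3$, contrary to the reduction) or the pairwise differences are nontrivial enough --- using \Cref{R:prime case next} together with \Cref{R:too many sides of a triangle}, and upgrading $\calM(\eta_i) \geq \tfrac{3}{2}$ via \Cref{L:short sums}(b) whenever $\N(\eta_i) \geq 2$ --- to make $\sigma$ sizable and bound $p$. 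For $X = 2$, writing $\alpha \sim \eta_0 + \eta_1 \zeta_p$ and letting $p \to \infty$ in the inequality forces $\calM(\eta_0) + \calM(\eta_1) < 3\tfrac{1}{4}$; combined with $\N(\eta_0) + \N(\eta_1) \geq 4$ and \Cref{L:short sums}(b,c,d), only $(\N(\eta_0), \N(\eta_1)) \in \{(1,3), (2,2)\}$ can persist, with each $\eta_i$ constrained (up to equivalence) to be a root of unity, equivalent to $1+\zeta_5$, or equivalent to $1+\zeta_7+\zeta_7^i$ or $\zeta_3-\zeta_5-\zeta_5^i$. When the Cassels-height sum $s := \calM(\eta_0)+\calM(\eta_1)$ attains its minimum value $3$, the admissible range for the difference term $\calM(\eta_0 - \eta_1)$ gives a finite list of $(p, M)$ after bounding the root-of-unity dilations of the canonical forms; otherwise the next allowed value of $s$ exceeds $3\tfrac{1}{4}$ by a positive gap $\epsilon$, yielding the bound $p < 2 + \tfrac{13}{4\epsilon}$.

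The outcome is that in every surviving case both $p$ and $\N(\alpha)$ are bounded by small explicit constants, and $\alpha$ lies in a cyclotomic field whose conductor matches one of the pairs $(N',n)$ tabulated in \Cref{lem:exhaust}: $(5 \times 17, 4)$ and $(5 \times 19, 4)$ accommodate $p \in \{17, 19\}$ in the $X = 2$ subcase; $(3 \times 5 \times 7 \times 13, 5)$ handles $p = 13$; $(2^2 \times 3 \times 5 \times 7 \times 11, 5)$ handles $p = 11$; $(2^2 \times 3 \times 5 \times 7 \times 11 \times 13, 4)$ absorbs configurations with two large primes dividing $N$; and $(31, 6)$ handles the endpoint $p = 31$. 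Invoking \Cref{lem:exhaust} in each case yields coverage by \Cref{T:main}. The main obstacle will be the bookkeeping: each configuration of $(p, X, \N(\eta_0), \N(\eta_1))$ must be shown either to yield a contradiction with one of the displayed inequalities or to land in the table of \Cref{lem:exhaust}, with additional care needed when $M$ itself has prime factors greater than $7$ outside $p$ (handled either inductively on the number of such primes, or by appealing to the multi-prime entries of the exhaust table).
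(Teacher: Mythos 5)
Your overall strategy --- decompose at the largest prime $p$, bound $(p,X)$ via \Cref{L:rep-prime-case}(b) and \Cref{L:short sums}, and dispatch the small surviving $p$ with \Cref{lem:exhaust} --- matches the paper's proof for $X \geq 3$, where the Cassels-height inequality (refined by the contribution of the nonzero differences) really does force $p < 21$ once $\N(\alpha) \geq 4$ is in play. But your treatment of $X=2$ has a genuine gap. When $\calM(\eta_0)+\calM(\eta_1)$ takes its minimal value $3$, the inequality $(p-2)\cdot 3 + \calM(\eta_0-\eta_1) < \tfrac{13}{4}(p-1)$ reduces to $\calM(\eta_0-\eta_1) < \tfrac{p+11}{4}$, which holds for \emph{every} $p$; so the claim that ``the admissible range for the difference term gives a finite list of $(p,M)$'' is false. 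Concretely, $\alpha = 1 + \zeta_p(1+\zeta_7+\zeta_7^3)$ has $\calM(\alpha) \to 3$ and passes every Cassels-height test for arbitrarily large $p$, yet must be excluded because $\house{\alpha}^2 > 5.01$; and $\alpha = (1+\zeta_5)+(\zeta_5^2+\zeta_5^3)\zeta_p$ genuinely survives for every $p$ (it is form (3) of \Cref{T:cassels}, with minimal level $5p$), so no bound on $p$ and no finite exhaustion can close this subcase. The missing tool is \Cref{conjLemma}: since $p$ exactly divides $N$ and $\eta_0,\eta_1 \in \QQ(\zeta_{N/p})$, one gets $\house{\alpha}^2 \geq |\eta_0|^2 + |\eta_1|^2 + 2|\eta_0|\,|\eta_1|\cos\tfrac{2\pi}{p}$ for suitably chosen conjugates, and it is this house-level (not Cassels-height-level) inequality that eliminates all $p \geq 17$ configurations with $X=2$ apart from the infinite family just named. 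The paper's proof spends the bulk of its length on exactly this analysis, subdividing on $\calM(\alpha_1) \in \{1, \tfrac{3}{2}, \tfrac{5}{3}\}$ and on the interaction between the minimal level of $\alpha_2$ and the primes $5$ and $7$.

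Two smaller inaccuracies in your accounting of survivors: the exhaust pairs $(5\times 17,4)$ and $(5\times 19,4)$ arise from the $X=3$ case (where the refined inequality fails only for $p \geq 23$, leaving $\alpha \equiv 1+\zeta_p+\zeta_p^i(1+\zeta_5)$ for $p=17,19$), not from $X=2$; and the pair $(31,6)$ plays no role in this proposition, since no configuration with $p=31$ survives the height bounds here. Neither of these is fatal, but together with the $X=2$ gap they indicate that the case bookkeeping has not actually been carried out.
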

\begin{proof}
We may again assume that $\alpha \in \QQ(\zeta_N)$ and (by~\Cref{T:jones}) $\N(\alpha) > 3$.
Further, by \Cref{L:minimal level divides} and our hypothesis, we also have $\alpha \notin \QQ(\zeta_{420})$. Under these hypotheses, we will show that $\alpha$ is covered by \Cref{lem:exhaust}, is equivalent either to $(1+\zeta_5) + (\zeta_5^2 + \zeta_5^3)\zeta$ for some root of unity $\zeta$, or is equivalent to an entry of Table 1. 

Let $p \geq 11$ be the largest prime factor of $N$.
Since $\calM(\alpha) < \tfrac{13}{4} < \tfrac{p+3}{4}$,
by \Cref{L:rep-prime-case} we have a  $p$-decomposition
$\alpha = \sum_{i=0}^{p-1} \eta_i \zeta_{p}^i$
where the number $X$ of indices $i$ for which $\eta_i \neq 0$ is at most $\tfrac{p-1}{2}$. 
Let $\alpha_1, \dots, \alpha_X$ denote the nonzero values of $\eta_i$ sorted so that $0 < \N(\alpha_1) \leq \cdots \leq \N(\alpha_X)$. From \Cref{L:rep-prime-case}(b), we have the inequality:
\begin{equation}\label{eq:maininequality}
\tfrac{13}{4}(p-1) > (p-X) \sum_{i=1}^X \calM(\alpha_i) + \sum_{1\leq i<j\leq X} \calM(\alpha_i - \alpha_j) =: S
\end{equation}
(here we override the definition of $S$ in \Cref{L:rep-prime-case}).
We now proceed by considering different combinations of $p$ and $X$,
compiling lower bounds on $S$ based on \Cref{L:short sums}(a--d) and the inequality $\N(\alpha) \leq \sum_{i=1}^X \N(\alpha_i)$.

\subsubsection{Case: $p=11$} 
In this case, $2 \leq X \leq \tfrac{p-1}{2} = 5$
and \eqref{eq:maininequality} asserts that $S < \tfrac{65}{2} = 32 \tfrac{1}{2}$.
By applying \Cref{lem:exhaust} to the pair $(2^2 \times 3 \times 5 \times 7 \times 11, 5)$, we may assume that $\N(\alpha) \geq 6$. We obtain a contradictory lower bound on $S$ by tabulating cases:

    \begin{center}
    \setlength{\tabcolsep}{8pt}
    \begin{tabular}{ccccc|ccccc|c}
    \hline\hline
    \multicolumn{5}{c}{$\N(\alpha_i)$} & \multicolumn{5}{c}{$\calM(\alpha_i)$} & $S$ \\
    \hline
    $1, 2$ & $\geq 4$ & & & & $1$ & $5/2$ & & & & $31 \tfrac{1}{2} + 1^{*}$ \\
    $\geq 3$ & $\geq 3$ & & & & $2$ & $2$ & & & & $36$ \\
    \hline
    $1$ & $1$ & $\geq 4$ & & & $1$ & $1$ & $5/2$ & & & $36$ \\
    $1$ & $\geq 2$ & $\geq 3$ & & & $1$ & $3/2$ & $2$ & & & $36$ \\ 
    $\geq 2$ & $\geq 2$ & $\geq 2$ & & & $3/2$ & $3/2$ & $3/2$ & & & $36$ \\
    \hline
    $1$ & $1$ & $1$ & $\geq 3$ & & $1$ & $1$ &$1$ & $2$ & & $35$ \\
    $\geq 1$ & $\geq 1$ & $\geq 2$ & $\geq 2$ & & $1$ & $1$ & $3/2$ & $3/2$ & & $35$ \\
    \hline
    $\geq 1$ & $\geq 1$ & $\geq 1$ & $\geq 1$ & $\geq 2$ & $1$ & $1$ & $1$ & $1$ & $3/2$ & $33$ \\
    \hline\hline
    \end{tabular}
    \end{center}

* We get the additional 1 by noting that $\alpha_1 \neq \alpha_2$, so $\calM(\alpha_1-\alpha_2) \geq 1$.

\subsubsection{Case: $p=13$} 
In this case, $2 \leq X \leq \tfrac{p-1}{2} = 6$ and \eqref{eq:maininequality} asserts that $S < 39$.
By applying \Cref{lem:exhaust} to the pair $(2^2 \times 3 \times 5 \times 7 \times 11 \times 13, 4)$,
we may assume that $\mathcal{N}(\alpha) \geq 5$. For $X =5,6$, \Cref{L:rep-prime-case}(b)
gives $S \geq 5 \cdot 8 = 40$.
For $X \leq 4$, we obtain a contradictory lower bound on $S$ by tabulating cases:

    \begin{center}
    \setlength{\tabcolsep}{8pt}
    \begin{tabular}{cccc|cccc|c}
    \hline\hline
    \multicolumn{4}{c}{$\N(\alpha_i)$} & \multicolumn{4}{c}{$\calM(\alpha_i)$} & $S$ \\
    \hline
    1 & $\geq 4$ & & & $1$ & $5/2$ & & & $42$ \\
    $\geq 2$ & $\geq 3$ & & & $3/2$ & $2$ & & & $42$ \\
    \hline
    $1$ & $1$ & $\geq 3$ & & $1$ & $1$ & $2$ & & $40$ \\
    $1$ & $\geq 2$ & $\geq 2$ & & $1$ & $3/2$ & $3/2$ & & $40$ \\
    \hline
    $1$ & $1$ & $1$ & $\geq 2$ & $1$ & $1$ & $1$ & $3/2$ & $40 \tfrac{1}{2}$ \\
    \hline\hline
    \end{tabular}
    \end{center}

For $p \geq 17$, we continue to distinguish subcases based on $X$, but now arguing uniformly with respect to $p$.
Remember that we may assume $\N(\alpha) \geq 4$.

\subsubsection{Case: $X=2$ and $p\geq 17$}
By \eqref{eq:maininequality},
\begin{align*}
\tfrac{13}{4}\tfrac{p-1}{p-2} & > \calM(\alpha_1)+\calM(\alpha_2)+ \tfrac{1}{p-2} \calM(\alpha_1- \alpha_2).
\end{align*}
For $p \geq 17$, we have $\tfrac{p-1}{p-2} \leq \tfrac{16}{15}$, and therefore from the above inequality, we get
\begin{equation}\label{eq:inequality17,2}
\calM(\alpha_1)+\calM(\alpha_2)<\tfrac{52}{15}\approx 3.4666.
\end{equation}
In particular, if $\N(\alpha_1) > 1$, then by \Cref{L:short sums}(b,c), $\calM(\alpha_2) < \tfrac{52}{15} - \tfrac{3}{2} < 2$ and hence $\N(\alpha_1) = \N(\alpha_2) = 2$.
We may thus assume without loss of generality that $\calM(\alpha_1)\leq \calM(\alpha_2)$; then $\calM(\alpha_1)\leq \tfrac{52}{30}$, so by \Cref{L:short sums}(a,b) we have
\begin{equation*}
    \calM(\alpha_1)=1, \, \tfrac{3}{2}, \, \text{or} \, \tfrac{5}{3}.
\end{equation*}
We consider each possible value of $\calM(\alpha_1)$ below:
\begin{itemize}
\item $\calM(\alpha_1)=1$. Then $\N(\alpha_1)=1$, that is, $\alpha_1$ is a root of unity. We also have $\N(\alpha)>3$, hence $\N(\alpha_2)>2$. Applying \Cref{L:short sums}(c), we get $\calM(\alpha_2)\geq 2$ and hence
$$
\house{\eta_2}\geq \calM(\alpha_2)^{\tfrac{1}{2}}\geq \sqrt{2}.
$$ 
As $p \geq 17$ exactly divides $N$, \Cref{conjLemma} implies that 
$$
\house{\alpha}^2 \geq 3 + 2\sqrt{2} \cos \tfrac{2\pi}{17} \approx 5.6374 >5.01.
$$

\item $\calM(\alpha_1)=\tfrac{3}{2}$. 
As noted above, $\N(\alpha_2) = 2$ and so  $\alpha_2$ is equivalent to $1+\zeta_n$ for some $n \geq 4$.
 If $n=5$, then both $\alpha_1$ and $\alpha_2$ are equivalent to $1+ \zeta_5$, so $\alpha$ is equivalent to $1+\zeta_5+\varrho(\zeta_5^i+\zeta_5^j)$ for some root of unity $\varrho.$ Now, if $i-j \equiv 1$ or $4 \pmod 5$, then $\alpha$ is equivalent to $\alpha_1+ \zeta_p \alpha_2$ with $|\alpha_1|=|\alpha_2|=\tfrac{1+ \sqrt{5}}{2}$;
 applying \Cref{conjLemma} with $\theta \leq \tfrac{2 \pi}{17}$, we have $\house{\alpha}^2 > 10.11855 >5.01$. If $i-j \equiv 2$ or 3 $\pmod 5$, then $\alpha$ is equivalent to $(1+\zeta_5)+(\zeta_5^2+\zeta_5^3)\zeta$ for some root of unity $\zeta$.

 If $n$ is coprime to $5$, then $n \geq 4$ implies
 $$
 \house{1 + \zeta_n}=4 \cos^2 \tfrac{\pi}{n} \geq \sqrt{2}.
 $$ 
 Since $\alpha_2 \equiv 1 + \zeta_n \in \QQ(\zeta_{N/p})$, applying \Cref{conjLemma} with 
 $\theta \leq \tfrac{2 \pi}{17}$ and $\house{1 + \zeta_5}= \tfrac{1+ \sqrt{5}}{2}$
 yields $\house{\alpha}^2 \geq 8.88548$.

If $5 |  n$ and $n \neq 5$, then $n \geq 10$. We may conjugate $\alpha$ so that 
$$
|1 + \zeta_n|=\house{1 + \zeta_n} \geq \house{1 + \zeta_{10}}= \sqrt{(5+ \sqrt{5})/2}.
$$
The conjugate of $1+\zeta_5$ with smallest absolute value is $1 + \zeta_5^2$ with 
$|{1 + \zeta_5^2}|=\tfrac{\sqrt{5}-1}{2}$.
Applying \Cref{conjLemma} with $\theta \leq \tfrac{2 \pi}{17}$, we get $\house{\alpha}^2 \geq 6.19237$.

\item $\calM(\alpha_1)=\tfrac{5}{3}$.
Again, $\N(\alpha_2) = 2$ and so  $\alpha_2$ is equivalent to $1+\zeta_n$ for some $n \geq 4$.
If $n=7$, then both $\alpha_1$ and  $\alpha_2$ are equivalent to $1+ \zeta_7$.
Among the conjugates of $1+\zeta_7$, the ones with the smallest absolute values are $1+\zeta_7^3$ and $1+\zeta_7^4$,
and the next smallest is $1+\zeta_7^2$.
Since $\alpha$ is equivalent to both $1+\zeta_7+\varrho(1+\zeta_7^i)$ and $1+\zeta_7^2+\varrho(1+\zeta_7^{2i})$ for some $i \in \{1,\dots,6\}$ and some root of unity $\varrho$, we can ensure that neither $\alpha_1$ nor $\alpha_2$
has the smallest absolute value (taking the first option if $i \neq 3,4$ and the second option otherwise). 
We now have $|\alpha_1|, |\alpha_2| \geq |1+ \zeta_7^2|$,
so by \Cref{conjLemma} we conclude: 
$$
\house{\alpha}^2 \geq 6.00983.
$$

If $n$ is coprime to $7$, then the argument from the case $\calM(\alpha_1)=\tfrac{3}{2}$ carries over after noting that $\house{1+\zeta_7}>\house{1+\zeta_5}$.

 If $7|n$ and $n \neq 7$, then $n \geq 14$. 
 We may conjugate $\alpha$ so that 
 $$
 |1 + \zeta_n|=\house{1 + \zeta_n} \geq \house{1 + \zeta_{14}} \approx 1.949856.
 $$ 
 The conjugate of $1+\zeta_7$ with smallest absolute value is $1 + \zeta_7^3$ with $|{1 + \zeta_7^3}|\approx 0.44504$.
 Applying \Cref{conjLemma} with $\theta \leq \tfrac{2 \pi}{17}$, we get $\house{\alpha}^2 \geq 5.61756$.
\end{itemize}

\subsubsection{Case: $X=3$ and $p\geq 17$}

Since $\sum_{i=1}^3 \N(\alpha_i) \geq \N(\alpha) > 3$, we must have $\N(\alpha_3) > 1$ and hence $\calM(\alpha_3) \geq \tfrac{3}{2}$ by \Cref{L:short sums}(b).
If $\N(\alpha_2) > 1$, then $S > (p-3)(1+\tfrac{3}{2}+\tfrac{3}{2}) = 4(p-3)$ and this exceeds $\tfrac{13}{4}(p-1)$ for $p \geq 17$. Hence $\N(\alpha_2) = 1$ and in particular $\alpha_1-\alpha_3, \alpha_2-\alpha_3$ are both nonzero.
Combining this logic with \eqref{eq:maininequality}, we obtain
\begin{align*}
\tfrac{13}{4} (p-1) - \tfrac{7}{2}(p-3) - 2 &> 
(p-3)(\calM(\alpha_3)-\tfrac{3}{2}) + \calM(\alpha_1-\alpha_2) \\
&+ (\calM(\alpha_1-\alpha_3)-1)+ (\calM(\alpha_2-\alpha_3)-1) \geq 0
\end{align*}
and the outer inequality fails for $p \geq 23$. For $p=17,19$ the left-hand side of the inequality is at most 1;
using \Cref{L:short sums}, we deduce that $\alpha_3$ is equivalent to $1+\zeta_5$
(because $(p-3)(\tfrac{5}{3} - \tfrac{3}{2}) \geq 1$); $\alpha_1 = \alpha_2$; and $\N(\alpha_1-\alpha_3) = 1$
(because $\calM(\alpha_1-\alpha_3)=\calM(\alpha_2-\alpha_3) < \tfrac{3}{2}$).
Hence $\alpha$ is equivalent to $1+\zeta_p + \zeta_p^i (1+\zeta_5)$
for some $i$, so we may apply \Cref{lem:exhaust} to the pair $(5 \times p, 4)$ to ensure that $\alpha$ is covered by \Cref{T:main}.

\subsubsection{Case: $X\geq 4$ and $p\geq 17$}
From~\eqref{eq:maininequality},
\begin{equation}\label{inequalityforlarge}
    \tfrac{13}{4}(p-1)>(p-X)X,
\end{equation}
and we have $X\leq \tfrac{p-1}{2}$. 
However, the difference between the left-hand and right-hand sides of \eqref{inequalityforlarge} equals 0 for $p=17, X=4$; is strictly decreasing in $p$ for $X=4, p \geq 17$; and is strictly decreasing in $X$ for $p \geq 17$, $4 \leq X \leq \tfrac{p-1}{2}$. This difference therefore cannot be positive, which contradicts~\eqref{inequalityforlarge}.
\end{proof}

\subsection{Step 3}

The following is a refinement of \cite[Lemma~6.1]{robinson-wurtz}, but with a weaker restriction on $\calM(\alpha)$ which will allow us to reuse this result in case (b).
This is made possible by the fact that \Cref{lem:exhaust}
goes beyond the corresponding computation in \cite{robinson-wurtz}.

\begin{prop} \label{prop:list 420 part1}
Let $\alpha \in \QQ(\zeta_{420})$ be a cyclotomic integer with $\house{\alpha}^2 < 5.01$ and $\calM(\alpha) \leq 3 \tfrac{1}{2}$.
Then $\alpha$ is covered by \Cref{T:main}.
\end{prop}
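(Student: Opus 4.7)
The goal is to reduce the proof to an application of Lemma~\ref{lem:exhaust} with the pair $(N,n) = (420, 7)$, which is the first entry in the list of admissible pairs. Since $\alpha \in \QQ(\zeta_{420})$ by hypothesis, it suffices to establish the weight bound $\N(\alpha) \leq 7$; if this holds, Lemma~\ref{lem:exhaust} directly exhibits $\alpha$ as either belonging to one of the families in Theorem~\ref{T:cassels}(1)--(3) or equivalent to an entry of Table~\ref{table:exceptional classes}.

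To prove the weight bound, I would argue by contradiction: assume $\N(\alpha) \geq 8$ and derive an incompatibility with $\calM(\alpha) \leq 3\tfrac{1}{2}$. Since $420 = 2^2 \cdot 3 \cdot 5 \cdot 7$, start at the largest prime $p = 7$ and apply Lemma~\ref{L:rep-prime-case} to obtain a decomposition $\alpha = \sum_{i=0}^{6} \eta_i \zeta_7^i$ with $\eta_i \in \ZZ[\zeta_{60}]$ and the number $X$ of nonzero components minimized. The identity
\[
6\,\calM(\alpha) = (7-X) \sum_{i \in S} \calM(\eta_i) + \sum_{i<j \in S} \calM(\eta_i - \eta_j) \leq 21,
\]
combined with the refined partition-based lower bounds of Lemma~\ref{L:rep-prime-case}(c) and Remark~\ref{R:prime case next}, severely constrains both $X$ and the multiset of $\calM(\eta_i)$ and $\calM(\eta_i - \eta_j)$.

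When $X \leq 3 = (p-1)/2$, Remark~\ref{rem:determine minimal length} supplies the additivity $\N(\alpha) = \sum_i \N(\eta_i)$, so each branch of the analysis bounds $\N(\alpha)$ directly via Lemma~\ref{L:short sums}(a)--(d) applied to each $\eta_i$, feeding back the small $\calM$-budget to force $\N(\eta_i)$ to be small. For $X \geq 4$, where $\N$-additivity can fail, I would iterate: apply further $p'$-decompositions of each $\eta_i$ at $p' = 5$ and $p' = 3$ using Lemma~\ref{L:rep-prime-case}, and at $p' = 2$ using Lemma~\ref{L:rep-prime-power-case} (available since $2^2 \mid 420$). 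At each stage the tight Cassels height budget forces all but a few coefficients to vanish and bounds the remainder through Lemma~\ref{L:short sums}.

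The main obstacle is organizing the combinatorial case analysis when $X \geq 4$ at $p = 7$: here additivity of $\N$ can fail, and one must either refine the partition-based bound of Lemma~\ref{L:rep-prime-case}(c) (using Remark~\ref{R:too many sides of a triangle} to exclude triangles of roots of unity with all short sides) to force $\sum \N(\eta_i) \leq 7$, or show directly that any configuration with $\N(\alpha) \geq 8$ would push $\calM(\alpha)$ above $3\tfrac{1}{2}$. A useful simplification is that the structural information extracted at the $p = 7$ step already pins down the $\eta_i$ up to equivalence to short sums of the types catalogued in Table~\ref{table:upper bounds}, leaving only finitely many shapes to check, each of which is then eliminated or absorbed into the eventual call to Lemma~\ref{lem:exhaust}.
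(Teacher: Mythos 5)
Your first reduction is exactly the paper's: if $\N(\alpha) \leq 7$, apply \Cref{lem:exhaust} to the pair $(2^2 \times 3 \times 5 \times 7, 7)$, so it remains to rule out $\N(\alpha) \geq 8$ under the hypotheses $\house{\alpha}^2 < 5.01$ and $\calM(\alpha) \leq 3\tfrac{1}{2}$. The gap is in how you propose to get that contradiction. Decomposing at $p=7$ puts the components $\eta_i$ into $\QQ(\zeta_{60})$, a field containing $\zeta_5$, where the only lower bounds on $\calM$ in terms of $\N$ that the paper makes available are those of \Cref{L:short sums}, which stop at $\N \geq 4 \Rightarrow \calM \geq 2\tfrac{1}{2}$. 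That is not enough: in the branch $X=2$ with $\N(\eta_0)=1$, $\N(\eta_1)=7$ you only get $S \geq 5(1+\tfrac{5}{2})+1 = 18\tfrac{1}{2} \leq 21 = 6\calM(\alpha)$, and in the branch $X=1$ (which you must allow, since $7$ need not divide the minimal level) you are left with an $\alpha$ equivalent to an element of $\QQ(\zeta_{60})$ with $\N(\alpha)\geq 8$ and only the bound $\calM(\alpha)\geq 2\tfrac{1}{2}$ --- no contradiction in either case. Moreover, for $p=7$ and $\calM(\alpha)\leq 3\tfrac{1}{2} > \tfrac{p+3}{4}$, parts (d)--(e) of \Cref{L:rep-prime-case} give no cap on $X$, so $X$ can be as large as $6$ and the non-additivity of $\N$ that you flag as the ``main obstacle'' is not a side issue but the dominant case. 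Your appeal to \Cref{table:upper bounds} cannot close this, since that table only records \emph{upper} bounds on $f(n)$ and is not a classification.

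The paper avoids all of this by decomposing at $p=5$ instead, writing $\alpha = \sum_{i=0}^{4}\eta_i\zeta_5^i$ with $\eta_i \in \QQ(\zeta_{84})$ (allowing $X=1$ but not $X=5$). Because $\QQ(\zeta_{84})$ does not contain $\zeta_5$, much stronger lower bounds hold for its elements: $\N(\beta)=2 \Rightarrow \calM(\beta)\geq \tfrac{5}{3}$, and --- crucially, imported from Calegari--Morrison--Snyder Lemmas 7.0.5, 7.0.8, 7.0.9 --- $\N(\beta)=5 \Rightarrow \calM(\beta)\geq \tfrac{17}{6}$ and $\N(\beta)\geq 6 \Rightarrow \calM(\beta)\geq \tfrac{23}{6}$. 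With the budget $S \leq 4\calM(\alpha) \leq 14$ these bounds make every branch (including $X=1$, which is handled by a further $\zeta_4$-decomposition inside $\QQ(\zeta_{84})$) terminate in a contradiction. These external inputs are the essential ingredient your proposal is missing; without them, or some substitute of comparable strength valid in $\QQ(\zeta_{60})$, the case analysis you outline cannot be completed.
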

\begin{proof}
If $\N(\alpha) \leq 7$, we deduce the claim by  applying \Cref{lem:exhaust} to the pair
$(2^2 \times 3 \times 5 \times 7, 7)$.
It will thus suffice to prove that there is no cyclotomic integer $\alpha \in \QQ(\zeta_{420})$ with
$\house{\alpha}^2 < 5.01$, $\calM(\alpha) \leq 3 \tfrac{1}{2}$, and $\N(\alpha) > 7$.
(Note that there is no need to apply \Cref{T:jones} in this argument.)

Choose a representation $ \alpha = \sum_{i=0}^4 \zeta_5^i\eta_i$ where $\eta_i\in \QQ(\zeta_{84})$
and the number $X$ of nonzero $\eta_i$ has been minimized, as in \Cref{L:rep-prime-case} except with  $X=1$ allowed (but not $X=5$).
Let $\alpha_1, \dots, \alpha_X$ be the nonzero values of $\eta_i$ sorted so that $\N(\alpha_1) \leq \cdots \leq \N(\alpha_X)$.

It is known that $\beta\in \mathbb Q(\zeta_{84})$ satisfies the following:
\begin{itemize}
    \item If $\N(\beta)=2$, then $\calM(\beta)\ge 5/3$, by \Cref{L:short sums}(b) because $1+\zeta_5 \notin \QQ(\zeta_{84})$.
    \item If $\N(\beta)=3$, then $\calM(\beta)\ge 2$, by \Cref{L:short sums}(c).
    \item If $\N(\beta)=4$, then $\calM(\beta)\ge 5/2$, by \Cref{L:short sums}(d).
    \item If $\N(\beta)= 5$, then $\calM(\beta)\ge 17/6$, by \cite[Lemma 7.0.8]{calegari-morrison-snyder}.
    \item If $\N(\beta)\ge 6$, then $\calM(\beta)\ge 23/6$, by \cite[Lemma 7.0.9]{calegari-morrison-snyder}.
\end{itemize}
In each case, we demonstrate a contradiction to \eqref{eq:rep-prime-case} with $p=5$ and $\calM(\alpha) \leq 3\frac{1}{2}$:
\begin{align}\label{eq:bound_cassel_height}
    14 \geq (5-X)\sum_{i=1}^X \calM(\alpha_i)+\sum_{1\le i<j\le X}\calM(\alpha_i-\alpha_j):=S.
\end{align}
As in \Cref{P:robinson wurtz extract2}, we present the lower bounds contradicting \eqref{eq:bound_cassel_height} in tabular form. The column $\calM(\alpha_i-\alpha_j)$ is listed in the order $\alpha_1-\alpha_2, \dots, \alpha_1-\alpha_X, \alpha_2-\alpha_3, \dots$.

\subsubsection{Case: $X = 1$}

We may assume that $ \alpha = \alpha_1 \in \mathbb{Q}(\zeta_{84}) $. We write  $\alpha = \beta + \zeta_4 \gamma$ with $\beta, \gamma \in \mathbb{Q}(\zeta_{21})$. By \Cref{L:rep-prime-power-case}, we have
$$
\mathcal{M}(\alpha) = \mathcal{M}(\beta) + \mathcal{M}(\gamma) \leq 3\tfrac{1}{2},
$$
so we may assume without loss of generality that $ \mathcal{M}(\beta) \leq \tfrac{7}{4}$.
By \Cref{L:short sums}, $\N(\beta) \leq 2$ and so $\N(\gamma) \geq 6$. By \cite[Lemma~7.0.5]{calegari-morrison-snyder}, $\calM(\gamma) \geq \tfrac{23}{6}$
and now $\calM(\alpha) \geq 1 + \tfrac{23}{6}$, a contradiction to \eqref{eq:bound_cassel_height}.

\subsubsection{Case: $X = 2$}
We tabulate cases:

    \begin{center}
    \setlength{\tabcolsep}{8pt}
    \begin{tabular}{cc|cc|c|c}
    \hline\hline
    \multicolumn{2}{c}{$\N(\alpha_i)$} & \multicolumn{2}{c}{$\mathcal{M}(\alpha_i)$} & $\mathcal{M}(\alpha_1 - \alpha_2)$ & $S$ \\
    \hline
    $1,2$ & $\ge 6$ & $1$ & $23/6$ & $5/2$ & $17$ \\
    $3$ & $\ge 5$ & $2$ & $17/6$ & $5/3$ & $16\frac{1}{6}$ \\
    $\geq 4$ & $\geq 4$ & $5/2$ & $5/2$ & 0 & $15$ \\
    \hline\hline
    \end{tabular}
    \end{center}

\subsubsection{Case: $X = 3$}

At most one of the differences $\alpha_j-\alpha_k$ can vanish: otherwise $\alpha_1 = \alpha_2 = \alpha_3$ and there is another representation with $X < 3$
given by taking $\eta_i'=\eta_i-\alpha_1$ for all $i$.
We may further assume that among representations with $X =3$, we have chosen one to maximize the number of $i$ with $\N(\alpha_i) = 1$.
We now tabulate cases:

\begin{center}
    \setlength{\tabcolsep}{8pt}
    \begin{tabular}{ccc|ccc|ccc|c}
\hline\hline
\multicolumn{3}{c}{$\N(\alpha_i)$} & \multicolumn{3}{c}{$\mathcal{M}(\alpha_i)$} & \multicolumn{3}{c}{$\mathcal{M}(\alpha_i - \alpha_j)$} & $S$ \\
\hline
1 & 1 & $\ge 6$ & 1 & 1 & $23/6$ & 0 & 17/6 & $17/6$ & $17\frac{1}{3}$ \\
1 & 2 & $\ge 5$ & 1 & $5/3$ & $17/6$ & 1 & $5/2$ & $2$ & $16 \frac{1}{2}$ \\
1 & $\ge 3$ & $\ge 4$ & 1 & 2 & 5/2 & $5/3$ & $2$ & 0 & $14\frac{2}{3}$ \\
2 & 2 & $\ge 4$ & $5/3$ & $5/3$ & 5/2 & 0 & 5/3 & 5/3 & $15$ \\
$\ge 2$ & $\ge 3$ & $\ge 3$ & $5/3$ & 2 & 2 & 1 & 1 & 0 & $13\frac{1}{3} + 1^*$ \\
\hline\hline
\end{tabular}
\end{center}
* The lower bounds on $\calM(\alpha_i-\alpha_j)$ cannot all be sharp: otherwise we could rewrite $\alpha = \sum_{i=0}^4 \eta_i' \zeta_5^i$ with $\eta_i' := \eta_i - \alpha_2$;
the nonzero $\eta'_i$ would be $\alpha_1-\alpha_2, \alpha_2, \alpha_2$, increasing the number of roots of unity from 0 to 1. Hence either $\alpha_2 \neq \alpha_3$ and $\calM(\alpha_2-\alpha_3) \geq 1$, or $\calM(\alpha_1-\alpha_2) = \calM(\alpha_1-\alpha_3) \geq \tfrac{5}{3}$.

\subsubsection{Case: $X = 4$}
No difference $\alpha_j-\alpha_k$ can vanish: otherwise, there is another
representation with $X<4$ given by taking $\eta_i'=\eta_i-\alpha_j$ for all $i$. 

If there are at most three pairs $i<j$ with $\calM(\eta_i-\eta_j) \leq 1$, then $S \geq 4 + 7 \times \tfrac{5}{3} > 14$.
Otherwise, there must be an index $i$ such that
$\calM(\eta_i - \eta_j) = 1$ for at least two different $j$.
By replacing $\eta_i$ with $\eta_i' := \eta_i - \alpha_j$, we may reach the situation where $\alpha_1$ and $\alpha_2$ are both roots of unity.
We now tabulate cases:

\begin{center}
\begin{tabular}{cccc|cccc|cccccc|c}
\hline\hline
\multicolumn{4}{c}{$\N(\alpha_i)$} & \multicolumn{4}{c}{$\mathcal{M}(\alpha_i)$} & \multicolumn{6}{c}{$\mathcal{M}(\alpha_i - \alpha_j)$} & $S$ \\
\hline
1 & 1 & 1 & $\ge 5$ & 1 & 1 & 1 & $17/6$ & 1 & 1 & 5/2 & 1 & 5/2 & 5/2 & $16\frac{1}{3}$ \\
1 & 1 & 2 & $\ge 4$ & 1 & 1 & $5/3$ & 5/2 & 1 & 1 & $2$ & 1 & $2$ & 5/3 & $14 \frac{5}{6}$ \\
1 & 1 & $\ge 3$ & $\ge 3$ & 1 & 1 & 2 & 2 & 1 & $5/3$ & $5/3$ & $5/3$ & $5/3$ & 1 & $14\frac{2}{3}$ \\
\hline\hline
\end{tabular} 
\end{center}
\end{proof}

\subsection{Putting things together}

We assemble the three steps into a partial result towards \Cref{T:main}. 

\begin{theorem}[\Cref{T:main}, case (a)] \label{T:cassels partial classification}
Let $\alpha$ be a cyclotomic algebraic integer with $\house{\alpha}^2 < 5.01$ and $\calM(\alpha) < 3 \tfrac{1}{4}$. Then $\alpha$ is covered by \Cref{T:main}.
\end{theorem}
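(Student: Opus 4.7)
The plan is to assemble the three steps of the proof breakdown (Propositions \ref{P:robinson wurtz extract1}, \ref{P:robinson wurtz extract2}, and \ref{prop:list 420 part1}) by performing a clean case analysis on the minimal level $N$ of $\alpha$, following the blueprint of Table \ref{table:main breakdown} for case (a) with $N_0 = 4$ and $N_1 = 7$. Since this is purely a matter of dispatching to the right preceding proposition, there is essentially no new mathematical content; the only thing to verify is that the three steps exhaust all possibilities for $N$.

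First, I would invoke \Cref{L:minimal level divides} together with the definition of minimal level to note that $N$ is either odd or divisible by $4$. Suppose $N$ is divisible by some prime power $p^n$ with $n > 1$ and $p^n > 4$ (equivalently, $p^n \nmid N_0 = 4$). Then \Cref{P:robinson wurtz extract1} applies directly and concludes that $\alpha$ is covered by \Cref{T:main}. Otherwise, the only prime powers $p^n$ with $n \geq 2$ that can divide $N$ must satisfy $p^n \leq 4$; combined with the constraint that $N$ is either odd or divisible by $4$, this forces $N$ to divide $4$ times an odd squarefree integer.

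Under this latter hypothesis, if $N$ is divisible by some prime $p > 7$, then \Cref{P:robinson wurtz extract2} applies and we are done. Otherwise, every prime factor of $N$ is at most $7$; combining this with the squarefree-plus-$4$ structure from the previous paragraph, we conclude that $N$ divides $4 \cdot 3 \cdot 5 \cdot 7 = 420$. In particular, $\alpha$ (chosen in its equivalence class to lie in $\QQ(\zeta_N)$) belongs to $\QQ(\zeta_{420})$.

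Finally, since $\calM(\alpha) < 3\tfrac{1}{4} < 3\tfrac{1}{2}$, the hypotheses of \Cref{prop:list 420 part1} are satisfied, and this proposition concludes that $\alpha$ is covered by \Cref{T:main}. The main obstacle is not the logical stitching but ensuring that the three conditions for Steps 1--3 are indeed complementary with no gap; this is settled by the observation that the minimal-level constraint "$N$ odd or $4 \mid N$" rules out the awkward case $N \equiv 2 \pmod 4$, so that ruling out higher prime powers $p^n > 4$ really does force $N$ to be of the form ``$4$ or $1$'' times odd squarefree, and then ruling out large primes pins $N$ down to a divisor of $420$.
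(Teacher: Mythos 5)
Your proposal is correct and matches the paper's proof essentially verbatim: both dispatch on the minimal level $N$ by first applying \Cref{P:robinson wurtz extract1} when some $p^n > 4$ with $n>1$ divides $N$, then \Cref{P:robinson wurtz extract2} when a prime $p>7$ divides the resulting "$4$ times odd squarefree" level, and finally \Cref{prop:list 420 part1} for $N \mid 420$. The only cosmetic difference is that the paper also records the sharper bounds $\house{\alpha}^2 < 5$ and $\calM(\alpha) \leq 3\tfrac{1}{5}$ from \Cref{cor:RW truncation}, which are not needed for the dispatch itself.
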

\begin{proof}
By \Cref{T:robinson-wurtz} and 
\Cref{cor:RW truncation}
we have $\house{\alpha}^2 < 5$ and $\calM(\alpha) \leq 3 \tfrac{1}{5}$.
Let $N$ be the minimal level of $\alpha$. 
If $N$ is divisible by a higher prime power $p^n > 4$, then
we deduce the claim by applying \Cref{P:robinson wurtz extract1}.
Otherwise, $N$ divides $4N_0$ for some odd squarefree $N_0$;
if $N_0$ is divisible by some prime $p > 7$, then
we deduce the claim by applying \Cref{P:robinson wurtz extract2}. Otherwise, we can take $\alpha \in \QQ(\zeta_{420})$ and apply \Cref{prop:list 420 part1}.
\end{proof}

In light of the fact that the values of $\house{\alpha}^2$  listed in \Cref{cor:RW truncation}(b)--(d) are all at least 4, we deduce the following corollary which resolves \cite[Conjecture~1]{robinson}.

\begin{cor} \label{cor:robinson1}
Every cyclotomic integer $\alpha$ with $\house{\alpha} < 2$
is either the sum of at most two roots of unity; equivalent to 
some number of the form $\tfrac{\sqrt{a}+i\sqrt{b}}{2}$ where $a,b$ are positive integers; or equivalent to one of
\[
\frac{3+\sqrt{13} + i \sqrt{26-6\sqrt{13}}}{4},
\qquad
1 + i \frac{\sqrt{5}+1}{2},
\qquad
2 \cos \frac{2\pi}{7} + \frac{1+i\sqrt{3}}{2}.
\]
\end{cor}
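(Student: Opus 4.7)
The plan is to deduce the corollary from \Cref{T:cassels partial classification} by a finite case check. First, since $\house{\alpha} < 2$ means $\house{\alpha}^2 < 4$ and the castles listed in cases (b)--(d) of \Cref{cor:RW truncation} (namely $1 + 4\cos^2 \tfrac{\pi}{15}, 1 + 4\cos^2 \tfrac{\pi}{14}, 1 + 4\cos^2 \tfrac{\pi}{10}, 4, 5$) are all at least $4$, we must be in case (a); in particular $\calM(\alpha) \leq 3\tfrac{1}{5} < 3\tfrac{1}{4}$, so \Cref{T:cassels partial classification} applies and $\alpha$ is covered by \Cref{T:main}.

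The remainder is a case check against the forms appearing in \Cref{T:main}. If $\alpha$ has Cassels form (1), i.e.\ it is a sum of at most two roots of unity, then it lies in the first option of the corollary. For form (2), the castle formula \eqref{eq:house for cassels families} combined with $\house{\alpha}^2 < 4$ restricts $N'(N) \in \{2,3,4,5\}$; $N'(N) = 2$ yields $\house{\alpha} = 1$, hence a root of unity covered by form (1), while $N'(N) = 3, 4, 5$ yield $\alpha$ equivalent to $1 + i = (\sqrt 4 + i\sqrt 4)/2$, $1 + i\sqrt 2 = (\sqrt 4 + i\sqrt 8)/2$, and (for $N'(N) = 5$) the second listed exception $1 + i\tfrac{\sqrt 5 + 1}{2}$, respectively. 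Form (3) is handled by a parallel analysis using $N'(2N)$ in place of $N'(N)$: the surviving cases yield either an element equivalent to $(\sqrt 5 + i\sqrt 3)/2$ or to the first listed exception.

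Lastly, I would scan \Cref{table:exceptional classes} for entries with castle strictly less than $4$; only the rows with castles $4\cos^2 \tfrac{\pi}{14}$, $3$, and $2$ contribute, giving five equivalence classes. I would verify, using \Cref{algo:perfect hash} or a direct computation, that these are respectively equivalent to $2\cos(2\pi/7) + \zeta_6$ (the third listed exception, for the level-$21$ entry), $\tfrac{3+\sqrt{13}+i\sqrt{26-6\sqrt{13}}}{4}$ (the first listed exception, for the level-$13$ entry), and to $(1 + i\sqrt{11})/2$, $(\sqrt 5 + i\sqrt 7)/2$, and $(1 + i\sqrt 7)/2$ for the remaining entries at levels $11$, $35$, and $7$. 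The main obstacle lies in this matching step; the simplest tool available is the Gauss sum identity $(1 + i\sqrt p)/2 = 1 + \sum_{k \in \operatorname{QR}(p)} \zeta_p^k$ for primes $p \equiv 3 \pmod 4$, which in each case lets one exhibit an explicit root of unity in $\QQ(\zeta_{2p})$ moving the Table~\ref{table:exceptional classes} entry into the desired form.
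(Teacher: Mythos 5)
Your overall route is the same as the paper's: reduce to case (a) of \Cref{cor:RW truncation} (since the castles in cases (b)--(d) are all at least $4$), invoke \Cref{T:cassels partial classification}, and then run a finite check over forms (1)--(3) of \Cref{T:cassels} together with the low-castle rows of \Cref{table:exceptional classes}, matching each survivor to Robinson's list; the paper records exactly this matching in \Cref{table:matching} and certifies it by computer via \Cref{algo:perfect hash}.

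One concrete slip in your case check: for form (3), the surviving specializations are $N=1,4,6,10$, which give (up to equivalence) $-1$, $\tfrac{\sqrt{10}+i\sqrt{2}}{2}$, $\tfrac{\sqrt{5}+i\sqrt{3}}{2}$, and $1-\zeta_5$. None of these is equivalent to the first listed exception $\tfrac{3+\sqrt{13}+i\sqrt{26-6\sqrt{13}}}{4}$: a form (3) element has minimal level divisible by $5$ by \Cref{P:minimal level for Cassels families}, whereas that exception generates a field ramified at $13$ and arises instead from the level-$13$ entry of \Cref{table:exceptional classes}. The two values you omit, $\tfrac{\sqrt{10}+i\sqrt{2}}{2}$ and $1-\zeta_5$, are still covered by the first two options of the corollary, so the conclusion is unaffected, but the bookkeeping as written is wrong. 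Relatedly, your Gauss-sum identity handles the prime-level table entries at $7$ and $11$ (and, with $\sqrt{5}\in\QQ(\zeta_5)$ adjoined, the level-$35$ entry), but not the level-$13$ entry, whose target $\tfrac{3+\sqrt{13}+i\sqrt{26-6\sqrt{13}}}{4}$ is not of the form $\tfrac{\sqrt{a}+i\sqrt{b}}{2}$; for that one you would fall back on the hash comparison of \Cref{algo:perfect hash} or a direct minimal-polynomial computation, which is exactly what the paper does.
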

\begin{proof}
By \Cref{cor:RW truncation}, $\house{\alpha} < 2$ implies $\calM(\alpha) < 3 \tfrac{1}{4}$. Hence by \Cref{T:cassels partial classification}, every such $\alpha$ is either of one of the forms (1)--(3) of \Cref{T:cassels} or equivalent to an entry of \Cref{table:exceptional classes}. 
With the former in mind, we note that $1 + 4 \cos^2 \tfrac{\pi}{M}$ is monotone decreasing for $M \geq 2$, so
\begin{equation} \label{eq:castle 4 options}
1 + 4 \cos^2 \tfrac{\pi}{M} < 4 \Longrightarrow M \in \{2,3,4,5\}.
\end{equation}
Since our desired statement expressly includes form (1) of \Cref{T:cassels}, it suffices to check the claim for $\alpha$ of form (2) or (3) or listed in \Cref{table:exceptional classes}. We now identify these values:

\begin{itemize}
   \item  \textbf{Case $(2)$ of \Cref{T:cassels}:}
Up to equivalence, we have $\alpha = 1+\zeta_N-\zeta_N^{-1}$ for some root of unity $\zeta_N$. For such $\alpha$, by \eqref{eq:house for cassels families}, we have $\house{\alpha}^2 = 1 + 4 \cos^2 \frac{\pi}{N'(N)}.$ By \eqref{eq:denom for cassels families}, the values of $\alpha$ for which $M = N'(N)$ satisfies \eqref{eq:castle 4 options} are $N=1,2,8,12,20$. 

\item \textbf{Case $(3)$ of \Cref{T:cassels}:}
Up to equivalence, we have $\alpha = (\zeta_5+\zeta_5^4) + (\zeta_5^2 + \zeta_5^3)\zeta_N$ for some root of unity $\zeta_N$. For such $\alpha$, by \eqref{eq:house for cassels families}, we have $\house{\alpha}^2 = 1 + 4 \cos^2 \frac{\pi}{N'(2N)}$. By \eqref{eq:denom for cassels families}, the values of $\alpha$ for which $M = N'(2N)$ satisfies \eqref{eq:castle 4 options} are $N=1,4, 6,10$.

\item \textbf{Cases from \Cref{table:exceptional classes}:}
Among the entries of \Cref{table:exceptional classes}, the only cyclotomic integers $\alpha$ with castle less than $4$ appear in the three bottom boxes (with $\house{\alpha}^2 = 2, 3, 4 \cos^2 \tfrac{\pi}{14}$). 

\end{itemize}

For each $\alpha$ identified above, we identify a cyclotomic integer of one of our desired forms with the same hash as $\alpha$ in the sense of \Cref{algo:perfect hash}. See \Cref{table:matching} for the matching; see the Jupyter notebook \verb|corollary-4.5.ipynb| in the GitHub repository for verification of the asserted equivalences.
\end{proof}

\begin{table}[ht]
\caption{Matching of cyclotomic integers $\alpha$ with $\house{\alpha}^2 < 4$, as classified by \Cref{T:cassels partial classification}, with cyclotomic integers listed in \Cref{cor:robinson1}.}
\label{table:matching}
\begin{tabular}{c|c|c}
Cyclotomic Integer & Cyclotomic Hash &  Item in Robinson's List\\
\hline
$1+\zeta_1-\zeta_1^{-1}$ & $x-1$ &  $1$ \\
$1+\zeta_2-\zeta_2^{-1}$ & $x-1$ & $1$ \\
$1+\zeta_8-\zeta_8^{-1}$ & $x^2-2x+3$ & $1+i\sqrt{2}$\\
$1+\zeta_{12}-\zeta_{12}^{-1}$ & $x^2-2x+2$ & $1+i$ \\
$1+\zeta_{20}-\zeta_{20}^{-1}$ & $x^4 - 4x^3 + 9x^2 - 10x + 5$ & $1+ \frac{1}{2}i(\sqrt{5}+1)$ \\
\hline
$(\zeta_5+\zeta_5^4) + (\zeta_5^2 + \zeta_5^3)\zeta_1$ & $x-1$ & $1$ \\
$(\zeta_5+\zeta_5^4) + (\zeta_5^2 + \zeta_5^3)\zeta_4$ & $x^4-2x^3+2x^2-6x+9$ & $\frac{1}{2}(\sqrt{10}+i \sqrt{2})$ \\
$(\zeta_5+\zeta_5^4) + (\zeta_5^2 + \zeta_5^3)\zeta_6$ & $x^4-3x^3+5x^2-6x+4$ &  $\frac{1}{2}(\sqrt{5}+i \sqrt{3})$ \\
$(\zeta_5+\zeta_5^4) + (\zeta_5^2 + \zeta_5^3)\zeta_{10}$ & $x^4-5x^3+10x^2-10x+5 $& $1-\zeta_5$ \\
\hline
$1 + \zeta_7 + \zeta_7^3$ & $x^2-x+2$ & $\frac{1}{2}(1+i\sqrt{7})$  \\
$1 + \zeta_{11} + \zeta_{11}^2 + \zeta_{11}^4 + \zeta_{11}^7$ & $x^2-x+3$ & $\frac{1}{2} (1+i\sqrt{11})$  \\
$1 + \zeta_{13} + \zeta_{13}^3 + \zeta_{13}^9$ & $x^4-3x^3+5x^2-9x+9$ & $\frac{3+\sqrt{13} + i \sqrt{26-6\sqrt{13}}}{4}$ \\
$1 - \zeta_{21} - \zeta_{21}^{13}$ & \parbox[c]{1.4
in}{$x^6 - 5x^5 + 13x^4 - 22x^3 + 28x^2 - 21x + 7$} & $2 \cos \frac{2\pi}{7} + \frac{1+i\sqrt{3}}{2}$ \\
$1 - \zeta_{35} + \zeta_{35}^{7} - \zeta_{35}^{11} -\zeta_{35}^{16}$ & $x^4+x^2+9$ & $\frac{1}{2}(\sqrt{5}+i \sqrt{7})$ \\
\end{tabular}
\end{table}

\begin{remark} \label{rem:biquadratic}
In the manner of \Cref{cor:robinson1}, we note that every entry $\alpha$ of \Cref{table:exceptional classes} with $\house{\alpha}^2 \in \{4,5\}$
whose cyclotomic hash generates a quadratic or biquadratic field
is equivalent to $\tfrac{\sqrt{a} + i \sqrt{b}}{2}$ for some positive integers $a,b$ with $a +b \in \{16, 20\}$.
This covers the entries with 
$\house{\alpha}^2 = 4$
of minimal level 7, 28, 39, 55, 60,
and the entries with $\house{\alpha}^2 = 5$
of minimal level 11, 19, 24, 51, 84, 91.
This fact will reappear in the proof of \Cref{L:remove p castle 5}.
\end{remark}

We record a further corollary of \Cref{T:cassels partial classification} which will be needed in 
\S\ref{sec:higher prime powers}.
\begin{cor} \label{lem:cor of partial classification}
Let $\alpha$ be a cyclotomic integer with $\house{\alpha}^2 < 5$ and $\N(\alpha) > 2$.
\begin{enumerate}
    \item[(a)] 
    If $\calM(\alpha) \leq 2 \tfrac{1}{2}$, then up to equivalence we have
\begin{gather*}
(\house{\alpha}^2, \calM(\alpha), \alpha) \in 
\{ (2, 2, 1+\zeta_7+\zeta_7^3), (2, 2, 1- \zeta_{15} - \zeta_{15}^{12}),
(1 + 4 \cos^2 \tfrac{\pi}{5}, 2 \tfrac{1}{2}, 1 + \zeta_{20} - \zeta_{20}^{19}),
\\
(4 \cos^2 \tfrac{\pi}{14}, 2 \tfrac{1}{3}, 1 - \zeta_{21}+ \zeta_{21}^{13}),
(\tfrac{5+\sqrt{13}}{2}, 2 \tfrac{1}{2}, 1+\zeta_{13}+\zeta_{13}^4),
(\tfrac{5+\sqrt{21}}{2}, 2 \tfrac{1}{2}, 1-\zeta_{21}+\zeta_{21}^6+\zeta_{21}^{18})
\}.
\end{gather*}

\item[(b)] If $\house{\alpha}^2 = 3$, then up to equivalence we have
\begin{gather*}
\alpha \in \{ 
1+\zeta_8-\zeta_8^{-1},
1+\zeta_{11}+\zeta_{11}^2+\zeta_{11}^4 +\zeta_{11}^7,
1+\zeta_{13}+\zeta_{13}^3+\zeta_{13}^9, \\
1-\zeta_{20}+\zeta_{20}^3+\zeta_{20}^4,
1-\zeta_{35}+\zeta_{35}^7-\zeta_{35}^{11}-\zeta_{35}^{16}
\}.
\end{gather*}
\end{enumerate}
\end{cor}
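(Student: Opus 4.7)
The plan is to apply \Cref{T:cassels partial classification} to both parts and then enumerate the finitely many surviving candidates.

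First, I would verify that the hypothesis $\calM(\alpha) < 3\tfrac{1}{4}$ of \Cref{T:cassels partial classification} holds in both parts: for (a) we have $\calM(\alpha) \leq 2\tfrac{1}{2}$ by assumption, and for (b) we have $\calM(\alpha) \leq \house{\alpha}^2 = 3$ since $\calM(\alpha)$ is the average of $|\beta|^2$ over conjugates. Hence $\alpha$ is equivalent either to a member of one of the three parametric families of \Cref{T:cassels} or to an entry of \Cref{table:exceptional classes}, and the hypothesis $\N(\alpha) > 2$ immediately excludes family (1). It remains to comb through families (2) and (3) and through \Cref{table:exceptional classes} for the survivors.

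For part (a), I would use the formula \eqref{eq:formula for cassels height} (applied with $N$ replaced by $N'(N)$ or $N'(2N)$ according to the family) to write $\calM(\alpha) = 3 + 2\mu(M)/\varphi(M)$ with $M = N'(N)$ or $M = N'(2N)$. The inequality $\calM(\alpha) \leq 2\tfrac{1}{2}$ forces $\mu(M) = -1$ and $\varphi(M) \leq 4$, so $M \in \{3,5\}$. Inverting the piecewise formula \eqref{eq:denom for cassels families} then yields $N \in \{12, 20\}$ for family (2) (with $N = 12$ giving $1+\zeta_{12}-\zeta_{12}^{-1} = 1+i$ and hence excluded by $\N > 2$) and $N \in \{6,10\}$ for family (3). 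The remaining cases produce the representatives $1 + \zeta_{20} - \zeta_{20}^{19}$ (from family (2) with $N = 20$, duplicated by family (3) with $N = 10$) and $1 - \zeta_{15} - \zeta_{15}^{12}$ (from family (3) with $N = 6$). Combining these with the four entries of \Cref{table:exceptional classes} that satisfy $\calM(\alpha) \leq 2\tfrac{1}{2}$, namely the castle-$2$ bottom entry and the three rows of Cassels height $2\tfrac{1}{3}$, $2\tfrac{1}{2}$, $2\tfrac{1}{2}$, gives precisely the six items in the list.

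For part (b), the analysis is analogous but indexed by castle instead of Cassels height. The condition $\house{\alpha}^2 = 3$ applied to \eqref{eq:house for cassels families} reads $1 + 4\cos^2(\pi/M) = 3$, forcing $M = 4$, and inverting \eqref{eq:denom for cassels families} gives $N = 8$ for family (2) and $N = 4$ for family (3). These supply the representatives $1 + \zeta_8 - \zeta_8^{-1}$ and (up to equivalence) $1 - \zeta_{20} + \zeta_{20}^3 + \zeta_{20}^4$. Together with the three entries of \Cref{table:exceptional classes} of castle $3$ (levels $11$, $13$, and $35$), these produce the five items claimed.

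No step is genuinely hard; the main obstacle is purely bookkeeping. One must carefully invert the piecewise formula \eqref{eq:denom for cassels families}, confirm that no Table~\ref{table:exceptional classes} entry and no family representative is accidentally omitted under the equivalence relation, and check that the named family representatives coincide with the forms listed in the statement. Each such identification can be verified mechanically by computing cyclotomic hashes via \Cref{algo:perfect hash}, as was already done for the related listing in \Cref{cor:robinson1}.
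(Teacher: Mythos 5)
Your proposal is correct and follows essentially the same route as the paper: apply \Cref{T:cassels partial classification} (after checking $\calM(\alpha) < 3\tfrac{1}{4}$), discard family (1) via $\N(\alpha)>2$, pin down the admissible parameters in families (2)--(3) using \eqref{eq:house for cassels families} and \eqref{eq:denom for cassels families}, and append the matching rows of \Cref{table:exceptional classes}. (The paper bounds $M$ in part (a) via $\calM(\zeta_M+\zeta_M^{-1}) \leq 1\tfrac{1}{2}$ and \Cref{L:short sums} rather than via \eqref{eq:formula for cassels height}, but this is cosmetic.) One misstatement: family (3) at $N=10$ is not ``duplicated by'' family (2) at $N=20$ --- it degenerates to $\zeta_5^4 - 1$, a sum of two roots of unity of minimal level $5$, hence is excluded by the hypothesis $\N(\alpha)>2$ and is in fact inequivalent to $1+\zeta_{20}-\zeta_{20}^{-1}$ (which has minimal level $20$). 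The final list is unaffected. (Similarly, $M=2$ also satisfies $\mu(M)=-1$, $\varphi(M)\leq 4$ but yields a root of unity, so its omission from your list of candidates is harmless.)
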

\begin{proof}
By~\Cref{T:cassels partial classification}, we may build the correct lists by first collecting all relevant cases from~\Cref{table:exceptional classes}, then identifying all relevant cases from~\Cref{T:cassels}(2)--(3)
using~\eqref{eq:house for cassels families} and~\eqref{eq:denom for cassels families}
(We do not use \Cref{T:cassels}(1) because we require $\N(\alpha) > 2$).

For (a), we first observe that for $\house{\alpha}^2 = 1 +4 \cos^2 \tfrac{\pi}{N}$, 
we have $\calM(\alpha) \leq 2 \tfrac{1}{2}$ if and only if $\calM(\zeta_N + \zeta_N^{-1}) \leq 1 \tfrac{1}{2}$;
by \Cref{L:short sums} this only happens for $N = 3, 5$.
To achieve  $\house{\alpha}^2 = 1 +4 \cos^2 \tfrac{\pi}{3} = 2$,
we specialize \Cref{T:main}(2) to $N=12$ (this degenerates to $1+i$) and \Cref{T:main}(3) to $N=6$.
To achieve  $\house{\alpha}^2 = 1 +4 \cos^2 \tfrac{\pi}{5}$,
we specialize \Cref{T:main}(2) to $N=20$ and \Cref{T:main}(3) to $N=10$ (this degenerates to $\zeta_5-\zeta_5^3$).

For (b), we specialize \Cref{T:cassels}(2) to $N=8$ and \Cref{T:cassels}(3) to $N=4$.
\end{proof}

\section{Combinatorial constraints}
\label{sec:combinatorial}

We pause the proof of \Cref{T:main} to introduce a
new technique to supplement the calculus of Cassels heights. This technique applies to the situation where we are looking for a cyclotomic integer $\alpha$ whose minimal level is a nontrivial multiple of the minimal level of $\house{\alpha}^2$. For a prime $p$ dividing the quotient, the $p$-decomposition of $\alpha$ is subject to some restrictive combinatorial constraints; these ensure that the number of nonzero terms in the $p$-decomposition can be bounded \emph{below} as a function of the prime $p$. This in turns aids the calculus of Cassels heights by eliminating some of the most elaborate cases.

\subsection{Combinatorics of modular difference sets}

We first state some elementary lemmas concerning difference sets in $\ZZ/p \ZZ$ and $\ZZ/p^2 \ZZ$ for $p$ prime.
All case enumerations asserted in the proofs can be verified by running the Jupyter notebook \verb|combinatorics.ipynb| in the GitHub repository.

\begin{lemma} \label{lem:singleton difference}
Fix a positive integer $X$. 
Let $p$ be a prime such that either $p \geq p_0$ with
\[
(X, p_0) \in \{(1, 2), (2, 3), (3, 5), (4, 11), (5, 13), (6, 19)\}
\]
or  $p > 6^{(X-1)/2}$.
Then for every $X$-element subset $S$ of $\ZZ/p\ZZ$, there exists $k \in \ZZ/p\ZZ$ such that the set 
$\{(i,j) \in S^2\colon i-j = k\}$ contains a single element.
\end{lemma}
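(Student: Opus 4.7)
The plan is to analyze the representation function $r\colon \ZZ/p\ZZ \to \ZZ_{\geq 0}$ defined by $r(k) := |\{(i,j) \in S \times S : i - j = k\}|$. Note that $r(0) = X$ and $\sum_{k \in \ZZ/p\ZZ} r(k) = X^2$. If no $k$ gives a singleton fiber, then in particular $r(0) = X \neq 1$ forces $X \geq 2$, and for each $k \neq 0$ one has $r(k) \in \{0\} \cup \{2,3,\dots\}$. Since the nonzero support is stable under $k \mapsto -k$ and $r(k) = r(-k)$, the basic counting bound
\[
|S - S| - 1 \;\leq\; \tfrac{1}{2}\sum_{k \neq 0} r(k) \;=\; \binom{X}{2}
\]
follows, giving the anchor inequality $|S - S| \leq \binom{X}{2} + 1$.

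For the small cases corresponding to the listed pairs $(X,p_0)$, the plan is direct exhaustion, as implemented in the companion notebook \verb|combinatorics.ipynb|. For $X = 1$ the claim is trivial (take $k = 0$); for $X = 2$ it suffices to note that with $p \geq 3$ any nonzero difference $k = a-b$ with $S = \{a,b\}$ automatically has $r(k) = 1$. For $X \in \{3,4,5,6\}$, a set $S$ with no unique nonzero difference is (after normalization by a translation to contain $0$) a subset of $\ZZ/p\ZZ$ in which every nonzero element of $S-S$ is hit at least twice. The only small primes admitting such configurations correspond to classical multiplicity-$\geq 2$ difference sets, most notably the Singer set $\{0,1,2,4\} \subset \ZZ/7\ZZ$ (for $X=4$) and the $(11,5,2)$-biplane (for $X=5$); enumerating $X$-subsets of $\ZZ/p\ZZ$ for each $p < p_0$ confirms that no further examples exist, and the listed $p_0 \in \{2,3,5,11,13,19\}$ are precisely the resulting thresholds.

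For the general asymptotic bound $p > 6^{(X-1)/2}$, the plan is an inductive growth argument. Assume $S \subset \ZZ/p\ZZ$ of size $X$ has no unique nonzero difference, fix a basepoint $s_0 \in S$, and for each $s \in S \setminus \{s_0\}$ select a ``witness'' pair $(a(s),b(s)) \in S^2 \setminus \{(s_0,s)\}$ with $a(s) - b(s) = s_0 - s$. Chaining witnesses produces a sequence of subsets of $S$ whose mutual differences generate larger and larger subsets of $\ZZ/p\ZZ$; quantifying this growth, each recursive step multiplies the ambient size by a factor of at most $6$ (arising from the at most two ``new'' elements a witness can introduce, together with the at most three ways to extend the chain), so after $\lceil(X-1)/2\rceil$ steps we produce a set of size $\leq 6^{(X-1)/2}$ that must coincide with $\ZZ/p\ZZ$, giving $p \leq 6^{(X-1)/2}$ and contradicting the hypothesis.

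The hard part will be formulating the inductive step cleanly: deleting an element from $S$ can turn formerly repeated differences into unique ones, so the induction cannot be stated directly on $|S|$ but must be carried on a richer invariant — most naturally on the pair $(|S|,|S-S|)$, or equivalently on a ``cover'' of $S$ by pairs with matched differences. Once this bookkeeping is set up, the factor $6$ emerges naturally from case analysis of how witness pairs overlap with previously chosen elements.
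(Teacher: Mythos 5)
There is a genuine gap, and it sits exactly where the lemma's content lies. The entire quantitative assertion for general $X$ is that a set with no unique difference forces $p \leq 6^{(X-1)/2}$, and your sketch of this step is not a proof: you posit a ``chaining of witnesses'' in which ``each recursive step multiplies the ambient size by a factor of at most $6$'' and the resulting set ``must coincide with $\ZZ/p\ZZ$,'' but neither the factor $6$ nor the coincidence with $\ZZ/p\ZZ$ is justified (the chained sets are subsets of $S$, so it is unclear how they could exhaust $\ZZ/p\ZZ$ at all), and you yourself concede that you do not know how to formulate the inductive step. The paper's mechanism is entirely different and is where the $6$ actually comes from: one forms the linear system $x_i - x_j - x_{i'} + x_{j'} = 0$ over all coincident differences, observes that its solution space over $\QQ$ has some dimension $r$ with $1 \leq r \leq X-1$ (it contains $(1,\dots,1)$ but lies in a hyperplane $x_i = x_j$), while over $\mathbb{F}_p$ the solution $x_i = i$ shows the dimension strictly increases; hence some nonzero $(X-r)\times(X-r)$ minor of the coefficient matrix is divisible by $p$, and Hadamard's inequality (each row has Euclidean norm $2$ or $\sqrt{6}$) bounds that minor by $6^{(X-r)/2} \leq 6^{(X-1)/2}$.

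A second, independent error is the range of your exhaustion. Since the Hadamard-type bound only rules out counterexamples for $p > 6^{(X-1)/2}$, and since $p_0 < 6^{(X-1)/2}$ for $X \in \{4,5,6\}$, the primes that must be checked by enumeration are those with $p_0 \leq p \leq 6^{(X-1)/2}$ (for $X=6$ this means all primes from $19$ up to $88$). Enumerating only $p < p_0$, as you propose, locates the known counterexamples (the $(7,4,2)$- and $(11,5,2)$-difference sets) and shows the thresholds cannot be lowered, but it does not establish the lemma's actual claim that no counterexample exists for $p \geq p_0$. Your hand arguments for $X \leq 2$ and your opening counting bound $|S-S| \leq \binom{X}{2}+1$ are fine but do not substitute for either missing piece.
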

\begin{proof}
For $X=1$, take $k=0$. For $X=2$, write $S = \{i,j\}$ and take $k = i-j$. For $X = 3$, without loss of generality write $S = \{0,1,i\}$, then take $k = 2$ if $i \in \{2,p-1\}$ and $k=1$ otherwise.

For $X \geq 4$, we argue as in \cite[Lemma~16]{stan-zaharescu}.
Let $S$ be an $X$-element subset of $\ZZ/p\ZZ$ such that none of the sets
$\{(i,j) \in S^2\colon i-j = k\}$ is a singleton.
Consider the system of linear equations consisting of $x_i - x_j - x_{i'} + x_{j'} = 0$ for each two-element subset $\{(i,j), (i',j')\}$ of $S^2$ for which $i \neq j$, $i' \neq j'$, and $i-j = i'-j'$. 
The solution space of this system over $\QQ$ on one hand contains $(1,\dots,1)$,
and on the other hand is contained in some hyperplane of the form $x_i -x_j = 0$.
(Otherwise there would be a solution over $\QQ$ with all coordinates distinct, and the difference between the largest and smallest coordinates would be equated with some strictly smaller difference.)
Let $r$ be the dimension of the solution space; then $1 \leq r \leq X-1$ and the matrix $M$ corresponding to the system has rank $X-r$.

By contrast, over $\mathbb{F}_p$, taking $x_i = i$ gives a solution not satisfying $x_i - x_j = 0$ for any $i \neq j$, so the solution space over must have dimension strictly greater than $r$.
Hence the minors of $M$ of dimension $X-r$ are not all zero but are all divisible by $p$.
Pick a nonzero such minor and bound it using Hadamard's inequality: each row of $M$ has Euclidean norm 2 (if $i,j,i',j'$ are all distinct) or $\sqrt{6}$ (if not).
This yields $p \leq 6^{(X-r)/2} \leq 6^{(X-1)/2}$.

It thus remains to check the cases where $X \in \{4,5,6\}$ and $p_0 \leq p \leq 6^{(X-1)/2}$. 
We may assume without loss of generality that $0,1 \in S$; we then check by direct enumeration.
\end{proof}

\begin{lemma} \label{lem:mod p2}
For
\[
(p,X) \in \{(3,3), (5,4), (5,5), (7,4), (7,5), (11,5)\},
\]
for every $X$-element subset $S$ of $\ZZ/p^2 \ZZ$
whose elements are pairwise distinct modulo $p$,
there exist $k_1,k_2 \in \ZZ/p^2 \ZZ$ 
with $k_1 \equiv k_2 \not\equiv 0 \pmod{p}$
such that the set $\{(i,j) \in S^2 \colon i-j = k_1\}$ is empty and
the set $\{(i,j) \in S^2 \colon i-j = k_2\}$ contains a single element.
\end{lemma}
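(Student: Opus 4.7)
The strategy parallels \Cref{lem:singleton difference}: for each listed $(p, X)$, the conclusion is verified by direct case analysis facilitated by the affine symmetries of $\ZZ/p^2\ZZ$. I would begin by setting up the framework. For $r \in (\ZZ/p\ZZ)^\times$, let $d_r := |\{(i,j) \in S^2 : i-j \equiv r \pmod p\}|$, and observe that the $d_r$ pair-differences, taken modulo $p^2$, distribute among the $p$ residues above $r$. The lemma asks for some such $r$ with both an empty bin (giving $k_1$) and a singleton bin (giving $k_2$). Since the elements of $S$ are pairwise distinct mod $p$, $d_r$ depends only on $\bar S := S \bmod p \subset \ZZ/p\ZZ$, and in each listed case one has $d_r \leq X-1 < p$, so an empty bin exists automatically; only the singleton condition requires argument.

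Next I would reduce the enumeration. Translation in $\ZZ/p^2\ZZ$ preserves all differences, so I assume $0 \in S$; multiplication by a unit $u \in (\ZZ/p^2\ZZ)^\times$ transports the conclusion for $S$ to that for $uS$ (with $r$ replaced by $ur$), so I may additionally fix a second element of $S$ to equal $1$. After these reductions, each listed pair $(p,X)$ yields finitely many configurations, on the order of $p^{X-2}$, with the largest case $(11,5)$ producing only a few thousand.

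The substantive step is ruling out the unique ``bad'' distribution in which every nonempty bin contains at least two pairs. For $(p,X)=(3,3)$ this is immediate: the three pair-differences above a fixed $r$ sum to $0 \bmod 9$ and lie in $\{r, r+3, r+6\}$, so the only consistent multisets are of the form $\{v,v,w\}$ with $2v+w \equiv 0 \bmod 9$ and $v \neq w$, giving a $(2,1,0)$ bin split. For the larger cases I would argue by contradiction: if for some $r_1$ the $d_{r_1}$ pairs all land in a single bin, then the lifts $s_i$ along the cyclic chain generated by $r_1$ in $\bar S$ are forced into arithmetic progression modulo $p^2$; substituting this forced progression into the pair-differences for a second generator $r_2$ produces a ``wraparound'' pair whose difference mod $p^2$ cannot agree with the others, contradicting the collision. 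As a sanity check for $(p,X)=(5,4)$ with $\bar S=\{0,1,2,3\}$: the $r_1=1$ collision forces $s_k = s_0 + k\rho_1 \pmod{25}$ with $\rho_1 \equiv 1 \pmod 5$, and then the $r_2=2$ triple $(s_2-s_0, s_3-s_1, s_0-s_3) = (2\rho_1, 2\rho_1, -3\rho_1)$ collides only if $5\rho_1 \equiv 0 \pmod{25}$, which is false.

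The main obstacle is the bookkeeping for the cases with $X=5$, where several generators interact and the arithmetic-progression reduction does not fully pin down $S$. To handle these uniformly with the easier cases, I would certify the finite enumeration by computer, extending the Jupyter notebook \verb|combinatorics.ipynb| referenced for \Cref{lem:singleton difference}.
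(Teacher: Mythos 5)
Your proposal is correct and takes essentially the same route as the paper, whose entire proof is to normalize to $0,1\in S$ and then check the claim by direct enumeration in the notebook \texttt{combinatorics.ipynb}; your partial hand arguments for the small cases are a bonus, but the certification ultimately rests on the same finite computation. One small slip: the blanket claim $d_r\le X-1<p$ fails for $(p,X)=(3,3)$, where $S$ reduces to all of $\ZZ/3\ZZ$ and $d_r=3=p$, so the empty bin is not automatic there --- though your explicit $(2,1,0)$ analysis of that case does supply it.
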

\begin{proof}
We may assume without loss of generality that $0,1 \in S$. We then check the claim by direct enumeration.
\end{proof}

\begin{lemma} \label{lem:mod p graph}
For
\[
(p,X) \in \{(7, 3), (11,4), (13,4), (17,5), (19,5), (29, 6), (31, 6)\},
\]
let $S$ be a subset of $\ZZ/p\ZZ$
such that for each nonzero $k \in \ZZ/p\ZZ$, the set $\{(i,j) \in S^2\colon i-j = k\}$ is nonempty.
Let $\Gamma$ be the graph on $S$ with an edge $(i,j)$ 
for each pair $(i,j) \in S^2$ such that $i-j \neq i'-j'$ for any pair $(i',j') \in S^2 \setminus \{(i,j)\}$. Then $\Gamma$ is connected and not bipartite.
\end{lemma}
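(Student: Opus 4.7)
The plan is to prove this by direct enumeration, in the same spirit as \Cref{lem:singleton difference} and \Cref{lem:mod p2}. Reading the statement as implicitly referring to $X$-element subsets $S$ (consistent with \Cref{lem:mod p2}, and with the fact that $X$ is the minimum value satisfying $X(X-1) \geq p-1$ for each listed $p$), the covering hypothesis forces $|S|(|S|-1) \geq p-1$.

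For the three pairs $(p,X) \in \{(7,3),(13,4),(31,6)\}$ with $X(X-1) = p-1$, the set $S$ is a perfect difference set: every nonzero element of $\ZZ/p\ZZ$ is represented as $i-j$ for exactly one ordered pair $(i,j) \in S^2$. Hence every pair $(i,j) \in S^2$ with $i \neq j$ contributes an edge of $\Gamma$, so $\Gamma$ is the complete graph on $X \geq 3$ vertices, which is connected and contains a triangle, hence is not bipartite.

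For the remaining pairs $(p,X) \in \{(11,4),(17,5),(19,5),(29,6)\}$, I would proceed by exhaustive enumeration. The affine group of $\mathbb{F}_p$ acts on $\ZZ/p\ZZ$ preserving both the covering condition and the isomorphism class of $\Gamma$; quotienting by translation lets us assume $0 \in S$, and quotienting further by scaling lets us assume $1 \in S$. One then enumerates all $X$-element sets $S$ with $\{0,1\} \subseteq S$ satisfying the covering condition, constructs the graph $\Gamma$ for each, and verifies connectedness by breadth-first search and non-bipartiteness by attempting a 2-coloring (or equivalently searching for an odd cycle). The resulting search space is small in each case, and the check is carried out in the Jupyter notebook \verb|combinatorics.ipynb| already cited.

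The only potential obstacle is an implementation error in the enumeration or graph tests; no subtle theoretical step is required. As a sanity check, for each enumerated $S$ one may verify that $\sum_{k \neq 0} e_k = X(X-1)$ with $e_k \geq 1$ for all nonzero $k$, where $e_k := |\{(i,j) \in S^2 : i-j = k\}|$, and that the number of ordered edges of $\Gamma$ equals $|\{k : e_k = 1\}|$; together these confirm that the combinatorial data is being extracted correctly.
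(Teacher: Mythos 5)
Your proposal is correct and matches the paper's proof in essence: the paper's entire argument is ``we may assume without loss of generality that $0,1 \in S$'' followed by direct enumeration (verified in \verb|combinatorics.ipynb|), exactly your plan for the cases $(11,4)$, $(17,5)$, $(19,5)$, $(29,6)$. Your reading that $S$ is implicitly an $X$-element set is the intended one (and necessary, since the statement is false for larger $S$, e.g.\ $S = \ZZ/p\ZZ$). The one genuine addition is your observation that for $(7,3)$, $(13,4)$, $(31,6)$ the covering condition forces $S$ to be a perfect difference set by pigeonhole, so that $\Gamma$ is the complete graph $K_X$; this cleanly disposes of three of the seven cases without any computation, which the paper does not do.
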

\begin{proof}
We may assume without loss of generality that $0,1 \in S$. We then check the claim by direct enumeration.
\end{proof}

\begin{remark}
The bound $p < 6^{(X-1)/2}$ in \Cref{lem:singleton difference} improves upon the original bound $p < 4^{X-1}$ from \cite{straus} which uses a somewhat different technique. The improvement $p < 2^{X-1}$ is claimed in \cite[Corollary of Theorem 2]{browkin} but the proof is faulty: it follows the proof of \Cref{lem:singleton difference} but fails to account for the possibility of rows of norm $\sqrt{6}$ instead of 2. The bound given in \cite{stan-zaharescu} is $p < 6^{X/2}$ instead of $p < 6^{(X-1)/2}$;
it follows the proof of \Cref{lem:singleton difference} except that the rank of $M$ is bounded by $X$ rather than $X-1$.
It is suggested in \cite{stan-zaharescu} that the optimal bound may have the form $p < c 2^{X/2}$.
\end{remark}

\subsection{Application to the prime power case}

We next identify combinatorial constraints on the $p$-decomposition of $\alpha$
for a prime $p$ dividing the minimal level of $\alpha$ to a higher power that does not also divide the minimal level of $\house{\alpha}^2$.
This situation will arise when treating Step 1 in the remaining cases of \Cref{T:main}.

\begin{lemma} \label{lem:combinatorial constraints power}
Let $\alpha \in \QQ(\zeta_N)$ be a cyclotomic integer with minimal level $N$ exactly divisible by $p^n$ for some prime $p$ and some integer $n \geq 2$,
such that the minimal level of $\house{\alpha}^2$ is not divisible by $p^n$.
Define $\eta_i, X$ as in \Cref{L:rep-prime-power-case}.
\begin{itemize}
\item
If $X = 2$, then $p = 2$.
\item
If $X = 3$, then $p = 3$.
\item
If $X = 4$, then $p \in \{5,7\}$.
\item
If $X=5$, then $p \in \{5,7,11\}$.
\end{itemize}
\end{lemma}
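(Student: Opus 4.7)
The plan is to translate the hypothesis on the minimal level of $\house{\alpha}^2$ into a combinatorial constraint on the support $S := \{i : \eta_i \neq 0\} \subseteq \{0, \ldots, p-1\}$, and then apply~\Cref{lem:singleton difference} to rule out all but the listed primes.

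First, I would expand $\alpha \overline{\alpha}$ using the $p$-decomposition of $\alpha$. Since $\zeta_{p^n}^{-p} = \zeta_{p^{n-1}}^{-1} \in \QQ(\zeta_{N/p})$, reducing all exponents of $\zeta_{p^n}$ to $\{0, \ldots, p-1\}$ produces the $p$-decomposition $\alpha \overline{\alpha} = \sum_{k=0}^{p-1} c_k \zeta_{p^n}^k$ with
\[
c_0 = \sum_{i \in S} |\eta_i|^2 > 0, \qquad c_k = \sum_{\substack{(i,j) \in S^2 \\ i - j = k}} \eta_i \overline{\eta_j} + \zeta_{p^{n-1}}^{-1} \sum_{\substack{(i,j) \in S^2 \\ i - j = k - p}} \eta_i \overline{\eta_j}
\]
for $k = 1, \ldots, p-1$. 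Each term of each sum is a nonzero element of $\QQ(\zeta_{N/p})$, and together the two sums enumerate precisely the ordered pairs $(i,j) \in S^2$ with $i - j \equiv k \pmod{p}$.

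Next, I would show that the hypothesis forces $c_k = 0$ for every $k \in \{1, \ldots, p-1\}$. By \Cref{L:minimal level divides} the minimal level of $\alpha \overline{\alpha}$ divides $N/p$, so some root of unity $\zeta$ satisfies $\zeta \alpha \overline{\alpha} \in \QQ(\zeta_{N/p})$. Writing $\zeta$ as a product of its prime-to-$p$ part and its $p$-power part, the former already lies in $\QQ(\zeta_{N/p})$, so either $\zeta \in \QQ(\zeta_{N/p})$ outright or the $p$-part of $\zeta$ is a primitive $p^n$-th root of unity. This latter case is the subtlest step of the proof and the one I expect to be the main obstacle: expanding $\zeta_{p^n}^j \alpha \overline{\alpha} \in \QQ(\zeta_{N/p})$ would force all but one of the $c_k$ to vanish, but since $c_0 > 0$ that lone surviving coefficient must be $c_0$ itself. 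In both cases we conclude $c_k = 0$ for $k \geq 1$.

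Given $c_k = 0$ for $k = 1, \ldots, p-1$, no such $k$ can correspond to a \emph{unique} pair $(i,j) \in S^2$ with $i - j \equiv k \pmod{p}$, since such a singleton would leave a single unmatched nonzero term. By \Cref{lem:singleton difference}, a singleton $k$ \emph{does} exist whenever $X = |S|$ and $p$ satisfy $p \geq 3, 5, 11, 13$ for $X = 2, 3, 4, 5$, respectively. Combining these thresholds with the trivial bound $X \leq p$ yields the four stated cases: $X = 2$ forces $p < 3$, so $p = 2$; $X = 3$ forces $3 \leq p < 5$, so $p = 3$; $X = 4$ forces $5 \leq p < 11$, so $p \in \{5,7\}$; and $X = 5$ forces $5 \leq p < 13$, so $p \in \{5,7,11\}$.
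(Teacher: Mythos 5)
Your proof is correct and follows essentially the same route as the paper's: expand $\alpha\overline{\alpha}$ via the $p$-decomposition, observe that the hypothesis on the minimal level of $\house{\alpha}^2$ forces the coefficients of $\zeta_{p^n}^k$ to vanish for $k=1,\dots,p-1$, conclude that no residue class of differences in $S$ is a singleton, and invoke \Cref{lem:singleton difference} together with $X \leq p$. The only difference is that you spell out the step the paper compresses into ``we may gather terms'' --- namely ruling out a nontrivial $p$-power root-of-unity twist using $c_0 > 0$ and the uniqueness of the $p$-decomposition --- which is a welcome clarification rather than a deviation.
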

\begin{proof}
With notation as in \Cref{L:rep-prime-power-case}, we have
\begin{equation} \label{eq:fix norm}
\alpha \overline{\alpha} = \sum_{i,j \in S} \eta_i \overline{\eta}_j \zeta_{p^n}^{i-j}
= \sum_{i \in S} \eta_i \overline{\eta}_i + \sum_{i \neq j \in S} \eta_i \overline{\eta}_j \zeta_{p^n}^{i-j}.
\end{equation}
 Since $p^n$ does not divide the minimal level of $\house{\alpha}^2$, we may gather terms to say that 
\begin{equation} \label{eq:fix norm apart}
\mathop{\sum_{i,j \in S}}_{i-j \equiv k \pmod{p}} \eta_i \overline{\eta}_j \zeta_{p^n}^{i-j-k} = 0 \qquad (k=1,\dots,p-1).
\end{equation}
Consequently, there can be no value of $k$ for which the set $\{(i,j) \in S^2\colon i-j \equiv k \pmod{p}\}$ is a singleton. 
We may thus apply \Cref{lem:singleton difference} to conclude.
\end{proof}

\begin{cor} \label{cor:mod p2}
With notation as in \Cref{lem:combinatorial constraints power}, suppose that 
\[
(p,X) \in \{(2,2), (3,3), (5,4), (5,5), (7,4), (7,5), (11,5)\}
\]
and $p^n \neq 4$.
Then at least one $\eta_i$ has minimal level divisible by $p^{n-1}$.
\end{cor}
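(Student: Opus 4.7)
I plan to argue by contradiction: suppose every $\eta_i$ for $i \in S$ has minimal level not divisible by $p^{n-1}$. The first step is a lifting lemma asserting that each $\eta_i$ can then be written as $\eta_i = \zeta_{p^{n-1}}^{b_i} \tilde\eta_i$ for some $b_i \in \ZZ/p^{n-1}\ZZ$ and nonzero $\tilde\eta_i \in \QQ(\zeta_{N/p^2})$. To prove it I will combine \Cref{L:minimal level divides} (which forces the minimal level of $\eta_i$ to divide $N/p$) with the fact that the $p$-part of the roots of unity in $\QQ(\zeta_{N/p})$ is exactly $\mu_{p^{n-1}}$; this lets me factor any equivalence-witnessing root of unity as $\zeta_{p^{n-1}}^{b_i}$ times a prime-to-$p$ part that can be absorbed into $\tilde\eta_i$. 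Granting the lemma, substitution yields $\alpha = \sum_{i \in S} \tilde\eta_i \zeta_{p^n}^{e_i}$ with $e_i := i + pb_i \in \ZZ/p^n\ZZ$, and the lifted set $\tilde S := \{e_i\}$ has $X$ elements whose residues mod $p$ remain pairwise distinct.

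Next, mirroring the proof of \Cref{lem:combinatorial constraints power}, I compute $\alpha\overline\alpha = \sum_{(i,i') \in S^2} \tilde\eta_i \overline{\tilde\eta}_{i'} \zeta_{p^n}^{e_i - e_{i'}}$ and group by residue of the exponent mod $p$ to obtain the unique $p$-decomposition $\alpha\overline\alpha = \sum_{m=0}^{p-1} \gamma_m \zeta_{p^n}^m$ in $\QQ(\zeta_N)$. The hypothesis that the minimal level of $\alpha\overline\alpha$ is not divisible by $p^n$, combined with the fact that $\gamma_0$ contains the strictly positive diagonal $\sum_i |\tilde\eta_i|^2$, forces $\gamma_m = 0$ for all $m \in \{1,\dots,p-1\}$.

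The heart of the argument is then to apply \Cref{lem:mod p2} to the reduction $\tilde S \pmod{p^2}$, producing $k_1, k_2 \in \ZZ/p^2\ZZ$ with common residue $k \not\equiv 0 \pmod p$ such that no pair in $\tilde S$ has difference $k_1 \pmod{p^2}$ but exactly one pair $(i_0,i_1)$ has difference $k_2 \pmod{p^2}$. I further $p$-decompose $\gamma_k$ inside $\QQ(\zeta_{N/p})$ using $\zeta_{p^{n-1}}$, writing $\gamma_k = \sum_{c=0}^{p-1} U_c \zeta_{p^{n-1}}^c$ where $U_c \in \QQ(\zeta_{N/p^2})$ collects exactly the cross terms with $e_i - e_{i'} \equiv k + pc \pmod{p^2}$. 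When $n \geq 3$ this $p$-decomposition is unique, so $\gamma_k = 0$ forces every $U_c = 0$; but the component $U_{c_2}$ with $c_2 := (k_2-k)/p$ reduces to the single term $\tilde\eta_{i_0}\overline{\tilde\eta}_{i_1}$ times a root of unity, which is nonzero, a contradiction. When $n = 2$ the decomposition is in the prime case and non-unique, so $\gamma_k = 0$ instead forces all $U_c$ to coincide; but $U_{c_1} = 0$ (empty set) while $U_{c_2} \neq 0$ (singleton), again a contradiction.

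The pair $(p, X) = (2, 2)$ is not covered by \Cref{lem:mod p2} and must be handled separately; this is where the hypothesis $p^n \neq 4$ (equivalently $n \geq 3$ when $p = 2$) is used. I will compute $\gamma_1 \in \QQ(\zeta_{N/2})$ directly and expand it in the degree-2 basis $\{1, \zeta_{2^{n-1}}\}$ of $\QQ(\zeta_{N/2})/\QQ(\zeta_{N/4})$; since the two cross-term exponents of $\zeta_{2^{n-1}}$ turn out to have opposite parities, both basis components of $\gamma_1$ are nonzero multiples of $\tilde\eta_0 \overline{\tilde\eta}_1$, contradicting $\gamma_1 = 0$. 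The most delicate part of the argument is the case $n = 2$: the non-uniqueness of the prime-case $p$-decomposition is precisely what forces us to use both an empty-set index $k_1$ and a singleton index $k_2$ from \Cref{lem:mod p2}, whereas $n \geq 3$ would only require a single singleton-difference condition.
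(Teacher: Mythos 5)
Your proposal is correct and follows essentially the same route as the paper: the same two-level decomposition of $\alpha$ against powers of $\zeta_{p^n}$ with coefficients in $\QQ(\zeta_{N/p^2})$ (your lifted exponents $e_i = i + pb_i$ are the paper's index set $S'$), the same appeal to \Cref{lem:mod p2} with the empty-difference index forcing the common value to vanish in the non-unique $n=2$ decomposition and the singleton-difference index supplying the nonzero term, and the same separate treatment of $(p,X)=(2,2)$ using $n \geq 3$. Your writeup is if anything more explicit than the paper's about the lifting step and about exactly where $k_1$ versus $k_2$ is needed.
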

\begin{proof}
Suppose the claim fails in some instance.
Decompose $\alpha$ as a linear combination
$\sum_{i=0}^{p^2-1} \beta_i \zeta_{p^n}^i$
with $\beta_i \in \QQ(\zeta_{N/p^2})$
and let $S'$ denote the set of indices $i$ for which $\beta_j \neq 0$. The assumption that no $\eta_i$ has minimal level divisible by $p^{n-1}$ means that the elements of $S'$ are pairwise distinct mod $p$.
Now write
\begin{equation} \label{eq:mod p2 compare sides}
\alpha \overline{\alpha} - \sum_{i \in S'} \beta_i \overline{\beta}_i = \sum_{i \neq j \in S'} \beta_i \overline{\beta}_j \zeta_{p^n}^{i-j} \in \QQ(\zeta_{N/p}).
\end{equation}
For $p > 2$, \eqref{eq:mod p2 compare sides} implies
\[
\sum_{h=0}^{p-1}
\mathop{\sum_{i,j \in S'}}_{i-j \equiv k+ph \pmod{p^2}} \eta_i \overline{\eta}_j \zeta_{p^n}^{i-j-k} = 0 \qquad (k=1,\dots,p-1).
\]
By the uniqueness of $p$-decompositions, for each $k$, the inner sums must be equal for all $h$ (and for $n>2$ the common value must be zero). This creates a contradiction after applying \Cref{lem:mod p2}: for some $k$, there exist $k_1,k_2 \in \{0,\dots,p^2-1\}$ with $k_1 \equiv k_2 \equiv k \pmod{p}$ for which the summand with $h = (k_1-k)/p$ is empty and hence zero, while the summand with $h = (k_2-k)/p$ is a singleton sum and hence nonzero.

For $p = 2$, we may assume $S' = \{0,1\}$ without loss of generality, so \eqref{eq:mod p2 compare sides} reads
\[
\alpha \overline{\alpha} - \beta_0 \overline{\beta}_0 - \beta_1 \overline{\beta}_1 = \beta_1 \overline{\beta}_0 \zeta_{2^{n}} + \beta_0 \overline{\beta}_1 \zeta_{2^n}^{-1} \in \QQ(\zeta_{N/2}).
\]
Since $n > 2$ the common value must be zero,
but this yields $\beta_1 \overline{\beta_0} \zeta_{2^{n-1}} = -\beta_0 \overline{\beta}_1$ which expresses $\zeta_{2^{n-1}}$ as an element of $\QQ(\zeta_{N/4})$, a contradiction.
\end{proof}

\begin{cor} \label{cor:mod p2 refined}
With notation as in \Cref{cor:mod p2}, let $m$ be the smallest positive integer such that no $\eta_i$ has minimal level divisible by $p^m$.
Then $n = m$ unless $p = 2$ and $m = 1$, in which case $n=2$.
\end{cor}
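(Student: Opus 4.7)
The plan is to derive this as a short consequence of \Cref{cor:mod p2} together with an elementary upper bound on $m$. First I would establish $m \leq n$: since each $\eta_i$ lies in $\QQ(\zeta_{N/p})$, \Cref{L:minimal level divides} gives that the minimal level of each $\eta_i$ divides $N/p$, whose $p$-adic valuation is exactly $n-1$. In particular, no $\eta_i$ has minimal level divisible by $p^n$, so $n$ is already a valid candidate in the definition of $m$, and we conclude $m \leq n$.

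For the matching lower bound, I would invoke \Cref{cor:mod p2} directly. Its conclusion states that at least one $\eta_i$ has minimal level divisible by $p^{n-1}$, which translates to $m \geq n$; combined with the previous paragraph this yields $m = n$. This argument applies whenever the hypothesis $p^n \neq 4$ of \Cref{cor:mod p2} holds, that is, in all cases except $p = 2$ together with $n = 2$.

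For the remaining case $p = 2$, $n = 2$, we still have $m \leq 2$, so $m \in \{1, 2\}$. Either $m = 2 = n$, matching the main conclusion, or $m = 1$, which is precisely the exceptional branch of the statement; the required equality $n = 2$ is then automatic by the hypothesis of that branch. There is no substantive obstacle here beyond observing that \Cref{cor:mod p2} fails to apply only when $p^n = 4$, which isolates the anomaly cleanly.
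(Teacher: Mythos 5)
Your proof is correct and follows essentially the same route as the paper: the upper bound $m \leq n$ comes from $\eta_i \in \QQ(\zeta_{N/p})$ together with \Cref{L:minimal level divides}, and the lower bound $m \geq n$ (when $p^n \neq 4$) comes from \Cref{cor:mod p2}, with the case $p^n = 4$ isolated as the sole exception. No issues.
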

\begin{proof}
By definition $\eta_i \in \QQ(\zeta_{N/p})$ for all $i$,
so by \Cref{L:minimal level divides} no $\eta_i$ has minimal level divisible by $p^n$.
By \Cref{cor:mod p2}, unless $p^n = 4$,
at least one $\eta_i$ has minimal level divisible by $p^{n-1}$. This proves the claim.
\end{proof}

\begin{remark}
In \Cref{cor:mod p2}, one can also obtain some algebraic information when only a small number of $\eta_i$ have minimal level divisible by $p^{n-1}$. See \S\ref{sec:X3 N3} for an example.
\end{remark}

\subsection{Application to the prime case}

We next identify combinatorial constraints on the $p$-decomposition of $\alpha$
for a prime $p$ exactly dividing the minimal level of $\alpha$ but not dividing the minimal level of $\house{\alpha}^2$.
This situation will occur when treating Step 2 in the remaining cases of \Cref{T:main}.

\begin{lemma} \label{lem:combinatorial constraints}
Let $\alpha \in \QQ(\zeta_N)$ be a cyclotomic integer with minimal level $N$ squarefree and divisible by some prime $p$ not dividing the minimal level of $\house{\alpha}^2$. 
Define $X$ as in \Cref{L:rep-prime-case}.
\begin{itemize}
\item
For $p \geq 5$, we must have $X \geq 3$.
\item 
For $p \geq 11$, we must have $X \geq 4$.
\item
For $p \geq 17$, we must have $X \geq 5$.
\item
For $p \geq 23$, we must have $X \geq 6$.
\item
For $p \geq 37$, we must have $X \geq 7$.
\end{itemize}
\end{lemma}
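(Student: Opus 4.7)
The plan is to adapt the algebraic strategy of \Cref{lem:combinatorial constraints power} to the prime (rather than prime power) setting. The core idea is to exploit the hypothesis that $\house{\alpha}^2$ (equivalently $\alpha\overline{\alpha}$) has minimal level not divisible by $p$ in order to constrain the $p$-decomposition of $\alpha\overline{\alpha}$, and then combine a pigeonhole count of differences modulo $p$ with \Cref{lem:singleton difference} to reach a contradiction whenever $X$ is too small relative to $p$.

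The first step is to show $\alpha\overline{\alpha} \in \QQ(\zeta_{N/p})$. Since $N$ is squarefree, the convention that the minimal level is odd or divisible by $4$ forces $N$ odd; and since $\alpha\overline{\alpha}$ and $\house{\alpha}^2$ are Galois conjugates they share a minimal level $N_0$, which divides $N/p$ by \Cref{L:minimal level divides}. Writing $\alpha\overline{\alpha} = \eta\beta$ with $\eta \in W_{\QQ(\zeta_N)} = \langle \zeta_{2N}\rangle$ and $\beta \in \QQ(\zeta_{N_0})$, total reality of $\alpha\overline{\alpha}$ forces $\eta^2 = \overline{\beta}/\beta \in \QQ(\zeta_{N_0})$. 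A short computation with these roots of unity (using $p \mid N/N_0$ and $N$ odd) then places $\eta \in \langle \zeta_{2N/p}\rangle \subseteq \QQ(\zeta_{N/p})$, whence $\alpha\overline{\alpha} \in \QQ(\zeta_{N/p})$.

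With this containment in hand, I expand $\alpha\overline{\alpha} = \sum_{(i,j) \in S^2} \eta_i\overline{\eta}_j\zeta_p^{i-j}$ and regroup by $k := (i-j)\bmod p$ to obtain a $p$-decomposition $\alpha\overline{\alpha} = \sum_{k=0}^{p-1} c_k\zeta_p^k$ with $c_k = \sum_{i-j \equiv k\pmod p}\eta_i\overline{\eta}_j$. Because $p$ exactly divides $N$, this decomposition is unique only up to adding a common constant to every $c_k$, so the containment forces $c_1 = c_2 = \cdots = c_{p-1} = D$ for a single $D \in \QQ(\zeta_{N/p})$. A direct count gives at most $X(X-1)$ distinct nonzero values of $(i-j)\bmod p$, and the numerical thresholds in the statement ($p \geq 5, 11, 17, 23, 37$ for the forbidden $X \in \{2,3,4,5,6\}$) are precisely those that make $X(X-1) < p-1$; hence some $k_1$ yields an empty sum $c_{k_1} = 0$, so $D = 0$. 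Simultaneously, \Cref{lem:singleton difference} applies in each of these regimes (its numerical bounds $p \geq 3, 5, 11, 13, 19$ are all met) to furnish a singleton difference $k^* \in \{1,\ldots,p-1\}$ realized by a unique pair $(i^*,j^*) \in S^2$, so that $c_{k^*} = \eta_{i^*}\overline{\eta}_{j^*} = D = 0$, contradicting $\eta_{i^*},\eta_{j^*} \neq 0$.

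The main obstacle I anticipate is the field containment in the first step. In \Cref{lem:combinatorial constraints power} the non-$p^n$-divisibility condition forces each individual coefficient of $\zeta_{p^n}^k$ (for $k \not\equiv 0$) to vanish outright, whereas in the prime case the $p$-decomposition ambiguity only yields equality among the nonzero coefficients; the total reality of $\alpha\overline{\alpha}$ is essential for eliminating the root-of-unity factor and pushing the constraint down into $\QQ(\zeta_{N/p})$. The missing-difference pigeonhole is then the extra combinatorial input needed to bridge the remaining gap and convert the weaker algebraic constraint into the vanishing required for a singleton contradiction.
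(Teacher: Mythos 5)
Your proposal is correct and follows essentially the same route as the paper: expand $\alpha\overline{\alpha}$ over powers of $\zeta_p$, use the hypothesis on the minimal level of $\house{\alpha}^2$ to force all nonzero-index coefficients to share a common value, kill that value via the pigeonhole count $X(X-1) < p-1$, and contradict \Cref{lem:singleton difference}. Your first paragraph merely spells out the containment $\alpha\overline{\alpha} \in \QQ(\zeta_{N/p})$ that the paper takes as implicit, and your numerical checks all match.
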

\begin{proof}
With notation as in \Cref{L:rep-prime-case},
using the fact that the minimal level of $\house{\alpha}^2$ is not divisible by $p$, we may write
\[
\house{\alpha}^2 = \sum_{i,j \in S} \eta_i \overline{\eta}_j \zeta_{p}^{i-j}
= \sum_{i \in S} \eta_i \overline{\eta}_i + \sum_{i \neq j \in S} \eta_i \overline{\eta}_j \zeta_{p}^{i-j}
\]
and deduce that
\begin{equation} \label{eq:sum by residue}
\mathop{\sum_{i,j \in S}}_{i-j \equiv k \pmod{p}} \eta_i \overline{\eta}_j =  c  \qquad (k=1,\dots,p-1),
\qquad c := \sum_{i \in S} \eta_i \overline{\eta}_i - \house{\alpha}^2.
\end{equation}
If $p > X(X-1)+1$, then there must exist $k \in \{1,\dots,p-1\}$ for which the sum in \eqref{eq:sum by residue} is empty, so the common value $c$ must be 0.
Consequently, there can be no value of $k$ for which the set $\{(i,j) \in S^2\colon i-j \equiv k \pmod{p}\}$ is a singleton. 
We may thus apply \Cref{lem:singleton difference} to conclude.
\end{proof}

For some small $p$ we can push this a bit further.

\begin{lemma} \label{lem:connected by 2}
Let $\Gamma$ be a finite graph which is connected and not bipartite. Let $\Gamma'$ be the graph on the same vertex set in which two distinct vertices $v,w$ are adjacent iff they have a common neighbor in $\Gamma$. Then $\Gamma'$ is also connected.
\end{lemma}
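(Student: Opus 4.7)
The plan is to exploit the direct correspondence between edges of $\Gamma'$ and walks of length two in $\Gamma$: two distinct vertices are adjacent in $\Gamma'$ precisely when they admit a common neighbor in $\Gamma$, i.e., are the endpoints of a length-$2$ walk. It follows that if $v_0, v_1, \dots, v_{2k}$ is any walk of even length in $\Gamma$, then $v_0, v_2, v_4, \dots, v_{2k}$ is a walk (allowing consecutive repetitions) in $\Gamma'$, because each pair $v_{2i}, v_{2i+2}$ shares the common neighbor $v_{2i+1}$ in $\Gamma$. So the first step is to reduce the lemma to showing that every pair of vertices $u, v$ of $\Gamma$ is joined by a walk of \emph{even} length in $\Gamma$.

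Next I would use both hypotheses on $\Gamma$ in turn. Since $\Gamma$ is connected, there is at least one walk $W$ from $u$ to $v$; if its length is even we are done. Otherwise, because $\Gamma$ is not bipartite it contains an odd cycle $C$, and by connectedness there is a walk $P$ from $u$ to some vertex $x$ of $C$; concatenating $P$, one full traversal of $C$, and the reverse of $P$ produces a closed walk at $u$ whose length has the same parity as $|C|$, hence is odd. Prepending this odd closed walk to $W$ yields a walk from $u$ to $v$ of even length, completing the reduction.

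I do not anticipate any substantive obstacle; the argument is elementary graph theory. The one minor point to address is that the walk in $\Gamma'$ produced from an even walk in $\Gamma$ may have $v_{2i} = v_{2i+2}$ for some $i$, but such "stationary" steps can be deleted without disconnecting the walk, so they cause no difficulty for establishing connectivity of $\Gamma'$. The essential content of the lemma is just that non-bipartiteness, combined with connectedness, forces walks of both parities between any two vertices, and this is exactly what permits the switch from $\Gamma$ to $\Gamma'$.
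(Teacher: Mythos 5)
Your proof is correct and rests on the same two observations as the paper's: even-length walks in $\Gamma$ descend to walks in $\Gamma'$, and an odd cycle (guaranteed by non-bipartiteness) supplies the parity correction needed to make every walk even. The paper organizes this by anchoring all vertices to the odd cycle rather than stating the even-walk reduction explicitly, but the argument is essentially identical.
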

\begin{proof}
Since $\Gamma$ is connected and not bipartite, it contains a cycle $C$ of odd length; any two distinct vertices $v,w \in C$ are connected in $\Gamma'$. Any other vertex can be connected to some vertex in $C$ via a path of even length, and hence also belongs to the connected component of $C$ in $\Gamma'$.
\end{proof}

\begin{lemma} \label{lem:combinatorial constraints2}
With notation as in \Cref{lem:combinatorial constraints}, if
\[
(p,X) \in \{(7, 3), (11,4), (13,4), (17,5), (19,5), (29, 6), (31, 6)\},
\]
then $\alpha$ is equivalent to a sum of $X$ distinct $p$-th roots of unity times some cyclotomic integer $\eta$ whose minimal level is not divisible by $p$.
\end{lemma}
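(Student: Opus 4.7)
The plan is to show that, up to equivalence, all $\eta_i$ for $i \in S$ may be taken equal to a single cyclotomic integer $\eta \in \QQ(\zeta_{N/p})$; since $N$ is squarefree, $N/p$ is coprime to $p$, so the minimal level of $\eta$ is coprime to $p$, and $\alpha = \eta \sum_{i \in S} \zeta_p^i$ then directly exhibits the desired form. First, I revisit the equation derived in the proof of \Cref{lem:combinatorial constraints}: for each $k \in \{1, \dots, p-1\}$,
\[
\mathop{\sum_{i,j \in S}}_{i - j \equiv k \pmod p} \eta_i \overline{\eta}_j = c,
\qquad c := \sum_{i \in S} \eta_i \overline{\eta}_i - \house{\alpha}^2.
\]
Each listed pair $(p,X)$ meets the threshold of \Cref{lem:singleton difference}, so some nonzero $k$ is a singleton difference, realized by a unique pair $(i_0, j_0) \in S^2$; the corresponding identity reduces to $\eta_{i_0} \overline{\eta}_{j_0} = c$, so $c \neq 0$. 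Then every sum above is nonempty (otherwise it would equal $0 \neq c$), so \Cref{lem:mod p graph} applies and the singleton-difference graph $\Gamma$ is connected and not bipartite.

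Second, I extract algebraic information from each singleton edge $(i,j)$. The identity $\eta_i \overline{\eta}_j = c$ holds, and since $(j, i)$ is also a singleton edge (uniqueness of $i-j$ is equivalent to uniqueness of $j-i$), so does $\eta_j \overline{\eta}_i = c$; comparing the latter with the complex conjugate of the former yields $\overline{c} = c$, so $c$ is totally real. Multiplying $\eta_i \overline{\eta}_j = c$ by its complex conjugate produces the cyclotomic-integer identity $\eta_i \overline{\eta}_i \cdot \eta_j \overline{\eta}_j = c^2$. Fix an odd cycle in $\Gamma$ (which exists by non-bipartiteness) and propagate this edge relation along it: alternate vertices share the same value of $\eta_v \overline{\eta}_v$, and the odd-length condition collapses the two alternating values into a single $m$ with $m^2 = c^2$. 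Connectedness of $\Gamma$ then extends this to $\eta_i \overline{\eta}_i = m$ for every $i \in S$.

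Third, I conclude. Factoring $(m-c)(m+c) = 0$ in the domain $\ZZ[\zeta_N]$ gives $c = m$ or $c = -m$. If $c = m$, then every singleton edge yields $\eta_i \overline{\eta}_j = m = \eta_j \overline{\eta}_j$, hence $\eta_i = \eta_j$; connectedness of $\Gamma$ propagates this to a single common value $\eta$ for all $\eta_i$ with $i \in S$, and we obtain $\alpha = \eta \sum_{i \in S} \zeta_p^i$. If instead $c = -m$, then $\eta_i = -\eta_j$ along every singleton edge, and following the odd cycle back to its starting vertex yields $\eta_{i_0} = -\eta_{i_0}$, forcing $\eta_{i_0} = 0$ and contradicting $i_0 \in S$. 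I expect the sign step to be the main subtlety: it relies crucially on the non-bipartiteness guaranteed by \Cref{lem:mod p graph}, without which the case $c = -m$ could not be excluded.
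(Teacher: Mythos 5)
Your proof is correct and runs along the same lines as the paper's through the key combinatorial inputs: \Cref{lem:singleton difference} produces a singleton difference, which forces $c \neq 0$, hence every difference class is realized and \Cref{lem:mod p graph} makes the singleton-edge graph $\Gamma$ connected and non-bipartite. Where you diverge is in the endgame. The paper observes that a path $i \to j \to k$ in $\Gamma$ gives $\eta_i \overline{\eta}_j = c = \eta_k \overline{\eta}_j$ and hence $\eta_i = \eta_k$ outright, then invokes \Cref{lem:connected by 2} (connectedness of the common-neighbor graph, which is where non-bipartiteness enters) to conclude that all $\eta_i$ coincide. You instead pass to the quantities $\eta_i\overline{\eta}_i$, use the odd cycle to force $m^2 = c^2$ and constancy of $\eta_i\overline{\eta}_i$, and then rule out $m = -c$ by propagating signs around the cycle. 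Both uses of non-bipartiteness are legitimate and your steps all check out (in particular $\overline{\eta}_j \neq 0$ justifies each cancellation, and $\QQ(\zeta_N)$ being a domain justifies $m = \pm c$); the paper's route is slightly more economical, as it never needs the identity $(\eta_i\overline{\eta}_i)(\eta_j\overline{\eta}_j) = c^2$ or the sign dichotomy, but yours is an equally valid finish.
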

\begin{proof}
Set notation as in \Cref{L:rep-prime-case}.
By \Cref{lem:singleton difference}, the common difference $c$ in \eqref{eq:sum by residue} must be nonzero; hence for every $k \in \{1,\dots,p-1\}$, there exists at least one pair $(i,j) \in S^2$ such that $i-j \equiv k \pmod{p}$. 
Let $\Gamma$ be the graph from \Cref{lem:mod p graph}, which is connected but not bipartite;
for every path $i \to j \to k$ in $\Gamma$, we obtain 
an equality $\eta_i \overline{\eta}_j = c = \eta_k \overline{\eta}_j$ and deduce that $\eta_i = \eta_k$. By \Cref{lem:connected by 2}, this implies that all of the $\eta_i$ are equal.
\end{proof}

\begin{remark}
    While it is not helpful here, one can push this logic somewhat further. For instance, for $(p,X) = (23,6)$, one can show that all but one of the $\eta_i$ must be equal.
\end{remark}

\section{Constraints from Artin reciprocity}
\label{sec:class groups}

We introduce a second new technique to supplement the calculus of Cassels heights.
Here we use Artin reciprocity to describe the splitting of primes in cyclotomic fields, then exploit this to restrict the ideal generated by a cyclotomic integer with prescribed castle.

\subsection{$S$-units and fractional ideals}
\label{subsec:analyzing a single field}

We first state a proposition to make the following logic explicit:
if we want $\alpha$ to belong to a fixed number field $K$ and have a fixed castle, then we can constrain $\alpha$ by identifying possibilities for the ideal generated by $\alpha$.

\begin{prop} \label{prop:fixed house setup}
Let $K$ be a cyclotomic number field, fix a nonzero $\alpha_0 \in \mathcal{O}_K$, and set $c := \house{\alpha_0}^2$.
Let $S_K$ be  the set of primes of $K$ dividing $c$.
Let $T_K$ be a subset of $S_K$ containing one prime out of each two-element orbit under complex conjugation, so that either $T_K = \emptyset$ or $S_K = T_K \sqcup \overline{T}_K$. 
Let $N_{K,S} \subset \mathcal{O}_{K,S_K}^\times$
be the kernel of the map $\alpha \mapsto \alpha \overline{\alpha}$ and let $v_{T,K} \colon N_{K,S} \to \ZZ^{T_K}$ denote the valuation map.
Define the finite set
\begin{equation} \label{eq:set U}
U_K := \prod_{\frakp \in T_K} [-v_{\frakp}(\overline{\alpha}_0), v_\frakp(\alpha_0) ] \subset \ZZ^{T_K}.
\end{equation}
Then the map $\alpha \mapsto v_{T,K}(\tfrac{\alpha_0}{\alpha})$ induces a bijection
from the quotient of the set of $\alpha \in \mathcal{O}_K$ with $\house{\alpha}^2 = c$ by the multiplicative action of $W_K$ to the set $U_K \cap \image(v_{T,K})$.
(In particular, any two $\alpha$ with the same image are equivalent, but not conversely.)
\end{prop}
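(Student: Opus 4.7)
The plan is to first identify the real number $c = \house{\alpha_0}^2$ with the element $\alpha_0 \overline{\alpha}_0 \in \mathcal{O}_{K^+}$ and to read the condition $\house{\alpha}^2 = c$ as the field-theoretic identity $\alpha \overline{\alpha} = \alpha_0 \overline{\alpha}_0$; this is precisely what will force the ratio $\alpha_0/\alpha$ into $N_{K,S}$, bringing the $S$-unit framework into play. After this identification, the proof splits into three short valuation calculations: a well-definedness check, a surjectivity step, and a Kronecker-style injectivity step.

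For well-definedness, I would argue that at any prime $\frakp \notin S_K$ we have $v_\frakp(\alpha \overline{\alpha}) = 0$, and combined with $v_\frakp(\alpha), v_\frakp(\overline{\alpha}) \geq 0$ this forces $v_\frakp(\alpha) = 0$; the same for $\alpha_0$. Hence $\alpha_0/\alpha$ is an $S_K$-unit, and since $(\alpha_0/\alpha)\overline{(\alpha_0/\alpha)} = c/c = 1$, it belongs to $N_{K,S}$. For each $\frakp \in T_K$, the pointwise bound $0 \leq v_\frakp(\alpha) \leq v_\frakp(\alpha_0) + v_\frakp(\overline{\alpha}_0)$ then gives $v_\frakp(\alpha_0/\alpha) \in [-v_\frakp(\overline{\alpha}_0), v_\frakp(\alpha_0)]$, placing the image in $U_K$.

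For surjectivity, given $u \in U_K \cap \image(v_{T,K})$, I would lift $u$ to some $\gamma \in N_{K,S}$ with $v_{T,K}(\gamma) = u$ and set $\alpha := \alpha_0/\gamma$; the relation $\gamma \overline{\gamma} = 1$ immediately gives $\alpha \overline{\alpha} = c$, and integrality is checked prime by prime. At $\frakp \notin S_K$ both $\alpha_0$ and $\gamma$ are units; at $\frakp \in T_K$ the upper bound in the definition of $U_K$ yields $v_\frakp(\alpha) \geq 0$; and at a prime $\mathfrak{q} \in \overline{T}_K$ with $\overline{\mathfrak{q}} \in T_K$, the identity $v_\mathfrak{q}(\gamma) = -v_{\overline{\mathfrak{q}}}(\gamma)$ forced by $\gamma \overline{\gamma} = 1$ combines with the lower bound on $u_{\overline{\mathfrak{q}}}$ to give $v_\mathfrak{q}(\alpha) \geq 0$ again.

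Finally, for injectivity modulo $W_K$, two elements $\alpha, \alpha'$ sharing an image yield $\delta := \alpha'/\alpha \in N_{K,S}$ with $v_{T,K}(\delta) = 0$; the norm-one relation propagates this vanishing to $\overline{T}_K$, and I have already ruled out nontrivial support outside $S_K$, so $\delta$ is a unit at every finite place while simultaneously satisfying $|\sigma(\delta)| = 1$ at every complex embedding. Kronecker's theorem then forces $\delta \in W_K$. The only genuine subtlety I anticipate is the bookkeeping around the interpretation of $c$ and the pairing of primes by complex conjugation; once these are settled, every subsequent step is a routine valuation identity under the constraint $\gamma \overline{\gamma} = 1$.
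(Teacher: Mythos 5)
Your proposal is correct and follows essentially the same route as the paper: reduce $\house{\alpha}^2 = c$ to the identity $\alpha\overline{\alpha} = \alpha_0\overline{\alpha}_0$ so that $\alpha_0/\alpha \in N_{K,S}$, use Kronecker's theorem ($\ker(v_{T,K}) = W_K$) for injectivity, and realize a given $u$ by dividing $\alpha_0$ by a generator of the corresponding fractional ideal. The only difference is that you spell out the prime-by-prime valuation checks (membership in $U_K$, integrality of $\alpha_0/\gamma$ at primes of $\overline{T}_K$) that the paper's four-sentence proof leaves implicit.
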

\begin{proof}
    Note that $\alpha$ is an $S_K$-unit and hence $\beta := \tfrac{\alpha_0}{\alpha}$ belongs to $N_{K,S}$; thus the map is well-defined. 
    We have $\ker(v_{T,K}) = W_K$ by Kronecker's theorem, which implies injectivity of our map at the level of $W_K$-cosets.
    To confirm the description of the image, interpret $u \in \ZZ^{T_K}$
    as the image under $v_{T,K}$ of the fractional ideal
\begin{equation} \label{eq:ideal Ju}
J_u := \prod_{\frakp \in T_K} \frakp^{u_\frakp} \overline{\frakp}^{-u_\frakp};
\end{equation}
if $J_u$ is generated by $\beta \in N_{K,S}$, then visibly $\alpha := \alpha_0 \beta$ satisfies $\house{\alpha}^2 = \house{\alpha_0}^2 = c$.
\end{proof}

\subsection{Splitting of primes}

We next make \Cref{prop:fixed house setup} more explicit using the description of splitting of primes in a cyclotomic field afforded by Artin reciprocity.

\begin{prop} \label{prop:cyclotomic splitting}
    Let $N$ be a positive integer and set $K := \QQ(\zeta_N)$. 
    Let $p$ be a prime, let $e$ be the $p$-adic valuation of $N$, and set $N' := N/p^e$.
    Via the Artin isomorphism 
    \[
    \Gal(K/\QQ) \cong (\ZZ/N \ZZ)^\times \cong (\ZZ/N'\ZZ)^\times \times (\ZZ/p^e \ZZ)^\times,
    \]
    the inertia group of $p$ corresponds to $(\ZZ/p^e \ZZ)^\times$, while the decomposition group
    is generated modulo inertia by the subgroup $\langle p \rangle$ of $(\ZZ/N'\ZZ)^\times$ generated by $p$. Consequently, the set of primes of $K$ above $p$ is a torsor for the quotient group $(\ZZ/N'\ZZ)^\times/\langle p \rangle$;
    in particular, the number of such primes is $\varphi(N')/ \#\langle p \rangle$.
\end{prop}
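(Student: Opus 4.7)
The plan is to reduce to the two classical facts about cyclotomic fields at a prime: full ramification in the $p$-power part and Frobenius-as-$p$ in the prime-to-$p$ part, then combine them using the product decomposition of $\Gal(K/\QQ)$.

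First I would decompose $K = \QQ(\zeta_N)$ as the compositum of the linearly disjoint subfields $K' := \QQ(\zeta_{N'})$ and $K'' := \QQ(\zeta_{p^e})$, which matches the direct-product decomposition of $\Gal(K/\QQ) \cong (\ZZ/N'\ZZ)^\times \times (\ZZ/p^e\ZZ)^\times$ provided by Artin reciprocity (and CRT on $\ZZ/N\ZZ$). Under this identification, $\Gal(K/K')$ corresponds to the second factor and $\Gal(K/K'')$ to the first.

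Next I would recall (or invoke from any standard reference, e.g.\ Neukirch or Washington's \emph{Cyclotomic Fields}) that $K''/\QQ$ is totally ramified at $p$: $p \mathcal{O}_{K''} = \frakq^{\varphi(p^e)}$, so the inertia group of $p$ in $K''/\QQ$ is all of $\Gal(K''/\QQ)$. On the other hand, $K'/\QQ$ is unramified at $p$ because $\gcd(p,N') = 1$, and by the explicit description of Artin reciprocity for cyclotomic fields, a Frobenius element at any prime of $K'$ above $p$ is the class of $p$ in $(\ZZ/N'\ZZ)^\times$. Passing to $K/\QQ$ via the product decomposition, the inertia group $I_p$ is the product of the inertia groups in the two factors, namely $\{1\} \times (\ZZ/p^e\ZZ)^\times$. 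Since $K/K''$ is unramified at every prime above $p$ (being obtained by base change from the unramified extension $K'/\QQ$), the decomposition group $D_p$ modulo $I_p$ is generated by any Frobenius lift; the Frobenius projects to the Frobenius of $K'/\QQ$, giving $D_p / I_p = \langle p \rangle \subseteq (\ZZ/N'\ZZ)^\times$.

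Finally I would conclude by the general fact that in a Galois extension, the set of primes above a given rational prime is a torsor under $\Gal(K/\QQ)/D_p$; here
\[
\Gal(K/\QQ)/D_p \;\cong\; \bigl((\ZZ/N'\ZZ)^\times \times (\ZZ/p^e\ZZ)^\times\bigr)\big/\bigl(\langle p \rangle \times (\ZZ/p^e\ZZ)^\times\bigr) \;\cong\; (\ZZ/N'\ZZ)^\times/\langle p \rangle,
\]
and counting cardinalities gives $\varphi(N')/\#\langle p \rangle$ primes above $p$. The only mildly delicate point is verifying that Frobenius in the product really is the pair consisting of Frobenius in each factor (equivalently, that the unramified subextensions are compatible), but this is immediate from compatibility of Artin reciprocity with restriction, so there is no real obstacle; the proof is essentially bookkeeping on top of the two input facts.
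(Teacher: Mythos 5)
Your proof is correct and is essentially the standard argument (decompose into the totally ramified $p$-power part and the unramified prime-to-$p$ part, where Frobenius is the class of $p$), which is exactly the treatment in Washington's Chapter~2 that the paper cites in lieu of giving a proof. No issues.
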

\begin{proof}
See for example the discussion in \cite[Chapter~2]{washington}.
\end{proof}

\begin{remark} \label{rem:effect of complex conjugation}
    In \Cref{prop:cyclotomic splitting}, the order of $\langle p \rangle$ is the lcm of the order of the subgroups of $(\ZZ/\ell^e \ZZ)^\times$ generated by $p$ over all prime powers $\ell^e$ dividing $N'$.
    
    For a subfield $K$ of $\QQ(\zeta_N)$, the decomposition and inertia groups of $p$ in $K$ are obtained by projecting the corresponding groups associated to $\QQ(\zeta_N)$ along the surjection $\Gal(\QQ(\zeta_N)/\QQ) \to \Gal(K/\QQ)$.
    The kernel of said surjection is $\Gal(\QQ(\zeta_N)/K)$; let $H$ denote the corresponding subgroup of $(\ZZ/N\ZZ)^\times$.

    The unique complex conjugation on $K$ is induced by the element of $\Gal(\QQ(\zeta_N)/\QQ)$ corresponding to $-1 \in (\ZZ/N\ZZ)^\times$. Consequently, for $N'$ as in \Cref{prop:cyclotomic splitting}, if the image of $-1$ in $(\ZZ/N' \ZZ)^\times$ belongs to the subgroup generated by the images of $p$ and $H$, then the primes above $p$ in $K$ are all fixed by complex conjugation; otherwise, these primes partition into two-element orbits for complex conjugation.
\end{remark}

\begin{remark} \label{rem:short circuit}
In this paper, we will apply  \Cref{prop:fixed house setup} with $c = p^m$ for some rational prime $p$ and some positive integer $m$.
In this case, the sets $S_K$ and $T_K$ can be described using \Cref{prop:cyclotomic splitting} and \Cref{rem:effect of complex conjugation} once we compute the order of $p$ in $(\ZZ/\ell^e \ZZ)^\times$ for all prime powers $\ell^e$ dividing the conductor of $K$. We tabulate the relevant values here.
\begin{equation} \label{eq:multiplicative order}
\begin{array}{c|ccccccccccc}
  \ell^e & 2^2 & 2^3 & 3^2 & 5 & 7 & 11 & 13 & 17 & 19 & 23 & 29 \\
  \hline
p=2 & - & - & 6 & 4 & 3 & 10 & 12 & 8 & 18 & 11 & 28\\
p=5 & 1 & 2 & 6 & - & 6 & 5 & 4 & 16 & 9 & 22 & 14
\end{array}
\end{equation}

If $T_K \neq \emptyset$, then $\#T_K = \tfrac{1}{2} \#S_K = \tfrac{1}{2} [G:G_p]$.
Let $e$ be the ramification index of $p$ in $K$.
In case $\alpha_0$ is coprime to $\overline{\alpha}_0$
(resp. $\alpha_0 = \overline{\alpha}_0$),
we have $U_K = \prod_{\frakp \in T_K} [0, em]$
(resp. $U_K = \prod_{\frakp \in T_K} [-\tfrac{em}{2}, \tfrac{em}{2}]$);
in this case, $u$ and $(em,\dots,em) - u$ 
(resp. $u$ and $-u$)
correspond to values of $\alpha$ which are complex conjugates of each other, and hence equivalent.
\end{remark}

\subsection{Sharp bounds}

In general, applying \Cref{prop:fixed house setup} requires controlling $\image(v_{T,K})$ well enough to compute its intersection with $U_K$.
We next identify some cases where no such control is needed because the
known values of $\alpha$ with $\house{\alpha}^2 = c$ suffice to account for all of $U_K$. See \Cref{prop:step3 case 4} for an elaborate variant of the same argument.

\begin{lemma} \label{lem:short circuit}
For each row of \Cref{table:short circuit}, every cyclotomic integer $\alpha$ of minimal level dividing $N$ with $\house{\alpha}^2 = c$ is equivalent to one of the listed values, and in particular is covered by \Cref{T:main}.
\end{lemma}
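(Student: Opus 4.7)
The plan is a direct application of \Cref{prop:fixed house setup}. Fix a row of \Cref{table:short circuit} with parameters $(N, c)$, set $K := \QQ(\zeta_N)$, and designate one of the listed cyclotomic integers as the reference element $\alpha_0$, so $\house{\alpha_0}^2 = c$. The first step is to enumerate the prime set $S_K$ above the rational primes dividing $c$, choose orbit representatives $T_K$ under complex conjugation, and write down the box $U_K \subset \ZZ^{T_K}$ explicitly. This is done mechanically: \Cref{prop:cyclotomic splitting} combined with the multiplicative orders recorded in \eqref{eq:multiplicative order} controls the splitting behavior, \Cref{rem:effect of complex conjugation} identifies which primes above form nontrivial conjugate pairs, and \Cref{rem:short circuit} describes the interval product $U_K$ in terms of the ramification index $e$ and the multiplicity $m$ in $c = p^m$.

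Next, for every $\alpha$ listed in the row (together with each of its Galois conjugates), I compute the valuation vector $v_{T,K}(\alpha_0/\alpha) \in U_K$ by factoring the principal fractional ideal $(\alpha_0)(\alpha)^{-1}$ at the primes in $T_K$. This is a routine finite computation, suitable for SageMath along the lines of the other notebooks cited in the paper. Every such vector lies automatically in $U_K \cap \image(v_{T,K})$, since $\alpha_0/\alpha$ is an $S_K$-unit in the kernel of $\beta \mapsto \beta\overline{\beta}$.

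The critical verification, and the only genuine content of the lemma, is that the vectors produced in this way exhaust $U_K$ modulo the complex-conjugation involution noted in \Cref{rem:short circuit} (namely $u \mapsto (em,\dots,em) - u$ or $u \mapsto -u$ according as $\alpha_0$ is or is not coprime to $\overline{\alpha}_0$). Granting this, any cyclotomic integer $\alpha \in \mathcal{O}_K$ with $\house{\alpha}^2 = c$ produces a vector in $U_K \cap \image(v_{T,K})$ which must coincide, up to the involution, with one arising from a Galois conjugate of a listed value; the injectivity portion of \Cref{prop:fixed house setup} then forces $\alpha$ to be $W_K$-equivalent---hence equivalent in the sense of the paper---to that conjugate. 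The concluding assertion that $\alpha$ is covered by \Cref{T:main} is immediate, because each listed value is either of one of the forms (1)--(3) of \Cref{T:cassels} or an entry of \Cref{table:exceptional classes}.

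The main obstacle will be the combinatorial bookkeeping to confirm exhaustion of $U_K$. This ``short circuit'' method only succeeds when $|U_K|$ is small enough to be covered by the Galois orbits of the few listed $\alpha$'s, so the rows of \Cref{table:short circuit} must be chosen with this constraint in mind---favoring primes $p$ of large multiplicative order modulo $N/p^e$ (which shrinks $\#T_K$ per \Cref{prop:cyclotomic splitting}), small exponents $m$, and small ramification. In rows where $U_K$ is too large to be exhausted by inspection, one cannot avoid computing $\image(v_{T,K})$ itself, which in general requires class group information; that heavier machinery is the subject of \S\ref{sec:small primes} and is bypassed here precisely because of this sharpness.
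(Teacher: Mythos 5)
Your proposal is correct and takes essentially the same route as the paper: apply \Cref{prop:fixed house setup} with $\alpha_0$ a listed value, compute $\#T_K$ and $U_K$ from \eqref{eq:multiplicative order} and \Cref{rem:short circuit}, and check that the Galois conjugates of the listed values account for every element of $U_K$, so that $U_K \subseteq \image(v_{T,K})$ and the bijection forces any $\alpha$ with $\house{\alpha}^2 = c$ to be equivalent to a listed value. Your additional use of the involution $u \mapsto (em,\dots,em)-u$ (or $u \mapsto -u$) to halve the verification is exactly the observation already recorded in \Cref{rem:short circuit}.
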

\begin{proof}
    For each row, we apply \Cref{prop:fixed house setup} with $\alpha_0$ taken to be the first listed value of $\alpha$.
    Using \eqref{eq:multiplicative order}, we may compute the indicated values of $\#T_K$ and $U_K$ 
    and observe that every element of $U_K$ is accounted for
    by a Galois conjugate of a listed value of $\alpha$; in particular, $U_K \subseteq \image(v_{T,K})$.
\end{proof}

\begin{table}
\caption{Cases of \Cref{lem:short circuit}. Here $*_M$ denotes an entry $\alpha$ of \Cref{table:exceptional classes} of minimal level $M$ with $\house{\alpha}^2 = c$.}
\label{table:short circuit}
\begin{tabular}{c|c|c|c|c|c}
    $c$ & $N$ & $\alpha$ & $\#T_K$ & $U_K$ & Usage\\
    \hline
    \multirow{3}{*}{$4$}
    & $3^2 \times 5$ & $2, (1-\zeta_{15}+\zeta_{15}^{12})^2$ & $1$ & $\{-1,0,1\}$ & \S\ref{sec:X3 N3} \\
    & $3 \times 13$ & $2, *_{39}$ & $1$ & $\{-1,0,1\}$ & \S\ref{subsec:castle 4 prime 13} \\
    & $5 \times 13$ & $2$ & $0$ & $0$ & \Cref{R:handle Y when 0}, \S\ref{subsec:castle 4 prime 13} \\
    \hline
    \multirow{7}{*}{$5$}
    & $2^3 \times 5$ & $\sqrt{5}, 2+i, *_{20}$ & $1$ & $\{-2,-1,0,1,2\}$ & \S\ref{subsubsec:X2 not totally real} \\
    & $2^2 \times 7$ & $2+i$ & 1 & $\{0,1\}$ & \S\ref{subsubsec:X2 not totally real} \\
    & $3^2 \times 5 \times 7$ & $\sqrt{5}$ & $0$ & $0$ & \S\ref{sec:X3 N3} \\
    & $3 \times 17$ & $*_{51}$ & $1$ & $\{0,1\}$ &\Cref{R:handle Y when 0}, \S\ref{subsec:castle 5 prime 17} \\
    & $3 \times 19$ & $*_{19}$ & $1$ & $\{0,1\}$ & \Cref{R:handle Y when 0}, \S\ref{subsec:castle 5 prime 19}\\
    & $5 \times 23$ & $\sqrt{5}$ & $0$ & 0 & \Cref{R:handle Y when 0} \\
    & $5 \times 29$ & $\sqrt{5}$ & $0$ & 0 & \Cref{R:handle Y when 0} \\
\end{tabular}
\end{table}

\begin{remark} \label{rem:class group bound}
    When \Cref{lem:short circuit} does not apply, in principle one can compute $\image(v_{T,K})$ as follows.
    See \Cref{lem:step3 case 5 remove 3} for a small example.
    
    In \Cref{prop:fixed house setup}, the group $\coker(v_{T,K})$ is always finite because the class group of $K$ is finite: for any $u \in \ZZ^{T_K}$ we can find a positive integer $n$ such that $J_{nu} = J_u^n$ is generated by some $\beta \in K^\times$, and then $J_{2nu} = J_u^{2n}$ is generated by $\beta/\overline{\beta} \in N_{K,S}$.
    That said, the class group computations may become prohibitive for large $K$.

    Now suppose that $n$ is a positive integer such that $J_{nu}$ is generated by a \emph{known} element $\beta_0 \in N_{K,S}$.
    If $J_u$ is generated by $\beta \in N_{K,S}$, then $\beta^n$ and $\beta_0$ are both elements of $N_{K,S}$ generating $J_{nu}$, and so $\beta^n/\beta_0 \in W_K$. Consequently, we can determine whether or not $\beta$ exists by testing whether $\beta_0 \zeta$ has an $n$-th root in $K$ for some $\zeta \in W_K$.
\end{remark}

\subsection{Excluding primes from the minimal level}

In general, when applying \Cref{prop:fixed house setup} we will have $U_K \not\subset \image(v_{T,K})$ and so some work will be needed to compute the intersection (\Cref{rem:class group bound}). One can simplify this work by first identifying some primes that are obstructed from dividing the minimal level.

\begin{lemma} \label{lem:fixed house descent}
    Let $N$ be a positive integer, let $p$ be an odd prime such that $p^e$ exactly divides $N$ for some positive integer $e$, and set $N' := N/p^e$.
    Fix $c \in \mathcal{O}_K$ nonzero.
    Suppose that $K_0$ is a subfield of $\QQ(\zeta_N)$
    containing $\QQ(\zeta_{N'})$, $c$, and 
    the decomposition field of every prime of $K$ dividing $c$,
    but not containing $\zeta_p$.
    Then
    every cyclotomic integer $\alpha \in \QQ(\zeta_N)$ with $\house{\alpha}^2 = c$
    is equivalent to an element of $\QQ(\zeta_N)$ which generates a Kummer extension of $K_0$ (depending on $\alpha$) of degree dividing $k :=  \gcd([\QQ(\zeta_{N}) : K_0], \lcm(2,N'))$. 
\end{lemma}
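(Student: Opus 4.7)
The plan is to attack the claim via a cohomological computation after normalizing $\alpha$ by a Galois conjugate. Because the castle is achieved by some Galois conjugate of $\alpha \overline{\alpha}$, and because the CM structure of $K := \QQ(\zeta_N)$ makes Galois automorphisms commute with complex conjugation, I can replace $\alpha$ by $\sigma(\alpha)$ for a suitable $\sigma \in \Gal(K/\QQ)$ to arrange $\alpha \overline{\alpha} = c$ as elements of $K$. After this move, $(\alpha)(\overline{\alpha}) = (c)$, so $\alpha$ is supported on the set $S_K$ of primes of $K$ dividing $c$.

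Set $G := \Gal(K/K_0)$; as a subgroup of $\Gal(K/\QQ(\zeta_{N'})) \cong (\ZZ/p^e\ZZ)^\times$, which is cyclic for $p$ odd, $G$ is cyclic of order $d := [K:K_0]$. The decomposition-field hypothesis ensures that every $\frakp \in S_K$ is pointwise fixed by $G$, so $\tau(\alpha)\mathcal{O}_K = \alpha \mathcal{O}_K$ for every $\tau \in G$; hence $\tau(\alpha)/\alpha \in \mathcal{O}_K^\times$. Combining with $\tau(c) = c$ and CM gives $|\tau(\alpha)| = |\alpha|$ at our fixed embedding, and applying any $\rho \in \Gal(K/\QQ)$ promotes this to every complex embedding. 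By Kronecker (\Cref{R:kronecker}), $\tau(\alpha)/\alpha \in W_K$, and this yields a $1$-cocycle $\chi \colon G \to W_K$.

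Now decompose $W_K \cong \mu_{N''} \times \mu_{p^e}$, where $N'' := \lcm(2, N')$ (so $\gcd(N'', p^e) = 1$ since $p$ is odd and coprime to $N'$). Since $\mu_{N''} \subseteq \QQ(\zeta_{N'}) \subseteq K_0$, the factor $\mu_{N''}$ has trivial $G$-action, while $G$ acts on $\mu_{p^e}$ via the inclusion $G \hookrightarrow (\ZZ/p^e\ZZ)^\times$; split $\chi = \chi_1 \cdot \chi_2$ accordingly. The key technical input is $H^1(G, \mu_{p^e}) = 0$: since $G = \langle \tau \rangle$ is cyclic, this group equals $\ker(N_G)/(\tau-1)\mu_{p^e}$ with $N_G = 1 + \tau + \cdots + \tau^{d-1}$; writing the action of $\tau$ as multiplication by $a \in (\ZZ/p^e\ZZ)^\times$ and splitting on whether $a \equiv 1 \pmod{p}$, the lifting-the-exponent lemma evaluates $v_p(a-1)$ and $v_p(N_G) = v_p((a^d-1)/(a-1))$ and confirms $\ker(N_G) = (\tau-1)\mu_{p^e}$ as subgroups of $\mu_{p^e}$.

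Given this vanishing, write $\chi_2(\tau) = \tau(\eta)/\eta$ for some $\eta \in \mu_{p^e}$, and replace $\alpha$ by $\eta^{-1}\alpha$ (still an equivalence). The new cocycle takes values entirely in $\mu_{N''}$, on which $G$ acts trivially, so it reduces to a group homomorphism $\chi \colon G \to \mu_{N''}$. Its image has order $k'$ dividing $\gcd(d, N'') = k$, and $\chi(\tau)^{k'} = 1$ gives $\tau(\alpha^{k'}) = \alpha^{k'}$ for every $\tau \in G$, so $\alpha^{k'} \in K^G = K_0$. Thus this representative of $\alpha$ is equivalent to the original and generates the Kummer extension $K_0(\alpha)/K_0$ of degree $k' \mid k$, as required. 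The main obstacle is the vanishing $H^1(G, \mu_{p^e}) = 0$; without it, one would only conclude that $\alpha^{k \cdot |G|_p} \in K_0$ after adjustment (where $|G|_p$ denotes the $p$-part of $|G|$), which introduces an unwanted factor of $p$-power order.
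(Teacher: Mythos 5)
Your proof is correct and is essentially the paper's argument in cohomological dress: the paper likewise observes that $\tau(\alpha)/\alpha \in W_{\QQ(\zeta_N)}$ (via \Cref{prop:fixed house setup}), kills the $\mu_{p^e}$-component by multiplying $\alpha$ by a suitable power of $\zeta_{p^e}$ (which is exactly your coboundary adjustment), and then reads off the order of the resulting homomorphism $G \to W_{\QQ(\zeta_{N'})}$. One small simplification: the hypothesis $\zeta_p \notin K_0$ forces a generator of $G$ to act on $\mu_{p^e}$ by some $a \not\equiv 1 \pmod p$, so $a-1$ is a unit mod $p^e$ and the vanishing of $H^1(G,\mu_{p^e})$ is immediate, with no need for the lifting-the-exponent case analysis.
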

\begin{proof}
    Set $G := \Gal(\QQ(\zeta_N)/K_0)$, identified with a subgroup of $(\ZZ/p^e \ZZ)^\times$ via Artin reciprocity.
    Since $p$ is odd, $G$ is cyclic; fix a generator $g \in G$.
    Since $\zeta_p \notin K_0$, $g$ maps to a nontrivial element of $(\ZZ/p\ZZ)^\times$; hence $\zeta_{p^e}^g / \zeta_{p^e}$ is a primitive $p^e$-th root of unity.
    
    For $\alpha \in \mathcal{O}_K$ with $\house{\alpha}^2 \in \QQ(\zeta_{N'})$,
    using the hypothesis that $K_0$ contains both $c$ and the decomposition field of every prime of $K$ dividing $c$, we may apply \Cref{prop:fixed house setup} to see that $\tfrac{\alpha^g}{\alpha} \in W_{\QQ(\zeta_N)}$. 
    By the previous paragraph, after multiplying $\alpha$ by a suitable power of $\zeta_{p^e}$ we may ensure that in fact
    $\tfrac{\alpha^g}{\alpha} \in W_{\QQ(\zeta_{N'})}$.

    Now the formula $\rho(h) = \tfrac{\alpha^h}{\alpha}$ defines a homomorphism $\rho \colon G \to W_{\QQ(\zeta_{N'})}$, whose order must divide 
    $\gcd(\#G, \#W_{\QQ(\zeta_{N'})}) = k$.
    We deduce that $\alpha^k \in \QQ(\zeta_{N'})$; since $\zeta_k \in \QQ(\zeta_{N'})$, $\alpha$
    generates a Kummer extension of $\QQ(\zeta_{N'})$ of degree dividing $k$.
\end{proof}

The following consequence will be crucial for Step 2 of the proof of \Cref{T:main}.
\begin{lemma} \label{L:remove p}
Let $\alpha$ be a cyclotomic integer such that $\house{\alpha}^2$ is a power of an odd prime number $p$.
Then the minimal level of $\alpha$ is not divisible by $p^2$.
\end{lemma}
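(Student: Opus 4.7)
The plan is to argue by contradiction. Suppose the minimal level $N$ of $\alpha$ is divisible by $p^2$; write $N = p^n N'$ with $n \geq 2$ and $\gcd(p,N') = 1$, so that $p^n$ exactly divides $N$, and write $\house{\alpha}^2 = p^m$.

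The main tool I would use is the descent lemma \Cref{lem:fixed house descent}, applied with $K_0 := \QQ(\zeta_{N'})$ and $c := p^m$. First, I would verify its three hypotheses. The field $K_0$ obviously contains $\QQ(\zeta_{N'})$ and contains $c \in \QQ$. By \Cref{prop:cyclotomic splitting}, the inertia group of $p$ in $\Gal(\QQ(\zeta_N)/\QQ)$ is all of $(\ZZ/p^n\ZZ)^\times \cong \Gal(\QQ(\zeta_N)/K_0)$, so $K_0$ coincides with the inertia subfield and in particular contains the decomposition field of every prime of $\QQ(\zeta_N)$ above $p$. Finally, $\zeta_p \notin K_0$ because $p$ is odd and $p \nmid N'$.

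The lemma then produces an equivalent cyclotomic integer $\alpha' \in \QQ(\zeta_N)$ for which the degree $[K_0(\alpha'):K_0]$ divides $k := \gcd(p^{n-1}(p-1),\, \lcm(2,N'))$. The second step is to observe that since $p$ is odd and coprime to $N'$, $\gcd(p^{n-1}, \lcm(2,N')) = 1$, so $k$ divides $p-1$ and is in particular coprime to $p$. The third step exploits the cyclic structure of $\Gal(\QQ(\zeta_N)/K_0) \cong (\ZZ/p^n\ZZ)^\times$ (which is cyclic precisely because $p$ is odd): its unique subgroup of order $p^{n-1}$, namely the $p$-Sylow, is contained in every subgroup whose index is coprime to $p$, and its fixed field in $\QQ(\zeta_N)$ is $\QQ(\zeta_{pN'})$. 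Hence $K_0(\alpha') \subseteq \QQ(\zeta_{pN'})$, so $\alpha'$ itself lies in $\QQ(\zeta_{pN'})$, and by \Cref{L:minimal level divides} the minimal level of $\alpha \sim \alpha'$ divides $pN'$. This contradicts the assumption that the minimal level $N = p^n N'$ satisfies $n \geq 2$.

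The main obstacle I anticipate is simply the careful verification that the inertia group of $p$ fills out all of $\Gal(\QQ(\zeta_N)/K_0)$, which is what places the decomposition field inside $K_0$ and unlocks the descent lemma; once this is in hand, the rest is routine cyclotomic Galois theory. The odd-prime hypothesis is essential in two places: it makes $(\ZZ/p^n\ZZ)^\times$ cyclic (so that the descent lemma applies at all and the unique $p$-Sylow argument goes through), and it ensures $\zeta_p \notin K_0$.
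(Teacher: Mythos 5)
Your proposal is correct and follows essentially the same route as the paper: both arguments apply \Cref{lem:fixed house descent} with $K_0 = \QQ(\zeta_{N'})$ to conclude that $\alpha$ is equivalent to an element of a subextension of $\QQ(\zeta_N)/\QQ(\zeta_{N'})$ of degree dividing $\gcd(p-1,\lcm(2,N'))$, which forces it into $\QQ(\zeta_{pN'})$ and contradicts minimality of the level via \Cref{L:minimal level divides}. Your verification of the hypotheses (inertia at $p$ filling out $\Gal(\QQ(\zeta_N)/K_0)$, hence the decomposition field lying in $K_0$) and the unique $p$-Sylow argument are exactly the details the paper leaves implicit.
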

\begin{proof}
Suppose by way of contradiction that the minimal level $N$ of $\alpha$ has the form $p^e N'$ with $e \geq 2$ and $N'$ coprime to $p$.
By \Cref{lem:fixed house descent}, $\alpha$ is equivalent to an element of some cyclic subextension of $\QQ(\zeta_N)/\QQ(\zeta_{N'})$ of degree dividing
$\gcd((p-1) p^e, \lcm(2,N')) = \gcd(p-1, \lcm(2,N'))$; any such subextension is contained in $\QQ(\zeta_{pN'})$.
\end{proof}

We specialize further to the cases $\house{\alpha}^2 = 4, 5$.
The resulting lemmas will be helpful in all three steps of the proof of \Cref{T:main}.

\begin{lemma} \label{L:remove 5}
Let $\alpha$ be a cyclotomic integer with odd minimal level $N$
such that $\house{\alpha}^2 = 5$.
If $\alpha$ is not equivalent to $\sqrt{5}$, then $N$ is coprime to $5$.
(This can fail for $N$ even: see \Cref{table:exceptional classes} for an example where $\alpha$ has minimal level $20$.)
\end{lemma}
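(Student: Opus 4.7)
The plan is to argue the contrapositive: assume $5 \mid N$ and show $\alpha \sim \sqrt{5}$. By \Cref{L:remove p} applied to the odd prime $5$, the minimal level is not divisible by $25$; combined with $N$ odd, this forces $N = 5N'$ with $N'$ odd and coprime to $5$.

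Next I would invoke \Cref{lem:fixed house descent} with $p = 5$, $e = 1$, $c = 5$, and $K_0 = \QQ(\zeta_{N'})$. The hypotheses are readily checked: $K_0$ contains $\QQ(\zeta_{N'})$ and the rational integer $c = 5$; the decomposition field of each prime above $5$ in $\QQ(\zeta_N)$ is contained in $\QQ(\zeta_{N'})$ by \Cref{prop:cyclotomic splitting}; and $K_0 \not\ni \zeta_5$ since $5 \nmid N'$. The degree bound is
\[
k = \gcd([\QQ(\zeta_N):K_0],\ \lcm(2,N')) = \gcd(4, 2N') = 2,
\]
using that $N'$ is odd. We therefore obtain an element $\gamma \in \QQ(\zeta_N)$ equivalent to $\alpha$ with $\gamma^2 \in K_0$. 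If $\gamma \in K_0$, the minimal level of $\alpha$ would divide $N'$, contradicting $5 \mid N$. Otherwise $K_0(\gamma)$ is the unique quadratic subextension $K_0(\sqrt{5})$ of $\QQ(\zeta_N)/K_0$ (arising from $\sqrt{5} \in \QQ(\zeta_5)$); writing $\gamma = a + b\sqrt{5}$ with $a,b \in K_0$ and expanding the condition $\gamma^2 \in K_0$ yields $ab = 0$, so in fact $\gamma = \delta\sqrt{5}$ for some nonzero $\delta := b \in K_0$.

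The remaining and most delicate step is to show that $\delta$ is a root of unity; this will give $\gamma \sim \sqrt{5}$ and hence $\alpha \sim \sqrt{5}$. The main obstacle is verifying $\delta \in \mathcal{O}_{K_0}$, which requires a local valuation check at the primes above $5$. At any prime $\mathfrak{p}$ of $K_0$ not dividing $5$, the extension to $\QQ(\zeta_N)$ is unramified and $\sqrt{5}$ is a unit, so $v_\mathfrak{p}(\delta) = v_\mathfrak{P}(\gamma) \geq 0$ for any prime $\mathfrak{P}$ of $\QQ(\zeta_N)$ above $\mathfrak{p}$. At any prime $\mathfrak{q}_j$ of $K_0$ above $5$, which extends as $\mathfrak{P}_j^4$ in $\QQ(\zeta_N)$ with $v_{\mathfrak{P}_j}(\sqrt{5}) = 2$, integrality of $\gamma$ gives $v_{\mathfrak{P}_j}(\gamma) = 4 v_{\mathfrak{q}_j}(\delta) + 2 \geq 0$, which combined with $v_{\mathfrak{q}_j}(\delta) \in \ZZ$ forces $v_{\mathfrak{q}_j}(\delta) \geq 0$. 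Thus $\delta \in \mathcal{O}_{K_0}$. Finally, since every Galois conjugate of $\sqrt{5}$ equals $\pm\sqrt{5}$ and thus has absolute value $\sqrt{5}$, we have $|\sigma\gamma|^2 = 5|\sigma\delta|^2$ for every complex embedding $\sigma$, whence $\house{\delta}^2 = \house{\gamma}^2/5 = 1$. Kronecker's theorem (\Cref{R:kronecker}) now concludes that $\delta$ is a root of unity, so $\gamma = \delta\sqrt{5} \sim \sqrt{5}$, completing the proof.
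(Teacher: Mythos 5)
Your proposal is correct and follows essentially the same route as the paper: both proofs first invoke \Cref{L:remove p} to exclude $5^2$, then apply \Cref{lem:fixed house descent} with $K_0 = \QQ(\zeta_{N/5})$ and $k = \gcd(4, 2N/5) = 2$ to land in $\QQ(\zeta_{N/5}, \sqrt{5})$, and finally conclude that $\alpha/\sqrt{5}$ is a root of unity. The only difference is cosmetic: where the paper cites \Cref{prop:fixed house setup} and notes that the valuation vector $u$ must lie in $(2\ZZ)^{T_K} \cap U_K = \{0\}$, you unpack the same content by hand, checking integrality of $\delta = \gamma/\sqrt{5}$ prime by prime and then applying Kronecker's theorem directly.
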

\begin{proof}
By \Cref{L:remove p}, $5^2$ does not divide $N$. 
Suppose that $5$ divides $N$.
By \Cref{lem:fixed house descent}, 
we may assume that $\alpha$ belongs to a subextension of $\QQ(\zeta_N)/\QQ(\zeta_{N/5})$ of degree dividing
$\gcd(4, 2N/5) = 2$; that is, we must have
$\alpha \in K := \QQ(\zeta_{N/5}, \sqrt{5})$.
Now set notation as in \Cref{prop:fixed house setup} with $\alpha_0 = \sqrt{5}$
and set $u := v_{T,K}(\tfrac{\alpha_0}{\alpha})$.
Since the nontrivial automorphism $c$ of $K$ over $\QQ(\zeta_{N/5})$ fixes $T_K$,
we have $\epsilon := \tfrac{\alpha^c}{\alpha} \in W_K = W_{\QQ(\zeta_{N/5})}$.
Since $N$ is odd, without loss of generality we may assume $\epsilon \in \{\pm 1\}$. If $\epsilon = 1$, then $\alpha \in \QQ(\zeta_{N/5})$; 
if $\epsilon = -1$, then $\tfrac{\alpha_0}{\alpha} \in \QQ(\zeta_{N/5})$ and so $u \in (2\ZZ)^{T_K}$, but since $u \in U_K = \{-1,0,1\}^{T_K}$ (as in \Cref{rem:short circuit}) this forces $u = 0$ and so $\alpha$ is equivalent to $\sqrt{5}$.
\end{proof}

We can make a similar argument to remove some unramified primes.
This will be helpful in Steps 2 and 3 of the proof of \Cref{T:main}.

\begin{lemma} \label{L:remove p castle 5}
Choose $(\ell, c, p_0) \in \{(2,4,11), (5,5,7)\}$.
Let $p \geq p_0$ be a prime such that $\tfrac{p-1}{2}$ is also prime (this forces $p \equiv 3 \pmod{4})$.
Let $N$ be a positive integer coprime to $\ell p$
such that for every prime $q$ dividing $N$,
the order of $\ell$ in $(\ZZ/q\ZZ)^\times$ is not divisible by $\tfrac{p-1}{2}$ (this is automatic if $\tfrac{p-1}{2}$ does not divide $q-1$).
Then every $\alpha \in \QQ(\zeta_{pN})$ with $\house{\alpha}^2 = c$ is either covered by \Cref{T:main} or equivalent to a value in $\QQ(\zeta_{N})$.
\end{lemma}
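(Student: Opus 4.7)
The plan is to follow a strategy parallel to that of \Cref{L:remove 5}: first confine $\alpha$ to $\QQ(\zeta_N)(\sqrt{-p})$ via \Cref{lem:fixed house descent}, then apply \Cref{prop:fixed house setup} within this extension. The new input is a rationality argument that forces $\alpha \overline{\alpha} = c$ exactly, since $c \in \QQ$.

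First I would observe that $c$ appears among the Galois conjugates of $\alpha \overline{\alpha}$ (at the embedding realizing the house), so the irreducibility of the minimum polynomial of $\alpha \overline{\alpha}$ over $\QQ$ forces $\alpha \overline{\alpha} = c$. Consequently $(\alpha)(\overline{\alpha}) = (c) = \prod_{\frakp \mid \ell} \frakp^m$ with $c = \ell^m$, so $\alpha$ is an $S_K$-unit for $S_K$ the primes of $K := \QQ(\zeta_{pN})$ above $\ell$. Next I would apply \Cref{lem:fixed house descent} with $K_0$ set to the compositum of $\QQ(\zeta_N)$ and the decomposition field $D$ of any such prime. The hypothesis that $\mathrm{ord}_q(\ell)$ is coprime to $q_p := (p-1)/2$ for every $q \mid N$ implies that $\mathrm{ord}_N(\ell)$ is coprime to $q_p$; combined with $\mathrm{ord}_p(\ell) \in \{q_p, 2q_p\}$ this forces $[K_0 : \QQ(\zeta_N)] \in \{1, 2\}$, with $K_0 = \QQ(\zeta_N)(\sqrt{-p})$ when equality holds (and $\zeta_p \notin K_0$ since $p \geq 7$). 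Provided $q_p \nmid N$, the descent degree $k = \gcd([\QQ(\zeta_{pN}) : K_0], \lcm(2,N))$ is at most $2$, so $\alpha$ (after equivalence) lies in $\QQ(\zeta_N)(\sqrt{-p})$ and may be written as $\alpha = a + b\sqrt{-p}$ with $a, b \in \QQ(\zeta_N)$.

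Having reduced to this setting, I would use the nontrivial automorphism $\tau$ of $\QQ(\zeta_N)(\sqrt{-p})/\QQ(\zeta_N)$. When $\mathrm{ord}_p(\ell) = p-1$ (so that $\ell$ is inert in $K_0/\QQ(\zeta_N)$), $\tau$ fixes every prime above $\ell$ in $K_0$, making $\tau(\alpha)/\alpha$ a root of unity of order at most 2; the sign yields either $\alpha \in \QQ(\zeta_N)$ (the desired conclusion) or $\alpha = b\sqrt{-p}$ with $b \in \QQ(\zeta_N)$ satisfying $b \overline{b} = c/p$. The latter is severely constrained: since $p \geq 7$ while $c \leq 5$, the fraction $c/p$ is non-integral, forcing $b$ to have a negative valuation at the unique prime above $p$; combined with the $S_{K_0}$-unit structure at primes above $\ell$, \Cref{prop:fixed house setup} yields only finitely many candidates for $b$ and hence for $\alpha$, each of which I would verify is equivalent to an entry of \Cref{table:exceptional classes} or covered by a Cassels form. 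The complementary case $\mathrm{ord}_p(\ell) = q_p$ (where $\ell$ splits in $K_0/\QQ(\zeta_N)$) would be handled analogously, combining the actions of $\tau$ and complex conjugation on primes above $\ell$ to obtain a similar bounded enumeration via \Cref{prop:fixed house setup}.

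The case $q_p \mid N$, where $k$ can reach $q_p$ instead of $2$, is the likely obstacle: here I would use that $\alpha^{q_p}\, \overline{\alpha^{q_p}} = c^{q_p}$ is rational to describe $(\alpha^{q_p})$ in $K_0$ as a product of primes above $\ell$, then invoke Kummer theory for the cyclic extension $K_0(\alpha)/K_0$ (of degree dividing $q_p$) to recover $\alpha$ from $\alpha^{q_p}$ up to a $q_p$-th root of unity, ultimately reducing back to the previous cases after enlarging by $\zeta_{q_p}$. The hard part throughout is the final enumeration of valid valuation vectors in $U_{K_0} \cap \image(v_{T,K_0})$ as the splitting behavior of $\ell$ in $K_0$ varies with $\mathrm{ord}_N(\ell)$ and with whether $-1 \in \langle \ell \rangle \pmod{N}$; matching each surviving candidate to the entries of \Cref{table:exceptional classes} or to one of the Cassels families in \Cref{T:cassels} then completes the proof.
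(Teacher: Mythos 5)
Your first half tracks the paper's argument: $\alpha\overline{\alpha}=c$ exactly because $c\in\QQ$ appears among its conjugates, the order of $\ell$ mod $p$ is $\tfrac{p-1}{2}$ or $p-1$ so the decomposition field of $\ell$ lies in $K_0:=\QQ(\zeta_N,\sqrt{-p})$, and \Cref{lem:fixed house descent} applies. One simplification you are missing there: the case $\tfrac{p-1}{2}\mid N$, which you treat as a separate obstacle requiring a detour through $\alpha^{(p-1)/2}$ and $\zeta_{(p-1)/2}$, is disposed of uniformly by a ramification argument. A nontrivial Kummer extension $K_0(\alpha)/K_0$ generated by a root of an $S$-unit (for $S$ the primes above $\ell$) of exponent dividing $p-1$ is unramified above $p$, while every nontrivial subextension of $\QQ(\zeta_{pN})/K_0$ is totally ramified above $p$; hence the extension is trivial and $\alpha$ is equivalent to an element of $K_0$ in all cases.

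The genuine gap is in the second half, which is also where you depart from the paper. After reducing to $\alpha\in K_0$, the paper does \emph{not} go back to \Cref{prop:fixed house setup} and an enumeration of valuation vectors. Instead it writes $\alpha=\eta_0+\eta_1\sqrt{-p}$ with $\eta_0,\eta_1\in\tfrac12\ZZ[\zeta_N]$ (the rings of integers of $\QQ(\zeta_N)$ and $\QQ(\sqrt{-p})$ generate $\mathcal{O}_{K_0}$ since the discriminants are coprime) and expands $c=\alpha\overline{\alpha}$ into the two equations $\eta_0\overline{\eta}_0+p\,\eta_1\overline{\eta}_1=c$ and $\eta_0\overline{\eta}_1=\eta_1\overline{\eta}_0$. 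The first gives $\house{2\eta_1}^2\le 4c/p$, which is $<1$ for $c=4,p\ge 19$ and $c=5,p\ge 23$ (forcing $\eta_1=0$, i.e., $\alpha\in\QQ(\zeta_N)$) and $<3$ for the remaining small $p$, so \Cref{lem:cassels low height} pins $\eta_1$ down to $\tfrac12$ up to equivalence; the second then makes $\eta_0/\eta_1$ totally real with $|\eta_0|^2=\tfrac{4c-p}{4}$, leaving four explicit biquadratic values, all in \Cref{table:exceptional classes}. Your proposed substitute — casework on whether $\ell$ is inert or split in $K_0/\QQ(\zeta_N)$ followed by computing $U_{K_0}\cap\image(v_{T,K_0})$ — is precisely the step you label ``the hard part,'' and it is not carried out: in the split case (which really occurs, e.g.\ $\ell=5$, $p=11$) the automorphism $\tau$ permutes the primes above $\ell$ nontrivially, so $\tau(\alpha)/\alpha$ is not a root of unity and ``handled analogously'' does not apply; and even in the inert case with $\tau(\alpha)=-\alpha$ you stop short of the contradiction (here $b=\alpha/\sqrt{-p}$ is forced to be integral above $p$ because $p$ ramifies in $K_0/\QQ(\zeta_N)$, while $b\overline{b}=c/p$ has valuation $-1$ there). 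Without the norm-form computation, or a completed class-group/$S$-unit enumeration in each splitting regime, the lemma is not proved.
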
 
\begin{proof}
Since $p > \ell+1$, $p$ does not divide $(\ell-1)(\ell+1) = \ell^2-1$; hence the order of $\ell$ in $(\ZZ/p\ZZ)^\times$ cannot be 1 or 2, and hence must be divisible by $\tfrac{p-1}{2}$.
Consequently, the orders of both the group $(\ZZ/pN \ZZ)^\times$ and its subgroup generated by $\ell$ are exactly divisible by $\tfrac{p-1}{2}$,
so the decomposition field of $\ell$ in $\QQ(\zeta_{pN})$ is contained in $\QQ(\zeta_N, \sqrt{-p})$.
We may therefore apply \Cref{lem:fixed house descent} 
to see that $\alpha$ is equivalent to an element of $\QQ(\zeta_{pN})$ generating a Kummer extension
of $\QQ(\zeta_N, \sqrt{-p})$ of degree dividing $p-1$.
If this extension were nontrivial, it would on one hand be unramified away from places above divisors of $\ell(p-1)$, but on the other hand would be totally ramified above $p$.
This contradiction implies that $\alpha$ is in fact 
equivalent to an element of $\QQ(\zeta_N, \sqrt{-p})$.

Since the number fields $\QQ(\zeta_N)$ and of $\QQ(\sqrt{-p})$ are Galois, linearly disjoint, and have coprime discriminants,
their rings of integers together generate the ring of integers of $\QQ(\zeta_N, \sqrt{-p})$. We may thus write
\[
\alpha = \eta_0 + \eta_1 \sqrt{- p} \qquad (\eta_0, \eta_1 \in \tfrac{1}{2}\ZZ[\zeta_N], \eta_0-\eta_1 \in \ZZ[\zeta_N]).
\]
Expanding the equality 
\[
c = \alpha \overline{\alpha} = (\eta_0 + \eta_1 \sqrt{- p})(\overline{\eta}_0 -\overline{\eta}_1  \sqrt{- p})
\]
in terms of $\sqrt{- p}$ yields
\[
\eta_0 \overline{\eta}_0 + p \eta_1 \overline{\eta}_1 = c,
\qquad
\eta_0 \overline{\eta}_1 - \eta_1 \overline{\eta}_0 = 0.
\]
If $\eta_1 = 0$, then $\alpha = \eta_0 \in \QQ(\zeta_N)$ and we are done. Otherwise, $\house{2\eta_1}^2 \leq \tfrac{4c}{p}$;
this is impossible for $c=4, p \geq 19$ or $c=5, p \geq 23$. 
For $c=4, p = 11$ or $c=5, p = 11, 19$, we have $\house{2\eta_1}^2 < 2$; by \Cref{lem:cassels low height}, up to equivalence we may force $\eta_1 = \tfrac{1}{2}$. For $p = 7$, we reach the same conclusion by noting that 
none of the options with $1 < \house{2 \eta_1}^2 \leq \tfrac{4c}{p}$
offered by  \Cref{lem:cassels low height} is compatible with our restrictions on $N$.

Now $\tfrac{\eta_0}{\eta_1}$ is totally real
and $|\eta_0|^2 = \tfrac{4c-p}{4}$, so $\eta_0 = \pm \tfrac{\sqrt{4c-p}}{2}$.
We deduce that $\alpha$ is equivalent to 
$\tfrac{\sqrt{5} + \sqrt{-11}}{2}$ for $c=4, p=11$;
$\tfrac{\sqrt{13} + \sqrt{-7}}{2}$ for $c=5, p=7$;
$\tfrac{3+\sqrt{-11}}{2}$ for $c=5, p=11$;
or $\tfrac{1+\sqrt{-19}}{2}$ for $c=5, p=19$.
All of these cases are covered by \Cref{T:main} (compare \Cref{rem:biquadratic}).
\end{proof}

\section{Higher prime powers in the minimal level}
\label{sec:higher prime powers}

In this section, we prove the following result that resolves Step 1 of the proof of \Cref{T:main} in cases (b), (c), (d).

\begin{prop} \label{prop:Step 1}
Let $\alpha$ be a cyclotomic integer with $\house{\alpha}^2 < 5.01$ and $\calM(\alpha) \geq 3 \tfrac{1}{4}$.
If the minimal level of $\alpha$ is not squarefree, then $\alpha$ is covered by \Cref{T:main}.
\end{prop}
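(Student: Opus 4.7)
Suppose the minimal level $N$ of $\alpha$ is exactly divisible by $p^n$ with $n \geq 2$; up to equivalence we may assume $\alpha \in \QQ(\zeta_N)$. Combining $\calM(\alpha) \geq 3\tfrac{1}{4}$ with \Cref{cor:RW truncation} places us in case (b), (c), or (d) of that corollary, so $\house{\alpha}^2 \in \QQ(\zeta_M)$ with squarefree $M \in \{1,10,14,15\}$; in particular the minimal level of $\house{\alpha}^2$ is not divisible by $p^n \geq 4$. Form the $p$-decomposition $\alpha = \sum_{i=0}^{p-1} \eta_i \zeta_{p^n}^i$ with $X$ nonzero terms. By \Cref{lem:combinatorial constraints power} together with the bound $X \leq \sum_i \calM(\eta_i) = \calM(\alpha) \leq 5$ (from \Cref{L:rep-prime-power-case} and \Cref{R:kronecker}), we have
\[
(p, X) \in \{(2,2),(3,3),(5,4),(7,4),(5,5),(7,5),(11,5)\},
\]
with the three $X = 5$ options appearing only in case (d).

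\textbf{High-$X$ cases.} For the pairs with $(p, X) \neq (2,2)$, the identity $\sum_i \calM(\eta_i) = \calM(\alpha) \leq 5$ together with \Cref{L:short sums} (which enforces the gap $\calM(\eta_i) = 1$ or $\geq \tfrac{3}{2}$) constrains each $\eta_i$ tightly: in each subcase either all $\eta_i$ are roots of unity, or at most one is a non-root-of-unity exception whose shape is pinned down by \Cref{L:short sums}. In every subcase $\N(\alpha)$ is then bounded by a small constant. Next, \Cref{cor:mod p2 refined} (whose hypothesis $p^n \neq 4$ is met since $p^n \geq 9$ here) forces some $\eta_i$ to have minimal level with $p$-adic valuation exactly $n-1$; since such an $\eta_i$ is a root of unity or one of the small-height exceptions of \Cref{L:short sums}, both $n$ and the prime-to-$p$ part of $N$ are bounded. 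Hence \Cref{lem:exhaust} applied to a suitable pair---for example $(2^3 \times 3^2 \times 5 \times 7, 4)$ for the $\N(\alpha) \leq 4$ subcases, or $(2^2 \times 3 \times 5 \times 7, 7)$ otherwise---verifies that $\alpha$ is covered by \Cref{T:main}.

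\textbf{The case $(p,X)=(2,2)$ and the main obstacle.} Write $\alpha = \eta_0 + \zeta_{2^n}\eta_1$, and let $n_0 \leq n_1$ denote the $2$-adic valuations of the minimal levels of $\eta_0, \eta_1$. For $n \geq 3$ the hypothesis $p^n \neq 4$ of \Cref{cor:mod p2} holds, so \Cref{cor:mod p2 refined} yields $n_1 = n-1$; combining this structural information with the Cassels-height partition $\calM(\alpha) = \calM(\eta_0) + \calM(\eta_1)$, the short list of \Cref{L:short sums}, and the geometric inequality \Cref{conjLemma} (applied with $m = n_0$) produces a finite list of configurations, each dispatched by \Cref{lem:exhaust}. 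For $n = 2$ the combinatorial machinery of \Cref{sec:combinatorial} is vacuous; one must partition $\calM(\alpha) = \calM(\eta_0) + \calM(\eta_1)$ via \Cref{L:short sums} into finitely many possibilities, eliminate incompatible configurations using \Cref{conjLemma}, and finally invoke \Cref{lem:exhaust} on the survivors. This $n = 2$ subcase of $(p, X) = (2, 2)$ is the main obstacle: lacking any combinatorial constraint, the proof rests entirely on the Cassels-height decomposition, \Cref{conjLemma}, and the exhaustion of \Cref{lem:exhaust}.
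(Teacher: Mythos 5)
Your skeleton matches the paper's: reduce to cases (b)--(d) of \Cref{cor:RW truncation}, note that the minimal level of $\house{\alpha}^2$ is squarefree, form the $p$-decomposition, and use \Cref{lem:combinatorial constraints power} with $X \leq \calM(\alpha) \leq 5$ to restrict to the pairs $(2,2),(3,3),(5,4),(7,4),(5,5),(7,5),(11,5)$. But the ``high-$X$'' paragraph has a genuine gap. Bounding $\calM(\eta_i)$ via \Cref{L:short sums} pins down each $\eta_i$ only \emph{up to equivalence}: e.g.\ $\calM(\eta_i)=\tfrac32$ means $\eta_i = \zeta'(1+\zeta_5)$ for an \emph{arbitrary} root of unity $\zeta' \in \QQ(\zeta_{N/p})$. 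So the minimal levels of the individual $\eta_i$ being small does not bound the prime-to-$p$ part of $N$, and \Cref{lem:exhaust} is not available. Indeed it cannot be: none of the levels listed in \Cref{lem:exhaust} is divisible by $25$, $49$, $121$, or $27$, so no exhaustion pair there can cover a minimal level exactly divisible by $p^n$ with $p\geq 5$ or with $p^n=27$. The paper closes these cases differently: for $X\geq 4$ it derives an outright contradiction ($\calM(\eta_i)\in\{1,\tfrac32\}$ for all $i$, so \Cref{cor:mod p2} forces $p=5$, while \Cref{L:remove 5} forbids $5\mid N$ when $\house{\alpha}^2=5$ and $N$ is odd); for $X=3$ it must explicitly kill the stray roots of unity, via a vanishing-sums-of-roots-of-unity lemma proved with Mann's theorem (\Cref{lem:X3 no 3}) in one subcase, and via the algebraic identities \eqref{eq:compare beta012}--\eqref{eq:substitute beta012 consequence} (which force $\zeta,\zeta'\in\QQ(\zeta_M)$) in the other, finishing with the Artin-reciprocity argument of \Cref{lem:short circuit} rather than exhaustion.

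Your assessment of $(p,X)=(2,2)$ is also inverted relative to the paper. That case is not handled by \Cref{conjLemma} plus exhaustion; the paper extracts from the vanishing of the $\zeta_{2^n}$-coefficient of $\alpha\overline{\alpha}$ the exact relation \eqref{eq:x2 fix rotation}, which determines $\eta_1$ from $\eta_0$ up to sign and a fixed power of $\zeta_{2^n}$. This both eliminates the uncontrolled relative rotation (the same issue as above) and splits the analysis cleanly into ``both $\eta_i$ equivalent to totally real'' (resolved by \Cref{P:totally real case}) versus ``neither'' (resolved by \Cref{lem:cor of partial classification} and \Cref{lem:short circuit}). With that identity in hand, $(2,2)$ is arguably the most routine of the three cases, and the subcase $n=2$ poses no special difficulty. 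The real content of \Cref{prop:Step 1} that your proposal is missing is the mechanism for controlling the unit ambiguity in the $\eta_i$ — namely the exact algebraic consequences of $\alpha\overline{\alpha}$ lying in a field of squarefree level, pushed beyond the purely combinatorial statements of \S\ref{sec:combinatorial}.
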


\begin{remark}
The statement of \Cref{prop:Step 1} corresponds to taking $N_0 = 1$ in each of cases (b), (c), (d) in the proof of \Cref{T:main} (in the sense of \Cref{table:main breakdown}). In case (b) it would suffice to take $N_0 = 2^2$ because Step 3 will be handled using \Cref{prop:list 420 part1} as in case (a). We have opted not to take this shortcut because it does not save much effort, and anyway the full argument helps to explain the presence of many entries of \Cref{table:exceptional classes}.
\end{remark}

\begin{proof}[Proof of \Cref{prop:Step 1}]
Let $N$ be the minimal level of $\alpha$,
let $p$ be the smallest prime such that $p^2$ divides $N$, and let $n$ be the $p$-adic valuation of $N$.

The possible values of $\house{\alpha}^2$ are those allowed by cases (b)--(d) of \Cref{cor:RW truncation}.
In particular, $\house{\alpha}^2$ has minimal level which is squarefree and hence not divisible by $p^n$.
We may thus set notation as in \Cref{L:rep-prime-power-case},
so that $\calM(\alpha) = \sum_{i \in S} \calM(\eta_i)$. Since each summand is least 1 by \Cref{L:short sums}(a), we have $X \leq \calM(\alpha) \leq \house{\alpha}^2 \leq 5$. 

As in \Cref{P:robinson wurtz extract1}, we separate cases according to the value of $X$.
Before doing so, we articulate a number of common points (which will be restated as they arise).
\begin{itemize}
\item
For each $i \in S$, $\house{\eta_i}^2 < \house{\alpha}^2 \leq 5$ and so $\house{\eta_i}^2$ must be a value appearing in \Cref{T:robinson-wurtz}.
\item
If $\eta_i$ is equivalent to a totally real cyclotomic integer, then by \Cref{P:totally real case} the latter can be chosen to be either $2 \cos \tfrac{\pi}{m}$ for some positive integer $m$ or $\frac{\sqrt{3}+\sqrt{7}}{2}$.
\item
If $\eta_i$ is not equivalent to a totally real cyclotomic integer but $\calM(\eta_i) < 3 \tfrac{1}{4}$, we can classify $\eta_i$ using \Cref{T:cassels partial classification}. See \Cref{lem:cor of partial classification} for the key conclusion.
\item
The choice of the $\eta_i$ uniquely determines $n$ via
\Cref{cor:mod p2 refined}.
\item
By \Cref{L:remove 5},
if $\house{\alpha}^2 = 5$ and $p > 2$,
then $N$ is coprime to 5.
\end{itemize}

\subsection{Case: $X=2$}

We have $p=2$ by \Cref{lem:combinatorial constraints power}.
By \Cref{cor:mod p2},
if $n = 2$, each $\eta_i$ has odd minimal level;
if $n > 2$, some $\eta_i$ has minimal level divisible by $2^{n-1}$.

We may assume $\calM(\eta_0) \leq \calM(\eta_1)$.
By rewriting the equation $\eta_0 \overline{\eta}_1 + \eta_1 \overline{\eta}_0 \zeta_{2^{n-1}} = 0$ as
\[
\overline{\eta}_1 \overline{\eta}_0^{-1} = \zeta_{2^n}^{-2^{n-1}} \eta_1 \eta_0^{-1} \zeta_{2^n}^2,
\]
we see that $\eta_1 \eta_0^{-1} \zeta_{2^n}^{1-2^{n-2}}$ is totally real; in other words,
\begin{equation} \label{eq:x2 fix rotation}
\eta_1 = \pm \frac{|\eta_1|}{|\eta_0|} \eta_0 \zeta_{2^{n}}^{2^{n-2}-1}
 = \pm \frac{\sqrt{|\eta_0 \eta_1|^2}}{\overline{\eta}_0} \zeta_{2^n}^{2^{n-2}-1}.
\end{equation}
Consequently, $\eta_0$ is equivalent to a totally real cyclotomic integer if and only if $\eta_1$ is;
we distinguish subcases based on whether this occurs.

\subsubsection{Subcase: $\eta_i$ equivalent to totally real}
If both $\eta_i$ are equivalent to totally real cyclotomic integers,
then we may assume that $\eta_0$ is itself totally real.

If $\calM(\alpha) = 4$, then by \Cref{P:totally real case} and \eqref{eq:x2 fix rotation}, up to Galois conjugation we have
\[
\eta_0 = 2 \cos \tfrac{\pi}{m}, \qquad \eta_1 = \pm \zeta_{2^{n}}^{2^{n-2}-1} 2 \sin \tfrac{\pi}{m}
\]
for some positive integer $m$. We then
obtain $\alpha = 2 \left(\cos \tfrac{\pi}{m} \pm i \sin \tfrac{\pi}{m}\right) = 2 \zeta_{2m}^{\pm 1}$,
which is covered by \Cref{T:main}(1).

If $\calM(\alpha) \neq 4$, we may apply the constraints imposed by \Cref{P:totally real case} to solve the equation
$\eta_0 \overline{\eta}_0 + \eta_1 \overline{\eta}_1 = \house{\alpha}^2$;
we may ignore cases where $\eta_0 = 1$, as these give rise to examples of \Cref{T:cassels}(2).
We present the results in \Cref{table:house 2 way split part 1}; all of these cases are covered by either \Cref{T:cassels} or \Cref{table:exceptional classes}.

\begin{table} 
\caption{Possible values of $\house{\eta_0}, \house{\eta_1}$ when $X = 2$, $\calM(\alpha) < 4$, and each $\eta_i$ is equivalent to a totally real cyclotomic integer.}
\renewcommand{\arraystretch}{1.2}
\label{table:house 2 way split part 1}
\begin{tabular}{c|cc|cc|c|c}
$\calM(\alpha)$ & $\calM(\eta_0)$ & $\calM(\eta_1)$ & $\house{\eta_0}$ & $\house{\eta_1}$ & Level of $\alpha$ & Table~\ref{table:exceptional classes}? \\
\hline
$3 \tfrac{1}{4}$ & $1 \tfrac{1}{2}$ & $1 \tfrac{3}{4}$ & $2 \cos \tfrac{\pi}{5}$ & $2 \cos \tfrac{\pi}{30}$ & $2^2 \times 3 \times 5$ & No \\
\hline
$3 \tfrac{1}{3}$ & $1 \tfrac{2}{3}$ & $1 \tfrac{2}{3}$ & $2 \cos \tfrac{\pi}{7}$& $2 \cos \tfrac{\pi}{7}$ & $2^2 \times 7$ & Yes \\
\hline
\multirow{2}{*}{$3 \tfrac{1}{2}$} & $1 \tfrac{1}{2}$ & 2 & $2 \cos \tfrac{\pi}{5}$ & $\sqrt{2}$ & $2^3 \times 5$ & Yes \\
& $1 \tfrac{3}{4}$ & $1 \tfrac{3}{4}$ & $2 \cos \tfrac{\pi}{30}$ & $2 \cos \tfrac{\pi}{30}$ & $2^2 \times 3 \times 5$ & Yes \\
\hline
\multirow{3}{*}{$5$} & $2$ & 3 & $\sqrt{2}$ & $\sqrt{3}$ & $2^3 \times 3$ & Yes\\
& $2 \tfrac{1}{2}$ & $2 \tfrac{1}{2}$ & $2 \cos \tfrac{\pi}{10}$ & $2 \cos \tfrac{\pi}{10}$ & $2^2 \times 5$ & Yes \\
& $2 \tfrac{1}{2}$ & $2 \tfrac{1}{2}$ & $\tfrac{\sqrt{3}+ \sqrt{7}}{2}$ & $\tfrac{\sqrt{3}+ \sqrt{7}}{2}$ & $2^2 \times 3 \times 7$ & Yes \\
\end{tabular}
\renewcommand{\arraystretch}{1}
\end{table}

\subsubsection{Subcase: $\eta_i$ not equivalent to totally real}
\label{subsubsec:X2 not totally real}

If neither $\eta_i$ is equivalent to a totally real cyclotomic integer,
then by Lemma~\ref{L:short sums}(c),
\[
2 \leq \calM(\eta_0) \leq \calM(\eta_1) \leq 3;
\]
in particular, $\calM(\alpha) \geq 4$ and so $\house{\alpha}^2 \in \{4,5\}$.
Since $\N(\eta_0) > 2$, $\calM(\eta_0) \leq 2 \tfrac{1}{2}$, and $\house{\eta_0}^2 < 5$, we
may apply \Cref{lem:cor of partial classification} to classify $\eta_0$ up to equivalence; we exclude the case $\house{\eta_0}^2 = \tfrac{5+\sqrt{21}}{2}$ as in this case $\eta_0$ is equivalent to the totally real cyclotomic integer $\tfrac{\sqrt{3}+\sqrt{7}}{2}$.

For $\house{\alpha}^2 = 4$, we must have $|\eta_0| = |\eta_1| = 2$; we deduce from \eqref{eq:x2 fix rotation} that $\eta_1 = \eta_0 \zeta_{2^n}^{2^{n-2}-1}$ and hence $\alpha = (1+i) \eta_0$.
Since $\eta_0 \in \{1+\zeta_7+\zeta_7^3, 1-\zeta_{15}+\zeta_{15}^2\}$, these cases appear in Table~\ref{table:exceptional classes}.

For $\house{\alpha}^2 = 5$,  
by \Cref{lem:cor of partial classification}(a) we have $\house{\eta_0}^2 \in \{1 + 4 \cos^2 \tfrac{\pi}{5}, 4 \cos^2 \tfrac{\pi}{14}, 2, \tfrac{5+\sqrt{13}}{2}\}$.
In the first two cases, by \Cref{cor:mod p2 refined}, $\alpha$ must have minimal level dividing one of $2^3 \times 5$ or $2^2 \times 7$, and \Cref{lem:short circuit} applies.
In the last two cases, we have $|\eta_0 \eta_1|^2 = 6,3$, respectively, and so the right-hand side of \eqref{eq:x2 fix rotation} is not an algebraic integer.

\subsection{Case: $X=3$}
\label{subsec:x3}

We have $p=3$ by \Cref{lem:combinatorial constraints power};
in particular, $\alpha$ has odd minimal level.
We may assume without loss of generality that $\calM(\eta_0) \leq \calM(\eta_1) \leq \calM(\eta_2)$.
By \Cref{cor:mod p2}, at least one $\eta_i$ has minimal level divisible by 3;
by \Cref{L:short sums}(b), this implies $\calM(\eta_2) \geq 1 \tfrac{3}{4}$.
This already rules out all cases where $\calM(\alpha) < 4$.

For $\calM(\alpha) = 4$, we have $\calM(\eta_0) + \calM(\eta_1) \leq 2 \tfrac{1}{4}$. By \Cref{L:short sums}(a,b), this forces $\calM(\eta_0) =\calM(\eta_1) = 1$ and $\house{\eta_2}^2 = \house{\alpha}^2 - 2 = 2$. Now $\eta_2$ must have minimal level divisible by 3, so
we must have $\N(\eta_2) > 2$.

For $\calM(\alpha) = 5$, we have $\calM(\eta_0) + \calM(\eta_1) \leq 3 \tfrac{1}{4}$. By \Cref{L:remove 5}, no $\eta_i$ has minimal level divisible by 5. By \Cref{L:short sums},  this means $\calM(\eta_i) \notin \{1 \tfrac{1}{2}, 1 \tfrac{3}{4}\}$, so $\calM(\eta_0) = 1$.

We now distinguish subcases based on $\max_i\{\N(\eta_i)\}$.

\subsubsection{Subcase: $\max_i\{\N(\eta_i)\} \leq 2$}
As noted above, we may assume that $\house{\alpha}^2 = 5$ and $\eta_0 = 1$. Since $\max_i\{\N(\eta_i)\} \leq 2$,
we must have $\eta_1 = \zeta' (1+\zeta)$, $\eta_2 = \zeta'' (1+\tilde{\zeta})$ for some roots of unity $\zeta, \tilde{\zeta}, \zeta',\zeta''$.
Adding the fact that $\eta_1 \overline{\eta}_1 + \eta_2 \overline{\eta}_2 = 5 - \eta_0 \overline{\eta}_0 = 4$, we obtain $\zeta + \zeta^{-1} = -\tilde{\zeta} - \tilde{\zeta}^{-1}$, so we may further assume that $\tilde{\zeta} = - \zeta$. 
Taking $k=1$ in \eqref{eq:fix norm apart} gives
\[
\zeta' (1+\zeta) + (\zeta')^{-1} \zeta'' (1+\zeta^{-1})(1-\zeta) + (\zeta'')^{-1} (1-  \zeta^{-1})\zeta_{3^{n-1}}^{-1} = 0.
\]
By writing $\beta_1 := \zeta' \zeta_{3^n}$, $\beta_2 := \zeta'' \zeta_{3^n}^2$, we may rewrite this equation as
\begin{equation} \label{eq:X3 no 3}
\beta_1 (1+\zeta) + \beta_1^{-1} \beta_2 (\zeta^{-1}- \zeta) + \beta_2^{-1} (1-\zeta^{-1}) = 0.
\end{equation}
We then get a contradiction by applying \Cref{lem:X3 no 3},
which forces $\zeta$ to have order divisible by 4 whereas $\alpha$ has odd minimal level.

\begin{lemma} \label{lem:X3 no 3}
For any roots of unity $\zeta, \beta_1, \beta_2$ satisfying \eqref{eq:X3 no 3}, $\zeta$ has order $4$ or $8$.
\end{lemma}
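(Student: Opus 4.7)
The plan is to multiply the given equation by $\zeta$ to obtain the quadratic relation
\[
(1-\mu_2)\zeta^2 + (1+\mu_3)\zeta + (\mu_2 - \mu_3) = 0,
\]
where $\mu_2 := \beta_1^{-2}\beta_2$ and $\mu_3 := \beta_1^{-1}\beta_2^{-1}$ are again roots of unity. The degenerate case $\mu_2 = 1$ immediately reduces this to a linear equation $\zeta = (\mu_3 - 1)/(\mu_3+1)$, whose solution lies on the unit circle only when $\mu_3 = \pm i$, forcing $\zeta = \pm i$ (order $4$).

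For $\mu_2 \neq 1$, the quadratic has two roots $\zeta, \zeta'$ with Vieta product $\zeta\zeta' = (\mu_2 - \mu_3)/(1-\mu_2)$, and I would split into two subcases based on whether $|\zeta'| = 1$. If so, then $|\mu_2 - \mu_3| = |1 - \mu_2|$ for roots of unity $\mu_2, \mu_3$ forces $\mu_3 \in \{1, \mu_2^2\}$: the first subcase leads to a contradiction (a modulus computation forces $|\mu_2| > 1$), while the second gives a quadratic with roots $\{\zeta, \mu_2/\zeta\}$ whose Vieta sum $\zeta + \mu_2/\zeta = -(1+\mu_2^2)/(1-\mu_2)$, when rewritten in polar form using $\mu_2 = e^{i\alpha}$, has real left side $2e^{i\alpha/2}\cos(\theta - \alpha/2)$ and purely imaginary right side $-ie^{i\alpha/2}\cos\alpha/\sin(\alpha/2)$ (up to the common phase $e^{i\alpha/2}$). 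Equating each part to zero yields $\mu_2 = \pm i$ and $\zeta$ a primitive $8$th root of unity.

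If instead $|\zeta'| \neq 1$, then $\zeta$ must lie in $\QQ(\mu_2, \mu_3)$, since otherwise its Galois conjugate over that field would be $\zeta'$, which being a Galois conjugate of a root of unity would itself be a root of unity and hence have modulus $1$. Taking the complex conjugate of the equation and renormalizing (multiplying by $\mu_2\mu_3\zeta^2$) produces a second quadratic $Q$ in $\zeta$; since $|\zeta|=1$ gives $\bar\zeta = \zeta^{-1}$, this $Q$ also vanishes at $\zeta$. Forming $\mu_2 P - Q$ causes the linear coefficients to cancel, yielding
\[
\zeta^2 = \frac{\mu_2^2 - 2\mu_2\mu_3 + \mu_3}{\mu_2^2 - 2\mu_2 + \mu_3}.
\]
A direct expansion shows that numerator and denominator have equal absolute value for all $\mu_2, \mu_3$ on the unit circle; since $\zeta^2 \in \QQ(\mu_2, \mu_3)$ is an algebraic integer with all conjugates of modulus $1$, Kronecker's theorem confirms it is a root of unity. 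A finite case analysis of when the displayed formula produces a root of unity (organized by the orders of $\mu_2, \mu_3$ and the cyclotomic field structure) shows that only $\zeta^2 \in \{-1, \pm i\}$ arises, so $\zeta$ has order $4$ or $8$.

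The main obstacle will be the final classification in the subcase $|\zeta'|\neq 1$: showing that the displayed formula yields a root of unity only when $\zeta^2 \in \{-1, \pm i\}$. A cleaner alternative is to expand the original equation as a vanishing sum of six signed roots of unity,
\[
1 + \zeta + \mu_2\zeta^{-1} - \mu_2\zeta + \mu_3 - \mu_3\zeta^{-1} = 0,
\]
and invoke the Conway--Jones classification of minimal vanishing sums: any such vanishing sum of length $6$ decomposes as $2+2+2$ (three pairs of opposite signs), $3+3$ (two triples of cube roots of unity), or a minimal length-$6$ sum. Enumerating the resulting constraints (for instance the fifteen pairings in the $2+2+2$ case) shows that every nontrivial possibility forces either $\zeta^2 = -1$ or $\zeta^4 = -1$, while the $3+3$ and minimal-$6$ possibilities are incompatible with the given sign pattern and the requirement that $\mu_2, \mu_3$ be roots of unity.
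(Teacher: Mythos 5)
Your reduction of \eqref{eq:X3 no 3} to the quadratic $(1-\mu_2)\zeta^2 + (1+\mu_3)\zeta + (\mu_2-\mu_3)=0$ with $\mu_2=\beta_1^{-2}\beta_2$, $\mu_3=\beta_1^{-1}\beta_2^{-1}$ is correct, and the branch where the second root $\zeta'$ has modulus $1$ is genuinely complete: the identity $|\mu_2-\mu_3|=|1-\mu_2|$ does force $\mu_3\in\{1,\mu_2^2\}$, the subcase $\mu_3=1$ is contradictory (the Vieta sum $\zeta-\zeta^{-1}=-2/(1-\mu_2)$ forces $1-\mu_2$ purely imaginary, hence $\mu_2=1$), and the subcase $\mu_3=\mu_2^2$ yields $\mu_2=\pm i$ and $\zeta$ of order $8$ exactly as you compute. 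The problem is the branch $|\zeta'|\neq 1$. There you correctly obtain $\zeta\in\QQ(\mu_2,\mu_3)$ and $\zeta^2=(\mu_2^2-2\mu_2\mu_3+\mu_3)/(\mu_2^2-2\mu_2+\mu_3)$ (note the numerator is $\mu_2^2\mu_3$ times the conjugate of the denominator, which is why the moduli agree -- and the Kronecker step is vacuous, since $\zeta$ is a root of unity by hypothesis). But the concluding sentence, that ``a finite case analysis \dots shows that only $\zeta^2\in\{-1,\pm i\}$ arises,'' is precisely the content of the lemma and is not a finite check in any evident sense: $\mu_2,\mu_3$ range over \emph{all} roots of unity, and the condition that a fixed rational expression in two roots of unity be a root of unity of bounded order is exactly the kind of statement that requires the vanishing-sums machinery (or the torsion-closure method of \cite{tetrahedra}) to make effective. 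As written, this branch is an assertion, not a proof.

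Your ``cleaner alternative'' is in fact the paper's actual method: rewrite \eqref{eq:X3 no 3} as a vanishing sum of six roots of unity and apply Mann's theorem (equivalently Conway--Jones), which yields the three shapes $2{+}2{+}2$, $3{+}3$, and the minimal length-$6$ relation built from $\zeta_3$ and $\zeta_5$. But here too the entire argument is compressed into ``enumerating the resulting constraints shows\dots''. The paper's enumeration is not routine: it relies on the observation that the six monomials split into two triples each of product $1$, permuted by two commuting involutions, and it is these multiplicative relations -- not the ``sign pattern'' -- that kill the $3{+}3$ and minimal-$6$ cases and that pin down $\zeta^2=\pm i$ or $\zeta=\pm i$ in the $2{+}2{+}2$ case. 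In particular your claim that the $3{+}3$ case is ``incompatible with the given sign pattern'' is not correct as stated (the signs are absorbed into the roots of unity; the paper must argue that two product-one triples of cube roots of unity force a monomial identity whose product equals $-1$, a contradiction). So the proposal identifies the right tools and settles one branch, but the decisive classification step is missing from both routes; to finish you would need to carry out the Mann-theorem case analysis, using the product-one partition as the organizing device.
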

\begin{proof}
While this can be established quickly using the method of \emph{torsion closures} from \cite{tetrahedra},
we give a self-contained proof based on a theorem of Mann \cite{mann}
(see also \cite[Theorem~3.1]{poonen-rubinstein} for a concise presentation of a stronger result).
To wit, the six monomials in \eqref{eq:X3 no 3} are roots of unity with zero sum, so up to permutation they take one of the following forms:
\begin{itemize}
    \item[(i)] $\eta_1, -\eta_1, \eta_2, -\eta_2, \eta_3, -\eta_3$ for some $\eta_1, \eta_2, \eta_3$;
    \item[(ii)] $\eta_1, \eta_1 \zeta_3, \eta_1 \zeta_3^2, \eta_2, \eta_2 \zeta_3, \eta_2 \zeta_3^2$ for some $\eta_1, \eta_2$;
    \item[(iii)] $\eta \zeta_3, \eta \zeta_3^2, -\eta \zeta_5, -\eta \zeta_5^2, -\eta \zeta_5^3, -\eta \zeta_5^4$ for some $\eta$.
\end{itemize}

Before continuing, we identify some additional structure among our six monomials. First, they partition into two sets of three that each have product $1$: 
\begin{equation} \label{eq:partition into two sets of three}
\{\beta_1, -\beta_1^{-1} \beta_2 \zeta, -\beta_2^{-1} \zeta^{-1}\} \cup \{\beta_1 \zeta, \beta_1^{-1} \beta_2 \zeta^{-1}, \beta_2^{-1}\}.
\end{equation}
Second, they are permuted by a group action generated by two commuting involutions:
\begin{equation}\label{symmetry between partitions}
    (\zeta, \beta_1, \beta_2) \mapsto (-\zeta^{-1}, \beta_2^{-1}, \beta_1^{-1}),
\end{equation}
which interchanges the two parts of the partition in \eqref{eq:partition into two sets of three};
and
\begin{equation} \label{preserving involution} 
    (\zeta, \beta_1, \beta_2) \mapsto (-\zeta, -\beta_2^{-1} \zeta^{-1}, \beta_1^{-1} \zeta^{-1}),
\end{equation}
which preserves the two parts of the partition in \eqref{eq:partition into two sets of three}. 

We now proceed backwards through the cases of Mann's theorem. 
In each case, we start with an arbitrary matching of the six listed terms with the six monomials in \eqref{eq:partition into two sets of three} and let $\pi_1, \pi_2$ be the products over the two parts; we must then have $\pi_1 = \pi_2 = 1$.

In case (iii), if $\eta \zeta_3$ and $\eta \zeta_3^2$ appear in one part of the partition, then $\pi_1/\pi_2 = \zeta_5^{\pm(1+2+3+4-2i)}$ for some $i \in \{1,2,3,4\}$, which cannot equal 1. Otherwise, $\pi_1/\pi_2 = \zeta_3^{\pm 1} \zeta_5^i$ for some $i\in \{0,1,2,3,4\}$, which again cannot equal 1.

In case (ii), 
if $\eta_1, \eta_1\zeta_3, \eta_1 \zeta_3^2$ appear in one part of the partition, then $\pi_1 = \eta_1^3, \pi_2 = \eta_2^3$ and so $\eta_1^3 = \eta_2^3 = 1$.
Otherwise, $\pi_1 \pi_2 = (\eta_1 \eta_2)^3$ and $(\pi_1/\pi_2)^3 = (\eta_1/\eta_2)^{\pm 3}$, so $\eta_1^3 = \eta_2^3 = \sigma \in \{\pm 1\}$ and each $\eta_i$ has the form $\sigma \zeta_3^j$ for some $j \in \{0,1,2\}$;
now $\pi_1 = \sigma^3 \zeta_3^j$ for some $j \in \{0,1,2\}$
and so $\sigma=1$. In both cases we conclude that $\eta_1, \eta_2$ are cube roots of 1, as are each of the six monomials in \eqref{eq:X3 no 3}. Now each of $\beta_1, -\beta_2^{-1} \zeta^{-1}, \beta_1 \zeta, \beta_2^{-1}$ is a cube root of 1
but 
$(\beta_1)^{-1} (-\beta_2^{-1} \zeta^{-1}) (\beta_1 \zeta) (\beta_2^{-1})^{-1} = -1$ is not, a contradiction.

In case (i), if $\eta_1, \eta_2, \eta_3$ appear in one part of the partition, then $\pi_1/\pi_2 = -1$, a contradiction.
We can thus choose $\eta \in \{\eta_1,\eta_2,\eta_3\}$ such that $\eta$ and $-\eta$ appear in one part of the partition; using that $\pi_1 = \pi_2$, we may write the partition as
\begin{equation}\label{eq :partion with cancelling in pairs}
    \{\eta, -\eta, -\eta^{-2}\} \cup \{ \eta^{-2}, i\eta, -i\eta\}.
\end{equation}
If neither $\beta_1$ nor $\beta_1 \zeta$ matches with $\pm \eta^{-2}$, then $\beta_1 = \pm \eta$, $\beta_1 \zeta = \pm i \eta$, and $\zeta = (\beta_1 \zeta) (\beta_1)^{-1} = \pm i$ has order 4; similarly, if neither $\beta_2^{-1}$ nor $-\beta_2^{-1} \zeta^{-1}$ matches with $\pm \eta^{-2}$, then $\zeta$ has order 4. Otherwise, the monomials matching with $\pm \eta^{-2}$ are either $\beta_1, \beta_2^{-1}$ or $\beta_1 \zeta, -\beta_2^{-1} \zeta^{-1}$; using  \eqref{preserving involution} we may reduce to the former case. We have
\[
\beta_1 = -\eta^{-2}, \qquad \beta_2^{-1} = \eta^{-2},
\qquad
-\beta_2^{-1} \zeta^{-1} = \pm \eta, \qquad \beta_1 \zeta = \pm i \eta;
\]
by writing
\[
\zeta^2 = (\beta_1 \zeta) (\beta_1)^{-1} (\beta_2^{-1} \zeta^{-1})^{-1} (\beta_2^{-1}) 
 = (\pm i \eta) (-\eta^2) (\pm \eta^{-1}) (\eta^{-2}) = \pm i,
\]
we deduce that $\zeta$ has order 8.
\end{proof}

\subsubsection{Subcase: $\max_i\{\N(\eta_i)\} > 2$}
\label{sec:X3 N3}

As noted earlier, we may assume that $\eta_0 = 1$.
By \Cref{L:short sums}(c) and the condition $\max_i\{\N(\eta_i)\} > 2$, either $\calM(\eta_2) = 2$ or $\calM(\eta_2) \geq 2 \tfrac{1}{4}$.
When $\house{\alpha}^2 = 4$ this forces $\calM(\eta_1) = 1, \calM(\eta_2) = 2$.
When $\house{\alpha}^2 = 5$, we deduce that  $\calM(\eta_1) = 2$ or $\calM(\eta_1) \leq 1 \tfrac{3}{4}$; since $\eta_1$ has minimal level coprime to 5 (by \Cref{L:remove 5}),
\Cref{L:short sums}(b) only leaves the options $\calM(\eta_1) \in \{1, 1 \tfrac{2}{3}, 2\}$.

For each choice of $\calM(\eta_1), \calM(\eta_2)$, we identify $\house{\eta_1}^2, \house{\eta_2}^2$ as follows.
When $\calM(\eta_1) < 2$, we apply \Cref{L:short sums} to identify $\eta_1$ up to equivalence, then solve for $\house{\eta_2}^2$ by writing $|\eta_1|^2 + |\eta_2|^2 = \house{\alpha}^2 - 1$. When $\calM(\eta_1) = \calM(\eta_2) = 2$, we combine the condition $\max_i\{\N(\eta_i)\} > 2$ with
\Cref{lem:cor of partial classification}(a) to deduce that one of $\house{\eta_1}^2, \house{\eta_2}^2$ equals 2, as then does the other.

We then identify $\eta_1$ and $\eta_2$ up to equivalence using \Cref{lem:cor of partial classification}, keeping in mind that at least one has minimal level divisible by 3 (by \Cref{cor:mod p2}), but neither has minimal level divisible by 5 if $\house{\alpha}^2 = 5$. 
The results are listed in Table~\ref{table:X3}.

In the two remaining cases, up to equivalence we have 
$\eta_1 = \zeta \beta_1, \eta_2 = \zeta' (\beta_0 + \zeta_3^2 \beta_2)$ where $\zeta,\zeta'$ are roots of unity of order prime to 3 and $\beta_0, \beta_1, \beta_2$ belong to $\QQ(\zeta_{M})$ for $M = 5$ (if $\house{\alpha}^2 = 4$)
or $M = 7$ (if $\house{\alpha}^2 = 5$).
Now
\[
\alpha = 1 + \eta_1 \zeta_9  + \eta_2 \zeta_9^2 = 
1 + \zeta \beta_1 \zeta_9 + \zeta' \beta_0 \zeta_9^2 + \zeta' \beta_2 \zeta_9^8.
\]
Since the expansion of $\alpha \overline{\alpha} \in \{4,5\}$ in powers of $\zeta_9$ does not include $\zeta_9^4$ or $\zeta_9^5$,
the coefficients of $\zeta_9$ and $\zeta_9^{2}$ in that expansion are zero:
\begin{align}
\label{eq:compare beta012}
0 &= (\zeta')^{-1} \overline{\beta}_2 +  \zeta \beta_1 + \zeta^{-1} \zeta' \beta_0 \overline{\beta}_1 \\
\label{eq:substitute beta012}
&= \zeta (\zeta')^{-1} \beta_1 \overline{\beta}_2 + \zeta' \beta_0.
\end{align}
Multiplying \eqref{eq:compare beta012} by $\zeta \beta_1$ and subtracting \eqref{eq:substitute beta012} yields:
\begin{equation} \label{eq:substitute beta012 consequence}
\zeta^2 \beta_1^2  = \zeta' \beta_0 (1 - \beta_1 \overline{\beta}_1).
\end{equation}
Since $\beta_0, \beta_1, \beta_2 \in \QQ(\zeta_M)$,
by \eqref{eq:substitute beta012}, \eqref{eq:substitute beta012 consequence}
we have $\zeta^2 (\zeta')^{-1}, \zeta^{-1} (\zeta')^2 \in \QQ(\zeta_{M})$.
We thus have $\zeta^3, (\zeta')^3 \in \QQ(\zeta_M)$ and hence $\zeta,\zeta' \in \QQ(\zeta_M)$ because $\zeta,\zeta'$ have orders coprime to 3. We deduce that $\alpha$ has minimal level dividing $3^2 \times M$, which is covered by \Cref{lem:short circuit}.

\begin{table}
\caption{Possible values of $\calM(\eta_1), \calM(\eta_2), \house{\eta_1}^2, \house{\eta_2}^2$
and $\eta_1, \eta_2$ up to equivalence when $X=3$ and $\max_i\{\N(\eta_i)\} > 2$.}
\label{table:X3}
\begin{tabular}{c|c|c|c|c|c}
$\calM(\eta_1)$ & $\calM(\eta_2)$ & $\house{\eta_1}^2$ & $\house{\eta_2}^2$ & $\eta_1$ & $\eta_2$ \\
\hline
$1$ & $2$ & $1$ & $2$ & $1$ & $1 - \zeta_{15} + \zeta_{15}^{12}$ \\
\hline
$1$ & $3$ & $1$ & $3$ & --- & --- \\
$1 \tfrac{2}{3}$ & $2 \tfrac{1}{3}$ & $4 \cos^2 \tfrac{\pi}{7}$ & $4 \cos^2 \tfrac{\pi}{14}$ & $1+\zeta_7$ & $1-\zeta_{21} + \zeta_{21}^{13}$ \\
$2$ & $2$ & $2$ & $2$ & --- & ---
\end{tabular}
\end{table}

\subsection{Case: $X>3$}

\Cref{lem:combinatorial constraints power} implies that $p \in \{5,7,11\}$;
in particular, $\alpha$ has odd minimal level.
By \Cref{cor:mod p2}, we cannot have $\calM(\eta_i) = 1$ for all $i$; this forces $X = 4, \calM(\alpha) = 5$.
By \Cref{L:short sums}, each nonzero $\eta_i$ is equivalent to $1$ or $1+\zeta_5$; we now reach a contradiction by comparing \Cref{cor:mod p2} (which forces $p=5$) with \Cref{L:remove 5} (which excludes $p=5$).
\end{proof}

\section{Large primes in the minimal level}
\label{sec:large primes}

In this section, we resolve Step 2 of the proof of \Cref{T:main} in cases (b)--(d) of \Cref{cor:RW truncation}; see respectively \Cref{prop:step 2 case b}, \Cref{prop:step 2 case c}, and \Cref{prop:step 2 case d}.

\subsection{Initial setup}

We introduce the general setup for resolving Step 2, following along the lines of \Cref{P:robinson wurtz extract2} but incorporating the combinatorial constraints identified in \S\ref{sec:combinatorial} to eliminate smaller values of $X$, for which the calculus of Cassels heights gets prohibitively complicated.
This improvement alone will suffice to handle those $\alpha$ covered by \Cref{cor:RW truncation}(b);
we will need some further ingredients to manage the other cases.

\begin{remark} \label{R:apply combinatorial constraints}
For $\alpha$ covered by \Cref{cor:RW truncation}(b)--(d),
suppose that
the minimal level $N$ of $\alpha$ is squarefree.
Let $p$ denote the largest prime factor of $N$ and assume that $p > N_1$.
Set notation as in \Cref{L:rep-prime-case}.
As in the proof of \Cref{P:robinson wurtz extract2}, let $\alpha_1,\dots,\alpha_X$ denote the nonzero values of $\eta_i$ sorted so that $0 < \N(\alpha_1) \leq \cdots \leq \N(\alpha_X)$.

We will separate cases according to $p$ and $X$.
Since $p \geq 11$, $p$ cannot divide the minimal level of $\house{\alpha}^2$,
so we may exclude all pairs $(p,X)$
covered by \Cref{lem:combinatorial constraints}.

As in \Cref{P:robinson wurtz extract2}, we write $S$ for the right-hand side of \eqref{eq:rep-prime-case} (again overriding the notation of \Cref{L:rep-prime-case}) and tabulate lower bounds on $S$ based on \Cref{L:short sums}, the inequality $\N(\alpha) \leq \sum_{i=1}^X \N(\alpha_i)$, and other available constraints (including \Cref{R:handle Y when 0}). In these tables,
we write $\Sigma$ as shorthand for $\sum_{1 \leq i < j \leq X} \calM(\alpha_i-\alpha_j)$,
and write $\times n$ to indicate $n$-fold repetition.
\end{remark}

\begin{remark} \label{rem:minimal level bounded below}
Before continuing, we note that the minimal level of $\house{\alpha}^2$ must divide $N$.
When $\calM(\alpha) = 3 \tfrac{1}{4}, 3\tfrac{1}{3}, 3\frac{1}{2}$,
by \eqref{eq:rw cutoff} this implies that $N$ is divisible by $15, 7, 5$, respectively.
We will take advantage of this in certain situations by showing $N$ equals $p$ times a small cofactor incompatible with this constraint; 
see for example \Cref{R:handle Y when 0}. For a distinct but related observation, see \Cref{rem:small cofactor}.
\end{remark}

\begin{remark} \label{R:handle Y when 0}
When the $\alpha_i$ are all known to be equal (e.g., by \Cref{lem:combinatorial constraints2}),
we may proceed as follows. Here we assume $p \leq 31$, as all larger $p$ will get ruled out 
solely using \Cref{R:apply combinatorial constraints}
(see \eqref{eq:lower bound X7}).

Factor $\alpha$ as $\beta \gamma$ where $\beta$ is a sum of $X$ distinct $p$-th roots of unity and $\gamma$ is a cyclotomic integer of minimal level coprime to $p$.
Since we can conjugate $\beta$ and $\gamma$ independently, 
we have $\house{\alpha} = \house{\beta} \times \house{\gamma}$;
in particular, both $\house{\beta}^2$ and $\house{\gamma}^2$
are restricted by \Cref{T:robinson-wurtz}. 
\begin{itemize}
    \item 
We cannot have $\house{\beta} = 1$, as then $N$ could not be the minimal level of $\alpha$.
\item
 If $\house{\gamma} = 1$, then $\alpha = \beta$ and hence $N = p$;
 using \Cref{rem:minimal level bounded below},
 this forces $\house{\alpha}^2 \in \{4,5\}$.
Now using \Cref{lem:exhaust} (for $p = 31$) and \Cref{lem:short circuit} (for $p < 31$), we deduce that all such $\alpha$ are covered by \Cref{T:main}.
\item
If $\house{\gamma} > 1$, then by \Cref{lem:cassels low height} we have
\[
\house{\beta}^2, \house{\gamma}^2 \in \{2, 4 \cos^2 \tfrac{\pi}{5}\} \cup [3, \infty);
\]
since $2 \times 4\cos^2 \tfrac{\pi}{5} > 5$,
the only way to ensure that $\house{\alpha}^2 = \house{\beta}^2 \house{\gamma}^2 \leq 5$
is to take $\house{\alpha}^2 = 4, \house{\beta}^2 = \house{\gamma}^2 = 2$.
Since $\beta$ and $\gamma$ are then classified by \Cref{lem:cassels low height},
we again conclude that all such $\alpha$ are covered by \Cref{T:main}. 
\end{itemize}

We may thus assume throughout this section that the $\alpha_i$ are not all equal.
This means on one hand that the pair $(p,X)$ is not covered by either \Cref{lem:combinatorial constraints} or \Cref{lem:combinatorial constraints2},
and on the other hand $\Sigma \geq X-1$ by \eqref{eq:prime case next Y0}.
\end{remark}

\subsection{$\calM(\alpha) \leq 3 \tfrac{1}{2}$}

\begin{prop} \label{prop:step 2 case b}
Let $\alpha$ be a cyclotomic integer 
with $\house{\alpha}^2 < 5.01$ and $3 \tfrac{1}{4} \leq \calM(\alpha) \leq 3 \tfrac{1}{2}$.
Assume further that the minimal level of $\alpha$ is squarefree and divisible by some prime $p$ greater than $7$.
Then $\alpha$ is covered by \Cref{T:main}.
\end{prop}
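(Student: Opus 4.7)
The strategy parallels \Cref{P:robinson wurtz extract2}, with two key simplifications: the combinatorial constraints of \S\ref{sec:combinatorial} force $X$ to be large relative to $p$, and the hypothesis $\calM(\alpha) \leq 3\tfrac{1}{2}$ combined with \Cref{rem:minimal level bounded below} forces the minimal level $N$ of $\alpha$ to be divisible by one of $15, 7, 5$. Together these constraints will rule out every relevant pair $(p, X)$ directly, with no need to invoke \Cref{lem:exhaust} or \Cref{prop:list 420 part1}.

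Let $p \geq 11$ be the largest prime dividing $N$ and form a $p$-decomposition with the number $X$ of nonzero summands minimized, setting notation as in \Cref{R:apply combinatorial constraints}. Since $\house{\alpha}^2$ has squarefree minimal level, \Cref{lem:combinatorial constraints} gives $X \geq 4$ for $p \geq 11$, $X \geq 5$ for $p \geq 17$, $X \geq 6$ for $p \geq 23$, and $X \geq 7$ for $p \geq 37$. For each pair $(p, X)$ listed in \Cref{lem:combinatorial constraints2}, \Cref{R:handle Y when 0} factors $\alpha$ as $\beta\gamma$ with $\beta \in \QQ(\zeta_p)$ nontrivial and $\gamma$ of minimal level coprime to $p$; since $\calM(\alpha) \leq 3\tfrac{1}{2}$ rules out $\house{\alpha}^2 \in \{4, 5\}$ by \Cref{cor:RW truncation}, \Cref{lem:cassels low height} forces $\gamma$ to be a root of unity, so $N = p$, contradicting divisibility by $15$, $7$, or $5$. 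This sharpens the lower bound to $X \geq 5$ for $p \in \{11, 13\}$, $X \geq 6$ for $p \in \{17, 19\}$, and $X \geq 7$ for $p \in \{29, 31\}$; moreover, since the $\alpha_i$ are not all equal, \eqref{eq:prime case next Y0} yields $\Sigma := \sum_{i<j} \calM(\alpha_i - \alpha_j) \geq X - 1$.

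The master inequality \eqref{eq:rep-prime-case} reads $S := (p-X) \sum_i \calM(\alpha_i) + \Sigma \leq \tfrac{7}{2}(p-1)$, and the uniform estimate $\calM(\alpha_i) \geq 1$ together with $\Sigma \geq X - 1$ gives $S \geq (p-X)X + (X-1)$, which strictly exceeds $\tfrac{7}{2}(p-1)$ for every allowed pair $(p, X)$ except $(11, 5)$ and $(11, 6)$. For $(11, 5)$, any profile with some $\calM(\alpha_i) > 1$ is ruled out using the extra boost $\Sigma \geq 4$ coming from $Y \geq 1$; the surviving all-ones profile is then pinned down via \eqref{eq:prime case next Y1} and \Cref{R:too many sides of a triangle} to the configuration in which four $\alpha_i$ equal $1$ and the fifth equals $\zeta_6^{\pm 1}$ up to equivalence. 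For $(11, 6)$, \Cref{L:rep-prime-case}(d) similarly forces five $\eta_i$ to equal $1$ and the sixth to equal $\zeta_6$. In both configurations $\alpha \in \QQ(\zeta_{66})$, but $66$ is not divisible by any of $15, 7, 5$, contradicting the divisibility constraint from \Cref{rem:minimal level bounded below}. The main obstacle is the equality-case analysis at $(11, 5)$, where the interplay between the Cassels-height inequality and the refined bounds on $\Sigma$ coming from \Cref{R:prime case next} must be executed carefully so as to eliminate the profiles with a single outlier of larger Cassels height as well as the all-ones profile.
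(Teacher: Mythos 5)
Your overall strategy is sound and, for the hardest subcase, genuinely different from the paper's. The paper disposes of $(p,X)=(11,5)$ by invoking the computational \Cref{lem:exhaust} (to assume $\N(\alpha)\geq 6$) and then tabulating profiles with some $\N(\alpha_i)\geq 2$; you instead handle the all-roots-of-unity profile directly, using $\Sigma\leq 5$ to force $Y=1$ via \eqref{eq:prime case next Y1}, pinning the outlier to $\zeta_6^{\pm 1}$, and then contradicting \Cref{rem:minimal level bounded below} since $33$ is divisible by none of $15,7,5$. That analysis is correct (if all $\calM(\alpha_i)=1$ then $\Sigma\leq 5<6\leq Y(X-Y)$ for $Y\geq 2$, and $4\calM(\alpha_5-1)\leq 5$ forces $\calM(\alpha_5-1)=1$ by \Cref{L:short sums}(b)), and it buys you independence from the exhaustive floating-point search. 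Your adaptation of \Cref{R:handle Y when 0} — noting that $\calM(\alpha)\leq 3\tfrac12$ excludes $\house{\alpha}^2\in\{4,5\}$, so the all-equal case collapses to $N=p$ and contradicts the divisibility constraint — is also correct and slightly cleaner than the paper's appeal to \Cref{lem:exhaust}/\Cref{lem:short circuit} there.

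There is, however, one genuine gap: your claim that $S\geq (p-X)X+(X-1)$ exceeds $\tfrac72(p-1)$ ``for every allowed pair except $(11,5)$ and $(11,6)$'' presupposes an upper bound on $X$ that you never establish. The constraints you actually derive (the lower bounds from \Cref{lem:combinatorial constraints}, \Cref{lem:combinatorial constraints2}, and \Cref{R:handle Y when 0}) leave large $X$ on the table, and the displayed inequality fails there: for example $(p,X)=(11,7)$ gives $(p-X)X+(X-1)=34\leq 35$, and $(13,10)$ gives $39\leq 42$. You must invoke \Cref{L:rep-prime-case}(d),(e) — as the paper does — to get $X\leq\tfrac{p-1}{2}$ when $\calM(\alpha)<\tfrac{p+3}{4}$ (so $X\leq 5$ for $p=11$ unless $\calM(\alpha)=3\tfrac12$, in which case $X\leq 6$; and $X\leq\tfrac{p-1}{2}$ for all $p\geq 13$), which also supplies the monotonicity of $X(p-X)$ needed for the $p\geq 17$ cases. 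Since you already use part (d) to analyze $(11,6)$, this is an easily repaired omission rather than a conceptual error, but as written the enumeration of surviving pairs is not justified.
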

\begin{proof}
Set notation as in \Cref{R:apply combinatorial constraints}.
We separate cases based on $p$.

\subsubsection{Case: $p = 11$}
By \Cref{L:rep-prime-case}(d),(e), we have $X \leq \frac{p+1}{2} = 6$.
Meanwhile, by \Cref{R:handle Y when 0} we may assume $X \geq 5$.
From \Cref{L:rep-prime-case}  and the condition that $\calM(\alpha) \leq 3 \tfrac{1}{2}$, we have the inequality:
\begin{equation} \label{CasselsBoundfor11}
35 \geq (p-X) \sum_{i=1}^X \calM(\alpha_i) + \sum_{1\leq i<j\leq X} \calM(\alpha_i - \alpha_j) =: S.
\end{equation} 

For $X = 5$, after applying \Cref{lem:exhaust} to the pair $(2^2 \times 3 \times 5 \times 7 \times 11, 5)$, we can assume that $\N(\alpha) \geq 6$ as in the proof of \Cref{P:robinson wurtz extract2}. 
We obtain a contradiction by tabulating cases:

\begin{center}
\setlength{\tabcolsep}{6 pt}
\begin{tabular}{cc|cc|c|c}
\hline\hline
\multicolumn{2}{c}{$\N(\alpha_i)$} & \multicolumn{2}{c}{$\calM(\alpha_i)$} & $\Sigma$ & $S$ \\
\hline
$1$ $(\times 4)$ & $\geq 2$ & $1$ $(\times 4)$ & $\geq 3/2$ & $\geq 4$ &  $\geq 37$ \\

$\geq 1$ $(\times 3)$ & $\geq 2$ $(\times 2)$ & $\geq 1$ $(\times 3)$ & $\geq 3/2$ $(\times 2)$ & & $\geq 36$ \\
\hline \hline
\end{tabular}
\end{center}

For $X = 6$, we have $X>5=\tfrac{p-1}{2}$, and so $\calM(\alpha)\geq \tfrac{p+3}{4} = 3\tfrac{1}{2}$ by \Cref{L:rep-prime-case}(e). From our hypothesis, we deduce that in fact $\calM(\alpha)= \tfrac{p+3}{4}=3\tfrac{1}{2}$. 
Since $X=6=\tfrac{p+1}{2}$, we are in the equality case of  \Cref{L:rep-prime-case}(d), which is covered by \Cref{R:handle Y when 0}.

\subsubsection{Case: $p = 13$}
By \Cref{L:rep-prime-case}(e),
\begin{equation} \label{eq:case b upper bound on X}
p \geq 13 \Longrightarrow  \calM(\alpha) \leq 3 \tfrac{1}{2} < \tfrac{p+3}{4}
\Longrightarrow X \leq \tfrac{p-1}{2}.
\end{equation}
For $p=13$, we thus have $X \leq 6$.
Meanwhile, by \Cref{R:handle Y when 0} we may assume $X \geq 5$.
From \Cref{L:rep-prime-case}(b)  and the bound $\calM(\alpha) \leq 3 \tfrac{1}{2}$, we have the inequality:
\begin{equation}\label{CasselsBoundfor13}
42 \geq (13-X) \sum_{i=1}^X \calM(\alpha_i) + \sum_{1\leq i<j\leq X} \calM(\alpha_i - \alpha_j) =: S.
\end{equation}

For $X = 5$, we obtain a contradiction by tabulating cases:

\begin{center}
\setlength{\tabcolsep}{6 pt}
\begin{tabular}{cc|cc|c|c}
\hline\hline
\multicolumn{2}{c}{$\N(\alpha_i)$} & \multicolumn{2}{c}{$\calM(\alpha_i)$} & $\Sigma$ & $S$ \\
\hline
$1$ $(\times 5)$ & & $1$ $(\times 5)$ & & $\geq 4$ & $\geq 44$ \\

$\geq 1$ $(\times 4)$ & $\geq 2$ & $\geq 1$ $(\times 4)$ & $\geq 3/2$  & & $\geq 44$ \\
\hline \hline
\end{tabular}
\end{center}

For $X = 6$, \Cref{L:rep-prime-case}(b) implies $\calM(\alpha) \geq 3 \tfrac{1}{2}$. This puts us in the equality case of  \Cref{L:rep-prime-case}(b), which is covered by \Cref{R:handle Y when 0}.

\subsubsection{Case: $p \geq 17$}
By \eqref{eq:case b upper bound on X}, $X(p-X)$ grows monotonically with $X$.
If $X \geq 5$, then \Cref{L:rep-prime-case}(b) gives the contradiction
\[
\calM(\alpha) \geq \frac{X(p-X)}{p-1} \geq \frac{5(p-5)}{p-1} \geq \frac{5(17-5)}{17-1} > 3 \tfrac{1}{2}.
\]
Hence $X \leq 4$, contrary to \Cref{lem:combinatorial constraints}.
\end{proof}

\begin{remark}
At this point, we can already extend \Cref{T:cassels partial classification} to case (b) of 
\Cref{cor:RW truncation} by combining \Cref{prop:Step 1},
\Cref{prop:step 2 case b}, and \Cref{prop:list 420 part1}.
\end{remark}

\subsection{$\calM(\alpha) = 4$}

When $\house{\alpha}^2 \in \{4,5\}$, 
the combinatorics of Cassels heights starts to become somewhat complicated. We tame it firstly by raising $N_1$ (thus pushing some work into Step 3), and secondly by bringing in some additional tools.

\begin{remark} \label{rem:small cofactor}
When $\house{\alpha}^2 \in \{4,5\}$, \Cref{rem:minimal level bounded below} ceases to be useful. Instead, in some situations where we show that $N$ divides $p$ times a small cofactor, we can appeal to \Cref{lem:short circuit} to confirm that $\alpha$ is covered by \Cref{T:main}.
\end{remark}

\begin{remark} \label{R:short sums 11}
When $\house{\alpha}^2 = 4$ and $p \leq 13$,
we can apply \Cref{L:remove p castle 5} to reduce to the case where the minimal level $N$ of $\alpha$ is not divisible by 11;
that is, we may assume that $N$ divides $3 \times 5 \times 7 \times 13$.
For a corresponding statement with $\house{\alpha}^2 = 5$, see \Cref{R:short sums 7}.
\end{remark}

\begin{prop} \label{prop:step 2 case c}
Let $\alpha$ be a cyclotomic integer 
with $\house{\alpha}^2 = 4$.
Assume further that the minimal level of $\alpha$ is squarefree and divisible by some prime greater than $11$.
Then $\alpha$ is covered by \Cref{T:main}.
\end{prop}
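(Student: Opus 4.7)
I would follow the template of the proof of \Cref{prop:step 2 case b}. Set notation as in \Cref{R:apply combinatorial constraints} and let $p$ be the largest prime factor of the minimal level $N$. The master inequality from \Cref{L:rep-prime-case}(b) reads
\[
4(p - 1) \geq (p - X) \sum_{i=1}^X \calM(\alpha_i) + \Sigma \geq X(p - X),
\]
and we also have the upper bound $X \leq (p-1)/2$ from \Cref{L:rep-prime-case}(e) (valid for $p \geq 17$, since then $(p+3)/4 > 4 = \calM(\alpha)$) and the combinatorial lower bounds $X \geq 5, 6, 7$ for $p \geq 17, 23, 37$ from \Cref{lem:combinatorial constraints}. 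For $p \geq 23$, the comparison $6(p-6) \leq 4(p-1)$ forces $p \leq 16$, a contradiction; for $p \geq 37$, the analogous comparison with $7(p-7)$ fails similarly. Hence $p \in \{13, 17, 19\}$.

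The cases $p = 17$ and $p = 19$ are parallel: the master inequality admits only $X = 5$ (since at $X = 6$ we already have $X(p-X) > 4(p-1)$ in both cases), and then \Cref{lem:combinatorial constraints2} applies at $(p,X) = (17,5)$ and $(19,5)$ to force all $\alpha_i$ equal, whereupon \Cref{R:handle Y when 0} concludes.

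The main work is the case $p = 13$. I would first apply \Cref{R:short sums 11} to reduce to $N \mid 3 \times 5 \times 7 \times 13$, and then invoke \Cref{lem:exhaust} with the pair $(3 \times 5 \times 7 \times 13, 5)$ to assume $\N(\alpha) \geq 6$. The master inequality $48 \geq S$ then admits $X \in \{4,5,6,7\}$; the case $X = 4$ is dispatched by \Cref{lem:combinatorial constraints2} at $(p,X) = (13,4)$ together with \Cref{R:handle Y when 0}. For $X \in \{5, 6\}$ I would tabulate lower bounds on $\sum_i \calM(\alpha_i)$ based on the admissible partitions of $(\N(\alpha_1), \dots, \N(\alpha_X))$ subject to $\sum_i \N(\alpha_i) \geq 6$, using the small values of $\calM$ from \Cref{L:short sums} restricted to $\alpha_i \in \QQ(\zeta_{105})$ (so $\calM(\alpha_i) \in \{1, \tfrac{3}{2}, \tfrac{5}{3}, 2, \ldots\}$), combined with the bound $\Sigma \geq Y(X-Y) \geq X - 1$ from \Cref{R:prime case next} (applicable since \Cref{R:handle Y when 0} lets us assume the $\alpha_i$ are not all equal). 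All rows but a few borderline ones yield $S > 48$. The case $X = 7$ is the equality case of \Cref{L:rep-prime-case}(d): six $\alpha_i$ must equal $1$ and the seventh must equal $\zeta_6$, so $\alpha \in \QQ(\zeta_{39})$. The residual borderline rows at $X \in \{5, 6\}$ likewise pin $\alpha$ down to a cyclotomic field of the form $\QQ(\zeta_{3 \times 13})$, $\QQ(\zeta_{5 \times 13})$, or $\QQ(\zeta_{7 \times 13})$; applying \Cref{lem:short circuit} (together with the $c = 4$ rows of \Cref{table:short circuit}) either identifies $\alpha$ with the entry of \Cref{table:exceptional classes} of minimal level $39$ or eliminates it.

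The main obstacle is the tabulation at $p = 13$, specifically the borderline rows at $X \in \{5,6,7\}$ where the master inequality is essentially tight and the Cassels-height bookkeeping alone does not close the case. These configurations constrain $\alpha$ to have a very rigid $p$-decomposition (a handful of roots of unity together with at most one $1 + \zeta_5$-type summand), which in turn constrains the minimal level to a small cyclotomic field; the matching against \Cref{table:exceptional classes} via \Cref{lem:short circuit} then finishes the proof.
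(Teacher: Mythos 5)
Your proposal follows essentially the same route as the paper's proof: the same case split on $p$, the same use of \Cref{lem:combinatorial constraints} and \Cref{lem:combinatorial constraints2} together with \Cref{R:handle Y when 0} to dispose of $p \geq 17$ (where indeed only $X=5$ survives for $p=17,19$) and of $X=4$ at $p=13$, the same reduction via \Cref{R:short sums 11} and \Cref{lem:exhaust} to $\N(\alpha)\geq 6$, the same Cassels-height tabulation for $X\in\{5,6\}$ with the equality case of \Cref{L:rep-prime-case}(d) at $X=7$, and the same endgame via \Cref{lem:short circuit}. The borderline cases you flag are exactly the ones the paper resolves (the $X=5$ case landing in $\QQ(\zeta_{65})$, the $X=6$ case eliminated by \eqref{eq:prime case next Y1}, and $X=7$ landing in $\QQ(\zeta_{39})$), so the sketch is correct as it stands.
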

\begin{proof}
Set notation as in \Cref{R:apply combinatorial constraints}. 
We separate cases based on $p$.

\subsubsection{Case: $p = 13$}
\label{subsec:castle 4 prime 13}

By \Cref{L:rep-prime-case}(d), $X \leq \tfrac{p+1}{2} = 7$.
Meanwhile, by \Cref{R:handle Y when 0} we may assume $X \geq 5$.
By \Cref{R:short sums 11}, we may assume that
$N$ is not divisible by 11. From \Cref{L:rep-prime-case}  and the condition that $\calM(\alpha) = 4$, we have the equality:
\begin{equation}\label{CasselEqualityfor 4,13}
48 = (p-X) \sum_{i=1}^X \calM(\alpha_i) + \sum_{1\leq i<j\leq X} \calM(\alpha_i - \alpha_j) =: S.
\end{equation} 

For $X = 5$, 
after applying \Cref{lem:exhaust} to the pair
$(3 \times 5 \times 7 \times 13, 5)$,
we may assume that $\N(\alpha) > 5$. We now tabulate cases:
\begin{center}
\setlength{\tabcolsep}{6 pt}
\begin{tabular}{cc|cc|c|c}
\hline\hline
\multicolumn{2}{c}{$\N(\alpha_i)$} & \multicolumn{2}{c}{$\calM(\alpha_i)$} & $\Sigma$ & $S$ \\
\hline
$1$ $(\times 4)$ & $2$ & $1$ $(\times 4)$ & $\geq 3/2$ & $\geq 4$ &   $\geq 48$ \\
$1$ $(\times 3)$ & $\geq 2$ $(\times 2)$ & $1$ $(\times 3)$ & $\geq 3/2$ $(\times 2)$ & $\geq 6$ & $\geq 54$ \\

$\geq 1$ $(\times 4)$ & $\geq 3$ & $\geq 1 (\times 4)$ & $\geq 2$  & $\geq 4$ & $\geq 52$ \\
\hline \hline
\end{tabular}
\end{center}
By \eqref{CasselEqualityfor 4,13}, we may assume that $\alpha_1 = \dots = \alpha_4 = 1$, and then $\alpha_5$ satisfies $\N(\alpha_5) = 2$, $\calM(\alpha_5) = \tfrac{3}{2}$,
and $\calM(\alpha_5-1) = 1$.
By \Cref{L:short sums}, we may assume that $\alpha_5 = 1 + \zeta_5$; thus $N$ divides $5 \times 13$, which is covered by \Cref{lem:short circuit}.

For $X = 6$, we tabulate cases:
\begin{center}
\setlength{\tabcolsep}{6 pt}
\begin{tabular}{cc|cc|c|c}
\hline\hline
\multicolumn{2}{c}{$\N(\alpha_i)$} & \multicolumn{2}{c}{$\calM(\alpha_i)$} & $\Sigma$ & $S$ \\
\hline
$1$ $(\times 6)$ & & $1$ $(\times 6)$ & & $\geq 5$ & $\geq 47$ \\
$1$ $(\times 5)$ & $\geq 2$ & $1$ $(\times 5)$ & $\geq 3/2$ & $\geq 5$ & $\geq 50.5$ \\

$\geq 1$ $(\times 4)$ & $\geq 2$ $(\times 2)$ & $\geq 1$ $(\times 4)$ & $\geq 3/2$ $(\times 2)$ & & $\geq 49$ \\
\hline \hline
\end{tabular}
\end{center}
By \eqref{CasselEqualityfor 4,13}, the $\alpha_i$ are all roots of unity and $\Sigma=6 < 2X-2$.
By \eqref{eq:prime case next Y1}, we may assume that $\alpha_1 = \cdots = \alpha_5 = 1$. Now by \Cref{L:short sums}, $\Sigma = 5 \calM(\alpha_6-1)$ is either equal to 5 or at least  $5 \times \frac{3}{2}=7.5 > 6$, a contradiction.

For $X = 7$, we have $\calM(\alpha) = 4 = \tfrac{p+3}{4}$ and $X = \tfrac{p+1}{2}$, so we are in the equality case of \Cref{L:rep-prime-case}(d). Hence up to equivalence,
we may take the $\alpha_i$ to be $1$ $(\times 6)$, $-\zeta_3$.
Thus $N$ divides $3 \times 13$, which is covered by \Cref{lem:short circuit}.

\subsubsection{Case: $p \geq 17$}
By \Cref{L:rep-prime-case}(e),
\begin{equation} \label{eq:case c upper bound on X}
p \geq 17 \Longrightarrow  \calM(\alpha) = 4 < \tfrac{p+3}{4}
\Longrightarrow X \leq \tfrac{p-1}{2};
\end{equation}
so $X(p-X)$ grows monotonically with $X$.
If $X \geq 6$, then \Cref{L:rep-prime-case}(b) gives the contradiction
\[
\calM(\alpha) \geq \frac{X(p-X)}{p-1} \geq \frac{6(p-6)}{p-1} \geq \frac{6(17-6)}{17-1} > 4.
\]
Hence $X \leq 5$; for $p \geq 23$ this contradicts \Cref{lem:combinatorial constraints}, while for $p = 17, 19$ it gives a case covered by \Cref{lem:combinatorial constraints2} and thus addressed by \Cref{R:handle Y when 0}.
\end{proof}

\subsection{$\calM(\alpha) = 5$}

When $\house{\alpha}^2 = 5$, the calculus of Cassels heights becomes rather more complicated even compared to $\house{\alpha}^2 = 4$; we react by raising $N_1$ again.
In addition, we avail ourselves of a much stronger version of \Cref{R:short sums 11}.

\begin{remark} \label{R:short sums 7}
When $\house{\alpha}^2 = 5$, 
by \Cref{L:remove 5}, we may assume that the minimal level $N$ of $\alpha$ is not divisible by $5$.
For $p \leq 23$, by \Cref{L:remove p castle 5}, we may further assume that $N$ is not divisible by 11 or 23.

We can go somewhat further by comparing with \eqref {eq:multiplicative order}: since  $5$ has order $4$ in $(\ZZ/13\ZZ)^\times$, 
when $p \leq 17$ we can apply \Cref{L:remove p castle 5} to reduce to the case where $N$ is not divisible by 7.
This logic fails for $p=19$; however, in this case we can turn around and argue that $N$ \emph{must} be divisible by 7, as otherwise \Cref{L:remove p castle 5} excludes the prime factor 19.

In particular, when tabulating lower bounds on $S$, if the contribution of $\calM(\eta)$ is greater than 1, then it is  at least $1 \tfrac{5}{6}$ if $p=17$ or at least $1 \tfrac{2}{3}$ if $p = 19$.
\end{remark}

One more tool we need is a lemma that allows us to pit two large primes in the minimal level 
in opposition to each other. In lieu of stating the strongest possible lemma of this form, we stick to a simpler statement which will suffice for our purposes.

\begin{lemma} \label{L:rep two primes}
Let $p' < p$ be two odd primes.
Let $\alpha \in \QQ(\zeta_N)$ be a cyclotomic integer whose minimal level $N$ is exactly divisible by both $p$ and $p'$.
With notation as in \Cref{L:rep-prime-case} (for the prime $p$),
suppose that $\calM(\eta_i) = 1$ for each $i \in S$.
Define $X'$ by applying \Cref{L:rep-prime-case} for the prime $p'$.
Then 
\begin{equation} \label{eq:rep-two-primes}
(p-1) \calM(\alpha) \geq X(p-X) + \left(2 - \frac{2}{p'-1} \right) \left( \binom{X}{2} - \binom{X-\min\{X,X'\}+1}{2} \right).
\end{equation}
\end{lemma}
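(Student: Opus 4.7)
The plan is to build directly on Lemma~\ref{L:rep-prime-case}(b) applied to $\alpha$ with the prime $p$. Under the hypothesis $\calM(\eta_i)=1$ for every $i \in S$, that identity collapses to
\[
(p-1)\calM(\alpha) = X(p-X) + \sum_{i<j \in S}\calM(\eta_i - \eta_j),
\]
so the whole task reduces to establishing the lower bound
$\sum_{i<j \in S}\calM(\eta_i - \eta_j) \geq \bigl(2-\tfrac{2}{p'-1}\bigr)\bigl(\binom{X}{2} - \binom{X-\min\{X,X'\}+1}{2}\bigr)$.

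My plan to do this is to refine each root of unity $\eta_i$ along the second prime $p'$. Since $p'$ is odd and exactly divides $N/p$, the group of roots of unity in $\QQ(\zeta_{N/p})$ factors canonically as $\langle\zeta_{p'}\rangle$ times the subgroup $W'$ of elements of order coprime to $p'$, and $W' \subset \QQ(\zeta_{N/(pp')})$; this furnishes a unique decomposition $\eta_i = \zeta_{p'}^{a_i}\mu_i$ with $a_i \in \ZZ/p'\ZZ$ and $\mu_i \in W'$. For a pair with $a_i \neq a_j$, writing
\[
\eta_i - \eta_j = \zeta_{p'}^{a_j}\mu_j(\zeta - 1), \qquad \zeta := \zeta_{p'}^{a_i - a_j}\mu_i\mu_j^{-1},
\]
exhibits $\eta_i - \eta_j$, up to a unit, as $\zeta - 1$ for a root of unity $\zeta$ whose order is exactly divisible by $p'$. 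Since $\calM$ is invariant under multiplication by roots of unity, a direct trace computation via $\mathrm{Tr}_{\QQ(\zeta)/\QQ}(\zeta) = \mu(\mathrm{ord}\,\zeta)$ yields $\calM(\zeta - 1) = 2 - 2\mu(\mathrm{ord}\,\zeta)/\varphi(\mathrm{ord}\,\zeta)$; setting $\mathrm{ord}\,\zeta = p'm$ with $\gcd(p',m)=1$ gives $\varphi(\mathrm{ord}\,\zeta) = (p'-1)\varphi(m)$ and the uniform estimate $\calM(\zeta - 1) \geq 2 - \tfrac{2}{p'-1}$.

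The last step is to count how many pairs satisfy $a_i \neq a_j$. The crucial observation is that regrouping the $p$-decomposition of $\alpha$ by $p'$-exponent,
\[
\alpha = \sum_{a \in \ZZ/p'\ZZ} \zeta_{p'}^a \Bigl(\sum_{i \in S,\, a_i = a} \mu_i \zeta_p^i\Bigr),
\]
produces a $p'$-decomposition of $\alpha$ with at most $M := \#\{a_i : i \in S\}$ nonzero coefficients, so minimality of $X'$ forces $X' \leq M$; with the trivial $M \leq X$ this gives $M \geq \min\{X,X'\}$. Letting $n_a := \#\{i \in S : a_i = a\}$, the constraints $\sum_a n_a = X$ and exactly $M$ of the $n_a$ nonzero are maximized for $\sum_a \binom{n_a}{2}$ by concentrating at $(X-M+1,1,\dots,1)$, so that $\sum_a \binom{n_a}{2} \leq \binom{X-M+1}{2} \leq \binom{X-\min\{X,X'\}+1}{2}$. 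Subtracting from $\binom{X}{2}$ and multiplying by the per-pair bound (with pairs where $a_i = a_j$ contributing a trivial $\calM \geq 0$) yields exactly \eqref{eq:rep-two-primes}.

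The main obstacle is the uniform lower bound $\calM(\zeta - 1) \geq 2 - \tfrac{2}{p'-1}$: it is sharp precisely when $\mathrm{ord}\,\zeta = p'$ with $p' = 3$, the case that corresponds in Lemma~\ref{L:rep-prime-case} to the unusual degeneration $X_{ij} = 1$; for all other orders the factor $\varphi(m)$ in the denominator provides enough slack. Everything else is the combinatorial bookkeeping of partitions with a constrained number of nonempty parts.
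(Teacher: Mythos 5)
Your proposal is correct and follows essentially the same route as the paper: partition $S$ according to the $p'$-component of the root of unity $\eta_i$ (equivalently, the paper's relation $\eta_i/\eta_j \in \QQ(\zeta_{N/(pp')})$), note that this yields a $p'$-decomposition witnessing at least $\min\{X,X'\}$ classes, and count the cross-class pairs exactly as you do. The only cosmetic difference is that you obtain the per-pair bound $2-\tfrac{2}{p'-1}$ by the explicit trace formula $\calM(1-\zeta)=2-2\mu(n)/\varphi(n)$, whereas the paper gets the same value $\tfrac{2(p'-2)}{p'-1}$ by applying \Cref{L:rep-prime-case}(b) to $\eta_i-\eta_j$ with the prime $p'$.
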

\begin{proof}
Define an equivalence relation $\sim$ on $S$ by specifying that $i \sim j$ if $\eta_i/ \eta_j \in \QQ(\zeta_{N/(pp')})$. 
By the definition of $X'$, $S$ must separate into at least $\min\{X,X'\}$ equivalence classes.

For $i < j \in S$ with $i \not\sim j$, we may apply \Cref{L:rep-prime-case} with $\alpha,p$ replaced by $\eta_i-\eta_j, p'$ to deduce that
$\calM(\eta_i-\eta_j) \geq \tfrac{2(p'-2)}{p'-1}$. 
The number of such pairs $i,j$ is minimized 
when all but one equivalence class is a singleton
(as in the proof of \eqref{eq:rep-prime-case extended}),
yielding a lower bound of $ \binom{X}{2} - \binom{X-\min\{X,X'\}+1}{2}$.
Applying \eqref{eq:rep-prime-case} now yields the claim.
\end{proof}

\begin{prop} \label{prop:step 2 case d}
Let $\alpha$ be a cyclotomic integer 
with $\house{\alpha}^2 = 5$.
Assume further that the minimal level of $\alpha$ is squarefree and divisible by some prime greater than $13$.
Then $\alpha$ is covered by \Cref{T:main}.    
\end{prop}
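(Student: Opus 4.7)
Set notation as in \Cref{R:apply combinatorial constraints}: $p$ is the largest prime factor of the minimal level $N$ of $\alpha$, and $\alpha_1, \ldots, \alpha_X$ are the nonzero coefficients in a $p$-decomposition with $X$ minimized. Under the standing assumption from \Cref{R:handle Y when 0} that the $\alpha_i$ are not all equal, \eqref{eq:prime case next Y0} gives $\Sigma \geq X - 1$. Combining this with \Cref{L:rep-prime-case}(b) yields the chain
\[
5 (p - 1) \;=\; (p - X) \sum_{i=1}^X \calM(\alpha_i) + \Sigma \;\geq\; (p-X) X + (X-1).
\]
Since $\calM(\alpha) = 5 \leq \tfrac{p+3}{4}$ for $p \geq 17$, \Cref{L:rep-prime-case}(d),(e) bound $X \leq \tfrac{p+1}{2}$, strictly for $p \geq 19$. \Cref{R:short sums 7} further constrains $N$ to be coprime to $5$, $11$, and $23$; in particular $p \neq 23$. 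For $p = 17$ we also have $7 \nmid N$, while for $p = 19$ we must have $7 \mid N$.

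The first step is to prune away the large primes. For every $p \geq 37$, \Cref{lem:combinatorial constraints} forces $X \geq 7$, and then the crude inequality above reads $5(p-1) \geq 7(p-7) + 6$, i.e.\ $p \leq 19$, a contradiction. The same crude bound eliminates $X \geq 7$ at $p \in \{29, 31\}$, leaving only $X = 6$; both pairs $(29, 6)$ and $(31, 6)$ appear in the list of \Cref{lem:combinatorial constraints2}, so the $\alpha_i$ are all equal and the case is closed by \Cref{R:handle Y when 0}. This leaves $p \in \{17, 19\}$, where the same crude bound restricts $X$ to $\{5, 6, 7, 8, 9\}$ for $p = 17$ and to $\{5, 6, 7\}$ for $p = 19$.

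Next I would dispatch the easy extremes. The pairs $(17, 5)$ and $(19, 5)$ are covered by \Cref{lem:combinatorial constraints2}. The maximal configuration $(17, 9)$ is the equality case of \Cref{L:rep-prime-case}(d): eight $\alpha_i$ equal $1$ and the ninth equals $\zeta_6^{\pm 1}$, so the minimal level of $\alpha$ divides $3 \times 17$, a row of \Cref{table:short circuit}. The pair $(19, 7)$ is forced into the analogous configuration by tightness in the crude bound: all $\calM(\alpha_i) = 1$ and $\Sigma = X - 1 = 6$; then \eqref{eq:prime case next Y1} pins $Y = 1$, and the roots-of-unity-with-root-of-unity-difference identity forces the exceptional coefficient to be $\zeta_6^{\pm 1}$, so the minimal level divides $3 \times 19$ and contradicts the requirement $7 \mid N$. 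The intermediate cases $X \in \{6, 7, 8\}$ at $p = 17$ and $X = 6$ at $p = 19$ I would handle by the weight-profile tabulations familiar from \Cref{prop:step 2 case c}, feeding in the strengthened lower bounds on $\calM(\alpha_i)$ for $\N(\alpha_i) > 1$ provided by \Cref{R:short sums 7}, and closing the residual small-minimal-level subcases via \Cref{lem:exhaust} applied to the pairs $(5 \times 17, 4)$ and $(5 \times 19, 4)$, combined with \Cref{lem:short circuit} on the rows for $3 \times 17$ and $3 \times 19$.

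The main obstacle will be the mid-range $X$ analysis at $p = 17$, where the arithmetic slack in the Cassels identity is largest (up to $9$ at $X = 6$) and the weight profiles to enumerate are correspondingly most numerous. Each subcase is routine in style but requires careful bookkeeping to rule out configurations in which several distinct nonzero contributions to $\calM(\alpha_i)$ coexist; I anticipate needing \Cref{L:rep two primes} with $p' \in \{3, 7, 13\}$ in at least one subcase to exclude $\alpha$ having a second moderately-large prime in its minimal level, thereby reducing the final residue to levels accessible to the existing computational lemmas.
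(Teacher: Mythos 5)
Your plan follows essentially the same route as the paper's proof: the same case split on $p$ and $X$, the same use of \Cref{lem:combinatorial constraints}, \Cref{lem:combinatorial constraints2}, \Cref{R:handle Y when 0}, \Cref{R:short sums 7}, the equality cases of \Cref{L:rep-prime-case}, \Cref{lem:short circuit}, and \Cref{L:rep two primes}; the worked-out extremes ($(17,9)$, $(19,7)$, $p\in\{29,31\}$, $p\geq 37$) match the paper, and the deferred tabulations for $X\in\{6,7,8\}$ at $p=17$ and $X=6$ at $p=19$ do close exactly as you anticipate, with \Cref{L:rep two primes} (taking $p'=13$ for $p=17$ and $p'=7$ for $p=19$) needed precisely in the all-roots-of-unity subcases at $X=6$. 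One small correction: the exhaustions over $(5\times 17,4)$ and $(5\times 19,4)$ are irrelevant here, since \Cref{L:remove 5} already forces $5\nmid N$ when $\house{\alpha}^2=5$; the residual small levels are $3\times 17$ and $3\times 19$, handled by \Cref{lem:short circuit} alone.
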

\begin{proof}
Set notation as in \Cref{R:apply combinatorial constraints}. 
We separate cases based on $p$.

\subsubsection{Case: $p = 17$}
\label{subsec:castle 5 prime 17}

By \Cref{L:rep-prime-case}(d), $X \leq \tfrac{p+1}{2} = 9$.
Meanwhile, by \Cref{R:handle Y when 0} we may assume $X \geq 6$.
By \Cref{R:short sums 7}, we may assume that $N$ divides $3 \times 13 \times 17$;
we may also assume that 13 divides $N$, as otherwise \Cref{lem:short circuit} applies. From \Cref{L:rep-prime-case}  and the condition that $\calM(\alpha) = 5$, we have the equality:
\begin{equation}\label{CasselEqualityfor 5,17}
80 = (p-X) \sum_{i=1}^X \calM(\alpha_i) + \sum_{1\leq i<j\leq X} \calM(\alpha_i - \alpha_j) =: S.
\end{equation} 

For $X = 6$, note that if $\N(\alpha) = 6$,
then we may apply \Cref{L:rep two primes} with $p=17, p'=13$, and $X' \geq 4$ by \Cref{lem:combinatorial constraints}, but now \eqref{eq:rep-two-primes} yields the contradiction
\[
80 = (p-1) \calM(\alpha) \geq 6 \times 11 + \left(2 - \tfrac{2}{12} \right) (15 - 3)
= 88.
\]
We may thus assume now that $\N(\alpha) > 6$.  We obtain a contradiction by tabulating cases (now noting \Cref{R:short sums 7}):
\begin{center}
\setlength{\tabcolsep}{6 pt}
\begin{tabular}{cc|cc|c|c}
\hline\hline
\multicolumn{2}{c}{$\N(\alpha_i)$} & \multicolumn{2}{c}{$\calM(\alpha_i)$} & $\Sigma$ & $S$ \\
\hline
$1$ $(\times 5)$ & $\geq 2$ & $1$ $(\times 5)$ & $\geq 11/6$ & $\geq 5$ &  $\geq 80.16$ \\
$\geq 1$ $(\times 4)$ & $\geq 2$ $(\times 2)$ & $\geq 1$ $(\times 4)$ & $\geq 11/6$ $(\times 2)$ & & $\geq 84.33$ \\
\hline \hline
\end{tabular}
\end{center}

For $X = 7$, we tabulate cases:
\begin{center}
\setlength{\tabcolsep}{6 pt}
\begin{tabular}{cc|cc|c|c}
\hline\hline
\multicolumn{2}{c}{$\N(\alpha_i)$} & \multicolumn{2}{c}{$\calM(\alpha_i)$} & $\Sigma$ & $S$ \\
\hline
$1$ $(\times 7)$ & & $1$ $(\times 7)$ & & $\geq 6$ & $\geq 76$ \\
$1$ $(\times 6)$ & $\geq 2$ & $1$ $(\times 6)$ & $\geq 11/6$ & $\geq 6$ & $\geq 84.33$ \\

$\geq 1$ $(\times 5)$ & $\geq 2$ $(\times 2)$ & $\geq 1$ $(\times 5)$ & $\geq 11/6$ $(\times 2)$ & & $\geq 86.66$ \\
\hline \hline
\end{tabular}
\end{center}
By \eqref{CasselEqualityfor 5,17}, the $\alpha_i$ are all roots of unity and $\Sigma=10 < 2X-2$.
By \eqref{eq:prime case next Y1}, we may assume that $\alpha_1 = \cdots = \alpha_6 = 1$. Now by \eqref{CasselEqualityfor 5,17}, $\calM(\alpha_7-1)=1\frac{2}{3}$, but this contradicts \Cref{R:short sums 7}.

For $X = 8$, we again tabulate cases:
\begin{center}
\setlength{\tabcolsep}{6 pt}
\begin{tabular}{cc|cc|c|c}
\hline\hline
\multicolumn{2}{c}{$\N(\alpha_i)$} & \multicolumn{2}{c}{$\calM(\alpha_i)$} & $\Sigma$ & $S$ \\
\hline
$1$ $(\times 8)$ & & $1$ $(\times 8)$ & & $\geq 7$ &  $\geq 79$ \\
$1$ $(\times 7)$ & $\geq 2$ & $1$ $(\times 7)$ & $\geq 11/6$ & $\geq 7$ & $\geq 86.5$ \\

$\geq 1$ $(\times 6)$ & $\geq 2$ $(\times 2)$ & $\geq 1$ $(\times 6)$ & $\geq 11/6$ $(\times 2)$ & & $\geq 87$ \\
\hline \hline
\end{tabular}
\end{center}
By \eqref{CasselEqualityfor 5,17}, the $\alpha_i$ are all roots of unity and $\Sigma =8 < 2X-2$.
By \eqref{eq:prime case next Y1}, we may assume that $\alpha_1 = \cdots = \alpha_7 = 1$. Now by 
\Cref{L:short sums}, $\Sigma = 7 \calM(\alpha_8-1)$ is either equal to 7 or at least  $7 \times \frac{3}{2}=10.5 > 8$, a contradiction.

For $X = 9$, we have $\calM(\alpha) = 5 = \tfrac{p+3}{4}$ and $X = \tfrac{p+1}{2}$, so we are in the equality case of \Cref{L:rep-prime-case}(d). Hence up to equivalence,
we may take the $\alpha_i$ to be $1$ $(\times 8)$, $-\zeta_3$.
Thus $N$ divides $3 \times 17$, which is covered by \Cref{lem:short circuit}.

\subsubsection{Case: $p = 19$}
\label{subsec:castle 5 prime 19}

By \Cref{L:rep-prime-case}(e),
\begin{equation} \label{eq:case d upper bound on X}
p \geq 19 \Longrightarrow  \calM(\alpha) = 5 < \tfrac{p+3}{4}
\Longrightarrow X \leq \tfrac{p-1}{2}.
\end{equation}
For $p = 19$ this yields $X \leq 9$.
Meanwhile, by \Cref{R:handle Y when 0} we may assume $X \geq 6$.
By \Cref{R:short sums 7}, we may assume that $N$ divides $3 \times 7 \times 13 \times 17 \times 19$ and is divisible by $7$. From \Cref{L:rep-prime-case}  and the condition that $\calM(\alpha) = 5$, we have the equality:
\begin{equation}\label{CasselEqualityfor 5,19}
90 = (p-X) \sum_{i=1}^X \calM(\alpha_i) + \sum_{1\leq i<j\leq X} \calM(\alpha_i - \alpha_j) =: S.
\end{equation} 

For $X = 6$, we note that if $\N(\alpha) = 6$,
then we may apply \Cref{L:rep two primes} with $p=19, p'=7$, and $X' \geq 3$ by \Cref{lem:combinatorial constraints}, but now \eqref{eq:rep-two-primes} yields the contradiction
\[
90 = (p-1) \calM(\alpha) \geq 6 \times 13 + \left(2 - \tfrac{2}{6} \right) (15 - 6)
= 93.
\]
We may thus assume now that $\N(\alpha) > 6$. We obtain a contradiction by tabulating cases (now noting \Cref{R:short sums 7}):
\begin{center}
\setlength{\tabcolsep}{6 pt}
\begin{tabular}{cc|cc|c|c}
\hline\hline
\multicolumn{2}{c}{$\N(\alpha_i)$} & \multicolumn{2}{c}{$\calM(\alpha_i)$} & $\Sigma$ & $S$ \\
\hline
$1$ $(\times 5)$ & $\geq 2$ & $1$ $(\times 5)$ & $\geq 5/3$ & $\geq 5$ &  $\geq 91.66$ \\
$\geq 1$ $(\times 4)$ & $\geq 2$ $(\times 2)$ & $\geq 1$ $(\times 4)$ & $\geq 5/3$ $(\times 2)$ & & $\geq 95.33$ \\
\hline \hline
\end{tabular}
\end{center}

For $X = 7$,  we tabulate cases:
\begin{center}
\setlength{\tabcolsep}{6 pt}
\begin{tabular}{cc|cc|c|c}
\hline\hline
\multicolumn{2}{c}{$\N(\alpha_i)$} & \multicolumn{2}{c}{$\calM(\alpha_i)$} & $\Sigma$ & $S$ \\
\hline
$1$ $(\times 7)$ & & $1$ $(\times 7)$ & & $\geq 6$ & $\geq 90$ \\
$\geq 1$ $(\times 6)$ & $\geq 2$ & $\geq 1$ $(\times 6)$ & $\geq 5/3$ & & $\geq 92$ \\
\hline \hline
\end{tabular}
\end{center}
By \eqref{CasselEqualityfor 5,19}, the $\alpha_i$ are all roots of unity, and we may assume that $\alpha_1 = \cdots = \alpha_6 = 1$ and $\calM(\alpha_7-1)=1$. Hence up to equivalence,
we may take $\alpha_7 = -\zeta_3$; then $N$ divides $3 \times 19$, which is covered by \Cref{lem:short circuit}.

For $X = 8$, we obtain a contradiction by tabulating cases:
\begin{center}
\setlength{\tabcolsep}{6 pt}
\begin{tabular}{cc|cc|c|c}
\hline\hline
\multicolumn{2}{c}{$\N(\alpha_i)$} & \multicolumn{2}{c}{$\calM(\alpha_i)$} & $\Sigma$ & $S$ \\
\hline
$1$ $(\times 8)$ & & $1$ $(\times 8)$ & & $\geq 7$ & $\geq 95$ \\
$\geq 1$ $(\times 7)$ & $\geq 2$ & $\geq 1$ $(\times 7)$ & $\geq 5/3$ & & $\geq 95.33$ \\
\hline \hline
\end{tabular}
\end{center}

For $X = 9$, we have $\calM(\alpha)=\frac{X(p-X)}{p-1}$, so we are in the equality case of \Cref{L:rep-prime-case}(b). Hence this case is covered by \Cref{R:handle Y when 0}.

\subsubsection{Case: $p = 23$}

As per \Cref{R:short sums 7}, we may cover this case using \Cref{L:remove p castle 5}.

\subsubsection{Case: $p \geq 29$}

For $p > 19$, \eqref{eq:case d upper bound on X} implies that $X(p-X)$ grows monotonically with $X$. We may thus rule out $X \geq 7$ by computing that
\begin{equation} \label{eq:lower bound X7}
p \geq 23, X \geq 7 \Longrightarrow \calM(\alpha) \geq \frac{X(p-X)}{p-1} \geq \frac{7(p-7)}{p-1} \geq \frac{7(23-7)}{23-1} > 5.
\end{equation}
For $p \geq 37$, taking $X \leq 6$ violates \Cref{lem:combinatorial constraints}, while for $p = 29, 31$ the case $X = 6$ gives a case covered by \Cref{lem:combinatorial constraints2} and thus addressed by \Cref{R:handle Y when 0}.
\end{proof}

\section{Small primes in the minimal level}
\label{sec:small primes}

In this section, we resolve Step 3 of the proof of \Cref{T:main} in cases (c), (d) of \Cref{cor:RW truncation}, following the approach introduced in \S\ref{sec:class groups};
see respectively \Cref{prop:step3 case 4} and \Cref{prop:step3 case 5}.
We skip case (b) because it was already covered in \Cref{prop:list 420 part1}.

\subsection{$\calM(\alpha) = 4$}

When $\house{\alpha}^2 = 4$, we took $N_1 = 11$ and so we must consider the case where $\alpha$ has minimal level dividing $3 \times 5 \times 7 \times 11$. We do this by removing the prime 11
using \Cref{L:remove p castle 5}, then continuing with a variant of \Cref{lem:short circuit}.

\begin{prop} \label{prop:step3 case 4}
For $N = 3 \times 5 \times 7 \times 11$, 
every cyclotomic integer $\alpha \in \QQ(\zeta_N)$ with $\house{\alpha}^2 = 4$ is covered by \Cref{T:main}.  
\end{prop}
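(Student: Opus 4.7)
The plan is to follow the outline sketched at the start of this section: first invoke \Cref{L:remove p castle 5} to descend to the case $\alpha \in K := \QQ(\zeta_{105})$, then apply a class-group-style argument in the spirit of \Cref{lem:short circuit} to enumerate the equivalence classes in $K$ with castle $4$. Concretely, I apply \Cref{L:remove p castle 5} with $(\ell, c, p_0, p) = (2, 4, 11, 11)$ and $N_{\text{lemma}} = 105$: the hypothesis that $(p-1)/2 = 5$ is prime is immediate, and by \eqref{eq:multiplicative order} the order of $2$ modulo each of $3, 5, 7$ is respectively $2, 4, 3$, none divisible by $5$. Hence $\alpha$ is either already covered by \Cref{T:main} or equivalent to an element of $K$, and it suffices to treat the latter case.

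For $\alpha \in K$ with $\house{\alpha}^2 = 4$, I apply \Cref{prop:fixed house setup} with $c = 4$ and $\alpha_0 = 2$. The multiplicative order of $2$ modulo $105$ is $\lcm(2, 4, 3) = 12$, so there are $\varphi(105)/12 = 4$ primes of $K$ above $2$; since $-1 \notin \langle 2 \rangle$ in $(\ZZ/105\ZZ)^\times$, these four primes partition into two two-element orbits under complex conjugation, yielding $|T_K| = 2$ and $U_K = \{-1, 0, 1\}^2$. I then exhibit, for each $u \in U_K$, a representative cyclotomic integer covered by \Cref{T:main}, using the factorizations $(1+\zeta_7+\zeta_7^3) \mathcal{O}_K = \frakp_{11} \frakp_{12}$ and $(1-\zeta_{15}+\zeta_{15}^{12}) \mathcal{O}_K = \frakp_{11} \frakp_{21}$ for a suitable labeling of the four primes $\frakp_{ij}$ ($i,j \in \{1,2\}$) above $2$, with complex conjugation sending $\frakp_{ij}$ to $\frakp_{(3-i)(3-j)}$ and $T_K = \{\frakp_{11}, \frakp_{12}\}$. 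Modulo the symmetries coming from complex conjugation ($u \mapsto -u$) and the nontrivial Galois action on $T_K$ (swap of the two coordinates), the nine values of $u$ partition into four orbits with the following representatives: $u = (0, 0) \leftrightarrow \alpha = 2$ (covered by \Cref{T:cassels}(1)); $u = (\pm 1, \pm 1) \leftrightarrow (1+\zeta_7+\zeta_7^3)^2$ (entry at level $7$ in \Cref{table:exceptional classes}); $u = (\pm 1, 0), (0, \pm 1) \leftrightarrow (1+\zeta_7+\zeta_7^3)(1-\zeta_{15}+\zeta_{15}^{12})$ (entry at level $105$); and $u = (\pm 1, \mp 1) \leftrightarrow (1-\zeta_{15}+\zeta_{15}^{12})^2$.

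The main subtlety is the last case, since $(1 - \zeta_{15} + \zeta_{15}^{12})^2$ has degree $8$ over $\QQ$ and does not obviously belong to any Cassels family. The key point is the algebraic identity
\[
(\zeta_5 + \zeta_5^4) + (\zeta_5^2 + \zeta_5^3) \zeta_3 \;=\; \zeta_{15}^7 \cdot \overline{(1 - \zeta_{15} + \zeta_{15}^{12})^2},
\]
which can be verified directly in $\QQ(\zeta_{15})$ by expanding both sides in the basis of powers of $\zeta_{15}$ and using $1 + \zeta_5 + \zeta_5^2 + \zeta_5^3 + \zeta_5^4 = 0$ together with $1 + \zeta_3 + \zeta_3^{-1} = 0$. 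The right-hand side is a root of unity times the complex conjugate (a Galois conjugate) of $(1 - \zeta_{15} + \zeta_{15}^{12})^2$, and the left-hand side is precisely of the form \Cref{T:cassels}(3) with $\zeta = \zeta_3$; hence $(1 - \zeta_{15} + \zeta_{15}^{12})^2$ is covered by \Cref{T:main}. Note that Cassels equivalence need not preserve the degree over $\QQ$, since multiplication by a root of unity outside a stabilizing subfield can collapse the Galois orbit of the element. This completes the proof.
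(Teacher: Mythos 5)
Your proof is correct and follows essentially the same route as the paper: remove the prime $11$ via \Cref{L:remove p castle 5} (the paper's text cites \Cref{L:remove 5} here, which appears to be a typo for the lemma you correctly invoke), then apply \Cref{prop:fixed house setup} in $\QQ(\zeta_{105})$ with $\alpha_0 = 2$, $\#T_K = 2$, $U_K = \{-1,0,1\}^2$, and account for every $u \in U_K$ by a known castle-$4$ element. The only differences are cosmetic: you organize the nine values of $u$ into symmetry orbits rather than tabulating all nine generators, and you supply an explicit identity placing $(1-\zeta_{15}+\zeta_{15}^{12})^2$ in Cassels family (3), a membership the paper merely asserts.
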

\begin{proof}
By applying \Cref{L:remove 5} as in \Cref{R:short sums 11}, we may exclude the prime factor 11 from the minimal level of $\alpha$; that is, we may assume that $\alpha \in \QQ(\zeta_{N'})$ with $N' := 3 \times 5 \times 7$.
We now apply \Cref{prop:fixed house setup} with $K = \QQ(\zeta_{N'}), \alpha_0 = 2$.
Define 
\[
\alpha_1 := 1+\zeta_7 + \zeta_7^3, \qquad \alpha_2 := 1-\zeta_{15} + \zeta_{15}^{12}.
\]
By \eqref{eq:multiplicative order} we have $\#T_K = 2$;
we choose $T_K = \{\frakp_1, \frakp_2\}$
with $\frakp_1 = (\alpha_1, \alpha_2)$ and $\frakp_2 = (\alpha_1, \overline{\alpha}_2)$.
For each $u \in U_K = [-1, 0, 1] \times [-1, 0, 1]$, $J_u$ admits a generator as indicated in this table:
\[
\begin{array}{c|c|c|c}
 & -1 & 0 & 1 \\
\hline
-1 & \overline{\alpha}_1^2 & \overline{\alpha}_1 \overline{\alpha}_2 & \overline{\alpha}_2^2 \\
\hline
0 & \overline{\alpha}_1 \alpha_2 & 2 & \alpha_1 \overline{\alpha}_2 \\
\hline
1 & \alpha_2^2 & \alpha_1 \alpha_2 & \alpha_1^2
\end{array}
\]
The entries $2, \alpha_2^2, \overline{\alpha}_2^2$ appear in the infinite families of \Cref{T:cassels}, while the others are equivalent to entries of \Cref{table:exceptional classes}.
\end{proof}

\subsection{$\calM(\alpha) = 5$}

When $\house{\alpha}^2 = 5$, we took $N_1 = 13$ and so we must consider the case where $\alpha$ has minimal level dividing $3 \times 5 \times 7 \times 11 \times 13$.
For this, we remove some prime factors using \Cref{L:remove 5} and \Cref{L:remove p castle 5}, and handle the rest as follows.

\begin{lemma} \label{lem:step3 case 5 remove 3}
For $N = 3 \times 13$,
there is no cyclotomic integer $\alpha \in \QQ(\zeta_{N})$ with $\house{\alpha}^2 = 5$.
\end{lemma}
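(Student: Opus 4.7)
Since we are in case (d) of \Cref{cor:RW truncation}, we have $\calM(\alpha) = \house{\alpha}^2 = 5$, so all conjugates of $\alpha \overline{\alpha}$ equal $5$, giving $\alpha \overline{\alpha} = 5$ as an algebraic number. Set $K := \QQ(\zeta_{39})$, $\theta := \zeta_{13} + \zeta_{13}^{-1}$, $\delta := \zeta_{13} - \zeta_{13}^{-1}$, $F := \QQ(\theta)$, and let $K_0 := \QQ(\zeta_3, \theta)$ be the unique index-$2$ subfield of $K$ not containing $\zeta_{13}$; then $K = K_0(\delta)$ with $\delta^2 = \theta^2 - 4 \in K_0$ and $\overline{\delta} = -\delta$.

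The plan is first to apply \Cref{lem:fixed house descent} with $p = 13$, $e = 1$, $N' = 3$, and the $K_0$ just defined. The hypotheses are routine to verify: $K_0 \supseteq \QQ(\zeta_3) \ni 5$, and since the automorphism $\zeta_{13} \mapsto \zeta_{13}^{-1}$ lies in the decomposition group of every prime of $K$ above $5$ (as $-1 \equiv 5^2 \pmod{13}$), $K_0$ contains the decomposition field. Computing $k = \gcd(2, \lcm(2,3)) = 2$, we conclude that, after passing to an equivalent element (which preserves $\alpha \overline{\alpha} = 5$), either $\alpha \in K_0$ or $\alpha = d\delta$ for some $d \in K_0^\times$.

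I would dispose of the case $\alpha = d\delta$ using integrality at $13$. From $N_{\QQ(\zeta_{13})/\QQ}(\delta) = 13$ one finds $(\delta)\mathcal{O}_K = \mathfrak{P}_1 \mathfrak{P}_2$ for the two primes $\mathfrak{P}_i$ of $K$ above $13$; since $K/K_0$ is ramified at each $\mathfrak{P}_i$ with index $2$, integrality of $\alpha$ forces $v_{\mathfrak{p}}(d) \geq 0$ for the primes $\mathfrak{p}$ of $K_0$ above $13$, while integrality at all other primes is automatic. Hence $d \in \mathcal{O}_{K_0}$ and $d \overline{d} \in \mathcal{O}_F$. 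On the other hand, $\alpha \overline{\alpha} = 5$ becomes $d \overline{d}(4 - \theta^2) = 5$, forcing $d \overline{d} = 5/(4 - \theta^2)$. But $(4 - \theta^2)$ generates the unique prime $\mathfrak{p}_F$ of $F$ above $13$ (as $N_{F/\QQ}(4 - \theta^2) = 13$ and $13$ is totally ramified in $F$), so this element has valuation $-1$ at $\mathfrak{p}_F$ and is not integral, a contradiction.

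For the remaining case $\alpha \in K_0$, write $\alpha = a + b\zeta_3$ with $a, b \in \mathcal{O}_F$; using $\zeta_3 + \zeta_3^{-1} = -1$, the equation $\alpha \overline{\alpha} = 5$ becomes $a^2 - ab + b^2 = 5$ in $\mathcal{O}_F$. Let $\mathfrak{q}$ be a prime of $F$ above $3$; since $3$ has order $3$ in $(\ZZ/13\ZZ)^\times/\{\pm 1\}$, the residue field is $\mathbb{F}_{27}$. In characteristic $3$ we have $a^2 - ab + b^2 = (a+b)^2$, so reducing mod $\mathfrak{q}$ yields $(a+b)^2 \equiv -1 \pmod{\mathfrak{q}}$. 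But $-1$ has order $2$ in $\mathbb{F}_{27}^\times$, which does not divide $13 = \tfrac{1}{2}|\mathbb{F}_{27}^\times|$, so $-1$ is not a square in $\mathbb{F}_{27}$, a contradiction. The main obstacle is really just setting up the descent correctly; once that is done, the two cases are dispatched by short algebraic arguments.
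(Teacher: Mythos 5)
Your proof is correct, but it takes a genuinely different route from the paper's. The paper norms $\alpha$ down to the index-$3$ subfield of $\QQ(\zeta_{39})$, adjoins $\sqrt{5}$, and runs the machinery of \Cref{prop:fixed house setup}: the possible valuation vectors form $U_K = \{-3,\dots,3\}$, a parity argument restricts to odd $u$, and the observation that neither $\pm(2\sqrt{-3}\pm\sqrt{13})$ (a generator of $J_2$) has a square root in $K$ shows that no odd class lies in $\image(v_{T,K})$. You instead descend via \Cref{lem:fixed house descent} to the index-$2$ subfield $K_0 = \QQ(\zeta_3,\theta)$, splitting into $\alpha \in K_0$ and $\alpha = d\delta$; the twisted case dies by a valuation computation at the (ramified in $K/K_0$) primes above $13$, and the untwisted case dies by reducing $a^2-ab+b^2=5$ modulo a prime of $F=\QQ(\theta)$ above $3$ and noting that $-1$ is a nonsquare in $\mathbb{F}_{27}$ since $27\equiv 3\pmod 4$. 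Your argument avoids any appeal to principality of specific ideals or to square-root nonexistence in a degree-$16$ field, and the congruence modulo $3$ is a particularly clean obstruction; the paper's method has the advantage of fitting the general framework of \Cref{prop:fixed house setup} reused elsewhere (e.g., \Cref{lem:short circuit}). The supporting facts you invoke all check out: $\calM(\alpha)=\house{\alpha}^2=5$ forces $\alpha\overline{\alpha}=5$ exactly; the hypotheses of \Cref{lem:fixed house descent} hold because $5$ has order $4$ modulo $39$ and $\Gal(K/K_0)$ is generated by $25=5^2$; $\mathcal{O}_{K_0}=\mathcal{O}_F[\zeta_3]$ by coprimality of discriminants; and $N_{F/\QQ}(4-\theta^2)=13$, so $(4-\theta^2)$ is the unique prime of $F$ above the totally ramified prime $13$.
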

\begin{proof}
Let $K_0$ be the index-3 subfield of $\QQ(\zeta_N)$
and set $\alpha_1 := \Norm_{\QQ(\zeta_N)/K_0}(\alpha)$,
so that $\house{\alpha_1}^2 = 5^3$.
We apply \Cref{prop:fixed house setup}
with $K = K_0(\sqrt{5}), \alpha_0 = 5^{3/2}$.
By \eqref{eq:multiplicative order} we have $\#T_K = 1$ (so we may identify $\ZZ^{T_K}$ with $\ZZ$) and $U_K = \{-3,\dots,3\}$. 
For $u \in \ZZ$, the fractional ideal $(\alpha_0)/J_u$ is extended from $K_0$ if and only if $u$ is odd, so $\alpha_1$ must correspond to some $u \in \{\pm 1, \pm 3\}$.

Now note that for $u = 2$, $J_u$ is principal generated by $\beta_0 = 2\sqrt{-3} \pm \sqrt{13}$, but neither $\pm \beta_0$ has a square root in $K$: such a square root would generate an abelian extension of $\QQ$,
which would imply that one of $\pm \beta_0 / \overline{\beta}_0 = \pm \beta_0^2/5$ is a square in $\QQ(\beta_0)$, which they are not.
We conclude that none of $\pm 1, \pm 3$ vanishes in $\coker(v_{T,K})$, proving the claim.
\end{proof}

\begin{prop} \label{prop:step3 case 5}
For $N = 3 \times 5 \times 7 \times 11 \times 13$,
every cyclotomic integer $\alpha \in \QQ(\zeta_N)$ with $\house{\alpha}^2 = 5$ is covered by \Cref{T:main}. 
\end{prop}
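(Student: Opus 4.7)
The plan is to iteratively remove prime factors from the minimal level of $\alpha$ by invoking lemmas already established, ultimately reducing to the case excluded by \Cref{lem:step3 case 5 remove 3}. Since $N = 3 \times 5 \times 7 \times 11 \times 13$ is odd, so is the minimal level of $\alpha$, so \Cref{L:remove 5} applies: either $\alpha$ is equivalent to $\sqrt{5}$, or the minimal level of $\alpha$ is coprime to $5$. The first alternative is handled by noting that specializing family (3) of \Cref{T:cassels} to $\zeta = -1$ gives $(\zeta_5 + \zeta_5^4) - (\zeta_5^2 + \zeta_5^3) = \sqrt{5}$, so such an $\alpha$ is covered by \Cref{T:main}. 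In the second alternative, $\alpha$ is equivalent to an element of $\QQ(\zeta_{3 \times 7 \times 11 \times 13})$.

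Next I would apply \Cref{L:remove p castle 5} twice, with $(\ell, c, p_0) = (5, 5, 7)$. The first application takes $p = 11$ and cofactor $N' = 3 \times 7 \times 13$: one checks that $N'$ is coprime to $5 \cdot 11 = 55$ and that for each prime $q \mid N'$ the order of $5$ in $(\ZZ/q\ZZ)^\times$ is not divisible by $(p-1)/2 = 5$. By \eqref{eq:multiplicative order} these orders are $2, 6, 4$ at $q = 3, 7, 13$, none of which is a multiple of $5$. The second application takes $p = 7$ and cofactor $N'' = 3 \times 13$, with the analogous check that the orders $2, 4$ at $q = 3, 13$ avoid the divisor $3 = (p-1)/2$. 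Both reductions conclude as intended: up to equivalence, $\alpha$ is either covered by \Cref{T:main} or lies in $\QQ(\zeta_{3 \times 13})$.

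At this stage, \Cref{lem:step3 case 5 remove 3} disposes of the latter possibility outright, completing the argument. The proof is essentially a chain of reductions and I do not anticipate any genuine obstacle; the only points that require attention are the verification of the combinatorial conditions on multiplicative orders in each application of \Cref{L:remove p castle 5} (all of which hold by direct inspection of \eqref{eq:multiplicative order}) and the separate treatment of the exceptional case $\alpha \sim \sqrt{5}$ flagged by \Cref{L:remove 5}.
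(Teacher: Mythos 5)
Your proposal is correct and follows essentially the same route as the paper: remove the factor $5$ via \Cref{L:remove 5} (with the $\sqrt{5}$ case absorbed into family (3) of \Cref{T:cassels}), strip off $11$ and then $7$ by two applications of \Cref{L:remove p castle 5} with $(\ell,c,p_0)=(5,5,7)$, and finish with \Cref{lem:step3 case 5 remove 3}. The paper merely compresses the middle steps by citing \Cref{R:short sums 7}, where exactly these order-of-$5$ verifications are recorded.
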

\begin{proof}
By applying \Cref{L:remove 5} as in \Cref{R:short sums 7}, we may exclude the prime factors 5, 7, and 11 from the minimal level of $\alpha$; that is, we may assume that $\alpha \in \QQ(\zeta_{3 \times 13})$. 
This case is ruled out by \Cref{lem:step3 case 5 remove 3}.
\end{proof}

We may finally deduce \Cref{T:main} by assembling the ingredients listed in \Cref{table:main breakdown}.

\newpage
%---------------------------------------------------------------------
%	  Section -- Acknowledgements
%---------------------------------------------------------------------
\section*{Acknowledgements}
Thanks to Mathilde Gerbelli-Gauthier for suggesting the term ``castle''.
This paper is a result of the virtual workshop ``Rethinking Number Theory 6'' funded by NSF grants DMS-2201085 and DMS-2418528. In addition, Das was supported by Prime Minister's Research Fellowship, Government of India. Leudière was supported in part by the Pacific Institute for the Mathematical Sciences. Kedlaya was supported by NSF grant DMS-2401536, the UC San Diego Warschawski Professorship, and during fall 2025 by the Lodha Mathematical Sciences Institute.

\end{document}